\numberwithin{equation}{section}
\newtheorem{theorem}{Theorem}[section]
\newtheorem{lemma}[theorem]{Lemma}
\newtheorem{prop}[theorem] {Proposition}
\newtheorem{cor}[theorem]  {Corollary}
\theoremstyle{definition}
\newtheorem{assumption}{Assumption}
\theoremstyle{remark}
\newtheorem*{remark}{Remark}
\newcommand{\N}{\mathbb{N}}
\newcommand{\R}{\mathbb{R}}
\newcommand{\Z}{\mathbb{Z}}
\newcommand{\Q}{\mathbb{Q}}
\renewcommand{\P}{\mathbb{P}}
\newcommand{\E}{\mathbb{E}}
\newcommand{\dd}{\mathrm{d}} 
\newcommand{\eps}{\varepsilon}
\newcommand{\la}{\langle}
\newcommand{\ra}{\rangle}
\newcommand{\vect}[1]{\boldsymbol{#1}}
\DeclareMathOperator{\var}{var}  
\newcommand{\be}{\begin{equation}}
\newcommand{\ee}{\end{equation}}
\newcommand{\ba}{\begin{equation} \begin{aligned}}
\newcommand{\ea}{\end{aligned}\end{equation}}
\newcommand{\bes}{\begin{equation*}}
\newcommand{\ees}{\end{equation*}}
\newcommand{\ssup}[1] {{\scriptscriptstyle{({#1}})}}
\def\1{{\mathchoice {1\mskip-4mu\mathrm l}      
{1\mskip-4mu\mathrm l}
{1\mskip-4.5mu\mathrm l} {1\mskip-5mu\mathrm l}}}
\newcommand{\e}   {{\operatorname e }}
\newcommand{\heap}[2]{\genfrac{}{}{0pt}{}{#1}{#2}}
\begin{document}

\title[Boundary layers for a chain of atoms at low temperature]{Surface energy and boundary layers for a chain of atoms at low temperature}
\author{Sabine Jansen}
\address{Mathematisches Institut, Ludwig-Maximilians-Universit{\"a}t, Theresienstr. 39, 80333 M{\"u}nchen, Germany}
\email{jansen@math.lmu.de}
\author{Wolfgang K{\"o}nig}
\address{Weierstrass Institute Berlin, Mohrenstr. 39, 10117 Berlin and Technische Universit{\"a}t Berlin, Str. des 17. Juni 136, 10623 Berlin, Germany}
\email{koenig@wias-berlin.de}
\author{Bernd Schmidt}
\address{Institut f{\"u}r Mathematik, Universit{\"a}t Augsburg, Universit{\"a}tsstr. 14, 86159 Augsburg, Germany}
\email{bernd.schmidt@math.uni-augsburg.de}
\author{Florian Theil}
\address{Mathematics Institute, University of Warwick, Coventry, CV4 7AL, UK}
\email{F.Theil@warwick.ac.uk}

\date{April 12, 2019}
\maketitle
\begin{abstract}We analyze the surface energy and boundary layers for a chain of atoms at low temperature for an interaction potential of Lennard-Jones type. The pressure (stress) is assumed small but positive and bounded away from zero, while the temperature $\beta^{-1}$ goes to zero. Our main results are: (1)  As $\beta \to \infty$ at fixed positive pressure $p>0$, the Gibbs measures $\mu_\beta$ and $\nu_\beta$ for infinite chains and semi-infinite chains satisfy path large deviations principles. The rate functions are bulk and surface energy functionals $\overline{\mathcal{E}}_{\mathrm{bulk}}$ and $\overline{\mathcal{E}}_\mathrm{surf}$. The minimizer of the surface functional corresponds to zero temperature boundary layers. (2) The surface correction to the Gibbs free energy converges to the zero temperature surface energy, characterized with the help of the minimum of $\overline{\mathcal{E}}_\mathrm{surf}$. (3)  The bulk Gibbs measure and Gibbs free energy can be approximated by their Gaussian counterparts. (4) Bounds on the decay of correlations are provided, some of them uniform in $\beta$.  \\

\noindent\emph{Keywords}: atomistic models of elasticity - surface energy and boundary layers - semi-classical limit of transfer operators - uniform decay of correlations - path large deviations for 
stationary processes. \\

\noindent{\emph{MSC2010 classification}: 
  82B21, 
  74B20, 
   74G65, 
  60F10. 
} 
\end{abstract}

\tableofcontents

\section{Introduction} 

The purpose of the present article is to analyze the low-temperature behavior for a one-dimensional chain of atoms that interact via a Lennard-Jones type potential. The model is atomistic and in terms of the Gibbs measures of classical statistical mechanics. Two limiting procedures are at  play: the zero-temperature limit, for which the inverse temperature $\beta$ goes to infinity,  and the thermodynamic limit, where the number of particles $N$ and the system size go to infinity. The order of the limits matters. When the zero-temperature limit is taken before the $N\to \infty$ limit, the analysis of Gibbs measures is replaced by energy minimization, leading to variational models of non-linear elasticity. 
We perform instead the zero-temperature limit after the thermodynamic limit.
The zero-temperature limit for infinite systems is far from trivial, see~\cite{vanenter-ruszel07,chazottes-gambaudo-ugalde11,chazottes-hochman10} and the discussion in~\cite{bellissard-radin-shlosman10}. 

For the one-dimensional Lennard-Jones interaction, it is known that energy minimizers (ground states) converge to a periodic lattice~\cite{gardner-radin79} (``crystallization''). For one-dimensional systems with pair potentials that decay faster than $1/r^2$ it is well-known that, in contrast, at positive temperature,  no matter how small, there is no crystallization~\cite{blanc-lewin15}. Nevertheless, some quantities can be approximated well by their zero-temperature counterpart. For the bulk free energy this is to be expected, for other quantities such as surface corrections this is already more subtle. For the decay of correlations, it is a priori not even clear what the zero-temperature counterpart should be; we propose a natural candidate, see Eqs~\eqref{eq:effective-int} and~\eqref{eq:effective-zero}. 

At zero temperature, surface corrections and boundary layers have been studied, for example, in order to better understand variational models of fracture, see e.g.~\cite{braides-cicalese07, scardia-schloem-zanini11} and the references therein. 
Fracture might be expected for elongated chains, forced to stretch beyond their preferred length. At small positive temperature, large interparticle distances correspond to low pressure (stress) $p=p_\beta \to 0$. We address this regime in a subsequent work and focus here on the elastic regime of positive pressure $p>0$, though the case of small pressure $p_\beta \to 0$ is discussed in some comments. 

Our main results come in four parts. They are listed in Sections~\ref{sec:zerores}--\ref{sec:corres} and proven in Sections~\ref{sec:energy}--\ref{sec:brascamp}. 
At zero temperature, we extend the result on bulk periodicity from~\cite{gardner-radin79} to a more general class of potentials and positive pressure, see Theorem~\ref{thm:periodic}. We prove the existence of bounded surface corrections, and characterize them with the help of an energy functional $\mathcal{E}_\mathrm{surf}$ for semi-infinite chains (Theorem~\ref{thm:surface}). 

At positive temperature, we prove large deviations principles for the Gibbs measures $\mu_\beta$ and $\nu_\beta$ on $\R_+^\Z$ and $\R_+^\N$ (product topology) as $\beta \to \infty$ at fixed $p>0$ (Theorem~\ref{thm:ldp}).  The speed is $\beta$ and the respective rate functions are energy functionals $\overline{\mathcal{E}}_\mathrm{bulk}$ and $\overline{\mathcal{E}}_\mathrm{surf} - \min \overline{\mathcal{E}}_\mathrm{surf}$ whose minimizers are, respectively, the periodic bulk ground state and the zero-temperature boundary layer. The convergence of positive-temperature surface corrections to their zero-temperature counterpart is addressed in Theorem~\ref{thm:geld}. These results are intimately related to path large deviations for Markov processes and Hamilton-Jacobi-Bellman equations~\cite{feng-kurtz-book}, semi-classical analysis~\cite{helffer-book}, and a more direct approach to low-temperature expansions~\cite{shapeev-luskin17}. We remark that our results are valid for long range interactions which in particular are not assumed to have superlinear growth at infinity. 
The large deviations principle is complemented by a result on Gaussian approximations for the bulk Gibbs measure and the Gibbs free energy, valid for finite interaction range $m$ (Theorems~\ref{thm:gaussian2} and~\ref{thm:gaussian3}). 

Finally we study the temperature-dependence of correlations and informally discuss how correlations connect with effective interactions of defects and the decay of boundary layers. Theorem~\ref{thm:corrgen} provides a priori estimates that hold for all $\beta,p>0$. In Theorem~\ref{thm:corr-finitem} we show that for finite $m$ and small positive pressure $p$, the decay of correlations is exponential with a rate of decay that stays bounded as $\beta\to \infty$---the associated Markov chain has a spectral gap bounded away from zero. This uniform estimate is proven with perturbation theory for the transfer operator. For infinite $m$, we provide instead a uniform estimate for restricted Gibbs measures (Proposition~\ref{prop:restrict-alg}),  which follows from the convexity of the energy (in a neighborhood of the periodic gound state) and techniques from the realm of Brascamp-Lieb inequalities~\cite{helffer-book}. 
At vanishing pressure $p_\beta \to 0$ or fixed high pressure $p>0$, the spectral gap might become exponentially small because of fracture or metastable wells~\cite{bovier-denholla} in non-convex energy landscapes. 

Bringing statistical mechanics into atomistic models of crystals and elasticity has a rich tradition~\cite{born-huang-book,weiner-book,bavaud-choquard-fontaine86,penrose02}. Modern developments include: the study of gradient Gibbs measures~\cite{funaki-spohn97} with sophisticated tools such as renormalization groups and cluster expansions~\cite{adams-kotecky-mueller16}, random walk representations~\cite{brydges-frohlich-spencer82}, and Witten Laplacians~\cite{helffer-book}; scaling limits and gradient Young-Gibbs measures~\cite{presutti-book,kotecky-luckhaus14,runa16}; the 
extension of approximation schemes, e.g., the quasi-continuum method, to positive temperature \cite{blanc-lebris-legoll-patz10,tadmor-miller-book}.
In addition, there have been some inroads into the open problem of proving crystallization in the form of orientational order for two-dimensional models~\cite{aumann15,heydenreich-merkl-rolles14}.

To the best of our knowledge, all of the aforementioned mathematical literature, notably on Gibbs gradient measures, is limited to potentials with a superlinear growth at infinity. This is in stark contrast with the decay to zero typically imposed in statistical mechanics of point particles~\cite{ruelle-book69}.  We work with potentials $v(r)\to 0$,  an additional linear term $pr$ enters because we work in the constant pressure ensemble, which is the most convenient ensemble for one-dimensional systems~\cite[Section 5.6.6]{ruelle-book69}. As a consequence, the by now classical combination of Bakry-{\'E}mery estimates and Holley-Stroock perturbation principle, see~\cite{menz14} and the references therein, becomes potentially more delicate. We use instead  estimates on energy penalties, some aspects of which might generalize to higher-dimensional models. 

Another aspect that might generalize to higher dimension concerns the large deviations principle. 
The existence of a large deviations principle for the Gibbs measure as $\beta \to \infty$, proven using a exponential tightness and fixed point equation for the measure, amounts to the construction of an infinite volume energy functional that vanishes on ground states only. In higher dimension, the role of the fixed point equation is taken by  DLR-conditions named after Dobrushin, Lanford, Ruelle~\cite{georgii-book} and the proof of a large deviations principle  reduces to the investigation of a higher-dimensional analogue of a Bellman equation. The theory of the latter, for non-unique ground states, might mirror possible intricacies of the zero-temperature limit of Gibbs measure described in~\cite{vanenter-ruszel07}.

Finally we remark that the results of this work allow for a detailed analysis of typical atomic configurations at low temperature and low density. In \cite{jkst19} we will in particular prove that, when the density is strictly smaller than the density of the ground state lattice, a system with $N$ particles fills space by alternating approximately crystalline domains (``clusters'') with empty domains (``cracks''). The number of domains is of the order of $N \exp(-\beta e_{\rm surf}/2)$ with $e_{\rm surf}$ the surface energy from Theorem~\ref{thm:surface} below.

\section{Main results}\label{sec:results}

\subsection{Zero temperature} \label{sec:zerores}

Let $v:(0,\infty) \to \R$ be a pair potential, $m\in \N\cup \{\infty\}$ a truncation parameter and $p\geq 0$ the pressure. At zero temperature we allow for $p=0$, at positive temperature we impose $p>0$. The Gibbs energy at zero temperature and pressure $p$ for a system of $N$ particles with 
 positions $x_1<\ldots<x_N$ and interparticle spacings $z_j = x_{j+1}- x_j$, $j=1,\ldots,N-1$, is 
\bes
	\mathcal{E}_N(z_1,\ldots,z_{N-1}) = \sum_{\heap{1\leq i <j\leq N}{|i-j|\leq m}} v(z_i+\cdots + z_{j-1}) + p \sum_{j=1}^{N-1} z_j. 
\ees
The parameter $m$ restricts the range of the interaction: $m=2$ corresponds to a next-nearest neighbor interaction. This section deals with the minimization problem
\bes
	E_N = \inf_{z_1,\ldots,z_{N-1}>0}\,  \mathcal{E}_N(z_1,\ldots,z_{N-1})
\ees
in the limit $N\to \infty$. 
Throughout we assume that the following assumption holds.

\begin{assumption} \label{assu:v}
The pair potential $v : (0, \infty) \to \R \cup \{+ \infty\}$ is equal to $+ \infty$ on $(0, r_{\rm hc}]$ for some $r_{\rm hc} \ge 0$ and a $C^2$ function on $(r_{\rm hc}, \infty)$. There exist $r_{\rm hc} < z_{\min} < z_{\max} < 2 z_{\min}$ and $\alpha_1, \alpha_2 > 0$, $s > 2$ such that the following holds. 
\begin{itemize}
\item[(i)] Shape of $v$: $z_{\max}$ is the unique minimizer of $v$ and satisfies $v(z_{\max})<0$.  $v$ is decreasing on $(0,z_{\max})$ and  increasing and non-positive on $(z_{\max},\infty)$. 

\item[(ii)] Growth of $v$: $v(z) \ge - \alpha_1 z^{-s}$ for all $z > 0$ and $v(z) + v(z_{\max}) - 2 \alpha_1 \sum_{n=2}^\infty (n z)^{-s} > 0$ for all $z < z_{\min}$.  

\item[(iii)] Shape of $v''$: $v''$ is decreasing on $[z_{\min}, z_{\max}]$ and increasing and non-positive on $[2 z_{\min}, \infty)$.

\item[(iv)] Growth of $v''$: $v''(z) \ge -\alpha_2 z^{-s-2}$ for all $z > r_{\rm hc}$ and $v''(z_{\max}) + \sum_{n=2}^\infty n^2 v''(n z_{\min}) > 0$. 
\end{itemize}
\end{assumption} 

\noindent The assumption is satisfied, for example, by the Lennard-Jones potential $v(r) = r^{-12} - r^{-6}$. As we will see, parts (i) and (ii) of the assumption guarantee that energy minimizers at $p=0$ have interparticle spacings $z_j$ in $(z_{\min},z_{\max})$, parts (iii) and (iv) ensure that $\mathcal{E}_N$ is uniformly strictly convex in $(z_{\min},z_{\max})^{N-1}$; moreover the Hessian $\mathrm{D}^2 \mathcal{E}_N$ is diagonally dominant with positive diagonal entries and negative off-diagonal entries.

\begin{assumption}\label{assu:p} 
	The pressure $p$ satisfies $0\leq p<p^*$ with $p^*:= \frac{|v(z_{\max})|}{z_{\max}}$. 
\end{assumption}

At positive temperature we shall assume in addition that $p>0$, $r_{hc}>0$, and for some results we need $\lim_{r\searrow r_\mathrm{hc}} v(r) = \infty$. The next theorem is the adaptation of a similar result by Gardner and Radin \cite{gardner-radin79}. It is proven in Section~\ref{sec:bulk}. 

\begin{theorem}[Bulk properties] \label{thm:periodic}
	Let $m\in \N\cup \{\infty\}$ and $p\in [0,p^*)$ as in Assumption~\ref{assu:p}. 
	\begin{enumerate}
		\item [(a)] For every $N\geq 2$, the map $\mathcal{E}_N:\R_+^{N-1}\to \R$ has a unique minimizer $(z_1^\ssup{N},\ldots,z_{N-1}^\ssup{N})$. The mimizer has all its spacings $z_j$  in $ [z_{\min},z_{\max}]$.  
		\item [(b)] As $j,N\to \infty$ along $N-j\to \infty$, we have $z_j^\ssup{N}\to a$ where $a \in (z_{\min}, z_{\max}]$ is the unique minimizer of $\R_+\ni r\mapsto p r+ \sum_{k=1}^m v(kr)$.  
		\item [(c)] The limit $e_0 = \lim_{N\to \infty} (E_N/N)  < 0$ exists and is given by 
		$$ e_0= p a+ \sum_{k=1}^m v(ka) = \min_{r>0} \Bigl( p r +  \sum_{k=1}^m v(kr)\Bigr).$$
	\end{enumerate} 
\end{theorem}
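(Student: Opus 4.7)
My plan is to establish the three parts of Theorem~\ref{thm:periodic} roughly in order, with the main work lying in part (a). I would first observe that $\mathcal{E}_N$ is coercive on $\R_+^{N-1}$: the $p\sum z_j$ term (or, for $p=0$, the nearest-neighbor contributions) prevents $z_j \to \infty$, while the hard-core condition $v = +\infty$ on $(0, r_\mathrm{hc}]$, together with the lower bound $v(z) \geq -\alpha_1 z^{-s}$ from (ii), prevents $z_j \to 0$. Hence a minimizer exists. The real content of (a) is to show that any minimizer $\vect{z}^\ssup{N}$ has all coordinates in $[z_{\min}, z_{\max}]$: once this is known, the Hessian condition from (iii)--(iv)---diagonal dominance with positive diagonal and negative off-diagonal entries---gives strict convexity on the box $[z_{\min}, z_{\max}]^{N-1}$ and therefore uniqueness.

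For the lower bound $z_j^\ssup{N} \geq z_{\min}$, I would argue by contradiction: if some $z_j < z_{\min}$, estimate the energetic cost of the bond $j$ by isolating its nearest-neighbor contribution $v(z_j)$, bounding all indirect interactions through $j$ from below using $v(z) \geq -\alpha_1 z^{-s}$, and comparing with a reference configuration of spacing $z_{\max}$; the strict positivity in Assumption~\ref{assu:v}(ii) (the sum condition) yields the contradiction. The linear pressure term is favorable to the reference configuration and does not spoil the argument. For the upper bound $z_j^\ssup{N} \leq z_{\max}$, I would use that past $z_{\max}$ both $v$ and $pr$ are increasing, combined with the pressure constraint $p < p^* = |v(z_{\max})|/z_{\max}$, which ensures that cutting a large bond and replacing it with a bond of length $z_{\max}$ (absorbing the excess length at infinity, or equivalently comparing to a ground state of spacing $z_{\max}$ via Assumption~\ref{assu:p}) is strictly favorable. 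This is essentially the Gardner--Radin argument adapted to positive pressure.

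For part (b), I would exploit the uniform bounds from (a): the sequence $(z_j^\ssup{N})_{1 \leq j \leq N-1}$ lies in the compact set $[z_{\min}, z_{\max}]^{N-1}$, so along any subsequence one obtains, by a diagonal extraction, a limit $(\bar z_j)_{j \in \Z} \in [z_{\min}, z_{\max}]^\Z$. The Euler--Lagrange equations for $\mathcal{E}_N$ pass to the limit (using the summable decay $|v'|, |v''| \lesssim z^{-s-1}, z^{-s-2}$ to control the infinite tails), yielding the stationarity condition for an infinite-chain energy per particle. Strict convexity of the one-variable function $f(r) := pr + \sum_{k=1}^m v(kr)$ on $[z_{\min}, z_{\max}]$ follows from $f''(r) = \sum_{k=1}^m k^2 v''(kr) > 0$, which is Assumption~\ref{assu:v}(iv) rewritten. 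Hence $f$ has a unique minimizer $a$, and a translation-invariance/uniqueness argument forces $\bar z_j \equiv a$.

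For part (c), the convergence in (b) plus the uniform bounds give $\mathcal{E}_N(\vect{z}^\ssup{N})/N \to f(a)$ by a standard bulk-vs-boundary decomposition: the pair interactions contributing fully to bond $j$ in the bulk sum to $f(a) + o(1)$ as $j, N-j \to \infty$, while boundary corrections involve only $O(m) = O(1)$ many indices at each end, which are uniformly bounded since spacings lie in $[z_{\min}, z_{\max}]$. Dominated convergence (for $m = \infty$, using the $z^{-s}$ tail) then yields $E_N/N \to e_0 = f(a)$. The variational characterization $e_0 = \min_{r>0} f(r)$ is automatic once the minimum of $f$ on $(0,\infty)$ is shown to lie in $[z_{\min}, z_{\max}]$; this last point again uses Assumptions~\ref{assu:v}(ii) and~\ref{assu:p}. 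The main obstacle I anticipate is the two-sided localization in (a), since the interplay between long-range tails, the linear pressure term, and the non-convexity of $v$ outside $[z_{\min}, z_{\max}]$ must all be controlled simultaneously by a careful comparison argument.
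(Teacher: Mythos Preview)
Your outline for part (a) is essentially the paper's approach: energy-decreasing modifications to force spacings into $[z_{\min},z_{\max}]$, then strict convexity via the diagonally dominant Hessian for uniqueness. One correction: the pressure constraint $p<p^*$ is \emph{not} used for the upper bound $z_j\le z_{\max}$. The paper simply shrinks any $z_j>z_{\max}$ to $z_{\max}$; since every sum $z_i+\cdots+z_k$ containing the bond $j$ lies in $[z_{\max},\infty)$ both before and after, monotonicity of $v$ there plus $p\ge 0$ already does the job. The role of $p<p^*$ is in the \emph{lower} bound: after removing a particle sitting at a too-short bond (the step you describe correctly via Assumption~\ref{assu:v}(ii)), you must reattach it somewhere, and the paper reattaches at distance $z_{\max}$; this is only favorable because $v(z_{\max})+pz_{\max}<0$, i.e., $p<p^*$.

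For (b) your route differs from the paper's. The paper invokes the Gardner--Radin argument, which exploits the M-matrix structure of the Hessian (Lemma~\ref{lem:hessian}) to obtain \emph{monotonicity} relations of the type $\max\{z_j^{\ssup{N+1}},z_{j+1}^{\ssup{N+1}}\}\le z_j^{\ssup N}$, from which convergence to a constant follows directly. Your compactness + Euler--Lagrange route is viable but has a gap as written: strict convexity of the one-variable map $f(r)=pr+\sum_k v(kr)$ only rules out \emph{constant} competitors $\bar z_j\equiv r\neq a$; it says nothing about non-constant bounded solutions of the infinite Euler--Lagrange system. To close this you would need the strict convexity of the full energy on the box (which you already have from the Hessian estimate), together with an argument that any bounded solution of the infinite system coincides with the unique minimizer of $\mathcal E_{\mathrm{bulk}}$ on $\mathcal D^+\cap[z_{\min},z_{\max}]^\Z$---e.g., by showing a posteriori that $\bar z-a\in\ell^2(\Z)$, or by a maximum-principle argument using the sign structure of the Hessian. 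The Gardner--Radin monotonicity gives this for free and also yields the uniform convergence away from the boundary noted in the paper.

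For (c) both approaches work; the paper uses subadditivity (Lemma~\ref{lem:subadditive}) to get existence and the bound $E_N\ge (N-1)e_0$ independently of (b), which is slightly cleaner and is reused later.
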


Let $\mathcal{D}_0 \subset  (r_{\rm hc}, \infty)^{\N}$ be the space of sequences $(z_j)_{j\in \N}$ with none or at most finitely many elements different from $a$. 
Define 
\begin{align}
	h(z_1,\ldots,z_{m}) &= pz_1 + \sum_{k=1}^m v(z_1+\cdots + z_k) \label{eq:h-def} \\
	\mathcal{E}_{\rm surf}\bigl( (z_j)_{j\in \N}\bigr)& =\sum_{j=1}^\infty \bigl( h(z_j,\ldots,z_{j+m-1}) - e_0 \bigr), \quad  (z_j)_{j\in \N}\in \mathcal{D}_0. \notag
\end{align}
When $m=\infty$, $h((z_j)_{j\in \N})$ is a function of the whole sequence. $\mathcal{E}_\mathrm{surf}$ is the Gibbs energy of a semi-infinite chain, with additive constant chosen in such a way that at spacings $z_j\equiv a$ the Gibbs energy is zero; $h(z_1,z_2,\ldots)$ represents the interaction of the left-most particle with everybody else.
Let $\mathcal{D} = \{ (z_j)_{j\in \N}\in  (r_{\rm hc}, \infty)^{\N} \mid \sum_{j=1}^\infty(z_j-a)^2 <\infty\}$ be the space of square summable strains. 

\begin{theorem} [Surface energy] \label{thm:surface}
	Let $m\in \N\cup \{\infty\}$ and $p\in [0,p^*)$ as in Assumption~\ref{assu:p}. 
	Equip $\mathcal{D}$ with the $\ell^2$-metric. Then 
	\begin{enumerate}
		\item [(a)] $\mathcal{E}_\mathrm{surf}$ extends to a continuous functional on $\mathcal{D}$. 
		\item [(b)] On $\mathcal{D} \cap [z_{\min},z_{\max}]^\N$ it is strictly convex. 
		\item [(c)] $\mathcal{E}_\mathrm{surf}$ has a unique minimizer. The minimizer lies in $\mathcal{D}\cap [z_{\min},z_{\max}]^\N$. 
		\item [(d)] The limit $e_\mathrm{surf} =\lim_{N\to \infty}(E_N - N e_0)$ exists and is given by
			$$ e_\mathrm{surf} = 2 \min_\mathcal{D} \mathcal{E}_\mathrm{surf} - pa - \sum_{k=1}^m k v(ka).$$
	\end{enumerate}
\end{theorem}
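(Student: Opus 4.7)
My plan is to handle parts (a)--(c) by direct analysis of $\mathcal{E}_{\rm surf}$ as a functional of $u := z - a \in \ell^2$, and to derive (d) through an algebraic identity relating $\mathcal{E}_N$ to a ``doubly extended'' bulk energy. For part (a), I would write $z_j = a + u_j$ and Taylor-expand each window $h(z_j,\ldots,z_{j+m-1})$ about $(a,\ldots,a)$. The structural point is that, after interchanging the order of summation, the linear contribution has coefficient of $u_{j_0}$ equal to $\sum_{k=1}^{\min(j_0,m)} \partial_k h(a,\ldots,a)$; combining the explicit form of the partials with the ground-state identity $p + \sum_{l=1}^{m} l v'(la) = 0$ from Theorem \ref{thm:periodic}(c) makes this coefficient vanish for $j_0 \geq m$ when $m$ is finite, and for $m = \infty$ it decays like $j_0^{1-s}$ by Assumption \ref{assu:v}(ii), hence defines a bounded linear functional on $\ell^2$. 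The quadratic remainder is pointwise $O(|(u_j,\ldots,u_{j+m-1})|^2)$ uniformly on $\ell^2$-bounded sets, so its sum over $j$ is controlled by $Cm\|u\|_2^2$ and depends Lipschitz-continuously on $u$ via Cauchy--Schwarz, yielding the continuous extension to $\mathcal{D}$.

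For (b), on $[z_{\min},z_{\max}]^m$ Assumption \ref{assu:v}(iii)--(iv) renders $\mathrm{D}^2 h$ diagonally dominant with positive diagonal, so $h$ is strictly convex there; given $z \neq z'$ in $\mathcal{D} \cap [z_{\min},z_{\max}]^\N$, the smallest index $j_0$ with $z_{j_0} \neq z'_{j_0}$ yields a window where Jensen is strict, and this strict inequality survives summation. For (c), uniqueness follows immediately; for existence I would adapt Theorem \ref{thm:periodic}(a) (via Assumption \ref{assu:v}(i)--(ii)) to project any admissible $z$ into $[z_{\min},z_{\max}]^\N$ without raising $\mathcal{E}_{\rm surf}$, extract a weak-$\ell^2$ limit of a minimizing sequence using coercivity from the positive-definite effective Hessian at $a$, and invoke weak lower-semicontinuity from convexity plus the continuity from (a); pointwise convergence along the subsequence keeps the limit inside $[z_{\min},z_{\max}]^\N$.

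For (d), I embed the finite chain into a doubly-infinite one by setting $\bar z_j = a$ for $j \leq 0$ or $j \geq N$ and $\bar z_j = z_j$ otherwise, and consider the finite sum
\[
\mathcal{F}(\bar z) := \sum_{j \in \Z} \bigl[h(\bar z_j,\ldots,\bar z_{j+m-1}) - e_0\bigr].
\]
Window-by-window bookkeeping separates interior windows (which reassemble $\mathcal{E}_N(z)$) from left/right ``phantom'' windows $L_j(z),R_j(z)$ crossing the artificial boundaries, yielding the identity
\[
\mathcal{F}(\bar z) = \mathcal{E}_N(z) - (N+m-2)\, e_0 + \textstyle\sum_j L_j(z) + \sum_j R_j(z).
\]
Specialising to $z=(a,\ldots,a)$, where $\mathcal{F}=0$ and $\mathcal{E}_N(a,\ldots,a) = Ne_0 - pa - \sum_k k v(ka)$, pins down the constant $\sum L^{(a)} + \sum R^{(a)}$ explicitly. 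Applying the identity at the $N$-chain minimizer $z^{(N)}$ and using Theorem \ref{thm:periodic}(b) to send the phantom differences $\sum L^{(N)} - \sum L^{(a)}$ and $\sum R^{(N)} - \sum R^{(a)}$ to zero reduces the result to showing $\mathcal{F}(\bar z^{(N)}) \to 2\min_\mathcal{D}\mathcal{E}_{\rm surf}$. The two boundary layers of $z^{(N)}$ should converge, by strict convexity (b) and the characterisation (c), to the semi-infinite minimizer $z^*$ and its mirror image, while cross-interactions between the two ends vanish in the limit (disjoint supports for finite $m$; decay from Assumption \ref{assu:v}(ii) for $m = \infty$), producing the factor $2$; solving for $E_N - Ne_0$ and passing to the limit then yields the stated formula.

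The main obstacle is precisely this last convergence $\mathcal{F}(\bar z^{(N)}) \to 2\min\mathcal{E}_{\rm surf}$: it requires upgrading the pointwise statement $z_j^{(N)} \to a$ (for $j,N-j\to\infty$) into $\ell^2$-convergence of the two boundary layers to $z^*$ and its reverse, which I expect to obtain from the quantitative strict convexity delivered by uniform positive-definiteness of $\mathrm{D}^2 h$ on $[z_{\min},z_{\max}]^m$, together with a decay estimate controlling the interaction between the two ends.
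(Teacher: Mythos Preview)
Parts (a) and (c) follow essentially the same route as the paper (Lemmas~\ref{lem:extension}--\ref{lem:Esurf_min}), but there are genuine problems in (b) and (d).

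\textbf{Part (b).} The function $h$ is \emph{not} strictly convex on $[z_{\min},z_{\max}]^m$: for $m\geq 2$ its last diagonal Hessian entry is $\partial_m^2 h = v''(z_1+\cdots+z_m)\leq 0$ whenever $\sum_i z_i\geq 2z_{\min}$ (Assumption~\ref{assu:v}(iii)), so $D^2h$ is neither diagonally dominant nor positive semidefinite, and your ``single window where Jensen is strict'' argument fails. Strict convexity holds only for the \emph{sum} $\sum_j h(z_j,\ldots)$: each index $i$ receives the positive term $v''(z_i)$ from the window starting at $i$, and this dominates the negative cross-terms. The paper proves this via Gershgorin row-sum estimates on the full Hessian (Lemma~\ref{lem:hessian}) and transfers it to $\mathcal E_{\mathrm{surf}}$ in Lemma~\ref{lem:coercive}.

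\textbf{Part (d).} The phantom differences $L^{(N)}-L^{(a)}$ and $R^{(N)}-R^{(a)}$ do \emph{not} vanish. They depend on the boundary spacings $z_1^{(N)},\ldots,z_{m-1}^{(N)}$ (resp.\ $z_{N-m+1}^{(N)},\ldots,z_{N-1}^{(N)}$), and Theorem~\ref{thm:periodic}(b) only gives $z_j^{(N)}\to a$ in the \emph{bulk} ($j,N-j\to\infty$); the boundary spacings converge instead to the first entries of the semi-infinite minimizer $z^*$, which are generically $\neq a$. Correspondingly, $\mathcal F(\bar z^{(N)})$ does not converge to $2\min\mathcal E_{\mathrm{surf}}$: the artificial $a$-extension interacts with the nontrivial boundary layer, contributing an extra $\mathcal W(\cdots a\,a\mid z_1^* z_2^*\cdots)$ at each end. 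These two errors may well cancel, but verifying this requires exactly the $\ell^2$ boundary-layer convergence you flag as the main obstacle, plus careful bookkeeping of the residual interaction terms.

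The paper's approach (Proposition~\ref{prop:surface}) sidesteps all of this with a direct sandwich: for the lower bound, replace a central block of the $N$-minimizer by $a$'s at cost $O(\eps)$ (Lemma~\ref{lem:hessian}), split into left/middle/right/cross pieces, and bound each half below by $\inf_{\mathcal D_0}\mathcal E_{\mathrm{surf}}$; for the upper bound, glue two approximate $\mathcal E_{\mathrm{surf}}$-minimizers with $a$'s in between. No convergence of boundary layers to $z^*$ is ever needed.
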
 
\noindent The theorem is proven in Section~\ref{sec:surface}. 
Note that $-pa-\sum_{k=1}^\infty k v(ka)$ is the surface energy for a clamped chain with all spacings equal to $a$ and $\mathcal{E}_\mathrm{surf}$ encodes the effect of boundary layers. $\mathcal{E}_\mathrm{surf}$ is  multiplied by $2$ because finite chains have two ends. 
We note that $\min \mathcal{E}_\mathrm{surf}$ is exactly the boundary layer energy introduced by Braides and Cicalese \cite{braides-cicalese07}; Braides and Cicalese dealt with the special case $m=2$ of next-nearest neighbor interactions but more general potentials. For finite $m\geq 2$, see ~\cite[Theorem 4.2]{schaeffner-schloemerkemper2018}.

For later purpose we also define a bulk functional 
\bes 
\begin{aligned}
	\mathcal{E}_\mathrm{bulk} \bigl( (z_j)_{j\in \Z}\bigr) & = \sum_{j=-\infty}^\infty \bigl( h(z_j,\ldots,z_{j+m-1}) - e_0\bigr) \\
	& = \sum_{j=-\infty}^\infty \sum_{k=1}^m \bigl( v(z_j+\cdots + z_{j+k-1}) - v(ka) + \delta_{1k} p (z_j - a) \bigr).
\end{aligned} 
\ees
It is defined, a priori, on the space $\mathcal{D}_0^+$ of positive bi-infinite sequences  $(z_j)_{j\in \Z} \in (r_{\rm hc}, \infty)^{\Z}$ that have at most finitely many elements $z_j \neq a$.  Denoting the space of square summable strains $\mathcal{D}^+ = \{(z_j)_{j\in \Z} \in (r_{\rm hc}, \infty)^{\Z} \mid \sum_{j\in \Z}(z_j-a)^2<\infty\}$, an analysis similar to the one for the surface functional yields the following result.

\begin{prop} [Limiting bulk properties] \label{prop:lim-bulk} 
	Let $m\in \N\cup \{\infty\}$ and $p \in [0,p^*)$ as in Assumption~\ref{assu:p}. 
	Equip $\mathcal{D^+}$ with the $\ell^2$-metric. Then 
	\begin{enumerate}
		\item [(a)] $\mathcal{E}_\mathrm{bulk}$ extends to a continuous functional on $\mathcal{D}^+$. 
		\item [(b)] On $\mathcal{D}^+ \cap [z_{\min},z_{\max}]^\N$ it is strictly convex. 
		\item [(c)] The unique minimizer of $\mathcal{E}_\mathrm{bulk}$ is the constant sequence $(\ldots, a, a, \ldots)$. The minimum value is $\mathcal{E}_\mathrm{bulk}(\ldots, a, a, \ldots) = 0$. 
		\item [(d)] For every $(z_j)_{j \in \Z} \in \mathcal{D}^+$ one has 
			\begin{align*} 
				\mathcal{E}_\mathrm{bulk}((z_j)_{j \in \Z}) 
				&= \mathcal{E}_\mathrm{surf}(z_1, z_2, \ldots) + \mathcal{E}_\mathrm{surf}(z_0, z_{-1}, \ldots) \\ 
				&\qquad + \mathcal{W}( \cdots z_{-1} z_0 \mid z_1 z_2 \ldots ), 
			\end{align*}
			where $\mathcal{W}( \cdots z_{-1} z_0 \mid z_1 z_2 \ldots ) := \sum_{\heap{j \leq 0, k \geq 1}{|k-j|\leq m-1}} v(z_j+\cdots + z_k)$ is the total interaction between the left and right half-infinite chain. 
	\end{enumerate}
\end{prop}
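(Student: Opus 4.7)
The plan is to mirror the proof of Theorem~\ref{thm:surface}, with the decomposition~(d) serving as the bridge: once (d) is in place on a dense subspace, parts (a)--(c) for $\mathcal{E}_\mathrm{bulk}$ follow from Theorem~\ref{thm:surface} plus an analysis of the cross term $\mathcal{W}$. I would first establish (d) on $\mathcal{D}_0^+$ by direct rearrangement. Each bulk term $h(z_j, \ldots, z_{j+m-1}) - e_0$ accounts for the pair-interactions between particle $j$ and particles $j+1, \ldots, j+m$ plus the $pz_j$ and $-e_0$ normalizations, so the sum over $j \in \Z$ counts every pair $(i,\ell)$ with $\ell - i \leq m$ exactly once at the index $i$. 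Partitioning the pairs by whether both endpoints satisfy $i \geq 1$, both satisfy $\ell \leq 1$, or straddle the cut between indices $0$ and $1$ assembles them into $\mathcal{E}_\mathrm{surf}(z_1, z_2, \ldots)$ (right), $\mathcal{E}_\mathrm{surf}(z_0, z_{-1}, \ldots)$ (left, after the reindexing $w_k := z_{1-k}$), and $\mathcal{W}$ (cross), respectively; the $p z_j$ and $-e_0$ contributions split along $j \geq 1$ versus $j \leq 0$ and match on the two sides.

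For (a) I would argue piece-by-piece in~(d). The two surface summands extend continuously to $\mathcal{D}^+$ by Theorem~\ref{thm:surface}(a) applied to the right and reversed-left $\ell^2$-tails. The cross term is a finite sum of smooth pair potentials when $m < \infty$; when $m = \infty$, I would Taylor-expand $v(z_j + \cdots + z_k)$ around $(k-j+1)a$ and invoke Assumption~\ref{assu:v}(ii),(iv). The number of cross pairs at pair-distance $n = k - j + 1$ is $n-1$ while $|v^{(r)}(na)| \leq C n^{-s-r}$, so the zeroth-order contributions sum absolutely via $\sum_{n \geq 2} (n-1) n^{-s} < \infty$ (using $s > 2$), and the first and second order terms are controlled by $\|u\|_{\ell^2}$ and $\|u\|_{\ell^2}^2$ through Cauchy--Schwarz, where $u_j := z_j - a$. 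These uniform estimates also justify extending~(d) by density from $\mathcal{D}_0^+$ to all of $\mathcal{D}^+$.

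For (b) I would compute $\mathrm{D}^2 \mathcal{E}_\mathrm{bulk}$ at base points in $[z_{\min}, z_{\max}]^\Z$. Each pair interaction $v(z_i + \cdots + z_{\ell-1})$ contributes $v''$ of its argument to an entire block of the Hessian; by Assumption~\ref{assu:v}(iii) the contributions from distances $\geq 2 z_{\min}$ are non-positive. The Hessian therefore has strictly positive diagonal entries (dominated by the on-site $v''(z_{\max})$ via the shape assumption) and non-positive off-diagonals, and Assumption~\ref{assu:v}(iv), $v''(z_{\max}) + \sum_{n \geq 2} n^2 v''(n z_{\min}) > 0$, is precisely the diagonal-dominance condition yielding a uniform coercivity bound $\la u, \mathrm{D}^2 \mathcal{E}_\mathrm{bulk} \, u \ra \geq \lambda \|u\|_{\ell^2}^2$.

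Finally, for (c), the value $\mathcal{E}_\mathrm{bulk}(\ldots, a, a, \ldots) = 0$ is immediate since $h(a, \ldots, a) = e_0$, and the first-order condition $\partial_{z_0} \mathcal{E}_\mathrm{bulk}|_{z \equiv a} = p + \sum_{k=1}^m k v'(ka) = 0$ is the Euler--Lagrange equation defining $a$ in Theorem~\ref{thm:periodic}(b). Strict convexity from~(b) then identifies $(\ldots, a, a, \ldots)$ as the unique minimizer within $\mathcal{D}^+ \cap [z_{\min}, z_{\max}]^\Z$; to upgrade uniqueness to all of $\mathcal{D}^+$ I would transport the local-modification argument from the proof of Theorem~\ref{thm:surface}(c), lowering any $z_{j_0} > z_{\max}$ to $z_{\max}$ (strictly reducing every affected $v$ term by monotonicity together with the pressure contribution) and excluding $z_{j_0} < z_{\min}$ via Assumption~\ref{assu:v}(ii), which bounds the repulsive penalty against the attractive tail. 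The hardest step will be the $m = \infty$ cross-term analysis combined with the density passage extending~(d) to $\mathcal{D}^+$, since both rely on uniform polynomial-decay estimates with $s > 2$ leaving only modest slack.
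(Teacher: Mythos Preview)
Your proposal is correct and reaches the same conclusions, but the logical ordering differs from the paper's. The paper proves (a) \emph{directly}, independent of (d): it writes down the bulk analogue of Lemma~\ref{lem:extension},
\[
  \mathcal{E}_\mathrm{bulk}((z_j)_{j\in \Z}) = \sum_{j\in \Z}\sum_{k=1}^m \Bigl[ v\Bigl(\sum_{i=j}^{j+k-1} z_i\Bigr) - v(ka) - v'(ka)\sum_{i=j}^{j+k-1}(z_i-a)\Bigr],
\]
and then argues continuity exactly as in Lemma~\ref{lem:Esurfcont}; the decomposition (d) is established \emph{last}, checked on $\mathcal{D}_0^+$ and extended by the continuity already in hand. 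You instead put (d) first on $\mathcal{D}_0^+$ and use it as a scaffold: continuity of $\mathcal{E}_\mathrm{bulk}$ is inherited from Theorem~\ref{thm:surface}(a) on the two half-chains plus a separate analysis of the cross term $\mathcal{W}$. Your route is slightly more modular (it reuses Theorem~\ref{thm:surface} wholesale), while the paper's is more uniform (one extension formula, no separate treatment of $\mathcal{W}$). Parts (b) and (c) are handled identically in both: Hessian diagonal dominance from Assumption~\ref{assu:v}(iv) for strict convexity, and the shrink/remove modifications of Lemma~\ref{lem:Esurf_min} to force minimizers into $[z_{\min},z_{\max}]^\Z$ before invoking the Euler--Lagrange condition at $z\equiv a$. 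One small slip: in your partition of pairs for (d), the left block should have both indices $\leq 0$, not ``$\ell\leq 1$''.
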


\subsection{Small positive temperature} 
Next we analyze infinite volume Gibbs measures on $\R_+^\N$ and $\R_+^\Z$ in the limit $\beta \to \infty$. We focus on fixed positive $p\in (0, |v(z_{\max})|/z_\mathrm{\max})$ but comment on vanishing $p=p_\beta\to 0$ at the end of the section. Let $\Q_N^\ssup{\beta}$ be the probability measure on $\R_+^{N-1}$ defined by 
\bes
	\Q_N^\ssup{\beta} (A) = \frac{1}{Q_N(\beta)} \int_A \e^{-\beta \mathcal{E}_N(z_1,\ldots,z_{N-1})} \dd z_1\cdots \dd z_{N-1} 
\ees
where
\bes
	Q_N(\beta) = \int_{\R_+^{N-1}} \e^{-\beta \mathcal{E}_N(z_1,\ldots,z_{N-1})} \dd z_1\cdots \dd z_{N-1}. 
\ees
Standard arguments  (see Section~\ref{sec:gibbs}) show there is a  uniquely defined probability measure $\nu_\beta$ on the product space $\R_+^\N$ such that  for every $k\in \N$, every bounded continuous test function $f \in C_b(\R_+^k)$, 
\be \label{eq:nutherm}
	\lim_{N\to \infty} \int_{\R_+^{N-1}} f(z_1,\ldots,z_k) \dd \Q_N^\ssup{\beta}(z_1,\ldots,z_{N-1})  = \int_{\R_+^{\N}} f(z_1,\ldots,z_k) \dd \nu_\beta( (z_j)_{j\geq 1}).
\ee
Similarly, there is a uniquely defined probabilty measure $\mu_\beta$ on $\R_+^{\Z}$ such that 
for all local test functions $f$ as above, and all sequences $i_N$ with $i_N\to \infty$ and $N-i_N\to \infty$, 
\be \label{eq:mutherm}
	\lim_{N\to \infty} \int_{\R_+^{N-1}} f(z_{i_N+1},\ldots,z_{i_N+k}) \dd \Q_N^\ssup{\beta}(z_1,\ldots,z_{N-1})  = \int_{\R_+^{\Z}} f(z_1,\ldots,z_k) \dd \mu_\beta( (z_j)_{j\geq 1}).
\ee 
Moreover the measure $\mu_\beta$ is shift-invariant and mixing. The measure $\mu_\beta$ describes the bulk behavior of a semi-infinite chain, the measure $\nu_\beta$ is the equilibrium measure for a semi-infinite chain and encodes the probability distribution of boundary layers. 

Our first result is a large deviations principle for the equilibrium measure $\nu_\beta$ as $\beta \to \infty$. The rate function is a suitable extension of $\mathcal{E}_\mathrm{surf}$: define 
$\overline{\mathcal{E}}_\mathrm{surf}:\ \R_+^\N \to \R \cup \{\infty \}$ by 
\be\label{eq:Esurfextended}
	\overline{\mathcal{E}}_\mathrm{surf}\bigl( (z_j)_{j\in \N}\bigr) = \begin{cases}
			 \mathcal{E}_\mathrm{surf}\bigl( (z_j)_{j\in \N}\bigr), &\quad  (z_j)_{j\in \N} \in \mathcal{D},\\
			\infty, &\quad \text{else}. 
		\end{cases} 
\ee
In the same way $\mathcal{E}_\mathrm{bulk}$ extends to a map  $\overline{\mathcal{E}}_\mathrm{bulk}$ from $\R_+^\Z$ to $\R\cup\{\infty\}$. Both $\R_+^\N$ and $\R_+^\Z$ are equipped with the product topology. 
 
\begin{theorem} \label{thm:ldp}
	Fix $p\in (0,p^*)$ and $m\in \N\cup \{\infty\}$. Assume that $r_\mathrm{hc}>0$ and $\lim_{r\searrow r_\mathrm{hc}} v(r) = \infty$. Then 
	as $\beta\to \infty$, the equilibrium measures $(\nu_\beta)_{\beta>0}$ and $(\mu_\beta)_{\beta>0}$ satisfy large deviations principles with speed $\beta$ and respective rate functions $\overline{\mathcal{E}}_{\mathrm{surf}} - \min \mathcal{E}_\mathrm{surf}$ and $\overline{\mathcal{E}}_\mathrm{bulk}$. The rate functions are \emph{good}, i.e., lower semi-continuous with compact level sets.
\end{theorem}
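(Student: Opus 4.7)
The plan is to establish the LDP in the product topology via three ingredients: (i) goodness of the rate functions, read off from Theorems~\ref{thm:surface} and Proposition~\ref{prop:lim-bulk}; (ii) exponential tightness of $(\nu_\beta)_\beta$ and $(\mu_\beta)_\beta$; and (iii) finite-dimensional LDPs for the cylinder marginals $\pi_{k,*}\nu_\beta$ and $\pi_{k,*}\mu_\beta$. The Dawson--G\"artner projective limit theorem then assembles (iii) into the full LDP on $\R_+^\N$, respectively $\R_+^\Z$, and the identification of the projective rate function with $\overline{\mathcal{E}}_\mathrm{surf}-\min\mathcal{E}_\mathrm{surf}$ (respectively $\overline{\mathcal{E}}_\mathrm{bulk}$) reduces to a monotone convergence argument for constrained infima. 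The two cases run in parallel, so I focus on $\nu_\beta$.

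\textbf{Goodness and exponential tightness.} Lower semi-continuity of $\overline{\mathcal{E}}_\mathrm{surf}$ on $\R_+^\N$ follows by writing its restriction to $\mathcal{D}$ as a monotone limit of the continuous truncations $S_J(z) := \sum_{j=1}^J (h(z_j,\ldots,z_{j+m-1}) - e_0)$, the tails being uniformly controlled using Assumption~\ref{assu:v}(ii); on a level set $\{\overline{\mathcal{E}}_\mathrm{surf}\leq c\}$ each coordinate $z_j$ is confined to a bounded interval in $(r_\mathrm{hc},\infty)$ thanks to the pressure term $pz_j$ (upper bound) and the divergence of $v$ at $r_\mathrm{hc}$ (lower bound), so Tychonoff gives compactness. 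For exponential tightness I would derive Gibbs tail estimates
$$\nu_\beta(z_j \geq R) \leq C_1 \e^{-\beta(pR - K_1)}, \qquad \nu_\beta(z_j \leq r) \leq C_2 \e^{-\beta(v(r) - K_2)},$$
uniform in $j$ and $\beta$, by comparing the integrand with the ground-state density and integrating out $z_j$. A union bound on a Cantor-product set $K_M = \prod_j [r_j, R_j]$ with $r_j\searrow r_\mathrm{hc}$ and $R_j\to\infty$ tuned so that the resulting sum is dominated by $\e^{-\beta M}$ then yields $\nu_\beta(K_M^c) \leq \e^{-\beta M}$.

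\textbf{Finite-dimensional LDP.} For fixed $k$, the density of $\pi_{k,*}\Q_N^\ssup{\beta}$ with respect to Lebesgue measure is
$$\rho_{N,k}^\ssup{\beta}(z_1,\ldots,z_k) = \frac{1}{Q_N(\beta)} \int_{\R_+^{N-1-k}} \e^{-\beta \mathcal{E}_N(z_1,\ldots,z_{N-1})}\, \dd z_{k+1}\cdots \dd z_{N-1}.$$
Applying Laplace's method to both the numerator and to $Q_N(\beta)$ gives $-\beta^{-1}\log \rho_{N,k}^\ssup{\beta}(z) = I_{N,k}(z) + o(1)$ with $I_{N,k}(z) := \inf_{z_{k+1},\ldots,z_{N-1}} \mathcal{E}_N - E_N$, the subexponential corrections being uniformly controlled thanks to the strict convexity and diagonal dominance of $\mathrm{D}^2\mathcal{E}_N$ on $[z_\mathrm{min},z_\mathrm{max}]^{N-1}$ (Assumption~\ref{assu:v}(iii)--(iv)). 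From the arguments behind Theorems~\ref{thm:periodic} and~\ref{thm:surface} one deduces $I_{N,k}(z) \to I_k(z) := \inf\{\overline{\mathcal{E}}_\mathrm{surf}(w) : w_i = z_i,\, 1\leq i\leq k\} - \min\mathcal{E}_\mathrm{surf}$ as $N\to\infty$; combined with the thermodynamic-limit convergence~\eqref{eq:nutherm}, this yields the finite-dimensional LDP for $\pi_{k,*}\nu_\beta$ with rate $I_k$. For $\mu_\beta$, one instead anchors a window deep in the chain and uses Proposition~\ref{prop:lim-bulk}(d) to split the excess energy into two boundary-layer contributions plus a decaying interaction term; the projective limit is $\overline{\mathcal{E}}_\mathrm{bulk}$, whose minimum is $0$.

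\textbf{Main obstacle.} The central technical difficulty is the uniform-in-$N$ control of the Laplace asymptotics required to interchange the limits $N\to\infty$ (defining $\nu_\beta$, $\mu_\beta$) and $\beta\to\infty$ (governing the LDP). Equivalently, one must prove a form of $\Gamma$-convergence of the constrained excess energies $\mathcal{E}_N-E_N$ to $\overline{\mathcal{E}}_\mathrm{surf}-\min\mathcal{E}_\mathrm{surf}$ under prescription of finitely many spacings, with rates quantifiable enough that the Laplace prefactors do not enter the exponent. The crucial ingredients are the uniform strict convexity of $\mathcal{E}_N$ on $[z_\mathrm{min},z_\mathrm{max}]^{N-1}$ (which makes the constrained minimizers unique and well-separated, with Gaussian corrections whose determinants stay bounded uniformly in $N$) and the bulk periodicity of Theorem~\ref{thm:periodic} (which guarantees that constrained minimizers become flat deep inside the chain). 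Once the finite-dimensional LDPs are in place, exponential tightness transfers the upper and lower bounds to closed and open sets in the product topology in the standard way.
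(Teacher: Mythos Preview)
Your overall architecture (exponential tightness + finite-dimensional LDPs + Dawson--G\"artner) is a natural plan, but the paper takes a substantially different route that sidesteps precisely the obstacle you flag as central. Instead of working with finite-$N$ Gibbs measures $\Q_N^\ssup{\beta}$ and trying to interchange $N\to\infty$ and $\beta\to\infty$, the paper works directly with the infinite-volume measure $\nu_\beta$, exploiting that it is an eigenmeasure of the transfer operator: $\mathcal L_\beta^*\nu_\beta = \lambda_0(\beta)\nu_\beta$ (Lemma~\ref{lem:eigenmeas}). Exponential tightness gives subsequential LDPs (Lemma~\ref{lem:subseqldp}); then, applying Varadhan's lemma inside the eigenmeasure equation and using the asymptotics $\beta^{-1}\log\lambda_0(\beta)\to -e_0$ (Lemma~\ref{lemma:lambda0-limit}), any subsequential rate function $I$ is shown to satisfy the recursion $I(z_1,z_2,\ldots) = h(z_1,z_2,\ldots) - e_0 + I(z_2,z_3,\ldots)$ (Lemma~\ref{lem:fpe}). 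Proposition~\ref{prop:bellman-ersatz} then identifies $I$ uniquely as $\overline{\mathcal E}_\mathrm{surf}-\min\mathcal E_\mathrm{surf}$. The bulk LDP follows from the surface one via the representation $\dd\mu_\beta = c_\beta^{-1}\e^{-\beta\mathcal W_0}\,\dd(\nu_\beta^-\otimes\nu_\beta^+)$ and Varadhan again.

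In your proposal the interchange of limits is acknowledged but not resolved. Two concrete problems: (a) the Hessian determinants from Laplace's method over $N-k-1$ variables do \emph{not} stay bounded uniformly in $N$; they grow or decay exponentially, and what you really need is that the ratio with the corresponding determinant in $Q_N(\beta)$ converges, together with uniform control of the Laplace error terms---this is work of the same order as the theorem itself. (b) When the prescribed $z_1,\ldots,z_k$ lie outside $[z_\mathrm{min},z_\mathrm{max}]$, the constrained minimizer need not sit in the region of uniform convexity, so your localization argument breaks down. The weak convergence~\eqref{eq:nutherm} alone does not let you transport the finite-$N$ Laplace asymptotics to $\nu_\beta$. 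The paper's fixed-point approach is not merely a stylistic choice: it replaces the delicate double limit by a uniqueness statement for a Bellman-type equation, which is exactly what makes the argument go through cleanly for all $m\leq\infty$.
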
 
\noindent The theorem is proven in Section~\ref{sec:ldp}. The large deviations principle for $\nu_\beta$ says that for every closed set $A\subset \R_+^\N$ and every open set $O\subset \R_+^\N$ (product topology)
\be\label{eq:ldp}
\begin{aligned}
	\limsup_{\beta\to \infty} \frac{1}{\beta}\log \nu_\beta(A) & \leq - \inf_{(z_j)\in A }\Bigl( \overline{\mathcal{E}}_\mathrm{surf} \bigl((z_j)\bigr)- \min_{\R_+^\N} \mathcal{E}_\mathrm{surf} \Bigr) \\
	\liminf_{\beta\to \infty} \frac{1}{\beta}\log \nu_\beta(O) & \geq - \inf_{(z_j)\in O }\Bigl( \overline{\mathcal{E}}_\mathrm{surf} \bigl((z_j)\bigr)- \min_{\R_+^\N} \mathcal{E}_\mathrm{surf} \Bigr).
\end{aligned} 
\ee
It is essential that we work in the product topology. Indeed we shall later see that $\nu_\beta$ is mixing, therefore for every $\eps>0$, the measure $\nu_\beta$ gives full mass $1$  to sequences $(z_j)_{j\in \N}$ that have infinitely many spacings $|z_j-a|>\eps$. Thus for every ball $O= \{ (z_j) \in \R_+^\N \mid \sum_{j=1}^\infty (z_j-a)^2<\delta\}$,  we have $\nu_\beta(O) = 0$ hence $\beta^{-1}\log\nu_\beta(O) = -\infty$, to be contrasted with the lower bound in Eq.~\eqref{eq:ldp}.

Another consequence concerns the evaluation of the Gibbs energies of localized defects: suppose that because of some impurity, the energy is not $\mathcal{E}_N$ but $\mathcal{E}_N + \mathcal{V}$, where $\mathcal{V}$ is, say, continuous in the product topology, localized in the bulk,  and bounded from below. Then by Varadhan's lemma~\cite{dembo-zeitouni}, as $\beta\to \infty$, the effective Gibbs energy converges to the zero temperature energy of the defect,
\bes
	-\frac{1}{\beta}\log \mu_\beta\bigl( \e^{-\beta \mathcal{V}}\bigr) \to \inf_\mathcal{D} (\mathcal{E}_\mathrm{bulk}  + \mathcal{V})\quad (\beta\to \infty).
\ees
Surface energies occur as  a specific type of defect, when $\mathcal{V}$ cancels all interactions between two half-infinite chains (see Proposition~\ref{prop:thermolim}(a)), which leads to the following theorem. 
Define
\be\label{eq:gdef}
	g(\beta) = -  \lim_{N\to \infty} \frac{1}{\beta N }\log Q_N(\beta),\quad 
	g_\mathrm{surf}(\beta) =  \lim_{N\to \infty} \Bigl( -\frac{1}{\beta}\log Q_N(\beta) - N g(\beta) \Bigr), 
\ee
the Gibbs free energy $g(\beta)$ per particle in the bulk and the surface correction $g_\mathrm{surf}(\beta)$. 

\begin{theorem} \label{thm:geld} 
	Fix $p\in (0,p^*)$ and $m\in \N\cup \{\infty\}$. 
	The limits~\eqref{eq:gdef} exist. If in addition $r_\mathrm{hc}>0$ and $\lim_{r\searrow r_\mathrm{hc}} v(r) = \infty$, then the bulk and surface Gibbs energy  approach their zero-temperature counterparts when $\beta \to \infty$: 
	\bes
		\lim_{\beta\to \infty} g(\beta)= e_0,\quad \lim_{\beta\to \infty} g_\mathrm{surf}(\beta) =e_\mathrm{surf}. 
	\ees
\end{theorem}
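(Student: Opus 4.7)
The plan is to prove the three assertions of Theorem~\ref{thm:geld} --- existence of the limits, $g(\beta)\to e_0$, and $g_\mathrm{surf}(\beta)\to e_\mathrm{surf}$ --- in sequence. Set $F_N(\beta):= -\beta^{-1}\log Q_N(\beta)$. The existence of $g(\beta)$ follows from a standard superadditivity argument: splitting a chain of $N+M$ particles at the gap joining the two pieces and bounding the across-cut interaction by the summability of $v$ from Assumption~\ref{assu:v}(ii), one derives $F_{N+M}(\beta)\geq F_N(\beta)+F_M(\beta)-C$, so Fekete's lemma yields the limit. Existence of $g_\mathrm{surf}(\beta)$ follows from a refinement of the same decomposition --- mirroring Proposition~\ref{prop:lim-bulk}(d) at positive temperature --- together with the mixing of $\mu_\beta$, which ensures convergence of the residual across-cut interaction term as $N\to\infty$.

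For $g(\beta)\to e_0$, I would give matching two-sided bounds on $F_N(\beta)$ uniform in $N$. For the upper bound, restrict the integration in $Q_N(\beta)$ to the cube $\prod_j(z_j^\ssup{N}-\delta, z_j^\ssup{N}+\delta)$ around the ground state from Theorem~\ref{thm:periodic}(a); since its entries lie in $[z_\min,z_\max]$ and Assumption~\ref{assu:v}(iii),(iv) uniformly bound the Hessian of $\mathcal{E}_N$ there, Taylor's theorem yields $\mathcal{E}_N\leq E_N + CN\delta^2$ on the cube, hence $F_N(\beta)\leq E_N + CN\delta^2 + (N-1)\beta^{-1}\log(1/2\delta)$; choosing $\delta\sim \beta^{-1/2}$ and sending $N\to\infty$ gives $g(\beta)\leq e_0 + C(\log\beta)/\beta$. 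For the lower bound, $\mathcal{E}_N\geq E_N$ yields $Q_N(\beta)\leq e^{-\beta E_N} Z_N(\beta)$ with $Z_N(\beta):= \int e^{-\beta(\mathcal{E}_N-E_N)}\,\dd z$. Splitting $Z_N$ into the convex basin $[z_\min-\eta,z_\max+\eta]^{N-1}$, where the uniform positive-definiteness of the Hessian with lowest eigenvalue $\lambda>0$ produces a Gaussian contribution of size $(2\pi/\beta\lambda)^{(N-1)/2}$, and its complement, where the hard-core singularity $\lim_{r\searrow r_\mathrm{hc}}v(r)=\infty$ and the positive pressure $p>0$ create a strictly positive per-particle energy penalty giving $Z_N^\mathrm{out}\leq C^N e^{-c\beta N}$, together imply $g(\beta)\geq e_0 - C(\log\beta)/\beta$, so $g(\beta)\to e_0$.

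For $g_\mathrm{surf}(\beta)\to e_\mathrm{surf}$, I would combine the LDP of Theorem~\ref{thm:ldp} with a partition-function identity that is the positive-temperature analog of Proposition~\ref{prop:lim-bulk}(d). Cutting the chain into two halves, the identity has the schematic form
\bes
F_N(\beta) = 2\, F^{\mathrm{half}}_{N/2}(\beta) - \beta^{-1}\log \mathbb{E}_{\nu_\beta\otimes\nu_\beta}\bigl[e^{-\beta\mathcal{W}}\bigr] + \mathrm{const}(p,a,v) + o_N(1),
\ees
where $\mathcal{W}$ is the across-cut interaction and the explicit constant matches $-pa - \sum_k k\,v(ka)$ from Theorem~\ref{thm:surface}(d). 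Inserting the asymptotics $F_N = Ng(\beta)+g_\mathrm{surf}(\beta)+o_N(1)$ (and analogously for the half-chains) extracts $g_\mathrm{surf}(\beta)$ in terms of the logarithmic expectation. Since $\mathcal{W}$ is bounded and continuous in the product topology on $[z_\min,z_\max]^\Z$ (thanks to $s>2$ in Assumption~\ref{assu:v}(ii)), Varadhan's lemma applied to the product LDP from Theorem~\ref{thm:ldp} yields
\bes
\lim_{\beta\to\infty}-\beta^{-1}\log\mathbb{E}_{\nu_\beta\otimes\nu_\beta}\bigl[e^{-\beta\mathcal{W}}\bigr] = \min_{\mathcal{D}^+}\overline{\mathcal{E}}_\mathrm{bulk} - 2\min_{\mathcal{D}}\overline{\mathcal{E}}_\mathrm{surf} = -2\min_{\mathcal{D}}\overline{\mathcal{E}}_\mathrm{surf},
\ees
using Proposition~\ref{prop:lim-bulk}(c),(d). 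Combined with Theorem~\ref{thm:surface}(d), this gives $\lim g_\mathrm{surf}(\beta)=e_\mathrm{surf}$. The main obstacles will be, first, a careful derivation of the partition-function identity with its explicit constants and the control showing that the Gaussian residuals really cancel --- this likely requires quantitative input from the Gaussian approximations of Theorems~\ref{thm:gaussian2}--\ref{thm:gaussian3} for finite $m$, or the transfer-operator analysis of Section~\ref{sec:corres} --- and, second, the justification of Varadhan's lemma in the infinite-range case $m=\infty$, where one must use the exponential tightness of $\nu_\beta$ implied by the goodness of the rate function to handle the unbounded tails of the configuration space.
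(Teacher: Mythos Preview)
Your strategy is correct and closely parallels the paper's, but the paper's route for $g_\mathrm{surf}$ is cleaner than your ``partition-function identity'' and sidesteps both obstacles you flag.

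For $g(\beta)\to e_0$, your two-sided bounds are essentially those of Lemma~\ref{lemma:lambda0-limit}. The paper's upper bound on $Q_N$ is simpler than your basin/complement split: from the proof of Lemma~\ref{lem:est-on-min} one gets $\mathcal{E}_N(z)\ge E_N+p\sum_j(z_j-z_{\max})_+$ directly, giving $Q_N(\beta)\le e^{-\beta E_N}(z_{\max}+(\beta p)^{-1})^{N-1}$ without any energy-penalty argument on the complement.

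For $g_\mathrm{surf}$, the paper does not derive a finite-$N$ identity at all. Instead, Proposition~\ref{prop:thermolim}(a) (proved via the transfer operator and the mixing from Theorem~\ref{thm:ergodicity}) gives the \emph{exact} infinite-volume formula
\[
g_\mathrm{surf}(\beta)= -g(\beta) - \tfrac{1}{\beta}\log \mu_\beta\bigl(e^{\beta\mathcal{W}_0}\bigr).
\]
One then applies Varadhan's lemma with the LDP for $\mu_\beta$ (Theorem~\ref{thm:ldp}) to the bounded continuous functional $\mathcal{W}_0$, obtaining $\lim_\beta g_\mathrm{surf}(\beta)=-e_0+\inf(\overline{\mathcal{E}}_\mathrm{bulk}-\mathcal{W}_0)$, and a direct computation shows $\mathcal{E}_\mathrm{bulk}-\mathcal{W}_0=\mathcal{E}_\mathrm{surf}(z_1,\ldots)+\mathcal{E}_\mathrm{surf}(z_0,z_{-1},\ldots)+e_\mathrm{clamp}+e_0$, so the infimum is $2\min\mathcal{E}_\mathrm{surf}+e_\mathrm{clamp}+e_0=e_\mathrm{surf}+e_0$. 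Your version via $\nu_\beta\otimes\nu_\beta$ and $e^{-\beta\mathcal{W}}$ is equivalent through the density relation~\eqref{eq:mudef}, but the paper's formula makes the constants transparent and avoids having to extract $g_\mathrm{surf}(\beta)$ from a finite-$N$ expansion.

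Two of your worries are unfounded. First, no Gaussian input from Theorems~\ref{thm:gaussian2}--\ref{thm:gaussian3} is needed anywhere; there are no ``Gaussian residuals'' to cancel once you have the exact formula above. Second, Varadhan for $m=\infty$ requires no extra tightness argument: on $(r_\mathrm{hc},\infty)^\Z$ the interaction $\mathcal{W}_0$ is bounded (distances across the cut are $\ge 2r_\mathrm{hc}$ and $\sum_{k\ge 2}(k-1)|v(kr_\mathrm{hc})|<\infty$) and continuous in the product topology, so the standard Varadhan lemma for bounded continuous functionals applies directly.
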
 
\noindent This proves that the thermodynamic limit and the zero temperature limit can be exchanged, which is non-trivial (and in fact, fails when the pressure goes to zero too fast, see below). 

One last consequence of Theorem~\ref{thm:ldp} concerns the distribution of spacings and the pressure-density (or stress-strain) relation. The Gibbs free energy and our partition functions correspond to an ensemble where the overall length of the system is not fixed, but instead may fluctuate with a law that depends on the pressure---high pressures $p$ favor compressed states. In the thermodynamic limit $N\to \infty$, though, the average spacing between particles becomes a well-defined quantity, given by 
\be\label{eq:avspa}
  \ell (\beta) = \int_{\R_+^\Z} z_0 \dd \mu_\beta ((z_j)_{j\in \Z}). 
\ee
By the contraction principle~\cite[Theorem 4.2.1]{dembo-zeitouni}, the distribution of $z_0$ under $\mu_\beta$ satisfies a large deviations principle with good rate function $w(z) = \inf \{\overline{\mathcal{E}}_\mathrm{bulk}((z_j)_{j\in \Z})\mid (z_j)_{j\in \Z}\in \R_+^\Z,\, z_0 = z\}$. The unique minimizer of $w(z)$ is the ground state spacing $a$. Lemma~\ref{lem:ti} implies that the distribution of spacings has exponential tails 
$$\mu_\beta\bigl( \{(z_j)_{j\in \Z}\mid z_0\geq r\}\bigr) \leq C\exp(-\beta p r)$$ 
for some $\beta$-independent constant $C$. 

\begin{cor} \label{cor:spacings}
		Under the assumptions of Theorem~\ref{thm:geld}, we have
		$$\lim_{\beta\to \infty} \ell(\beta) = a = \mathrm{argmin} \bigl(p r +\sum_{k=1}^{m} v(kr)\bigr).$$ 
\end{cor}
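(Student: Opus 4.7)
The plan is to combine the large deviations principle for $\mu_\beta$ with the uniform exponential tail bound to upgrade weak convergence of the law of $z_0$ (to $\delta_a$) into convergence of first moments.

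First, I would apply the contraction principle. The coordinate projection $\pi_0: \R_+^\Z \to \R_+$, $(z_j)_{j\in \Z} \mapsto z_0$, is continuous in the product topology, so by Theorem~\ref{thm:ldp} the pushforward measures $\mu_\beta \circ \pi_0^{-1}$ satisfy an LDP on $\R_+$ with speed $\beta$ and good rate function $w(z) = \inf\{\overline{\mathcal{E}}_{\mathrm{bulk}}\bigl((z_j)_{j\in \Z}\bigr) : z_0 = z\}$, as already noted in the text. By Proposition~\ref{prop:lim-bulk}(c) the unique minimizer of $\overline{\mathcal{E}}_{\mathrm{bulk}}$ is the constant sequence $(\ldots,a,a,\ldots)$, with minimum value $0$; it follows that $w(a)=0$ and $w(z)>0$ for $z\neq a$. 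Applying the LDP upper bound to any closed set $\R_+\setminus (a-\eps,a+\eps)$, whose infimum of $w$ is strictly positive (by the good level-set property), we conclude that for every $\eps>0$,
\begin{equation*}
\mu_\beta\bigl(\{z_0 \notin (a-\eps,a+\eps)\}\bigr) \to 0 \quad (\beta \to \infty),
\end{equation*}
i.e. the marginals of $z_0$ converge weakly to $\delta_a$.

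Second, to upgrade weak convergence to convergence of means, I would invoke the exponential tail bound $\mu_\beta(\{z_0\geq r\}) \leq C \exp(-\beta p r)$ recalled just before the corollary (consequence of Lemma~\ref{lem:ti}, with $C$ independent of $\beta$). Since $z_0 \geq r_{\mathrm{hc}} \geq 0$ almost surely under $\mu_\beta$, I write
\begin{equation*}
\ell(\beta) = \int_0^\infty \mu_\beta(\{z_0 > t\})\, \dd t \leq (a+\eps) + \int_{a+\eps}^\infty C \e^{-\beta p t}\, \dd t = (a+\eps) + \frac{C}{\beta p}\e^{-\beta p(a+\eps)},
\end{equation*}
so $\limsup_{\beta \to \infty} \ell(\beta) \leq a+\eps$ for every $\eps>0$. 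For the matching lower bound, using $z_0 \geq 0$ on the complementary event,
\begin{equation*}
\ell(\beta) \geq (a-\eps)\, \mu_\beta\bigl(\{z_0 \in [a-\eps,a+\eps]\}\bigr),
\end{equation*}
and the right-hand side tends to $a-\eps$ by the weak convergence established above. Sending $\eps\to 0$ in both inequalities gives $\ell(\beta) \to a$, and the identification $a = \mathrm{argmin}(pr + \sum_{k=1}^m v(kr))$ is the content of Theorem~\ref{thm:periodic}(b).

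There is no serious obstacle: the LDP is already provided by Theorem~\ref{thm:ldp}, the identification of the minimizer of $w$ is immediate from Proposition~\ref{prop:lim-bulk}(c), and the uniform tail bound is stated in the paragraph preceding the corollary. The only conceptual subtlety is that weak convergence of the marginal does not by itself imply convergence of expectations of the unbounded function $z \mapsto z$, which is precisely why the $\beta$-uniform exponential tail bound is essential for controlling the contribution of large spacings and yielding the uniform integrability needed to pass to the limit.
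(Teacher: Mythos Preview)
Your proof is correct and follows exactly the approach the paper sketches in the paragraph preceding the corollary: contraction principle to get the LDP for the marginal of $z_0$ with rate function $w$, identification of $a$ as the unique zero of $w$ via Proposition~\ref{prop:lim-bulk}(c), and the exponential tail bound from Lemma~\ref{lem:ti} to pass from weak convergence to convergence of the mean. You have simply filled in the details the paper leaves implicit.
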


\noindent In particular, for large $\beta$, we have $\ell(\beta)< a_0$ where $a_0$ is the minimizer of the zero-stress Cauchy-Born energy density $\sum_k v(kr)$. Conversely, spacings $\ell(\beta)>a_0$ (elongated chains) imply vanishing pressure $p=p_\beta\to0$. This is clearly apparent for  nearest neighbor interactions ($m=1$, Takahashi nearest neighbor gas~\cite{takahashi42,lieb-mattis-chap1}), for which 
\be \label{eq:taka}
	 g(\beta) = - \frac{1}{\beta}\log \Bigl( \int_0^\infty \e^{-\beta[v(r)+ p_\beta r]} \dd r\Bigr),\quad \ell(\beta) = \frac{\int_0^\infty r \exp(-\beta[v(r)+ p_\beta r]) \dd r}{\int_0^\infty \exp(-\beta[v(r)+ p_\beta r]) \dd r}.\\
\ee

\emph{Comments on vanishing pressure.} We add a superscript to indicate that zero-temperature quantities are evaluated at $p=0$.  When $p=p_\beta\to 0$ slower than any exponential, it is still true that $g(\beta) \to e_0^0$. When $\beta p_\beta = \exp(-\beta \nu)$ with $\nu>0$, one can show with~\cite{jansen-koenig-metzger15,jansen12} that 
\be \label{eq:soltogas}
	\lim_{\beta\to \infty} g(\beta) = \min (e_0^0, - \nu). 
\ee
At pressures vanishing faster than $\exp(- \beta |e_0^0|)$, the most likely configurations have very large spacings (dilute gas phase, $\ell(\beta)\to \infty$) and the previous results no longer apply. For $\liminf\frac{1}{\beta}\log (\beta p_\beta)>e_0^0$, we expect that large deviations principles with  rate functions $\overline{\mathcal{E}}_\mathrm{bulk}^0$ and $\overline{\mathcal{E}}_\mathrm{surf}^0 - \min \overline{\mathcal{E}}_\mathrm{surf}^0$ still hold (in fact our proofs still  show \emph{weak} large deviations principles). However rate functions have non-compact level sets and exponential tightness is lost. Moreover large spacings may contribute to the average~\eqref{eq:avspa} and Corollary~\ref{cor:spacings} need no longer be true, thus allowing for spacings $\ell(\beta)\to \ell>a_0$. \\

\subsection{Gaussian approximation} 

Here we complement the large deviations result by a Gaussian approximation. This section deals with finite $m$ and the bulk measure $\mu_\beta$ only. 
Remember $d=m-1$. 
We will see that the Hessian of $\mathcal E_{\mathrm{bulk}}$ at $(\ldots,a,a,\ldots)$ is associated with a positive-definite, bounded operator $\mathcal H$ in $\ell^2 (\Z)$. It is represented by a doubly-infinite matrix $(\mathcal H_{ij})_{i,j\in \Z}$ that is diagonally dominant. Write $(\mathcal H^{-1})_{ij}$ for the matrix elements of the inverse operator and let $\mu ^{\mathrm{Gauss}}$ be the uniquely defined measure on $\R^\Z$, equipped with the product topology and its associated Borel $\sigma$-algebra, such that 
$$
		\int_{\R^\Z} s_i s_j \dd \mu^\mathrm{Gauss}\bigl( (s_k)_{k\in \Z}\bigr) = (\mathcal H^{-1})_{ij}
$$
for all $i,j\in \Z$, and every finite-dimensional marginal of $\mu^\mathrm{Gauss}$ is a multi-dimensional Gaussian distribution. Equivalently, $\mu^\mathrm{Gauss}$ is the distribution of a Gaussian process $(N_j)_{j\in \Z}$ with mean zero and covariance $\E[N_i N_j] = (\mathcal H^{-1})_{ij}$. More concrete expressions for the probability density functions of $nd$-dimensional marginals of $\mu^\mathrm{Gauss}$ are provided in Proposition~\ref{prop:gaussmarginals} below. 

In the following we identify the measure $\mu_\beta$ on $\R_+^\Z$ with the measure $\1_{\R_+^\Z} \mu_\beta$ on $\R^\Z$. We exclude the trivial case $m=1$. 

\begin{theorem} \label{thm:gaussian2}
	Assume $2 \leq m<\infty$, $p\in (0,p^*)$, and $r_\mathrm{hc}>0$.
	Then for every $n\in \N$, the $n$-dimensional marginals of $\mu_\beta$ and $\mu^\mathrm{Gauss}$ have probability density functions $\rho_n^{(\beta)}$ and $\rho_n^\mathrm{Gauss}$, and 
	$$
		\lim_{\beta \to \infty}\int_{\R^n} \Bigl| \beta^{-n/2} \rho_n^{(\beta)} \bigl(a+ \beta^{-1/2}s_1,\ldots, a+ \beta^{-1/2}s_n \bigr) - \rho_n^\mathrm{Gauss}(s_1,\ldots,s_n )\Bigr| \dd s_1\ldots \dd s_n = 0. 
	$$
\end{theorem}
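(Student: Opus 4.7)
The plan is to combine three ingredients: (i) the Markov structure of $\mu_\beta$ for finite $m$, which yields an explicit formula for finite-dimensional marginal densities via the transfer operator; (ii) a semi-classical (Laplace) analysis of this transfer operator as $\beta \to \infty$; and (iii) Scheff\'e's lemma, which upgrades pointwise convergence of probability densities to $L^1$-convergence.

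First, since $m$ is finite, the spacings under $\mu_\beta$ form a stationary Markov chain of order $d = m-1$: the state $Y_j = (z_j, \ldots, z_{j+d-1}) \in \R_+^d$ evolves via a kernel whose density is proportional to $\exp(-\beta h(z_j, \ldots, z_{j+m-1}))$, with $h$ as in~\eqref{eq:h-def}. The associated transfer operator $T_\beta$ admits, by a Perron--Frobenius type argument, a unique positive leading eigenvalue $\lambda_\beta = \e^{-\beta g(\beta)}$ and strictly positive left and right eigenfunctions $\psi_\beta^*, \psi_\beta$ on $\R_+^d$; existence and uniqueness follow from the tail control furnished by the pressure term $pz_1$ and the hard core $r_\mathrm{hc} > 0$, together with the exponential concentration of $\mu_\beta$ on a neighborhood of $(a, \ldots, a)$ provided by Theorem~\ref{thm:ldp}. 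For $n \ge d$, the $n$-dimensional marginal density then admits an explicit factorization of the schematic form
\bes
    \rho_n^{(\beta)}(z_1,\ldots,z_n) \propto \frac{1}{\lambda_\beta^n}\, \psi_\beta^*(z_1,\ldots,z_d)\, \psi_\beta(z_{n-d+1},\ldots,z_n)\, \e^{-\beta \sum_{j=1}^{n-d} h(z_j,\ldots,z_{j+m-1})},
\ees
where the eigenfunction factors account for integration over the semi-infinite tails $\{z_j : j \le 0\}$ and $\{z_j : j > n\}$.

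Second, I would rescale $z_j = a + \beta^{-1/2} s_j$ and carry out a Laplace expansion. Taylor expansion of $h$ around $(a,\ldots,a)$ turns the middle exponent into a Gaussian quadratic form in $(s_1,\ldots,s_n)$ with remainder of order $\beta^{-1/2}$; the Hessian of $h$ at $(a,\ldots,a)$ is the local building block of the bulk Hessian operator $\mathcal{H}$. After the same rescaling and proper normalization, $\psi_\beta$ and $\psi_\beta^*$ concentrate at scale $\beta^{-1/2}$ around $(a,\ldots,a)$ and converge pointwise to the Gaussian ground states of the limiting harmonic transfer operator; these ground states are exactly what is needed to convert the restricted-Hessian quadratic form into the inverse of the $n\times n$ principal submatrix of $\mathcal{H}^{-1}$, via a Schur-complement identity. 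The rescaled Gibbs density $\beta^{-n/2}\rho_n^{(\beta)}(a+\beta^{-1/2}s_1,\ldots,a+\beta^{-1/2}s_n)$ therefore converges pointwise to $\rho_n^\mathrm{Gauss}$. Since both are probability densities on $\R^n$, Scheff\'e's lemma upgrades pointwise to $L^1$ convergence.

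The main obstacle is the semi-classical analysis of $\psi_\beta$ on the non-compact space $\R_+^d$. In the bulk, uniform convergence after rescaling follows from strict convexity of $h$ on $[z_{\min},z_{\max}]^m$ (Assumption~\ref{assu:v}(iii)--(iv)) and a standard WKB-type computation, but one must simultaneously produce tail bounds on $\psi_\beta$ valid uniformly in $\beta$: these rely on coercivity of $h$, namely the pressure term $pz_1$ kills large spacings while $r_\mathrm{hc}>0$ together with $\lim_{r\searrow r_\mathrm{hc}} v(r) = \infty$ kills small ones, so that after dividing by the partition function the decay becomes super-Gaussian on the rescaled scale. A secondary subtlety is the algebraic identification of the limit: the naive restriction of the Hessian of $h$ to the $n\times n$ block gives the \emph{wrong} inverse covariance, and it is precisely the product $\psi_\beta^* \psi_\beta$ that injects the Schur-complement correction identifying the limit with $\rho_n^\mathrm{Gauss}$.
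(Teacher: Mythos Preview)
Your overall strategy---transfer operator for the $d$-step Markov chain, Gaussian approximation at the minimum, identification of the limiting covariance via a Schur complement---matches the paper's framework. But the final step differs in an important way, and your version carries a real difficulty that the paper sidesteps.

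The paper does \emph{not} go through pointwise convergence plus Scheff\'e. Instead it shows that a suitably chosen transfer operator $K_\beta$, after the unitary rescaling $U_\beta f(x')=\beta^{-d/4}f(\vect a+\beta^{-1/2}x')$, converges to the Gaussian operator $G$ in Hilbert--Schmidt norm (Proposition~\ref{prop:perturbation-operatornorms}). Standard perturbation theory for compact operators then yields $L^2$-convergence of the normalized principal eigenfunctions, $\tilde\phi_\beta\to\phi^{\mathrm{Gauss}}$, and of the eigenvalues. The $L^1$ bound on the density difference follows directly from the product formula for the marginal (Lemma~\ref{lem:standardtransfer}(b)) via a telescoping identity and Cauchy--Schwarz: schematically,
\[
\|\tilde\rho_{nd}^{(\beta)}-\rho_{nd}^{\mathrm{Gauss}}\|_{L^1}
\le C\,\|\tilde\phi_\beta-\phi^{\mathrm{Gauss}}\|_{L^2}+(n-1)\,\|\tilde K_\beta-\tilde G\|.
\]
No pointwise information about $\phi_\beta$ is ever needed.

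Your Scheff\'e route, by contrast, requires pointwise (a.e.) convergence of the rescaled eigenfunctions. That is strictly stronger than $L^2$-convergence and does not follow from operator perturbation alone; establishing it rigorously would require genuine semi-classical machinery (Agmon-type decay estimates plus a WKB construction with error control), which is substantially more work than the Hilbert--Schmidt argument. You correctly flag this as the main obstacle, but the proposal does not resolve it, and the paper's route shows it is unnecessary.

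There is a second point you gloss over. For $m\ge 3$ the naive symmetrized kernel $\exp(-\beta[\tfrac12 V(x)+W(x,y)+\tfrac12 V(y)])$ need not attain its minimum at $(\vect a,\vect a)$, so the Laplace expansion you describe would be centered at the wrong point and the Gaussian comparison would fail. The paper handles this by passing to a conjugated kernel $\widehat H(x,y)=H(x,y)-\tfrac12 w(x)-\tfrac12 w(y)$ built from constrained infima of $\mathcal E_{\mathrm{bulk}}$ (Section~\ref{sec:choices}, Lemma~\ref{lem:hhat}), which is guaranteed to be nonnegative and to vanish exactly at $(\vect a,\vect a)$. This is not cosmetic: without it both the perturbation estimate and your Taylor expansion break down.
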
 

\noindent It follows that the distribution of the spacings, suitably rescaled, converges locally to the Gaussian measure $\mu^\mathrm{Gauss}$: for every bounded function $f:\R^Z\to \R$ that depends on finitely many spacings $z_j$ only (bounded cylinder functions), we have 
$$
	\lim_{\beta \to \infty}\int_{\R^\Z} f\bigl( \sqrt{\beta}(z_j-a)_{j\in \Z}\bigr) \dd \mu_\beta\bigl((z_j)_{j\in \Z}\bigr) = \int_{\R^\Z} f \dd \mu^\mathrm{Gauss}.
$$
For example, in the limit $\beta\to \infty$, the distribution of a single spacing $z_j$ is approximately normal, with mean $a$ and variance $\beta^{-1} (\mathcal H^{-1})_{ii}$. We expect that Theorem~\ref{thm:gaussian2} stays true for $m=\infty$ but a proof or disproof is beyond the scope of this article. 

The next theorem says that the Gibbs free energy is close to the Gibbs free energy of the approximate Gaussian model.

\begin{theorem} \label{thm:gaussian3}
	Assume $2 \leq m<\infty$, $p\in (0,p^*)$, and $r_{\mathrm{hc}}>0$. The Gibbs free energy satisfies, as $\beta \to \infty$,  
	$$
		g(\beta) = e_0 - \frac1\beta\log \sqrt{\frac{2\pi}{\beta (\det C)^{1/d}}} +o(\beta^{-1})
	$$
	where $d=m-1$ and $C$ is a $d\times d$ positive-definite matrix. 
\end{theorem}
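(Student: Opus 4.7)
The plan is a Laplace (saddle-point) analysis of $Q_N(\beta)$ around the unique minimizer $(z_1^\ssup{N},\ldots,z_{N-1}^\ssup{N})$ of $\mathcal{E}_N$ from Theorem~\ref{thm:periodic}(a), combined with the thermodynamic limit $N\to\infty$. The Hessian $H_N := \mathrm{D}^2 \mathcal{E}_N(z_1^\ssup{N},\ldots,z_{N-1}^\ssup{N})$ is, by Assumption~\ref{assu:v} and the diagonal-dominance remark following it, a symmetric positive-definite banded matrix of bandwidth $d = m-1$. Formally one expects
\begin{equation*}
Q_N(\beta) \simeq e^{-\beta E_N}\,\Bigl(\tfrac{2\pi}{\beta}\Bigr)^{(N-1)/2}(\det H_N)^{-1/2},
\end{equation*}
and combining this with $E_N = N e_0 + O(1)$ (Theorem~\ref{thm:surface}(d)) and a Szeg{\H o}-type asymptotic $\tfrac{1}{N}\log\det H_N \to \tfrac{1}{d}\log\det C$ produces the stated identity after dividing by $-\beta N$ and sending $N\to\infty$.

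I would carry out the Laplace step as follows. Split $Q_N(\beta)$ into the integral over a neighborhood $B_\delta = \{(z_j): |z_j - z_j^\ssup{N}| \le \delta \text{ for all } j\}$ and its complement; strict convexity of $\mathcal{E}_N$ on $[z_{\min}, z_{\max}]^{N-1}$ together with the exponential tightness underlying Theorem~\ref{thm:ldp} shows the complement contributes a factor smaller by $e^{-\beta c_0}$ than the main term. On $B_\delta$ substitute $z_j = z_j^\ssup{N} + \beta^{-1/2} s_j$ and Taylor-expand,
\begin{equation*}
\beta\bigl[\mathcal{E}_N(z) - E_N\bigr] = \tfrac{1}{2}\langle s, H_N s\rangle + \beta^{-1/2} R_N(s), \qquad |R_N(s)| \le C \sum_j |s_j|^3,
\end{equation*}
where the cubic bound holds uniformly in $N$ by the $C^3$-smoothness of $h$ and $m<\infty$. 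Expanding $e^{-\beta^{-1/2} R_N(s)}$ against the Gaussian measure with covariance $H_N^{-1}$, the first-order term vanishes (odd moment) and the second-order term is controlled by $\mathbb{E}[R_N(s)^2]$. Exponential decay of $(H_N^{-1})_{ij}$ uniformly in $N$ (a Combes-Thomas estimate for banded uniformly positive matrices, consistent with the uniform spectral gap of Theorem~\ref{thm:corr-finitem}) gives $\mathbb{E}[R_N(s)^2] = O(N)$, hence
\begin{equation*}
Q_N(\beta) = e^{-\beta E_N}\,(2\pi/\beta)^{(N-1)/2}(\det H_N)^{-1/2}\bigl(1+O(N/\beta)\bigr);
\end{equation*}
after dividing by $\beta N$ and taking the log, this error contributes only $O(\beta^{-2}) = o(\beta^{-1})$.

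For the determinant, the uniform bulk convergence $z_j^\ssup{N}\to a$ (Theorem~\ref{thm:periodic}(b)) together with the exponential localization of boundary layers encoded in Theorem~\ref{thm:surface} imply that $H_N$ coincides, up to $O(1)$ boundary corrections, with a $d$-banded symmetric Toeplitz matrix whose symbol is determined by $\mathcal{H}$. Grouping into $d\times d$ blocks makes this matrix block-tridiagonal, and the block-Cholesky ($LDL^T$) factorization of the infinite Toeplitz limit solves a fixed-point Riccati equation $D = A - B^T D^{-1} B$ for a unique $d\times d$ positive-definite pivot $D = C$; the finite-$N$ pivots converge to $C$ exponentially fast, yielding $\log\det H_N = \tfrac{N}{d}\log\det C + O(1)$. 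Assembling the pieces,
\begin{equation*}
g(\beta) = e_0 + \frac{1}{2\beta}\log\frac{\beta(\det C)^{1/d}}{2\pi} + o(\beta^{-1}),
\end{equation*}
which is the claim. The main obstacle is the uniform-in-$N$ Combes-Thomas-type bound on $(H_N^{-1})_{ij}$: without uniform exponential decay, the cubic correction could produce per-particle $\log$-scale contributions exceeding $o(\beta^{-1})$; the resolution rests on the uniform strict positive-definiteness of $H_N$ guaranteed by Assumption~\ref{assu:v}(iii)(iv) together with its finite bandwidth.
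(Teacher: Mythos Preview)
Your approach is conceptually appealing but has a genuine order-of-limits gap. The Laplace expansion you write,
\[
Q_N(\beta) = e^{-\beta E_N}\,(2\pi/\beta)^{(N-1)/2}(\det H_N)^{-1/2}\bigl(1+O(N/\beta)\bigr),
\]
is a large-$\beta$ statement that is only meaningful when $N/\beta$ is small. But $g(\beta)$ is defined by first taking $N\to\infty$ at \emph{fixed} $\beta$, and to match the finite-$N$ free energy to $g(\beta)$ within $o(\beta^{-1})$ you need $1/N=o(\beta^{-1})$, i.e.\ $N\gg\beta$; in that regime your multiplicative error $(1+O(N/\beta))$ is larger than the main term and the expansion carries no information. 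Your claim that ``after dividing by $\beta N$ and taking the log, this error contributes only $O(\beta^{-2})$'' implicitly assumes $N/\beta$ bounded, which contradicts the required $N\gg\beta$. The two-term Taylor expansion of $e^{-\beta^{-1/2}R_N}$ and the bound $\mathbb E[R_N^2]=O(N)$ do not yield a uniform-in-$N$ additive estimate $\log Q_N(\beta)=[\text{main}]+O(N/\beta)$; getting that would require controlling all higher cumulants simultaneously (essentially a cluster expansion), which is the missing ingredient.

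The paper sidesteps this entirely via the transfer operator. Since $m<\infty$, one has an exact identity $g(\beta)=e_0-\tfrac{1}{\beta d}\log\Lambda_0(\beta)$ where $\Lambda_0(\beta)$ is the principal eigenvalue of a compact integral operator $K_\beta$ on $L^2(\R^d)$ (Lemma~\ref{lem:standardtransfer}(a)); no $N$-limit is involved. The $\beta\to\infty$ analysis then reduces to perturbation theory for a single operator: one constructs a Gaussian comparison operator $G_\beta$ with explicitly computable principal eigenvalue $\sqrt{(2\pi)^d/(\beta^d\det C)}$ and shows $\|\beta^{d/2}(K_\beta-G_\beta)\|\to 0$ (Proposition~\ref{prop:perturbation-operatornorms}), so $\Lambda_0(\beta)=(1+o(1))\Lambda_0^{\mathrm{Gauss}}(\beta)$ by standard spectral perturbation. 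The matrix $C$ arises not from a Szeg{\H o} limit of $\det H_N$ but from the quadratic Bellman equation $C=A-BC^{-1}B^T$ solved by the principal eigenfunction of the Gaussian operator (Lemmas~\ref{lem:gauprincipal} and~\ref{lem:qq}); this is the same Riccati equation you identify, but reached through eigenvalue analysis rather than block Cholesky of a Toeplitz determinant.
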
 
\noindent The matrix $C$ is introduced in Eq.~\eqref{eq:Cdef}, see also Lemma~\ref{lem:qq}, it is a function of the Hessian of the energy. \\

\begin{remark}[Gaussian approximation and semi-classical expansions] If $v$ is smooth and $p>0$ is fixed, the Gibbs energy should admit an asymptotic expansion of  form 
$$ g(\beta) = e_0 - \frac{1}{\beta}\log \sqrt{\frac{2\pi}{\beta c}} + \sum_{j=1}^n a_j \beta^{-j/2} + O(\beta^{-(n+1)/2}) \quad (\beta \to \infty)$$
to arbitrarily high order $n$, for some $c>0$ and coefficients $a_j\in \R$. The first correction comes from a Gaussian approximation of the partition function (\emph{harmonic crystal}), see Section~\ref{sec:gaussian}, with the constant $c$ capturing the asymptotic behavior of the determinant of the Hessian around the energy minimum. Higher order corrections correspond to anharmonic effects. A similar expansion holds for $g_\mathrm{surf}(\beta)$.  Rigorous results for finite $m$ are derived with semi-classical analysis~\cite{helffer-book,schach-moller2001,bach-schachmoller2003} which build on  the analogy with the $\hbar \to 0$ limit from quantum mechanics. For $m=2$ and potentials with superlinear growth at infinity, independent results are given
 in~\cite{shapeev-luskin17}.
\end{remark}

\subsection{Decay of correlations} \label{sec:corres}
Suppose that two defects change the energy functional from $\mathcal{E}_\mathrm{bulk}$ to $\mathcal{E}_\mathrm{bulk} + \mathcal{V}_0 + \mathcal{V}_k$, where we assume for simplicity that $\mathcal{V}_0$ and $\mathcal{V}_k$ depend on $z_0$ and $z_k$ alone. 
For large $k$, we may expect that the Gibbs energies are approximately additive, i.e., 
\be \label{eq:effective-int}
	\mathcal{I}^\ssup{\beta}_\mathrm{eff} (k)= -\frac{1}{\beta} \log \mu_\beta( \e^{-\beta (\mathcal{V}_0+\mathcal{V}_k)}) + \frac{1}{\beta}\log \mu_\beta( \e^{-\beta \mathcal{V}_0}) + \frac{1}{\beta} \log \mu_\beta( \e^{-\beta \mathcal{V}_k})
\ee
should be small when the defects are far apart. $\mathcal{I}_\mathrm{eff}^\ssup{\beta}(k)$ represents an effective interaction between the defects. 
In the study of systems with many defects it is important to understand how fast the effective interaction decreases at large distances. Some intuition is gained from the zero-temperature counterpart
\be \label{eq:effective-zero}
	\mathcal{I}^\ssup{\infty}_\mathrm{eff}(k) = \inf (\mathcal{E}_\mathrm{bulk} + \mathcal{V}_0 + \mathcal{V}_k) - 	\inf (\mathcal{E}_\mathrm{bulk} + \mathcal{V}_0) - 	\inf (\mathcal{E}_\mathrm{bulk} + \mathcal{V}_k), 
\ee
however in general the limits $\beta,k\to \infty$ cannot be interchanged and a full study of~\eqref{eq:effective-int} for large $k$ requires techniques beyond variational calculus. 

A closely related problem is about the localization of changes induced by  a  defect: at zero temperature, if $(z_j)_{j\in \Z}$ is a minimizer of $\mathcal{E}_\mathrm{bulk} + \mathcal{V}_0$, how fast does $z_k$ converge to the ground state spacing $a$ as $k\to \pm \infty$? On a similar note, how fast does $z_k\to a$ for a minimizer of the surface energy $\mathcal{E}_\mathrm{surf}$ (decay of boundary layers)? At positive temperature, the question is about the speed of convergence, for test functions $f:\R_+^k \to \R$, in 
$$\frac{\mu_\beta(\e^{-\beta \mathcal{V}_0} f_{i})}{\mu_\beta(\e^{-\beta \mathcal{V}_0})}
 \to \mu_\beta(f),\quad \nu_\beta(f_i)\to \mu_\beta(f)$$
as $i\to \infty$. Here $f_i((z_j)_{j\in \Z}):= f(z_i,\ldots,z_{i+k-1})$, so that $f_{n+i} = f_i\circ \tau^n$ when $\tau$ denotes the left shift on $\R_+^{\Z}$. These questions naturally lead to the investigation of the decay of correlations. We start with a general result which holds for all $\beta,p>0$. 

\begin{theorem}  \label{thm:corrgen}
	Assume $m\in \N\cup\{\infty\}$ and $p>0$. There exist $c,C>0$ such that for all $\beta,p>0$, $k\in \N$, and bounded $f,g:\R_+^k \to \R$,
	\begin{equation*}
		\bigl|\mu_\beta (f_0 g_n) - \mu_\beta(f_0) \mu_\beta(g_n)\bigr| \leq   \min_{\heap{q\in \N:}{1 \leq q \leq n/k}} \Bigl( (1- \e^{-c\beta})^q + \e^{c\beta} (\e^{C \beta (q/n)^{s-2}} -1) \Bigr) ||f||_\infty ||g||_\infty.
	\end{equation*}	
	When $m$ is finite and $k=m-1$, we have the stronger bound 
	\bes	
		\bigl|\mu_\beta (f_0 g_n) - \mu_\beta(f_0) \mu_\beta(g_n)\bigr| \leq  (1-\e^{-c\beta})^{n/k} ||f||_\infty ||g||_\infty. 
	\ees
\end{theorem}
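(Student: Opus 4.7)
Both bounds come from a Doeblin-type minorization argument applied to a Markov-chain representation of $\mu_\beta$, with the general $m$ case requiring an additional truncation of the interaction range to decouple far-apart regions.

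For the stronger bound in the case of finite $m$ with $k = m-1$, as developed in Section~\ref{sec:gibbs} the blocks $Y_j := (z_j,\ldots,z_{j+m-2})\in\R_+^{m-1}$ form a stationary Markov chain under $\mu_\beta$ with transition kernel $P$ constructed from the transfer operator and its Perron--Frobenius eigenfunction. The key step is a uniform Doeblin minorization of the iterate $P^k$,
\[
P^k(y,\cdot) \;\geq\; e^{-c\beta}\,\lambda(\cdot),
\]
where $\lambda$ is a probability measure concentrated on $(m-1)$-tuples whose spacings lie in a small neighborhood of the ground-state value $a$. The factor $e^{-c\beta}$ represents the worst-case energetic cost of driving a generic admissible $y$ into the minorizing set: coercivity at large spacings is supplied by the pressure term $p\sum z_j$, while at small spacings the divergence of $v$ near the origin plays this role (or the hard core when $r_\mathrm{hc}>0$). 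A standard coupling argument then upgrades this into the geometric mixing bound $(1-e^{-c\beta})^{n/k}\|f\|_\infty\|g\|_\infty$.

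For general $m\in\N\cup\{\infty\}$, I would fix $q\in\{1,\ldots,\lfloor n/k\rfloor\}$ and introduce a truncated Gibbs measure $\tilde\mu$ obtained from $\mu_\beta$ by dropping all pair interactions $v(z_i+\cdots+z_{j-1})$ with $j-i>R$, where $R:=\lfloor n/q\rfloor\geq k$. For $\tilde\mu$ the Markov-chain argument above provides $q$ roughly independent Doeblin coupling attempts in $[0,n]$, each succeeding with probability at least $e^{-c\beta}$, and so
\[
\bigl|\tilde\mu(f_0 g_n) - \tilde\mu(f_0)\tilde\mu(g_n)\bigr| \;\leq\; (1-e^{-c\beta})^q \|f\|_\infty\|g\|_\infty.
\]
The comparison $|\mu_\beta(F)-\tilde\mu(F)|$ is handled via the Radon--Nikodym density $\exp(-\beta\Delta H)$, using the tail bound $|v(r)|\leq\alpha_1 r^{-s}$ from Assumption~\ref{assu:v}(ii) to estimate the contribution of the dropped interactions in a neighborhood of the two endpoints:
\[
|\Delta H| \;\leq\; C\, R^{-(s-2)} \;=\; C\,(q/n)^{s-2}.
\]
This yields $|\mu_\beta(F)-\tilde\mu(F)| \leq e^{c\beta}\bigl(e^{C\beta (q/n)^{s-2}}-1\bigr)\|F\|_\infty$ after accounting for the partition-function ratio; combining with the contraction bound for $\tilde\mu$, applied to $F=f_0 g_n$, $F=f_0$, and $F=g_n$, produces the stated minimum in $q$.

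The main obstacle is the uniform Doeblin minorization of $P^k$: the constant $c$ must be independent of $\beta$, $p$, and the truncation range used in the general case, so that it serves in both parts of the theorem. This requires quantitative estimates on the Perron--Frobenius eigenfunction of the transfer operator that do not deteriorate with these parameters, and, in the absence of a hard core, careful control of configurations with very small spacings using the singular behaviour of $v$ near zero together with the confining effect of the Gibbs weight.
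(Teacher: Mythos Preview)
Your finite-$m$ argument is essentially the paper's: the contraction in Lemma~\ref{lem:contraction} is precisely a multiplicative Doeblin condition, $\inf \mathcal{S}_\beta^n f \geq \gamma(\beta)\,\mu_\beta(|f|)$ for non-negative $f$ with $\var_n(f)=0$, obtained by bounding the oscillation of the log-kernel (Eq.~\eqref{eq:vargbound}) rather than via an explicit coupling. The constant is $\gamma(\beta)=e^{-3\beta C_0}$ with $C_0=\sum_{k\geq 1}\var_k(h)$, which depends only on the potential, so the uniformity you worry about at the end is automatic.

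The general-$m$ argument, however, has a real gap in the comparison step. You propose to replace $\mu_\beta$ by the Gibbs measure $\tilde\mu$ for the range-$R$ truncated potential and to bound $|\mu_\beta(F)-\tilde\mu(F)|$ via a Radon--Nikodym density $\exp(-\beta\Delta H)$ with $|\Delta H|\leq C R^{-(s-2)}$. But $\mu_\beta$ and $\tilde\mu$ are two distinct \emph{infinite-volume} Gibbs measures; there is no such density. The dropped interactions form an infinite sum over the whole chain, not just ``near the two endpoints'', so $\Delta H$ is not even defined on $\R_+^\Z$. Working in finite volume $N$ first does not help: then $|\Delta H_N|$ is of order $N\cdot R^{-(s-1)}$ and diverges as $N\to\infty$. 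What you would actually need is a bound on the difference of the relevant finite-dimensional marginals of the two infinite-volume measures, and that is a genuinely different (Dobrushin-type) estimate which does not reduce to a single exponential tilt.

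The paper sidesteps this entirely by never touching the potential. The ``truncation'' is done at the operator level (Pollicott's method of conditional expectations): one writes $\mathcal{S}_\beta^{qn} = (\mathcal{S}_\beta^n\Pi_n)^q + \bigl[\mathcal{S}_\beta^{qn} - (\mathcal{S}_\beta^n\Pi_n)^q\bigr]$, where $\Pi_n$ is the conditional expectation onto functions of the first $n$ coordinates. The projected piece contracts mean-zero functions in $L^1$ by $1-\gamma(\beta)$ (Lemma~\ref{lem:contraction}), yielding the $(1-e^{-c\beta})^q$ term. The remainder is handled by a telescoping sum (Lemma~\ref{lem:truncerr}), each term controlled by $\var_n(\mathcal{S}_\beta^j f)$, which in turn is bounded through $C_n=\sum_{k>n}\var_k(h)=O(n^{-(s-2)})$ (Lemma~\ref{lem:smoothen}). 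This is the source of the factor $e^{c\beta}(e^{C\beta(q/n)^{s-2}}-1)$: it is the tail oscillation of a \emph{single} log-kernel $h$, not a ratio of partition functions. For finite $m$ and $n=k=m-1$ one has $C_n=0$ and $\var_n(f)=0$, so the remainder vanishes identically and only the contraction term survives.
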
 

\noindent The theorem is proven in Section~\ref{sec:ergodicity}. 
When $m$ is finite, it implies exponential decay of correlations as $n\to \infty$, however the rate $ -\log(1- \e^{-c\beta})$ can be exponentially small for large $\beta$. When $m$ is infinite, Theorem~\ref{thm:corrgen}
implies algebraic decay of correlations: for $q=\lfloor n^{\eps}\rfloor$ and sufficiently large $n$, $(1- \e^{-c\beta})^q$ is negligible compared to $\beta(q/n)^{s-2}$ and we find that as $n\to \infty$ 
\be\label{eq:apalg}
	\bigl|\mu_\beta (f_0 g_n) - \mu_\beta(f_0) \mu_\beta(g_n)\bigr| \leq (1+ O(1)) \frac{C \beta \exp(c\beta)}{n^{(s-2)(1-\eps)}}. 
\ee
Better bounds are available for restricted Gibbs measures. Let $\tilde \mu_\beta ^\ssup{N}$ be the measure $\Q_N^\ssup{\beta}$ conditioned on $[z_\mathrm{min},z_\mathrm{max}]^{N-1}$ and $\tilde \mu_\beta$ the probability measure on $[z_\mathrm{min},z_\mathrm{max}]^\Z$ obtained from the thermodynamic limit of $\tilde \mu_\beta^\ssup{N}$. 

\begin{prop} \label{prop:restrict-alg}
	Let $m \in \N\cup \{\infty\}$. There exists $c>0$ such that for all $\beta,p>0$, smooth $f,g:\R_+\to \R$, and $i\neq j$, 
	\bes
			\Bigl|\tilde \mu_\beta (f_i g_j) 
 - \tilde \mu_\beta (f_i) 	\tilde \mu_\beta(g_j) \Bigr| \leq 
  	  \frac{c}{\beta |i-j|^{s}}  \Bigl( \tilde \mu_\beta({f'_i}^2)  \tilde \mu_\beta({g'_j}^2) \Bigr)^{1/2}.
	\ees
\end{prop}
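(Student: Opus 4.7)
The plan is to use the Brascamp--Lieb/Helffer--Sj\"ostrand covariance representation from \cite{helffer-book}, combined with a polynomial off-diagonal decay estimate for the inverse of the energy Hessian. I work first at finite volume and then pass to the thermodynamic limit. Let $\tilde\mu_\beta^\ssup{N}$ denote $\Q_N^\ssup{\beta}$ conditioned to the convex cube $K_N:=[z_{\min},z_{\max}]^{N-1}$. By Assumption~\ref{assu:v}(iii)--(iv) and the discussion following Assumption~\ref{assu:v}, there is $c_0>0$, independent of $N$ and of $z\in K_N$, such that $\nabla^2\mathcal E_N(z)\geq c_0\,\mathrm{Id}$; moreover $\nabla^2\mathcal E_N$ is a Stieltjes matrix with positive, uniformly bounded diagonal and non-positive off-diagonal entries.

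The first key step is the Helffer--Sj\"ostrand covariance representation: for smooth test functions $F,G$ on $K_N$,
\begin{equation*}
	\cov_{\tilde\mu_\beta^\ssup{N}}(F,G) = \int \bigl\langle \nabla F,\, \mathcal A^{-1} \nabla G\bigr\rangle\, d\tilde\mu_\beta^\ssup{N},
\end{equation*}
where $\mathcal A$ is the Witten Laplacian on $1$-forms, a positive matrix-valued operator that is a zeroth-order perturbation of $\beta\,\nabla^2\mathcal E_N$ (see \cite[Ch.~5--6]{helffer-book}). Specialising to $F(z)=f(z_i)$ and $G(z)=g(z_j)$ collapses the gradients to $f'(z_i)\,e_i$ and $g'(z_j)\,e_j$, leaving
\begin{equation*}
	\cov_{\tilde\mu_\beta^\ssup{N}}(f_i,g_j) = \int f'(z_i)\,(\mathcal A^{-1})_{ij}(z)\,g'(z_j)\, d\tilde\mu_\beta^\ssup{N}(z).
\end{equation*}

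The second key step is polynomial decay of the entries $(\mathcal A^{-1})_{ij}$. For the Hessian itself, a direct counting argument using Assumption~\ref{assu:v}(iv) and the lower bound $z_i+\cdots+z_{j-1}\geq (j-i)z_{\min}$ on $K_N$ yields, for $k\neq l$,
\begin{equation*}
	\bigl|\partial_k\partial_l \mathcal E_N(z)\bigr| \leq \sum_{\heap{i\leq\min(k,l),\, j>\max(k,l)}{j-i\leq m}}\! \bigl|v''(z_i+\cdots+z_{j-1})\bigr|
	\leq \frac{C_1}{|k-l|^s},
\end{equation*}
since the combinatorial sum takes the form $\sum_{q\geq 1} q\,(q+|k-l|)^{-s-2}\sim|k-l|^{-s}$. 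Combined with the uniform coercivity $\mathcal A\geq \beta c_0\,\mathrm{Id}$, a Jaffard-type inversion theorem for polynomially banded operators (valid on $\Z$ since $s>1$) yields
\begin{equation*}
	\bigl|(\mathcal A^{-1})_{ij}(z)\bigr| \leq \frac{C_2}{\beta\,|i-j|^s},\qquad i\neq j,
\end{equation*}
uniformly in $z\in K_N$ and $N$. Inserting this into the covariance identity and applying Cauchy--Schwarz in $L^2(\tilde\mu_\beta^\ssup{N})$ gives the proposition at finite volume with a constant independent of $N$; passage to the thermodynamic limit as in Section~\ref{sec:gibbs} completes the proof.

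The main obstacle is the uniform polynomial inversion estimate for $\mathcal A$: Jaffard's theorem is classical for scalar matrices, but $\mathcal A$ is a second-order differential operator, so the estimate must be cast in an operator-algebra framework tailored to the Witten Laplacian. One must also track that the constants depend only on $s$, $c_0$, $\alpha_2$, and $z_{\min}$, not on $\beta$, $N$, or the pointwise configuration $z\in K_N$. A secondary issue is the boundary of $K_N$, which one handles either by smoothing $\chi_{K_N}$ into a convex barrier preserving the Stieltjes structure, or by a reflecting-boundary Langevin representation before passing to the limit.
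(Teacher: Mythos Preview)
Your overall strategy---Helffer--Sj\"ostrand covariance representation followed by a decay estimate for the inverse---is the right one, and your bound $|\partial_k\partial_l\mathcal E_N|\leq C_1|k-l|^{-s}$ on the off-diagonal Hessian entries matches the paper's. The gap is in the inversion step, which you yourself flag: Jaffard's theorem is a statement about bounded operators on $\ell^2$ with polynomially decaying entries, whereas the Witten Laplacian $\mathcal A = L\otimes\mathrm{Id} + \beta\,\mathrm D^2\mathcal E_N$ contains the unbounded generator $L$. The ``entries'' $(\mathcal A^{-1})_{ij}$ are operators on $L^2(\tilde\mu_\beta^\ssup{N})$, not scalar functions of $z$, so the pointwise bound $|(\mathcal A^{-1})_{ij}(z)|\leq C_2/(\beta|i-j|^s)$ is not well-posed as written. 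An operator-valued Jaffard argument with block entries in $\mathcal B(L^2)$ might be made to work, but you would have to develop it and track that the constants depend only on $\rho$, $\eta$, and the $\kappa_\ell$, not on $\beta$ or $N$; this is not routine.

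The paper avoids this entirely by exploiting the Stieltjes structure you mention early on but do not use in the inversion. Since $\mathrm D^2\mathcal E_N(z)$ is an M-matrix bounded below, entrywise, by a \emph{constant} M-matrix $A_N$ (diagonal $\rho$, off-diagonal $-\kappa_{|i-j|}$ with $\kappa_\ell\leq C\ell^{-s}$), the result of Menz \cite[Theorem~2.3]{menz14} yields directly
\[
\bigl|\cov_{\tilde\mu_\beta^\ssup{N}}(f_i,g_j)\bigr|\leq \beta^{-1}(A_N^{-1})_{ij}\,\bigl(\tilde\mu_\beta^\ssup{N}({f'_i}^2)\,\tilde\mu_\beta^\ssup{N}({g'_j}^2)\bigr)^{1/2}.
\]
The sign pattern of $A_N$ is what converts the operator inequality $\mathcal A\geq \beta A_N$ into entrywise control of the covariance; this is the ingredient missing from your sketch. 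The remaining task is purely about the constant matrix $A_N$: write $A_N^{-1}=\rho^{-1}\sum_k(\mathrm{Id}-\rho^{-1}A_N)^k$ as a Neumann series, interpret it as the Green's function of a random walk with step law $\P(X=\ell)\propto\kappa_{|\ell|}$, and bound $\P(S_k=j-i)\leq Ck^{s+1}|i-j|^{-s}$ by a large-jump decomposition as in \cite[Proposition~3.5]{menz14}. Summing against the geometric weight $(1-\eta/\rho)^k$ gives $(A_N^{-1})_{ij}\leq C|i-j|^{-s}$ with constants independent of $N$ and $\beta$. No Jaffard-type theorem is needed.
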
 

\begin{remark}
	When $m$ is finite, the uniform algebraic decay for the restricted Gibbs measure is replaced with uniform exponential decay  $\exp (- \gamma |j-i|)$ with $\beta$-independent $\gamma>0$. 
\end{remark} 

\noindent The proposition is proven in Section~\ref{sec:brascamp}. It follows from the uniform convexity of the energy (Lemma~\ref{lem:hessian}) and known results from the realm of Brascamp-Lieb, Poincar{\'e} and Log-Sobolev inequalities. Proposition~\ref{prop:restrict-alg} differs from the  estimate~\eqref{eq:apalg} in two ways: there is no exponentially large prefactor $\exp(c\beta)$, and the rate of algebraic decay is $1/n^s$ instead of $1/n^{s-2}$. Exponentially large prefactors are absent because the energy landscape has no local minimum. The improved algebraic decay $1/n^s$ arises, roughly, because the Gibbs measure is comparable to a Gaussian measure whose covariance is the inverse of the energy's Hessian near the minimum, and instead of the tails of $v(r)$, it is the tails of $v''(r)$ that count. 

We suspect that for large $\beta$ and small pressure, these improvements should carry over to the full Gibbs measure $\mu_\beta$, but we have proofs for 
interactions involving finitely many neighbors only. 

\begin{theorem} \label{thm:corr-finitem}
	Assume $2\leq m<\infty$, $p\in (0,p^*)$, and $r_\mathrm{hc}>0$. There exists $\gamma>0$ such that for all sufficiently large $\beta$, suitable $C(\beta)$, all $n\in \N$, and all 
	$f,g:\R_+^d \to \R$, we have 
	$$
		\bigl|\mu_\beta(f_{0} g_{n}) - \mu_\beta (f_{0}) \mu_\beta(g_{n})\bigr| \leq C(\beta) \e^{-\gamma n} ||f_0||_\infty \, ||g_n||_\infty. 
	$$
	If $m=2$, we can  pick $C(\beta) =1$. 
\end{theorem}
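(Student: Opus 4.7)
The plan is to exploit the Markov structure available for finite $m$ and to reduce the claim to a uniform (in $\beta$) lower bound on the spectral gap of the associated transfer operator; this gap will be obtained by perturbation around the Gaussian harmonic limit of Theorem~\ref{thm:gaussian2}. Writing $d=m-1$, the finite-range property implies that under $\mu_\beta$ the blocks $Z_j:=(z_j,\ldots,z_{j+d-1})$ form a stationary first-order Markov chain on $(r_\mathrm{hc},\infty)^d$. After $d$ iterations the transition operator becomes a non-degenerate integral operator $T_\beta$ on $L^2(\R_+^d)$, whose kernel is built from $d$ Boltzmann factors derived from $h$ (see \eqref{eq:h-def}), normalised by the $d$-th power of the leading eigenvalue $\lambda_0(\beta)=\e^{-\beta(g(\beta)-e_0)}$. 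Symmetrising by the square root of the invariant density turns $T_\beta$ into a compact self-adjoint operator on $L^2$ whose spectral gap $\gamma_\beta\in(0,1]$ controls decay of correlations through the standard estimate
\begin{equation*}
\bigl|\mu_\beta(f_0 g_{dn})-\mu_\beta(f_0)\mu_\beta(g_{dn})\bigr|\le C(\beta)\,(1-\gamma_\beta)^n\,\|f\|_\infty\|g\|_\infty,
\end{equation*}
so the theorem will follow with $\gamma=-d^{-1}\log(1-\gamma_0)$ once we establish $\gamma_\beta\ge\gamma_0>0$ uniformly for large $\beta$.

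To prove the uniform gap I would run a semi-classical perturbation argument. Rescale via $s_j=\sqrt{\beta}(z_j-a)$ and conjugate by the square root of the rescaled invariant density; the resulting operator $\widetilde T_\beta$ on $L^2(\R^d)$ has an integral kernel whose Laplace expansion around $(a,\ldots,a)$ converges, as $\beta\to\infty$, to a Gaussian transfer operator $T_\infty$ of Mehler type associated with the Hessian $\mathcal H$ of the bulk energy, whose positive definiteness is guaranteed by the convexity result Lemma~\ref{lem:hessian}. The operator $T_\infty$ is diagonalised by tensor products of Hermite functions and has a strictly positive spectral gap $\gamma_\infty$ explicitly determined by the matrix $C$ of Theorem~\ref{thm:gaussian3}. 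Operator-norm convergence $\widetilde T_\beta\to T_\infty$, combined with the min-max principle, then yields $\gamma_\beta\ge\gamma_\infty/2$ for all sufficiently large $\beta$, which is the required uniform bound.

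For the sharper statement $C(\beta)=1$ when $m=2$, the chain is scalar and I would bypass the $L^2$ route altogether. On the bulk window $[z_\mathrm{min},z_\mathrm{max}]$, Lemma~\ref{lem:hessian} makes the normalised kernel $T_\beta(z,z')/\lambda_0(\beta)$ uniformly bounded below by a positive multiple of the invariant density, while the tail bound $\mu_\beta(z_0\ge r)\le C\e^{-\beta p r}$ of Lemma~\ref{lem:ti} controls the exterior. A Doeblin-type minorisation of the full kernel then produces the contraction $\|T_\beta^n-\Pi\|_{L^\infty\to L^\infty}\le(1-\eta_\beta)^n$, yielding the sup-norm bound with $C(\beta)=1$ after adjusting $\gamma$.

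The hard part will be the operator-norm convergence $\widetilde T_\beta\to T_\infty$. Because $v$ decays only algebraically at infinity, the rescaled kernel carries heavy tails in $|s_j|$ of order $\beta^{s/2}|s_j|^{-s}$ on top of its Gaussian core, and the low-lying eigenfunctions of $\widetilde T_\beta$ are not a priori concentrated at scale one. I would therefore first establish Agmon-type exponential decay for those eigenfunctions---combining the tail estimate of Lemma~\ref{lem:ti} with the energy-penalty arguments underlying the proof of Theorem~\ref{thm:ldp}---and then control the off-diagonal and tail contributions via a Schur test. Once the eigenfunctions are localised at scale $\beta^{-1/2}$, the bulk of the kernel is uniformly close to $T_\infty$ and the semi-classical perturbation argument goes through.
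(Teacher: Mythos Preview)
Your high-level plan---transfer operator in $L^2(\R^d)$ for the $d$-block chain, then semi-classical perturbation around a Gaussian limit to get a $\beta$-uniform spectral gap---is exactly the paper's route (Lemma~\ref{lem:standardtransfer}(c) combined with Corollary~\ref{cor:perturbation-spectral}). But two of your technical claims are wrong, and they are precisely the places where the paper has to work.

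First, the block Markov chain is \emph{not} reversible for $m\geq 3$, so ``symmetrising by the square root of the invariant density'' does not produce a self-adjoint operator, and you cannot invoke the min-max principle. The underlying symmetry is $K_\beta(x,y)=K_\beta(\sigma y,\sigma x)$ with $\sigma$ the coordinate reversal on $\R^d$; only for $d=1$ (i.e.\ $m=2$) is this genuine symmetry. The paper therefore works with the compact but non-self-adjoint $K_\beta$, uses Krein-Rutman for the simple leading eigenvalue, and controls $\|(\Lambda_0^{-1}K_\beta-\Pi_\beta)^n\|$ via the spectral radius plus an $\eps$-loss, which is why the general bound carries a constant $C_\eps(\beta)$ rather than $1$. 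Relatedly, the naive symmetric kernel $\exp(-\beta[\tfrac12 V(x)+W(x,y)+\tfrac12 V(y)])$ is explicitly rejected (Section~\ref{sec:choices}) because its exponent need not be minimised at $(\vect a,\vect a)$ when $m\geq 3$; the paper instead builds $\widehat H(x,y)=H(x,y)-\tfrac12 w(x)-\tfrac12 w(y)$, proves $\widehat H\geq 0$ with equality at $(\vect a,\vect a)$, and shows its Hessian there is $\widehat M$. Without this construction your Laplace expansion does not center at the right point and the perturbation argument collapses. The operator-norm convergence is then obtained not by Agmon estimates or a Schur test but by splitting $\R^d$ into $\mathcal A=[z_{\min},z_{\max}+\eps]^d$ and its complement and bounding Hilbert-Schmidt norms on each block (Lemmas~\ref{lem:perturbation1}--\ref{lem:perturbation3}), using the linear growth from the pressure term and the quantitative Bellman inequality of Lemma~\ref{lem:unibell}.

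Second, for $m=2$ your Doeblin route is both unnecessary and unlikely to deliver a $\beta$-independent $\gamma$: as $\beta\to\infty$ the kernel concentrates on a shrinking neighbourhood of $a$, so any uniform minorisation constant you extract will typically be exponentially small in $\beta$. The paper's argument for $C(\beta)=1$ is much simpler: when $d=1$ the kernel $K_\beta$ is symmetric, hence the operator norm of $\Lambda_0^{-1}K_\beta-\Pi_\beta$ equals its spectral radius $\Lambda_1/\Lambda_0$, and the $\eps$-loss in Lemma~\ref{lem:standardtransfer}(c) disappears.
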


\noindent The theorem is proven in Section~\ref{sec:gaussian} with perturbation theory for compact integral operators  in $L^2(\R^d)$. When $m=2$, the relevant operators are self-adjoint and spectral norms and operator norms coincide, leading to improved statements. 
We conclude with a few comments. \\

\emph{Lagrangian vs. Eulerian point of view.}  The theorems above formulate decay of correlations in terms of labelled spacings, which in the language of continuum mechanics is a Lagrangian viewpoint. On the other hand, in statistical mechanics of point particles it is more common to deal with unlabelled particles (Eulerian viewpoint) and correlations are between portions of space rather than labelled interparticle distances. The difference between the two approaches becomes quite clear for nearest neighbor interactions ($m=1$, see Eq.~\eqref{eq:taka}), for which the spacings are i.i.d. with probability density $q_\beta(r)$ proportional to $\exp( - \beta [v(r)+ p_\beta r])$. Because of the independence of spacings, correlations in terms of spacings vanish, $\mu_\beta (f_0 g_n) - \mu_\beta(f_0) \mu_\beta(g_n) =0$. On the other hand, the two-point function $\rho_2(0,x)$\footnote{Intuitively, $\rho_2(0,x)$ represents the probability for having one particle at $0$ and one particle at~$x$. Rigorously, $\rho_2(x_1,x_2) = \rho_2(0,x_2-x_1)$ and for every $A$, $\int_A \rho_2(x_1,x_2) \dd x_1 \dd x_2$ is the average number $\la N_A(N_A-1)\ra$ of ordered particle pairs in $A$.} studied in statistical mechanics of particles is a sum over the number of particles contained in $(0,x]$,
$$ \rho_2(0,x) =\frac{1}{\ell(\beta)} \sum_{k=1}^\infty q_\beta^{*k}(x)
	= \frac{q_\beta(x)}{\ell(\beta)} + \int_0^\infty q_\beta(y-x) \rho_2(0,y)\dd y
$$
with $q_\beta^{*k}$ the $n$-fold convolution of $q_\beta$ with itself. It is a well-known fact from renewal theory~\cite[Chapter XI]{feller-vol2} that 
$$ \rho_2(0,x) - \frac{1}{\ell(\beta)^2} \to 0 \quad (x\to \infty), $$
but in general the difference is non-zero finite for $x$---in fact changing $q_\beta$ the convergence as $x\to \infty$ can be arbitrarily slow, even though correlations of labelled interparticle spacings vanish identically. One should keep this difference in mind when browsing the literature. 
\\

\emph{Path-large deviations, non-linear semi-groups, Bellman equation.} 
For $m=2$, we may view $\mu_\beta$ as the law of a stationary Markov chain with state space $\R_+$ and transition kernel $P_\beta$ defined in Eq.~\eqref{eq:pkernel}. Theorem~\ref{thm:ldp} is a path-large deviations result for the Markov chain. Path large deviations are often investigated with the help of non-linear semi-groups and Hamilton-Jacobi-Bellman equations~\cite{feng-kurtz-book}. In our context, a natural non-linear semi-group is
$$ V_\beta^nf:= - \frac{1}{\beta}\log \Bigl(P_\beta^n \e^{-\beta f} \Bigr)
$$ 
and for sufficiently smooth $f$ we have a convergence of the form 
$$\lim_{\beta\to \infty}V_\beta f (x) =- u(x)+  \inf_{y \in \R_+} \bigl( p x + v(x)   + v(x+y) - e_0 + u(y)+ f(y)\bigr)$$
where $u$ solves 
\bes 
 u(x) = \inf_{y\in \R_+} \bigl(p x + v(x)   + v(x+y) - e_0 + u(y)\bigr).
\ees 
Similar equations, motivated by quantum mechanics and geometric optics, appear in semi-classical analysis~\cite[Eq.~(5.4.4)]{helffer-book}.  Proposition~\ref{prop:bellman-ersatz} below provides an infinite-$m$ ersatz and is instrumental in the proof of Theorem~\ref{thm:ldp}. 
\\

\emph{Vanishing pressure.} When $\beta p = \beta p_\beta\to 0$ faster than $\exp(- \beta |e_0^0|)$ (see~\eqref{eq:soltogas}), the Gibbs measure should no longer be comparable to a Gaussian. Instead, it should be close to the ideal gas measure, for which spacings are i.i.d. exponentially distributed with parameter $\beta p_\beta$, and we may again expect uniform exponential decay of correlations (for finite $m$). When $\beta p_\beta\to 0$ at a speed comparable to $\exp(-\beta |e_0^0|)$, we should instead expect an exponentially small spectral gap: the Markov chain has two metastable wells, one corresponding to the optimal spacing $a$ and another well at infinity. The exponentially small spectral gap is associated with the fracture of the chain of atoms, in the spirit of ``fracture as a phase transition''~\cite{truskinovsky96}. \\


\section{Energy estimates} \label{sec:energy}

In this section we analyze the variational problems arising at zero temperature. Throughout the section we assume that $p\in [0,p^*)$ as in Assumption~\ref{assu:p}.

\subsection{Bulk periodicity} \label{sec:bulk}

\begin{lemma}\label{lem:est-on-min}
 Every minimizer of  $\mathcal{E}_N:\R_+^{N-1} \to \R$ lies in $[z_{\min},z_{\max}]^{N-1}$.
\end{lemma}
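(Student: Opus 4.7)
My plan is to obtain the two containments $z_k \le z_{\max}$ and $z_k \ge z_{\min}$ by separate comparison arguments that perturb a putative minimizer; one direction is essentially elementary and the other will use Assumption~\ref{assu:v}(ii) in a nontrivial way. Throughout I write $S_{ij} := z_i + z_{i+1} + \cdots + z_{j-1}$ for the distance between particles $i$ and $j$, so the terms of $\mathcal{E}_N$ containing $z_k$ are $pz_k$ together with $v(S_{ij})$ for pairs $(i,j)$ with $i \le k < j$ and $|i-j| \le m$.

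For the upper bound, I would assume some spacing $z_k$ of a minimizer satisfies $z_k > z_{\max}$ and replace it by $z_{\max}$, keeping all other spacings fixed. Every affected $S_{ij}$ is at least $z_k > z_{\max}$, decreases by $z_k - z_{\max} > 0$ under the modification, and hence remains $\ge z_{\max}$. By Assumption~\ref{assu:v}(i), $v$ is strictly increasing on $(z_{\max}, \infty)$, so each affected $v$-term decreases (strictly for the nearest-neighbor term $v(z_k) \mapsto v(z_{\max})$), while $pz_k$ decreases weakly. The net change of $\mathcal{E}_N$ is strictly negative, contradicting minimality.

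For the lower bound I would again argue by contradiction, but the perturbation must be chosen more carefully. Having already established the upper bound, I may assume all $z_j \le z_{\max}$. Let $z_\ast := \min_j z_j$ and suppose $z_\ast < z_{\min}$; pick $k$ with $z_k = z_\ast$. The key advantage of focusing on the globally smallest spacing is that every affected sum satisfies $S_{ij} \ge (j-i) z_\ast$, which lets me control long-range contributions by tails $\sim (nz_\ast)^{-s}$ compatible with Assumption~\ref{assu:v}(ii). Replacing $z_k$ by $z_{\max}$, the change of $\mathcal{E}_N$ splits into three pieces: the large negative nearest-neighbor contribution $v(z_{\max}) - v(z_\ast)$; the pressure cost $p(z_{\max} - z_\ast) < p^\ast z_{\max} = |v(z_{\max})|$ (here Assumption~\ref{assu:p} is essential); and the long-range terms, each bounded above by $-v(S_{ij}) \le \alpha_1 S_{ij}^{-s}$ using non-positivity of $v$ on $[z_{\max}, \infty)$ and the global lower bound in Assumption~\ref{assu:v}(ii). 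Combined with the strict lower bound on $v(z_\ast)$ furnished by the second part of Assumption~\ref{assu:v}(ii), these pieces should yield $\Delta \mathcal{E}_N < 0$, contradicting minimality.

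The hard part will be the combinatorial matching in the last step: naively counting the pairs $(i,j)$ with $j-i=n$ that contain $k$ gives up to $n$ terms, producing $\sum_{n\ge 2} n(nz_\ast)^{-s}$, one power of $n$ heavier than the $\sum_{n\ge 2}(nz_\ast)^{-s}$ actually available in Assumption~\ref{assu:v}(ii). If the plain single-site perturbation is not quite enough to close the inequality, the fallback I would try is a more global comparison: lower-bound $\mathcal{E}_N$ at the minimizer by summing the estimates from Assumption~\ref{assu:v}(i)--(ii) over the pairs involving any $z_l < z_{\min}$, and compare with the competitor configuration in which all spacings equal $z_{\max}$, which gives an upper bound on the minimum. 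The pressure restriction $p < p^\ast = |v(z_{\max})|/z_{\max}$ should again be precisely what is needed to balance the two sides.
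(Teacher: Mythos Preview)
Your upper-bound argument is correct and is exactly what the paper does.

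For the lower bound, the combinatorial mismatch you flag is real, and neither your primary perturbation nor your fallback closes it. When you replace the single spacing $z_k=z_\ast$ by $z_{\max}$, the pairs $(i,j)$ with $i\le k<j$ and $j-i=n$ number up to $n$, so the long-range error is $\sum_{n\ge 2} n\,\alpha_1(nz_\ast)^{-s}$, one power of $n$ too heavy to be absorbed by Assumption~\ref{assu:v}(ii). Your fallback---a global comparison with the all-$z_{\max}$ configuration---does not help: it gives an upper bound on $E_N$, but to get a contradiction you would still need a matching lower bound on $\mathcal{E}_N$ at the putative minimizer, and Assumption~\ref{assu:v}(ii) only controls a single nearest-neighbour bond, not the full energy.

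The paper's idea is different and fixes the combinatorics exactly: instead of modifying a \emph{bond}, remove a \emph{particle}. Take the minimal spacing $b=z_j<z_{\min}$ and delete the atom $x_j$. The interactions you lose are those of $x_j$ with every other atom: one term at distance $\ge b$ (namely $v(b)$), and at each index-distance $n\ge 2$ at most \emph{two} terms (one to the left, one to the right), each with argument $\ge nb$. Thus the energy drops by at least
\[
v(b)+v(z_{\max})-2\alpha_1\sum_{n\ge 2}(nb)^{-s}>0,
\]
which is precisely the inequality in Assumption~\ref{assu:v}(ii). (For finite $m$ one must also check that the new $m$-range interactions created by the deletion are dominated by old ones that were removed; this uses the monotonicity of $v$.) Finally, reattach the removed particle at distance $z_{\max}$ to an end of the chain: this contributes $v(z_{\max})+p\,z_{\max}<0$ by Assumption~\ref{assu:p}, plus nonpositive longer-range terms. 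The key structural point you were missing is that counting by \emph{neighbours of a particle} gives a factor $2$ per distance scale, whereas counting by \emph{pairs straddling a bond} gives a factor $n$.
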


\begin{proof}
 Let $z_1,\ldots,z_{N-1}>0$. If $z_j > z_{\max}$ for some $j$,
  define a new configuration by shrinking $z_j$ to $z_{\max}$, leaving all other spacings unchanged:
  $z'_i = z_i$ for $i \neq j$ and $z'_j= z_{\max}$. Since $z_{\rm max}$ is a strict minimizer of $v$ and $r \mapsto v(r)$ increases on $[z_{\max}, \infty)$, shrinking the bonds  decreases $\mathcal{E}_N$ strictly and the original configuration could not have been a minimizer. 

  If some interparticle spacing is smaller than $z_{\min}$, we remove a particle and reattach it to one end of the chain as follows. Assume $b:= \min (z_1,\ldots,z_{N-1})< z_{\min}$ and  let $j\in \{1,\ldots,N-1\}$ with $z_j=b$. Let $x_1=0$ and $x_i = z_1+\cdots + z_{i-1}$, $i=2,\ldots,N$ be associated particle positions. Thus $x_{j+1}-x_j =z_j = b$ and $x_{i+1}- x_i \geq b$ for all $i$. The interaction of $x_j$ with all other particles is 
\[ v(b) + \sum_{i=1}^{\min\{m-1,N-j-1\}} v(z_{j} + \ldots + z_{j+i}) + \sum_{i=1}^{\min\{m,j-1\}} v(z_{j-1} + \ldots + z_{j-i}). \]
For finite $m$ we note that, if $v(z_{j-i} + \ldots + z_{j-i+m}) > 0$ for an $i \in \{ 1, \ldots, \min\{m,j-1\} \}$, then $v(z_{j-i} + \ldots + z_{j-i+m}) < v(z_{j-i} + \ldots + z_{j-1})$ by Assumption~\ref{assu:v}(i). Removing the particle $x_j$ thus leads to a configuration of $N$ atoms whose energy has decreased by at least 
 \be \label{eq:dwb}
    \Delta_1 = v(b) + v(z_{\max}) - 2 \alpha_1 \sum_{n=2}^m (n b)^{-s} 
             \geq v(b) + v(z_{\max}) - 2 \alpha_1 \sum_{n=2}^\infty (n b)^{-s} > 0.
  \ee
  The last inequality holds because of Assumption~\ref{assu:v}(ii) and $b<z_{\rm min}$. We define a new configuration by attaching the removed particle to either end of the chain at a distance $r = z_{\rm max}$. 
  Since $v(z_{\max}) + p z_{\max} < 0$ by Assumption \ref{assu:p}, this decreases $\mathcal{E}_N$ further, so overall the new configuration has strictly smaller energy, and the original sequence of spacings cannot be a minimizer of  $\mathcal{E}_N$.
\end{proof}

At zero pressure, it is a well-known fact that the $N$-particle energy is subadditive, $E_{N+M}\leq E_N + E_M$. Indeed placing two $N$,$M$-particle minimizers side by side with large mutual distance, because of $v(r)\to 0$ at $r\to \infty$, yields  an $N+M$-particle configuration with energy $\leq E_N+E_M$. Positive pressure penalizes large  mutual distances between two consecutive blocks, so the construction has to be modified. 

\begin{lemma} \label{lem:subadditive}
	Let $m\in \N\cup \{\infty\}$ and $p\in [0,p^*)$. Then  
	 $E_{N+M-1}\leq E_{N} + E_{M}$ for all $N,M\in \N$, and 
	the limit $e_0= \lim E_N/N$ exists and satisfies 
	 $E_N \geq (N -1)e_0$ for all $N\in \N$. 
\end{lemma}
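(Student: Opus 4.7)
The plan is to establish the subadditivity $E_{N+M-1}\leq E_N+E_M$ by concatenating two minimizers that share their endpoint atom, then to apply Fekete's subadditive lemma (after a trivial reindexing), and finally to rule out $e_0=-\infty$ using the growth bound in Assumption~\ref{assu:v}(ii).

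For the subadditivity step, I would let $(z_i^{(N)})$ and $(w_j)$ be minimizers of $\mathcal{E}_N$ and $\mathcal{E}_M$; by Lemma~\ref{lem:est-on-min} they lie in $[z_{\min},z_{\max}]^{N-1}$ and $[z_{\min},z_{\max}]^{M-1}$, and their existence follows from continuity of $\mathcal{E}_N$ on that compact box. Glue them into an $(N+M-1)$-particle configuration by identifying the right end of the first chain with the left end of the second, producing the spacing list $(z_1^{(N)},\ldots,z_{N-1}^{(N)},w_1,\ldots,w_{M-1})$. Its energy splits as $\mathcal{E}_N(z^{(N)})+\mathcal{E}_M(w)$ plus a sum of cross-terms $v(x_j-x_i)$ over index pairs $i<N<j$ with $j-i\leq m$; the pressure term splits cleanly because the total length is additive. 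Each cross-distance is the sum of at least two spacings from $[z_{\min},z_{\max}]$, hence is at least $2z_{\min}$, which strictly exceeds $z_{\max}$ by the chain of inequalities $r_{\rm hc}<z_{\min}<z_{\max}<2z_{\min}$ in Assumption~\ref{assu:v}. Since $v$ is non-positive on $[z_{\max},\infty)$ by Assumption~\ref{assu:v}(i), every cross-term contributes $\leq 0$; in the case $m=\infty$ absolute convergence of the cross-sum follows from $v(r)\geq -\alpha_1 r^{-s}$ together with $r\geq (j-i)z_{\min}$ and $s>2$. Therefore $E_{N+M-1}\leq \mathcal{E}_{N+M-1}(\mathrm{concat.})\leq E_N+E_M$.

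To extract the limit I would reindex by setting $b_k:=E_{k+1}$ for $k\geq 0$, so that $b_0=E_1=0$ and $b_{k+\ell}=E_{(k+1)+(\ell+1)-1}\leq E_{k+1}+E_{\ell+1}=b_k+b_\ell$. Fekete's subadditive lemma then gives $b_k/k\to e_0:=\inf_{k\geq 1}b_k/k\in[-\infty,\infty)$ together with $b_k\geq ke_0$, which translates to $E_N/N\to e_0$ and $E_N\geq (N-1)e_0$. To verify $e_0>-\infty$, I would evaluate $\mathcal{E}_N$ at a minimizer, drop the non-negative pressure contribution, and use that the distance between the $i$-th and $(i+k)$-th atoms is at least $kz_{\min}$: by Assumption~\ref{assu:v}(ii) each such interaction is bounded below by $-\alpha_1(kz_{\min})^{-s}$, there are at most $N$ pairs for each shift $k$, and $\sum_{k\geq 1}k^{-s}<\infty$ since $s>2$, producing $E_N\geq -CN$ for some $C$ independent of $N$.

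The main delicate point I anticipate is the cross-term sign analysis in the concatenation, which rests entirely on the strict inequality $z_{\max}<2z_{\min}$ packed into Assumption~\ref{assu:v} combined with the non-positivity of $v$ on $[z_{\max},\infty)$. The Fekete reindexing and the finiteness of $e_0$ are then essentially routine.
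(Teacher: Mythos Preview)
Your proof is correct and follows essentially the same approach as the paper: concatenate minimizers (which lie in $[z_{\min},z_{\max}]^{N-1}$ by Lemma~\ref{lem:est-on-min}) so that all cross-distances are at least $2z_{\min}>z_{\max}$ and hence contribute non-positively, then reindex $a_n=E_{n+1}$ and apply Fekete, and finally bound $E_N$ below using $v(r)\geq -\alpha_1 r^{-s}$ on the minimizer's spacings. The only cosmetic difference is that the paper bounds $E_N$ below by $(N-1)\bigl(v(z_{\max})-\alpha_1 z_{\min}^{-s}\sum_{j\geq 2}j^{-s}\bigr)$ rather than a generic $-CN$, but this is the same estimate.
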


\begin{proof}
	Let $z\in (r_\mathrm{hc},\infty)^{N-1}$ and $w\in (r_\mathrm{hc},\infty)^{M-1}$ be minimizers of $\mathcal{E}_N$ and $\mathcal{E}_{M}$ respectively. 
	Define $y\in (r_\mathrm{hc},\infty)^{M+N-2}$ by concatenating $z$ and $w$. By Lemma~\ref{lem:est-on-min}, all spacings are in $[z_{\min}, z_{\max}]$. Therefore interactions that involve bonds from both blocks are for spacings $\geq 2 z_{\min} >z_{\max}$, hence negative, and 
		$$ E_{N+M-1} \leq \mathcal{E}_{N-1}(y) \leq E_N + E_M.$$ 
	As a consquence, $a_n:= E_{n+1}$ is subadditive. By Fekete's subadditive lemma, the limit $e_0= \lim a_n/n = \lim E_{n}/n$ exists and is equal to the infimum of $a_n/n$, hence 
	$ E_{N} \geq (N-1) e_0$. Notice that $e_0>-\infty$
	since 
	\[ E_n 
	   \ge (n-1) \Big( v(z_{\max}) + \sum_{j=2}^{\infty} v(j z_{\min}) \Big) 
		 \ge (n-1) \Big( v(z_{\max}) + \alpha_1 z_{\min}^{-s} \sum_{j=2}^{\infty} j^{-s} \Big). \] 
	(In the terminology of statistical mechanics, the energy is \emph{stable}~\cite[Chapter 3.2]{ruelle-book69}.)
\end{proof}

The next lemma in particular shows that $\mathcal{E}_N$ is uniformly convex on $[z_{\min},z_{\max}]^{N-1}$. For later purposes, we state and prove this on a slightly larger set. 

\begin{lemma}\label{lem:hessian} 
	There are constants $\eps,\eta,C>0$ such that for all $m, N, N_1, N_2 \in \N$ with $N_1 < N_2 \le N$, and $z=(z_1,\ldots,z_{N-1}) \in [z_{\min},\infty]^{N-1}$ with $z_j \le z_{\max}+\eps$ for $N_1 \le j \le N_2-1$, the Hessian of $\mathcal{E}_N$ at $z$ satisfies 
	\bes
		\eta \sum_{j=N_1}^{N_2-1} \zeta_j^2 
		\leq \sum_{i,j=N_1}^{N_2-1} \zeta_i \zeta_j \partial_i \partial_j \mathcal{E}_N(z) 
		\leq C \sum_{j=1_1}^{N_2-1} \zeta_j^2
	\ees
	for all $\zeta \in \R^{N-1}$. Moreover,  the submatrix $(\partial_i\partial_j \mathcal{E}_N(z))_{N_1 \le i.j \le N_2-1}$ of the Hessian has strictly positive diagonal entries $\partial_i^2 \mathcal{E}_N(z) >0$ and 
	non-positive off-diagonal entries $\partial_i\partial_j \mathcal{E}_N(z) \leq 0$. In particular, this matrix is monotone.
\end{lemma}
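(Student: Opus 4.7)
The plan is to unpack the Hessian entry by entry and then derive positive-definiteness from a Gershgorin-type diagonal-dominance estimate. A direct differentiation gives
\[
  \partial_k \partial_l \mathcal{E}_N(z) \;=\; \sum_{\heap{1\le i\le \min(k,l),\ \max(k,l)<j\le N}{j-i\le m}} v''(z_i+\cdots+z_{j-1}),
\]
so each off-diagonal entry ($k\ne l$) is a sum of $v''$ evaluated at bond lengths $\ge 2z_{\min}$; Assumption~\ref{assu:v}(iii) immediately gives $\partial_k\partial_l\mathcal{E}_N\le 0$. The pressure term is linear in $z$ and does not enter the Hessian.

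Next I would compute the full (unrestricted) row sum
\[
  R_k(z) \;:=\; \sum_{l=1}^{N-1} \partial_k\partial_l\mathcal{E}_N(z) \;=\; \sum_{\text{bonds }(i,j)\text{ through }k} (j-i)\, v''(z_i+\cdots+z_{j-1}),
\]
isolate the contribution $v''(z_k)$ of the nearest-neighbor bond ($n=j-i=1$), and note that for each $n\ge 2$ there are at most $n$ bonds of length $n$ passing through $k$, each contributing $v''(\cdot)\in[v''(nz_{\min}),0]$ by Assumption~\ref{assu:v}(iii). Together with the outer factor $(j-i)=n$ this yields
\[
  R_k(z) \;\ge\; v''(z_k) + \sum_{n=2}^\infty n^2\, v''(nz_{\min}),
\]
after noting that the truncation $n\le m$ can only help (omitted terms are non-positive) and that the series converges absolutely via $|v''(r)|\le\alpha_2 r^{-s-2}$ with $s>2$. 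Continuity of $v''$ together with Assumption~\ref{assu:v}(iv) then lets me choose $\eps>0$ small enough that
\[
  \eta \;:=\; \min_{z\in[z_{\min},z_{\max}+\eps]} v''(z) \,+\, \sum_{n=2}^\infty n^2\, v''(nz_{\min}) \;>\; 0,
\]
so that $R_k(z)\ge \eta$ uniformly for all $k\in[N_1,N_2-1]$ under the hypotheses.

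Finally I would apply Gershgorin's circle theorem to the restricted matrix $\tilde H=(\partial_i\partial_j\mathcal{E}_N(z))_{N_1\le i,j\le N_2-1}$. Because every off-diagonal entry is $\le 0$, we have $|H_{kl}|=-H_{kl}$ for $l\ne k$, and therefore
\[
  H_{kk}-\sum_{l\in[N_1,N_2-1]\setminus\{k\}}|H_{kl}| \;=\; \sum_{l\in[N_1,N_2-1]} H_{kl} \;\ge\; R_k(z) \;\ge\; \eta,
\]
the middle inequality holding because the entries $H_{kl}$ with $l\notin[N_1,N_2-1]$ discarded by the restriction are non-positive and therefore only lower $R_k$. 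Gershgorin then delivers $\lambda_{\min}(\tilde H)\ge \eta$, and positivity of each diagonal entry follows a fortiori. The matching upper bound $C\sum\zeta_j^2$ comes from $\sum_{l\ne k}|H_{kl}|\le H_{kk}$ (a consequence of $R_k\ge 0$) combined with $H_{kk}\le v''(z_k)\le v''(z_{\min})$ on $[z_{\min},z_{\max}]$ and the $\alpha_2 r^{-s-2}$ growth bound past $z_{\max}$. With positive diagonal, non-positive off-diagonal, and strict diagonal dominance, $\tilde H$ is a non-singular M-matrix, so classical M-matrix theory gives $\tilde H^{-1}\ge 0$ entrywise, which is the monotonicity assertion. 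The main subtlety is this restriction step: we have no natural row-sum identity for the submatrix alone, and it is precisely the non-positivity of the off-diagonal entries discarded by restriction that lets the full-chain row-sum bound $R_k\ge\eta$ control the Gershgorin gap of $\tilde H$ uniformly in $N_1,N_2,N,m$.
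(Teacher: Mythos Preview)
Your proof is correct and follows essentially the same approach as the paper: compute the Hessian entries as sums of $v''$ over bonds, note off-diagonal non-positivity from Assumption~\ref{assu:v}(iii), bound the full row sum below by $v''(z_k)+\sum_{n\ge 2}n^2 v''(nz_{\min})$ using the counting ``at most $n$ bonds of length $n$ through $k$,'' choose $\eps$ small via continuity and Assumption~\ref{assu:v}(iv), and then apply Gershgorin to the restricted block (using non-positivity of the discarded off-diagonal entries) to get the lower eigenvalue bound and the M-matrix conclusion. Your treatment of the restriction step and the choice of $\eps$ is in fact slightly more explicit than the paper's; the upper bound and the diagonal positivity argument also match.
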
 
 \noindent Note that the Hessian is independent of the pressure $p$. 
\begin{proof}
Let $\mathcal{L}$ be the collection of discrete intervals $\{i,\ldots,j-1\}\subset\{1,\ldots,N-1\}$  of length $j-i \leq m$. Then for all $i,j$
\bes 
	\partial_i \partial_j \mathcal{E}_N(z) 
	= \sum_{L\in \mathcal{L}:\, \{i,j\}\subset L} v''\Big(\sum_{j\in L} z_j\Big).  
\ees
For $i\neq j$ and $i,j\in L$ we have $\sum_{j\in L} z_j \geq 2 z_{\min}$ hence $v''(\sum_L z_j) \le 0$; it follows that the off-diagonal entries of the Hessian are non-positive. Next we show that the row-sums are bounded from below by some constant $\eta>0$ if $N_1 \le i \le N_2-1$. 
\begin{align*}
  \sum_{j = 1}^N \partial_i \partial_j \mathcal{E}_N(z)
	  &= \partial_i^2 \mathcal{E}_N(z)+ \sum_{j: j \neq i} \partial_j \partial_i \mathcal{E}_N(z) \\
	  &= v''(z_i) + \sum_{L\ni i,\#L\geq 2} v'' \Big( \sum_{j\in L} z_j \Big) + \sum_{j: j\neq i} 
	\sum_{L\supset \{i,j\}} v'' \Big( \sum_{j\in L} z_j \Big) \\
	  &\geq v''(z_i)
     + \sum_{n = 2}^m v''(n z_{\min})
       \sum_{L\ni i,\#L=n}  \Big(1 + \sum_{j\in L, j\neq i} 1\Big)  \\
	&\geq v''(z_i) - v''(z_{\max}) + v''(z_{\max}) + \sum_{n=2}^\infty n^2 v''(n z_{\min}) =\eta.
\end{align*}
Assumption~\ref{assu:v} guarantees that $\eta>0$ for $\eps > 0$ sufficiently small. Thus row sums are positive, off-diagonal matrix elements non-positive, and consequently diagonal elements positive. Moreover, with $C = 2 \max \{ v''(r) \mid r \in [z_{\min}, z_{\max}+\eps] \}$ the diagonal elements are bounded from above by $\frac{C}{2}$. The proof of the lemma is then completed with the help of standard arguments, for example every eigenvalue of  $(\partial_i\partial_j \mathcal{E}_N(z))_{N_1 \le i.j \le N_2-1}$ lies in a Gershgorin circle with center $\partial_i^2 \mathcal{E}_N$ and radius $\sum_{j\neq i} |\partial_i\partial_j \mathcal{E}_N|$.  In particular, $(\partial_i\partial_j \mathcal{E}_N(z))_{N_1 \le i,j \le N_2-1}$ is an M-matrix and thus monotone.
\end{proof}

\begin{proof} [Proof of Theorem~\ref{thm:periodic}]
	(a) By Lemma~\ref{lem:est-on-min} minimizers lie in the compact set $[z_{\min}, z_{\max}]^{N-1}$. On that set the Hessian of $\mathcal{E}_N$ is positive definite because of 
Lemma~\ref{lem:hessian}, so $\mathcal{E}_N$ is strictly convex and the minimzer is unique. 

 (b) The convergence $z_j^\ssup{N}\to a$ as $j,N\to \infty$ along $N-j\to \infty$, where $a \in [z_{\min}, z_{\max}]$ is the unique minimizer of $\R_+\ni r\mapsto p r+ \sum_{k=1}^m v(kr)$, with the help of Lemma \ref{lem:hessian} is a straightforward adaptation of the corresponding proof in \cite{gardner-radin79} and will be omitted. By Assumption \ref{assu:v}(ii) we even have $a > z_{\min}$.  We remark that the proof in \cite{gardner-radin79} also shows that $\max\{ z_j^\ssup{N+1},\ z_{j+1}^\ssup{N+1} \} \le z_j^\ssup{N}$ for $j = 1, \ldots, N-1$. This in turn implies that the convergence is in fact uniform away from a boundary layer of vanishing volume fraction. 

	(c) This observation in combination with Lemma \ref{lem:subadditive} yields (c). Note that $e_0 < 0$ since $e_0 \le pz_{\max} + \sum_{k=1}^{\infty} v(k z_{\max}) \le p z_{\max} + v(z_{\max}) < 0$ by Assumptions \ref{assu:v} and \ref{assu:p}. 
	
\end{proof}
\noindent Notice that also $a < z_{\max}$ except for the exceptional cases in which only nearest neighbors interact, i.e.\ $m = 1$ or $v(z) = 0$ for $z \ge 2 z_{\max}$, and the pressure vanishes. 

\subsection{Surface energy} \label{sec:surface}

\begin{prop}\label{prop:surface}
	Let $m\in \N\cup \{\infty\}$ and $p\geq 0$. 
	Then 
	\bes
	\lim_{N\to \infty}(E_N- Ne_0)=   e_{\rm surf}  = 2 \inf_{\mathcal{D}_0} \mathcal{E}_\mathrm{surf} - p a -  \sum_{k=1}^m k v(ka).
	\ees
\end{prop}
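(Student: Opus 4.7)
The plan is to prove matching upper and lower bounds on $\lim (E_N - Ne_0)$. A preliminary direct sum of all bonds in a perfect lattice gives $\mathcal{E}_N(a, a, \ldots, a) = Ne_0 - \beta$ for $N \geq m+1$, where $\beta := pa + \sum_{k=1}^m kv(ka)$, so the claim reduces to $\lim(E_N - Ne_0) = 2\inf_{\mathcal{D}_0} \mathcal{E}_\mathrm{surf} - \beta$. For the upper bound I would fix $\eps > 0$, pick $(y_j) \in \mathcal{D}_0$ with $\mathcal{E}_\mathrm{surf}(y) < \inf_{\mathcal{D}_0} \mathcal{E}_\mathrm{surf} + \eps$ and $y_j = a$ for $j > K$, and for $N \geq 2K+m$ use the symmetric test configuration $z^{(N)} := (y_1, \ldots, y_K, a, \ldots, a, y_K, \ldots, y_1) \in \R_+^{N-1}$. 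The at least $m$ bulk spacings in the middle forbid any bond of range $\leq m$ connecting the two defects, so $\mathcal{E}_N(z^{(N)}) - \mathcal{E}_N(a, \ldots, a)$ splits additively into a left and a right contribution; a bond-by-bond comparison with $\mathcal{E}_\mathrm{surf}$, using that $h(y_j, \ldots, y_{j+m-1}) = e_0$ for $j > K$, identifies each piece as $\mathcal{E}_\mathrm{surf}(y)$, so $\mathcal{E}_N(z^{(N)}) = Ne_0 - \beta + 2\mathcal{E}_\mathrm{surf}(y)$. Sending $\eps \downarrow 0$ gives $\limsup(E_N - Ne_0) \leq 2\inf_{\mathcal{D}_0} \mathcal{E}_\mathrm{surf} - \beta$.

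For the lower bound, let $z^{(N)*}$ be the unique minimizer from Theorem~\ref{thm:periodic}(a), which by Lemma~\ref{lem:est-on-min} lies in $[z_\mathrm{min}, z_\mathrm{max}]^{N-1}$. Uniform strong convexity of $\mathcal{E}_N$ on this cube (Lemma~\ref{lem:hessian}) with Hessian bound $\eta > 0$, together with $\nabla \mathcal{E}_N(z^{(N)*}) = 0$, gives
\[ \tfrac{\eta}{2} \sum_{j=1}^{N-1}(z^{(N)*}_j - a)^2 \leq \mathcal{E}_N(a, \ldots, a) - E_N \leq e_0 - \beta, \]
where the second inequality follows from the preliminary and $E_N \geq (N-1)e_0$ (Lemma~\ref{lem:subadditive}), so the squared strain is uniformly bounded in $N$. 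A pigeonhole argument then yields, for each large $N$, a splitting index $j^*_N \in (N/3, 2N/3)$ such that the $2m$ spacings around $j^*_N$ all lie within $o(1)$ of $a$. Split the minimizer at $j^*_N$ and embed each half as a semi-infinite chain by appending $a$'s, obtaining $\tilde z^{(N),L} = (z^{(N)*}_1, \ldots, z^{(N)*}_{j^*_N}, a, \ldots)$ and $\tilde z^{(N),R}$ for the reversed right half; both lie in $\mathcal{D}_0$ and satisfy $\mathcal{E}_\mathrm{surf}(\tilde z^{(N),\cdot}) \geq \inf_{\mathcal{D}_0} \mathcal{E}_\mathrm{surf}$ by Theorem~\ref{thm:surface}(c).

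Expanding the two surface functionals bond-by-bond yields the identity
\[ \mathcal{E}_\mathrm{surf}(\tilde z^{(N),L}) + \mathcal{E}_\mathrm{surf}(\tilde z^{(N),R}) = \mathcal{E}_N(z^{(N)*}) - (N-1)e_0 + E_N^{\mathrm{ext}} - E_N^{\mathrm{cross}},\]
where $E_N^{\mathrm{ext}}$ collects the virtual-$a$ bonds past the cut in each semi-infinite extension and $E_N^{\mathrm{cross}}$ collects the cross-bonds across $j^*_N$ that occur in $\mathcal{E}_N$ but in neither semi-infinite piece. When the spacings in a window of radius $m$ around $j^*_N$ lie near $a$, direct counting gives $E_N^{\mathrm{cross}} \to \sum_{n=2}^m (n-1)v(na) = \beta - e_0$ and $E_N^{\mathrm{ext}} \to 2(\beta - e_0)$, so $E_N^{\mathrm{ext}} - E_N^{\mathrm{cross}} \to \beta - e_0$. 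Rearranging and combining with the $\inf$ bound yields $\liminf(E_N - Ne_0) \geq 2\inf_{\mathcal{D}_0} \mathcal{E}_\mathrm{surf} - \beta$, matching the upper bound.

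The delicate step is the bookkeeping for $E_N^{\mathrm{ext}} - E_N^{\mathrm{cross}}$: these local quantities depend on spacings in a window of radius $m$ around $j^*_N$, and one must verify they combine to the single constant $\beta - e_0$ up to an error controlled by the local $\ell^2$ deviation from $a$ that the pigeonhole choice of $j^*_N$ makes small. The decay $v(r) \geq -\alpha_1 r^{-s}$ with $s > 2$ from Assumption~\ref{assu:v}(ii) is essential when $m = \infty$ for controlling the tails of the bond sums uniformly in $N$.
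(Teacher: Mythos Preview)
Your approach is correct and in the same spirit as the paper's, but with two genuine differences worth noting.

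For the lower bound, the paper locates the good middle region by invoking Theorem~\ref{thm:periodic}(b), i.e.\ the Gardner--Radin convergence $z_j^{(N)}\to a$ in the bulk, and then \emph{modifies} the ground state by resetting a block of $k$ spacings to exactly $a$ (at energy cost $\leq C\eps$), after which the decomposition $A_N+B_N+C_N+D_N$ is clean. You instead deduce a uniform $\ell^2$ bound on the strain directly from strict convexity (Lemma~\ref{lem:hessian}) together with $E_N\geq (N-1)e_0$, and pigeonhole to find a good window; you then \emph{extend} each half by $a$'s without altering the ground state. Your route is more self-contained (it does not rely on the convergence result, whose proof the paper omits), at the price of the extra bookkeeping you call $E_N^{\mathrm{ext}}-E_N^{\mathrm{cross}}$. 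Your computation of that constant is correct: there are exactly $n-1$ cross-bonds of each length $n$, and each appears once in $\mathcal E_N$ and once (with partial virtual-$a$ spacings) in each of the two surface functionals, so the net contribution is $\sum_{n\ge 2}(n-1)v(na)=\beta-e_0$ plus an error controlled by the local $\ell^2$ deviation.

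Two small points. First, the appeal to Theorem~\ref{thm:surface}(c) is circular, since that theorem is proved \emph{using} the present proposition; but all you need there is $\mathcal E_\mathrm{surf}(\tilde z^{(N),\cdot})\geq \inf_{\mathcal D_0}\mathcal E_\mathrm{surf}$, which is just the definition of infimum. Second, your preliminary identity $\mathcal E_N(a,\ldots,a)=Ne_0-\beta$ and the ``no bond connects the two defects'' separation are exact only for finite $m$; for $m=\infty$ both hold only up to $o(1)$ tail errors, which the paper handles by introducing an extra truncation parameter $k$ (window width) and sending $k\to\infty$ after $N\to\infty$. You flag this at the end, and the same two-parameter limit fixes your argument as well.
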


\begin{proof}
	For simplicity we write down the proof for $m=\infty$; the proof when $m\in \N$ is completely analogous. 
	Fix $k\geq 2$ and $\eps>0$. Let $n_1,n_2\in \N$ with $n_2\geq k$ and $N= n_1+n_2+1$. Let $z=(z_{-n_1},\ldots, z_{n_2-1})\in [z_{\min},z_{\max}]^{n_1+n_2}$ be the spacings of the $N$-particle ground state, labelled by $j=-n_1,\ldots,n_2-1$ rather than $1,\ldots,N-1$. Choosing $n_1$ and $n_2$ large enough we may assume $\sum_{j=0}^{k-1}|z_j-a|^2\leq \eps$. Since the Hessian has matrix norm uniformly bounded from above (Lemma~\ref{lem:hessian}), changing the spacings $z_0,\ldots,z_{k-1}$ to $a$ increases the energy by $C \eps$ at most thus
	\bes
	  E_N \geq \mathcal{E}_N(z_{-n_1},\ldots,z_{-1},a,\ldots,a, z_k,\ldots,z_{n_2-1}) - C\eps.
	\ees
	We decompose the energy of the modified configuration as $A_N+ B_N+ C_N+ D_N$ where 
	\bes
	\begin{aligned} 
		A_N & = \mathcal{E}_{n_1+1} (z_{-n_1},\ldots,z_{-1}) + \mathcal{W}(z_{-n_1},\ldots,z_{-1};a,\ldots,a),\\
		B_N & = \mathcal{E}_{k+1} (a,\ldots,a) \\
		C_N & = \mathcal{W}(a,\ldots,a; z_k,\ldots,z_{n_2-1}) 
	    + \mathcal{E}_{n_2-k+1}(z_k,\ldots,z_{n_2-1})\\ 
		D_N &= \sum_{i=-n_1}^{-1}\sum_{j=k}^{n_2} v(z_i+\cdots + z_{-1} + ka + z_k+\cdots + z_j)
	\end{aligned}
	\ees
	where $\mathcal{W}$ gathers interactions that involve bonds from two consecutive blocks. The term $D_N$  represents the interactions between the left and right blocks. It satisfies 
      \bes
	0 \geq D_N \geq   \sum_{n=k}^\infty (n-k) v(n z_{\rm min}) \geq - \alpha_1 \sum_{n=k}^\infty  \frac{n-k}{(n z_{\rm min})^s} 	
    \geq - \frac{\alpha_1}{z_{\rm min}^s}\, \sum_{n=k}^\infty \frac{1}{n^{s-1}}
	\ees
	which goes to zero as $k\to \infty$. Next we subtract $Ne_0$ from $\mathcal{E}_N$  and distribute it as
	 $Ne_0= n_1 e_0 + (k+1)e_0 +(n_2-k) e_0$ over the first three sums.
	The middle block contributes
	\bes
	\begin{aligned}
	     B_N -  (k+1) e_0 
	&= \sum_{n=1}^{k} (k-n+1) v(na) + kpa - (k+1)pa - (k+1) \sum_{n=1}^\infty v(na)  \\
	&= - pa - \sum_{n=1}^{k} n v(na) - (k+1) \sum_{n=k+1}^\infty v(na) \to - \sum_{n=1}^\infty n v(na) 
	\end{aligned}
	\ees
	as $k \to \infty$. For the first block, we notice that
	\bes
	   A_N  - n_1 e_0 \geq \mathcal{E}_\mathrm{surf} (z_{-n_1},\ldots,z_{-1},a,a,\ldots) \geq \inf_{\mathcal{D}_0}\mathcal{E}_\mathrm{surf} .
	\ees
	Indeed the only missing piece are negative interactions between the left block and the right tail of a semi-infinite chain. The contribution of the right block $C_N$ is estimated in a similar way. We combine the estimates and let first $n_1,n_2 \to \infty$, then $k\to \infty$, and finally $\eps\to 0$ and find
	\bes
	    \liminf_{N\to \infty} \bigl(E_N - N e_0\bigr) \geq 2 \inf_{\mathcal{D}_0}  \mathcal{E}_\mathrm{surf} - pa - \sum_{n=1}^\infty n v(na).
	\ees
	For the upper bound, we take approximate minimizers of $\mathcal{E}_\mathrm{surf}$ and glue them together to an $N$-particle configuration by assigning them to the left and right boundaries, with spacings $a$ in between. This yields an $N$-particle configuration with energy $\mathcal{E}_N(z) - Ne_0 \leq 2 \inf_{\mathcal{D}_0} \mathcal{E}_\mathrm{surf}-  \sum_{n=1}^\infty n v(na) + O(\eps)$, and the required upper bound follows.
\end{proof}

Next we extend $\mathcal{E}_\mathrm{surf}$ to the space $\mathcal{D} \subset (r_{\rm hc}, \infty)^{\N}$ of sequences with $\sum_{j=1}^\infty(z_j-a)^2<\infty$.

\begin{lemma} \label{lem:extension}
	Let $m\in \N\cup \{\infty\}$. Let $\beta_j = \sum_{k=j+1}^m (k-j) v'(ka)$, $j=1,\ldots,m-1$. 
	Then for all $(z_j)_{j\in\N} \in \mathcal{D}_0$, we have 
    \be\label{eq:extension}
      \mathcal{E}_\mathrm{surf}((z_j)_{j\in \N}) = -  \sum_{j=1}^{m-1} \beta_j (z_j - a)
      + \sum_{j=1}^\infty \sum_{k=1}^m 
	\Bigl[ v\Bigl(\sum_{i=j}^{j+k-1} z_i\Bigr) - v(ka) - v'(ka) \sum_{i=j}^{j+k-1} (z_i-a) \Bigr].
  \ee	
The right-hand side is absolutely convergent for all $(z_j)_{j\in\N} \in \mathcal{D}$. 
\end{lemma}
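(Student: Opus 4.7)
The plan is to first verify the identity by an algebraic manipulation on $\mathcal{D}_0$, where every series in sight is actually a finite sum, and then to show absolute convergence of the right-hand side on $\mathcal{D}$ by combining a second-order Taylor estimate with the polynomial decay of $v''$ encoded in Assumption~\ref{assu:v}(iii)--(iv).

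For the identity on $\mathcal{D}_0$, I would start from $h(z_j,\ldots,z_{j+m-1}) - e_0 = p(z_j - a) + \sum_{k=1}^m [v(\sum_{i=j}^{j+k-1} z_i) - v(ka)]$ and insert $\mp v'(ka) \sum_{i=j}^{j+k-1}(z_i-a)$ inside each bracket. Because only finitely many $z_j$ differ from $a$, every triple sum below is finite and Fubini applies. The subtracted linear contribution reorganises via $\#\{j \ge 1 : j \le i \le j+k-1\} = \min(i,k) = k - (k-i)_+$ into
\begin{equation*}
\sum_{k=1}^m v'(ka)\sum_{i=1}^\infty \min(i,k)(z_i-a) = \Bigl(\sum_{k=1}^m k v'(ka)\Bigr)\sum_i(z_i - a) - \sum_{i=1}^{m-1}(z_i - a)\sum_{k=i+1}^m (k-i) v'(ka).
\end{equation*}
The first sum on the right equals $-p$ by the first-order optimality condition for $r\mapsto pr + \sum_k v(kr)$ at $r=a$ (Theorem~\ref{thm:periodic}(b)), so it exactly cancels the term $\sum_j p(z_j - a)$; the second sum is $\sum_{j=1}^{m-1}\beta_j(z_j - a)$ by definition of $\beta_j$, yielding \eqref{eq:extension}.

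For absolute convergence on $\mathcal{D}$, fix $(z_j) \in \mathcal{D}$, write $\zeta_i = z_i-a$ and $S_j^k = \sum_{i=j}^{j+k-1}\zeta_i$. The linear piece is controlled via $|\beta_j| \le \sum_{k>j}(k-j)|v'(ka)|$, combined with $|v'(ka)| = O(k^{-s-1})$ (which follows by integrating the decay bound in Assumption~\ref{assu:v}(iv) from $\infty$), giving $|\beta_j| = O(j^{-(s-1)})$ and hence $\sum_j \beta_j^2 < \infty$ since $s>2$; Cauchy--Schwarz then yields $\sum_j|\beta_j||\zeta_j| \le (\sum_j\beta_j^2)^{1/2}\|\zeta\|_{\ell^2}$. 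For the double sum, apply the remainder estimate
\begin{equation*}
\Bigl|v(ka+S)-v(ka)-v'(ka) S\Bigr| \le \tfrac12 S^2 \sup_{u\in[0,1]}|v''(ka+uS)|,
\end{equation*}
choose $\delta$ with $0<\delta<a-r_{\mathrm{hc}}$ and let $J = \{i : |\zeta_i|>\delta\}$, a finite set by square summability. For $(j,k)$ with $\{j,\ldots,j+k-1\}\cap J = \emptyset$ the argument of $v''$ stays in $[k(a-\delta),k(a+\delta)]$, on which Assumptions~\ref{assu:v}(iii)--(iv) give $|v''|\le C_k$ with $C_k = O(k^{-s-2})$ for $k$ large; Cauchy--Schwarz yields $(S_j^k)^2 \le k\sum_{i=j}^{j+k-1}\zeta_i^2$, and summing over $j$ (each $\zeta_i^2$ is counted $\le k$ times) produces the bound $\tfrac12\sum_k C_k k^2 \|\zeta\|_{\ell^2}^2$, finite because $\sum_k k^2 \cdot k^{-s-2} = \sum_k k^{-s} < \infty$.

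The delicate step -- and what I anticipate as the main obstacle -- is the remaining set of pairs $(j,k)$ that straddle some $i_0\in J$: here $S_j^k$ can be of order $\sqrt{k}\|\zeta\|_{\ell^2}$ rather than $O(k\delta)$, so the "small strain" Taylor estimate is not immediate. The point is that $|S_j^k|\le ka/2$ once $k \ge k_1 := \lceil 4\|\zeta\|_{\ell^2}^2/a^2\rceil$, so $ka+uS_j^k\ge ka/2$ uniformly in $u\in[0,1]$, which for $k$ large enough keeps the argument past $2z_{\min}$ and again yields $|v''(ka+uS_j^k)| = O(k^{-s-2})$; each summand is then $O(k\cdot k^{-s-2}) = O(k^{-s-1})$, and summing over $k\ge k_1$ and the $\le k$ choices of $j$ per anchor $i_0\in J$ gives $\sum_k k\cdot k^{-s-1} = \sum_k k^{-s}<\infty$. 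For the finitely many small-$k$ terms the remainder is finite because $z_i>r_{\mathrm{hc}}$ ensures the partial sums lie where $v$ is $C^2$. Combining the three contributions yields absolute convergence on all of $\mathcal{D}$, completing the proof.
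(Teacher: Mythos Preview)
Your proof is correct and follows essentially the same approach as the paper: the identity on $\mathcal{D}_0$ is obtained by the same Fubini/counting argument $\#\{j\ge 1: j\le i\le j+k-1\}=\min(i,k)$ together with the equilibrium condition $p+\sum_k k v'(ka)=0$, and absolute convergence on $\mathcal{D}$ is shown via the second-order Taylor remainder, the decay $|v''(r)|=O(r^{-s-2})$ on $[2z_{\min},\infty)$, and Cauchy--Schwarz on $(S_j^k)^2$. The only cosmetic difference is bookkeeping of the exceptional indices: the paper takes $\{j:z_j<z_{\min}\}$ as the finite bad set and bounds the remaining terms by $|v''(kz_{\min})|$ directly, whereas you use $J=\{i:|z_i-a|>\delta\}$ and an extra $k\ge k_1$ cut-off to force $ka+uS_j^k\ge ka/2$; both lead to the same $\sum_k k^2|v''(kz_{\min})|$-type summability.
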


\begin{proof}
 Let $\gamma_j = z_j - a$. 
  Using $e_0 = \sum_{k=1}^m v(ka)$, we have
  \bes
    \mathcal{E}_\mathrm{surf} ((z_j)_{j\in \N} )  =  \sum_{j=1}^\infty \Bigl[ p(z_j-a) + \sum_{k=1}^m \bigl( v(ka+\gamma_j+ \cdots + \gamma_{j+k-1})
	  - v(ka)\bigr) \Bigr].
  \ees
The equilibrium condition $p + \sum_{k=1}^m k v'(ka)=0$ yields 
  \begin{align*}
      & \sum_{j=1}^\infty \sum_{k=1}^m v'(ka) (\gamma_j+ \cdots + \gamma_{j+k-1}) \\ 
      & \qquad = \sum_{i=1}^\infty \gamma_i \sum_{k=1}^m v'(ka) \#\{j \geq 1\mid j \leq i \leq j+k-1\} \\
      & \qquad = \sum_{i=1}^\infty \gamma_i \sum_{k=1}^m v'(ka) \min(i,k) \\
      & \qquad = - \sum_{i=1}^{m-1} \gamma_i  \sum_{k=i+1}^{m}(k-i) v'(ka)  
        = - \sum_{i=1}^{m-1} \beta_i \gamma_i - \sum_{i=1}^{\infty} p \gamma_i 
  \end{align*}
  and the alternate expression for $\mathcal{E}_\mathrm{surf}$ follows. Next consider $(\gamma_j) \in \ell^2(\N)$ with $\gamma_j > r_{\rm hc} -a$ for all $j \in \N$. 
 Under Assumption~\ref{assu:v} the derivatives behave as $v''(r) = O(r^{-s-2})$ and $v'(r) = O(r^{-s-1})$ as $r\to \infty$ with $s>2$. It follows that $\eps_j:= \sum_{k=1}^\infty k v'(ka)$ decays like $\int_{ja}^\infty  r \times r^{-s-1} \dd r = O(j^{-s+1})$ so that $\sum_{j=1}^\infty \eps_j ^2 <\infty$. The Cauchy-Schwarz inequality then shows 
\bes
	\sum_{j=1}^{m-1} \bigl|\beta_j \gamma_j \bigr| \leq c \Bigl(\sum_{j=1}^\infty \gamma_j^2\Bigr)^{1/2}
\ees
for some suitable $m$-independent constant $c$. In particular, when $m=\infty$ the sum $\sum_j \beta_j \gamma_j$ is absolutely convergent. In order to show that the double sum over $k$ and $j$ in Eq.~\eqref{eq:extension} is absolutely convergent, we  proceed with estimates analogous to Lemma~\ref{lem:hessian}. 
 Assume first that all spacings $z_j = \gamma_j + a$ are larger than  $z_{\rm min}$. Set $\sup_{r\geq z_{\min}} |v''(r)|=c_1$ and note that, by Assumption~\ref{assu:v}(iii) for all $k\geq 2$, $\sup_{r\geq kz_{\min}} |v''(r)| \leq |v''(k z_{\min})|$. Hence
\begin{align*}
  & 2 \sum_{j=1}^\infty \sum_{k=1}^m \bigl| v(ka+\gamma_j+ \cdots + \gamma_{j+k-1}) - v(ka) - v'(ka) (\gamma_j+ \cdots + \gamma_{j+k-1}) \bigr| \\
  & \qquad \leq  c_1 \sum_{j=1}^\infty  \gamma_j^2 + \sum_{j=1}^\infty \sum_{k=2}^m |v''(k z_{\min})|
    \, ( \gamma_j + \cdots + \gamma_{j+k-1}\bigr)^2 \\
  & \qquad \leq c_1 \sum_{j=1}^\infty  \gamma_j^2 + \sum_{j=1}^\infty \sum_{k=2}^m k |v''(kz_{\min})|
      \, ( \gamma_j^2 + \cdots + \gamma_{j+k-1}^2\bigr) \\
  & \qquad  \leq  \Bigl(c_1 + \sum_{k=1}^m k^2 |v''(kz_{\min})| \Bigr) \sum_{j=1}^\infty \gamma_j^2.
\end{align*}
More generally, if $(\gamma_j)\in \ell^2(\N) \cap (r_{\rm hc}-a, \infty)^{\N}$, then $\gamma_j \to 0$ and because of { $a>z_{\rm \min}$},
there is an $i \in \N$ such that $z_j \geq z_{\min}$ for all $j\geq i$. Let $\eps = \min\{ |z_j| \mid j=1,\ldots,i\}$. Summands with $j \geq i$ can be estimated as before. For $j\leq i$ and $k \geq i+2$, we proceed as before as well, except that we  replace $v''(k z_{\min})$ by $v''((k-i)z_{\min} + i \eps)$. This leaves a finite sum over $j \leq i, k \leq i+2$ and overall, the sum is absolutely convergent.
\end{proof}

\begin{lemma} \label{lem:Esurfcont}
	The map $\mathcal{D} \to \R$, $(z_j) \mapsto \mathcal{E}_\mathrm{surf}\bigl( (z_j)_{j\in \N}\bigr)$ defined by \eqref{eq:extension} is continuous. 
\end{lemma}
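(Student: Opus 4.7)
The plan is to use the absolutely convergent representation \eqref{eq:extension} from Lemma~\ref{lem:extension} and split $\mathcal{E}_\mathrm{surf} = L + F$ into a linear part $L(z) = -\sum_{j=1}^{m-1}\beta_j(z_j-a)$ and a second-order remainder $F(z) = \sum_{j=1}^{\infty}\sum_{k=1}^{m} T_{j,k}(z)$ with
$$ T_{j,k}(z) := v\Bigl(\sum_{i=j}^{j+k-1} z_i\Bigr) - v(ka) - v'(ka)\sum_{i=j}^{j+k-1}(z_i-a). $$
The linear piece is immediate: for finite $m$ it is a finite linear combination of coordinates (continuous in the product topology, a fortiori in $\ell^2$); for $m=\infty$ the proof of Lemma~\ref{lem:extension} has already established $(\beta_j)\in\ell^2$, so Cauchy--Schwarz gives continuity of $L$ on $\ell^2(\N)$.

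For $F$, fix $z\in\mathcal{D}$ and a sequence $z^{(n)}\to z$ in $\ell^2$. The decisive step is to separate all spacings uniformly from the hard core: since $z_j\to a$ (consequence of $z-a\in\ell^2$) and $z_j>r_\mathrm{hc}$, one has $\inf_j z_j =: 2\delta_0 > 2r_\mathrm{hc}$; combined with $\|z^{(n)}-z\|_\infty\le\|z^{(n)}-z\|_2\to 0$, this yields $z^{(n)}_j\ge\delta_0>r_\mathrm{hc}$ for every $j$ and every sufficiently large $n$. On $[\delta_0,\infty)$ the function $|v''|$ is bounded (it is continuous on the compact set $[\delta_0,2z_{\min}]$ and dominated by $\alpha_2\,r^{-s-2}$ on $[2z_{\min},\infty)$ by Assumption~\ref{assu:v}(iv)). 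Taylor's theorem then gives $|T_{j,k}(w)|\le \tfrac{1}{2}\sup_{r\ge k\delta_0}|v''(r)|\,\bigl(\sum_{i=j}^{j+k-1}(w_i-a)\bigr)^2$, and the very chain of inequalities used in the proof of Lemma~\ref{lem:extension} (expanding the square and reindexing the $k$-sum) yields
$$
\sum_{j=1}^{\infty}\sum_{k=1}^{m}|T_{j,k}(w)| \;\le\; C(\delta_0)\,\|w-a\|_2^2 \qquad \text{for every } w\text{ with }\inf_j w_j\ge \delta_0,
$$
where $C(\delta_0) = c_1 + \sum_{k\ge 2} k^2 |v''(k\delta_0)| < \infty$ thanks to $s>2$.

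The conclusion is a three-$\eps$ argument. Given $\eta>0$, choose $R\ge \sup_n \|z^{(n)}-a\|_2$ and pick $J,K$ so large that $\sum_{k>K} k^2|v''(k\delta_0)|<\eta/R^2$ and $\sum_{j>J}(w_j-a)^2<\eta/C(\delta_0)$ for $w\in\{z\}\cup\{z^{(n)}\}$ (uniform in $n$ because $\ell^2$-convergent sequences are tight). The estimate above bounds the joint tail $\sum_{(j,k):\,j>J\text{ or }k>K}|T_{j,k}(w)|$ by $2\eta$ uniformly in $n$. The remaining double sum is finite, and since each $T_{j,k}$ is a continuous function of the finitely many positive coordinates $z_j,\dots,z_{j+k-1}>\delta_0$ (recall $v\in C^2$ on $(r_\mathrm{hc},\infty)$) and $\ell^2$-convergence implies coordinatewise convergence, $\sum_{j\le J}\sum_{k\le K}T_{j,k}(z^{(n)})\to\sum_{j\le J}\sum_{k\le K}T_{j,k}(z)$. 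Combining the three estimates yields $F(z^{(n)})\to F(z)$, hence $\mathcal{E}_\mathrm{surf}=L+F$ is continuous on $\mathcal{D}$. The main obstacle is securing the uniform lower bound $z^{(n)}_j\ge\delta_0>r_\mathrm{hc}$; once that is done, the continuity argument is a direct rerun of the absolute-convergence estimate from Lemma~\ref{lem:extension}.
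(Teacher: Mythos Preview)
Your proof is correct and follows essentially the same $\eps/3$ strategy as the paper's (uniform tail control via the absolute-convergence estimates of Lemma~\ref{lem:extension}, then continuity of $v$ on the remaining finite sum), though you spell out the details more explicitly than the paper's two-line sketch. One minor slip: the claim $\inf_j z_j > 2r_\mathrm{hc}$ is not guaranteed (only $>r_\mathrm{hc}$ is), so your $\delta_0 = \tfrac12\inf_j z_j$ may drop below $r_\mathrm{hc}$; take instead $\delta_0 = \tfrac12(\inf_j z_j + r_\mathrm{hc})$ and the rest goes through unchanged.
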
 

\begin{proof}
Let $z, z^\ssup{1}, z^\ssup{2}, \ldots$ be sequences in $\mathcal{D}$ such that $z^\ssup{n} - z \to 0$ in $\ell^2(\N)$. As $\lim_{i\to\infty} \sum_{j \ge i} (\gamma^\ssup{n}_j)^2 = 0$ uniformly in $n$, the estimates above show that for every $\eps>0$, we can find $i \in \N$ such that the sum over $\{(j,k) \mid j \geq i \text{ or } k \geq i\}$ contributes to $\mathcal{E}_\mathrm{surf}(\gamma^\ssup{n})$ and $\mathcal{E}_\mathrm{surf}(\gamma)$ an amount bounded by $\eps$. In the remaining finite sum the continuity of $v(r)$ allows us to pass to the limit.  The proof is easily concluded with an $\eps/3$ argument.
\end{proof} 

\begin{lemma} \label{lem:coercive}
  The restriction of $\mathcal{E}_\mathrm{surf}$ to $\mathcal{D}\cap [z_{\min},z_{\max}+\eps]^\N$ is strictly convex and satisfies 
  \bes
     \mathcal{E}_\mathrm{surf}\bigl( (z_j)_{j\in \N}\bigr) \geq c_1 \sum_{j=1}^\infty (z_j-a)^2 -c_2
  \ees
  for suitable $m$-independent constants $\eps,c_1,c_2>0$. 
\end{lemma}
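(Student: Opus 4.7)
The plan is to exploit the Taylor-type representation of $\mathcal{E}_\mathrm{surf}$ provided by Lemma~\ref{lem:extension} and to lift the finite-dimensional Hessian estimate of Lemma~\ref{lem:hessian} to the infinite-dimensional setting. I would first observe that, in the expression \eqref{eq:extension}, the constant sequence $(a,a,\ldots)$ makes every bracket $[v(\sum z_i)-v(ka)-v'(ka)\sum(z_i-a)]$ vanish and is a critical point of the double sum. Writing $\gamma_j=z_j-a$ and applying the second-order Taylor formula with integral remainder to each individual term $v(ka+\gamma_j+\cdots+\gamma_{j+k-1})$, one obtains
\[
\mathcal{E}_\mathrm{surf}((z_j)) \;=\; -\sum_{j=1}^{m-1}\beta_j\gamma_j \;+\; \int_0^1(1-t)\sum_{i,l}H_{il}(a+t\gamma)\,\gamma_i\gamma_l\,\mathrm{d}t,
\]
where $H_{il}(\xi)=\sum_{L\in\mathcal{L}:\{i,l\}\subset L}v''(\sum_{r\in L}\xi_r)$ is the formal Hessian introduced in the proof of Lemma~\ref{lem:hessian}. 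I would first establish this identity on $(z_j)\in\mathcal{D}_0$, where all sums are finite and the algebra is routine.

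Next I would extend the quadratic-form bound of Lemma~\ref{lem:hessian} to $\ell^2(\N)$. For any $\xi\in[z_{\min},z_{\max}+\eps]^\N$, each finite principal submatrix of $H(\xi)$ is diagonally dominant with non-positive off-diagonal entries and row sums bounded below by the same constant $\eta>0$ appearing in Lemma~\ref{lem:hessian}; Gershgorin then gives $\sum_{i,l}H_{il}(\xi)\zeta_i\zeta_l\geq \eta\sum_j\zeta_j^2$ for every finitely-supported $\zeta$, and by density on $\ell^2(\N)$. Since $a+t\gamma$ stays in $[z_{\min},z_{\max}+\eps]^\N$ for $t\in[0,1]$, combining this with the Taylor expansion yields
\[
\mathcal{E}_\mathrm{surf}((z_j)) \;\geq\; -\sum_{j=1}^{m-1}\beta_j\gamma_j \;+\; \frac{\eta}{2}\|\gamma\|_{\ell^2}^2.
\]
Cauchy--Schwarz and Young's inequality then absorb the linear term,
\[
\Bigl|\sum_{j=1}^{m-1}\beta_j\gamma_j\Bigr| \;\leq\; \Bigl(\sum_{j\geq 1}\beta_j^2\Bigr)^{1/2}\|\gamma\|_{\ell^2} \;\leq\; \frac{1}{\eta}\sum_{j\geq 1}\beta_j^2 + \frac{\eta}{4}\|\gamma\|_{\ell^2}^2,
\]
and the bound $\sum_j\beta_j^2\leq c^2$ uniformly in $m$, established in the proof of Lemma~\ref{lem:extension}, gives the coercivity with $c_1=\eta/4$ and $c_2=c^2/\eta$, both independent of $m$. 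The extension from $\mathcal{D}_0$ to $\mathcal{D}\cap[z_{\min},z_{\max}+\eps]^\N$ uses the $\ell^2$-continuity of $\mathcal{E}_\mathrm{surf}$ from Lemma~\ref{lem:Esurfcont}.

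For strict convexity, the same Hessian bound does the job along line segments: given $z\neq z'$ in $\mathcal{D}\cap[z_{\min},z_{\max}+\eps]^\N$, the function $f(t)=\mathcal{E}_\mathrm{surf}(tz+(1-t)z')$ is $C^2$ on $[0,1]$, its second derivative equals the quadratic form $\sum_{i,l}H_{il}(tz+(1-t)z')(z_i-z_i')(z_l-z_l')$ (the linear part of \eqref{eq:extension} contributes nothing), and this is $\geq\eta\|z-z'\|_{\ell^2}^2>0$ throughout $[0,1]$.

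The main technical hurdle is to justify the differentiation under the infinite sums so that the Taylor remainder with integral form and the quadratic expression $\langle\gamma,H(\xi)\gamma\rangle$ are both absolutely convergent for $\xi$, $\gamma$ in the relevant spaces. This is dealt with precisely as in Lemma~\ref{lem:extension}: one performs all the algebra on $\mathcal{D}_0$, where each sum terminates, and then passes to the limit on $\mathcal{D}$ using the $\ell^2$-summability estimates already collected there. The positivity of diagonal dominance on infinite matrices, rather than being a serious obstacle, follows by truncation since both Lemma~\ref{lem:hessian} and the $\ell^2$ approximation are quantitative and uniform.
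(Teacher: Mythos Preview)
Your proof is correct and, for the coercivity, takes a genuinely different and more direct route than the paper. The paper does not Taylor-expand $\mathcal{E}_\mathrm{surf}$ around $(a,a,\ldots)$ at all; instead it truncates $\gamma$ to $\gamma^{(n)}$, compares $\mathcal{E}_\mathrm{surf}(z^{(n)})$ with the finite energy $\mathcal{E}_{n+1}(z_1,\ldots,z_n)-ne_0$, then embeds $(z_1,\ldots,z_n)$ into the middle of a large $N$-particle ground state configuration and Taylor-expands $\mathcal{E}_N$ around its minimizer $z^{(N)}$, using Lemma~\ref{lem:hessian} there. The upper bound on $\mathcal{E}_N(z')-E_N$ comes from block decomposition and $E_k\geq ke_0$, and the limits $k_1,k_2\to\infty$, $n\to\infty$ are taken in that order.

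Your approach exploits the explicit Taylor-type representation~\eqref{eq:extension} already established in Lemma~\ref{lem:extension}, lifts the Gershgorin/diagonal-dominance bound of Lemma~\ref{lem:hessian} directly to the semi-infinite Hessian, and absorbs the linear term $-\sum\beta_j\gamma_j$ via Young's inequality using the bound $\sum_j\beta_j^2<\infty$ from the proof of Lemma~\ref{lem:extension}. This is shorter and avoids the cut-and-paste construction entirely. The paper's argument, on the other hand, never needs to identify or control the linear term explicitly and recycles the same embedding technique used later (e.g.\ in Lemma~\ref{lem:sum-bound} and its truncated variant~\eqref{eq:truncoerc}), so there is some economy of method on that side. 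For the strict convexity both arguments are essentially the same---the paper simply says the proof is ``similar to Lemma~\ref{lem:hessian} and therefore omitted,'' which is precisely your Hessian-along-segments computation.
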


\begin{proof}
	The proof of the convexity is similar to Lemma~\ref{lem:hessian} and therefore omitted. For the coercivity, consider first $m=\infty$. Let $\gamma_j = z_j -a$,  $\gamma_j^\ssup{n} = \gamma_j \1_{\{j \leq n\}}$ the truncated strain, and $z_j^\ssup{n} = a+ \gamma_j^\ssup{n}$.
   Then
  \begin{align*}
    \mathcal{E}_\mathrm{surf}(z^\ssup{n}) &= \sum_{j=1}^n \bigl(h(z_j^\ssup{n},z_{j+1}^\ssup{n},\ldots)-e_0) \\
	& = \mathcal{E}_{n+1}(z_1,\ldots,z_n) - n e_0 + \sum_{j=1}^n \sum_{k=1}^\infty v(z_j+\cdots + z_n + k a)
  \end{align*}
  thus
  \bes
   \mathcal{E}_{n+1}(z_1,\ldots,z_n) - n e_0\leq \mathcal{E}_\mathrm{surf} (z^\ssup{n}) + C
  \ees
  where $C = - \sum_{k,\ell=1}^\infty v(\ell z_{\min} + k a) <\infty$.
  Next we cut and paste $(z_1,\ldots,z_n)$ into the middle of a large ground state chain: let $k_1,k_2\in \N$ with $k_2 \geq n+1$, $N=k_2+k_1+1$  and $(z_{-k_1+1}^\ssup{N},\ldots,z_{k_2}^\ssup{N})$ the spacings of the $N$-particle ground state.  Let $z'=(z_{-k_1+1}^\ssup{N},\ldots,z_0^\ssup{N}, z_1,\ldots,z_n, z_{n+1}^\ssup{N},\ldots,z_{k_2}^\ssup{N})$. A Taylor expansion of $\mathcal{E}_N$ around the minmizer $z^\ssup{N}$ together with Lemma~\ref{lem:hessian} and Theorem~\ref{thm:periodic} yields 
  \be \label{eq:coerc-lb}
      \mathcal{E}_N(z') - \mathcal{E}_N(z^\ssup{N}) \geq \frac{\eta}{2} \sum_{j=1}^n (z_j - z_j^\ssup{N})^2
      \to \frac{\eta}{2} \sum_{j=1}^n (z_j-a)^2 \quad (k_1,k_2\to \infty).
  \ee
  On the other hand, let $C_1 = \sum_{\ell=2}^\infty \ell |v(\ell z_{\min})|$ be a bound for interactions between blocks and remember $E_k \geq k e_0$ by Lemma \ref{lem:subadditive} and $e_0 \le 0$. Then 
  \begin{align*}
      \mathcal{E}_N(z') - \mathcal{E}_N(z^\ssup{N}) 
	& \leq 2 C_1 + \mathcal{E}_{k_1+1}(z_{-k_1+1}^\ssup{N},\ldots,z_0^\ssup{N})  + \mathcal{E}_{n+1}(z_1,\ldots,z_n) \\
	&\qquad \qquad 
+ \mathcal{E}_{k_2-n+1}(z_{n+1}^\ssup{N},\ldots,z_{k_2}^\ssup{N}) - E_N \\
	& \leq 4 C_1 + \mathcal{E}_{n+1}(z_1,\ldots,z_n) - \mathcal{E}_{n+1}(z_1^\ssup{N},\ldots,z_n^\ssup{N}) \\
	& \leq 4 C_1 + \mathcal{E}_{n+1}(z_1,\ldots,z_n) - (n+1) e_0  \\
	& \leq 4 C_1 - e_0 + C + \mathcal{E}_\mathrm{surf} (z^\ssup{n}) = C_2 + \mathcal{E}_\mathrm{surf} (z^\ssup{n}).
  \end{align*}
  We combine with Eq.~\eqref{eq:coerc-lb} and let first $k_1,k_2\to \infty$, then $n\to \infty$, and conclude that $\frac{\eta}{2}\sum_{j=1}^\infty \gamma_j^2 \leq \mathcal{E}_\mathrm{surf}(z) + C_2$ with the help of Lemma \ref{lem:Esurfcont}. This proves the coercivity in the case $m=\infty$. The proof for finite $m$ is similar. 
\end{proof}

\begin{lemma}\label{lem:Esurf_min}
	The surface energy $\mathcal{E}_\mathrm{surf}$ has a unique minimizer in $\mathcal{D}$. The minimizer is in $\mathcal{D}\cap [z_{\min}, z_{\max}]^\N$. 
\end{lemma}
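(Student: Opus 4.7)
The plan is to prove existence and uniqueness of a minimizer of $\mathcal{E}_\mathrm{surf}$ on the closed convex set $\mathcal{D}\cap[z_{\min},z_{\max}]^\N$ via the direct method of the calculus of variations, and then to show that this minimizer also achieves the infimum of $\mathcal{E}_\mathrm{surf}$ on all of $\mathcal{D}$ by producing, for every $z\in\mathcal{D}$ with some coordinate outside $[z_{\min},z_{\max}]$, a modified $z'\in\mathcal{D}$ with strictly smaller energy. These modifications are semi-infinite-chain analogues of the truncation and particle-removal moves used to prove Lemma~\ref{lem:est-on-min}.

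For the direct method, I note that $\mathcal{D}\cap[z_{\min},z_{\max}]^\N$ is a convex $\ell^2$-closed subset of the affine Hilbert space $a+\ell^2(\N)$. On it, Lemma~\ref{lem:Esurfcont} supplies $\ell^2$-continuity of $\mathcal{E}_\mathrm{surf}$, and Lemma~\ref{lem:coercive} supplies strict convexity together with the coercivity estimate $\mathcal{E}_\mathrm{surf}(z)\geq c_1\|z-a\|_{\ell^2}^2-c_2$. A minimizing sequence is therefore $\ell^2$-bounded, and a weakly convergent subsequence has its weak limit inside $\mathcal{D}\cap[z_{\min},z_{\max}]^\N$ since convex strongly closed sets are weakly closed by Mazur's theorem. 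Continuous convex functions on Hilbert spaces are weakly lower semicontinuous, so the weak limit is a minimizer, and strict convexity forces uniqueness on this set.

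To reduce from $\mathcal{D}$ to $\mathcal{D}\cap[z_{\min},z_{\max}]^\N$, I use that by $\ell^2$-summability of $z-a$ only finitely many coordinates of $z\in\mathcal{D}$ can violate $z_j\in[z_{\min},z_{\max}]$, and handle them one at a time. If $z_{j_0}>z_{\max}$, set $z'_{j_0}:=z_{\max}$ and leave the other spacings unchanged. The only affected interparticle distances $x_k-x_l$ are those with $l\leq j_0<k$; each equals $z_{j_0}$ plus a nonnegative remainder, so remains $\geq z_{\max}$ after the modification. Since $v$ is increasing on $[z_{\max},\infty)$ by Assumption~\ref{assu:v}(i), every affected pair interaction decreases (strictly for the nearest-neighbor term $v(z_{j_0})\to v(z_{\max})$), and the linear pressure contribution $pz_{j_0}$ decreases as well. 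If instead $z_{j_0}<z_{\min}$, I imitate Lemma~\ref{lem:est-on-min}: remove the atom at $x_{j_0+1}$, merging $z_{j_0}$ and $z_{j_0+1}$ into one spacing, and simultaneously prepend a new spacing $z_{\max}$ at the left, producing $z'=(z_{\max},z_1,\ldots,z_{j_0-1},z_{j_0}+z_{j_0+1},z_{j_0+2},\ldots)\in\mathcal{D}$. By Assumption~\ref{assu:v}(ii) the removal gains at least $v(z_{j_0})+v(z_{\max})-2\alpha_1\sum_{n\geq 2}(nz_{j_0})^{-s}>0$ in interaction energy, while prepending $z_{\max}$ adds at most $v(z_{\max})+pz_{\max}<0$ to the total energy (the new atom sees only distances $\geq z_{\max}$ to its right, all with $v\leq 0$, plus one pressure bond of length $z_{\max}$), by Assumption~\ref{assu:p}. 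The net change is therefore strictly negative. Iterating the two moves is a finite procedure terminating in $\mathcal{D}\cap[z_{\min},z_{\max}]^\N$ with strictly smaller energy, so the minimizer of $\mathcal{E}_\mathrm{surf}$ on $\mathcal{D}$ must coincide with the unique minimizer from the direct-method step.

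The main technical point is the $z_{j_0}<z_{\min}$ case. The finite-chain proof of Lemma~\ref{lem:est-on-min} reattaches the removed atom at either endpoint of the chain, but a semi-infinite chain has only one endpoint, and sliding the offending atom off to infinity on the right would incur an unbounded pressure contribution. Reattaching at the left endpoint while relabeling preserves the infinite-length bookkeeping, and the energy decomposition into a local removal gain and a local reattachment cost (separately controlled by Assumptions~\ref{assu:v}(ii) and~\ref{assu:p}) then transfers from the finite to the semi-infinite setting. Making this infinite-sum bookkeeping rigorous for $\mathcal{E}_\mathrm{surf}$ — verifying that no hidden boundary contribution spoils the strict inequality — is the delicate step.
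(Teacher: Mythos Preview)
Your overall strategy matches the paper's: apply the direct method on $\mathcal{D}\cap[z_{\min},z_{\max}]^\N$ using Lemmas~\ref{lem:Esurfcont} and~\ref{lem:coercive}, then reduce from $\mathcal{D}$ by energy-decreasing modifications. Two points are worth noting.

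First, in the case $z_{j_0}<z_{\min}$ you must choose $j_0$ so that $z_{j_0}=\min_j z_j$; otherwise the bound $v(z_{j_0})+v(z_{\max})-2\alpha_1\sum_{n\ge 2}(nz_{j_0})^{-s}$ on the removal gain is not justified, since the distances from $x_{j_0+1}$ to its $n$-th neighbours are only known to be $\ge n\cdot\min_j z_j$, not $\ge n z_{j_0}$. This is a minor omission, easily repaired.

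Second, your reattachment step is correct but unnecessary, and it is precisely the source of the ``delicate bookkeeping'' you worry about. The paper simply merges $z_j$ and $z_{j+1}$ (truncating the result at $z_{\max}$) \emph{without} prepending a new atom, mapping $(z_1,\ldots)\mapsto(z_1,\ldots,z_{j-1},\min(z_j+z_{j+1},z_{\max}),z_{j+2},\ldots)$. The reason this suffices for the semi-infinite chain is that each summand of $\mathcal{E}_\mathrm{surf}$ carries a $-e_0$ normalisation, and $e_0\le 0$: deleting one atom removes one such term, which is itself a non-negative contribution, so no compensating reattachment is needed. Your remove-and-prepend move achieves the same strict decrease via the separate inequality $v(z_{\max})+pz_{\max}<0$, at the cost of the extra shift-and-relabel manipulation. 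Both routes are valid; the paper's is shorter and sidesteps the boundary accounting that you flag as the delicate step.
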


\begin{proof}
	We proceed as in Section~\ref{sec:bulk}. 
	Let $(z_j)_{j\in \N}\in\mathcal{D}$. If one of the $z_j$'s is larger than $z_{\max}$, we can define a new configuration by shrinking this spacing to $z_{\max}$, leaving all other configurations unchanged. This decreases $\mathcal{E}_\mathrm{surf}$. If one of the $z_j$'s is smaller than $z_{\min}$, let $b$ be the smallest among them, and $j \in \N$ with $b=z_j$. Then we can define a new configuration by removing a participating particle and possibly shrinking a bond,  i.e., $(z_1,z_2,\ldots) \mapsto (z_1,z_2,\ldots,z_{j-1}, \min(z_{j} + z_{j+1}, z_{\max}), z_{j+2}, \ldots)$. Since $e_0 \le 0$, just as in Lemma~\ref{lem:est-on-min}, we see that this decreases the energy. Repeating these steps if necessary, the initial configuration is mapped to a new one that has strictly lower energy and all spacings in $[z_{\min},z_{\max}]$.

  The existence of a minimizer now follows from the coercivity proven in Lemma~\ref{lem:coercive}, the compactness of $[z_{\min},z_{\max}]^\N\cap \mathcal{D}$ with respect to the weak $\ell^2$-convergence (shifted by $(a, a, \ldots)$) and the weak lower semicontinuity of $\mathcal{E}_\mathrm{surf}$ on that set due to Lemmas~\ref{lem:Esurfcont} and \ref{lem:coercive}. The minimizer is unique because of the strict convexity from  Lemma~\ref{lem:coercive}.
\end{proof}

\begin{proof}[Proof of Theorem \ref{thm:surface}.]
Clear from Lemmas \ref{lem:Esurfcont}, \ref{lem:coercive}, \ref{lem:Esurf_min} and Proposition \ref{prop:surface}. 
\end{proof}

\begin{proof}[Proof of Proposition \ref{prop:lim-bulk}.]
In complete analogy to Lemma \ref{lem:extension} we obtain 
    \be\label{eq:extensionbulk}
      \mathcal{E}_\mathrm{bulk}((z_j)_{j\in \Z}) = 
      \sum_{j=-\infty}^\infty \sum_{k=1}^m 
	\Bigl[ v\Bigl(\sum_{i=j}^{j+k-1} z_i\Bigr) - v(ka) - v'(ka) \sum_{i=j}^{j+k-1} (z_i-a) \Bigr].
  \ee	
for all $(z_j)_{j\in\Z} \in \mathcal{D}^+_0$, and as in Lemma \ref{lem:Esurfcont}, we see that \eqref{eq:extensionbulk} defines a continuous map $\mathcal{D}^+ \to \R$. The proof of strict convexity, even on $[z_{\min}, z_{\max} + \eps]^{\Z} \cap \mathcal{D}^+$ for some $\eps > 0$, is again similar to Lemma~\ref{lem:hessian}. As in Lemma \ref{lem:Esurf_min} we have that $\mathcal{E}_\mathrm{bulk}$ has a unique minimizer in $\mathcal{D}$, which lies in $\mathcal{D}\cap [z_{\min}, z_{\max}]^\N$. Since $a \in (z_{\min}, z_{\max}]$ and $\partial_i \mathcal{E}_\mathrm{bulk}((z_j)_{j\in \Z}) = 0$ for every $i \in \Z$ by \eqref{eq:extensionbulk}, the minimizer of $\mathcal{E}_\mathrm{bulk}$ is $(\ldots, a, a, \ldots)$. Clearly, $\mathcal{E}_\mathrm{bulk}(\ldots, a, a, \ldots) = 0$. Finally, the formula connecting $\mathcal{E}_\mathrm{bulk}$ and $\mathcal{E}_\mathrm{surf}$ is clear on $\mathcal{D}^+_0$ and follows on $\mathcal{D}^+$ by approximation. 
\end{proof}


\subsection{A fixed point equation}

In the following we assume that $v$ has a hard core: 
\begin{assumption}\label{assu:hcvtoinfty} 
$r_{\rm hc} > 0$ and $v(r) \to \infty$ as $r \searrow r_{\rm hc}$. 
\end{assumption}

\noindent We extend $h$, defined by \eqref{eq:h-def} on $(r_{\rm hc}, \infty)^{\N}$, to $\R_+^{\N}$ by setting 
	\be\label{eq:h-def-ext} 
	h(z) = \infty \mbox{ if } z_j \le r_{\rm hc} \mbox{ for some } j. 
	\ee
Our main aim in this subsection is to obtain the following characterisation of $\overline{\mathcal{E}}_\mathrm{surf}$, cf.\ \eqref{eq:Esurfextended}. 
\begin{prop}\label{prop:bellman-ersatz}
	Let $I= \overline{\mathcal{E}}_\mathrm{surf} - \min \mathcal{E}_\mathrm{surf}$. Then $I$ is the unique lower semi-continuous solution (product topology) of the equation 
	\be\label{eq:bellmaennchen}
		I(z_1,z_2,\ldots) = h(z_1,z_2,\ldots) - e_0 + I(z_2,z_3,\ldots) 
	\ee
	such that $\min I =0$ and $I = \infty$ if $z_j\leq r_\mathrm{hc}$ for one of the $z_j$'s.
\end{prop}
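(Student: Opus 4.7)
\emph{Existence.} The plan is to first establish~\eqref{eq:bellmaennchen} on $\mathcal{D}_0$ by splitting off the $j=1$ term from the series defining $\mathcal{E}_\mathrm{surf}$, and then to extend to $\mathcal{D}$ using the $\ell^2$-continuity of $\mathcal{E}_\mathrm{surf}$ (Lemma~\ref{lem:Esurfcont}) together with the continuity of $h$ on $(r_{\rm hc}, \infty)^m$. Outside $\mathcal{D}$ (above the hard core), both sides of~\eqref{eq:bellmaennchen} equal $+\infty$ since the left shift preserves $\mathcal{D}$. Subtracting the constant $\min \mathcal{E}_\mathrm{surf}$ then shows that $I$ satisfies~\eqref{eq:bellmaennchen}. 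The properties $\min I = 0$ and $I = \infty$ on the hard-core region are immediate from the definition of $\overline{\mathcal{E}}_\mathrm{surf}$; for lower semi-continuity in the product topology I would combine the coercivity in Lemma~\ref{lem:coercive} (to handle limits that fall out of $\mathcal{D}$) with the representation from Lemma~\ref{lem:extension}, whose finite truncations are continuous in the product topology.

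\emph{Uniqueness.} Let $J$ be another lower semi-continuous solution, set $S_N(z) := \sum_{k=1}^N \bigl( h(z_k, z_{k+1}, \ldots) - e_0\bigr)$ and $C := J(a, a, \ldots) \ge 0$. Iterating~\eqref{eq:bellmaennchen} yields
\[
J(z) = S_N(z) + J\bigl( (z_{N+1}, z_{N+2}, \ldots)\bigr) \quad \text{for all } N \in \N.
\]
For $z \in \mathcal{D}_0$ with $z_j = a$ for $j > K$ and $N \ge K$, the shifted sequence equals $(a, a, \ldots)$ and $S_N(z) = \mathcal{E}_\mathrm{surf}(z)$, so $J = C + \mathcal{E}_\mathrm{surf}$ on $\mathcal{D}_0$. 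For general $z \in \mathcal{D}$ I would approximate by the truncations $z^{(n)} := (z_1, \ldots, z_n, a, a, \ldots) \in \mathcal{D}_0$, which converge to $z$ in $\ell^2$ and hence in product topology: lower semi-continuity of $J$ combined with $\ell^2$-continuity of $\mathcal{E}_\mathrm{surf}$ yields $J(z) \le C + \mathcal{E}_\mathrm{surf}(z)$ on all of $\mathcal{D}$. Applying this bound at the shifted sequence $(z_{N+1}, \ldots) \in \mathcal{D}$, which tends to $(a, a, \ldots)$ in $\ell^2$ (hence in product topology) as $N \to \infty$, gives $\limsup_N J((z_{N+1}, \ldots)) \le C$, while lower semi-continuity at $(a, a, \ldots)$ gives the matching bound $\liminf \ge C$. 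Hence $J((z_{N+1}, \ldots)) \to C$ and $S_N(z) \to J(z) - C$. Comparing $S_N(z)$ with $S_N(z^{(N)}) = \mathcal{E}_\mathrm{surf}(z^{(N)})$ (the summands differ only for indices close to or above $N$, with each difference vanishing as $N \to \infty$ thanks to $z_j \to a$ and the decay in Assumption~\ref{assu:v}) upgrades this to $S_N(z) \to \mathcal{E}_\mathrm{surf}(z)$, so $J = C + \mathcal{E}_\mathrm{surf}$ on $\mathcal{D}$.

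\emph{Pinning down $C$ and the main obstacle.} To close the argument I would rule out $J(\hat z) < \infty$ for $\hat z \in (r_{\rm hc}, \infty)^\N \setminus \mathcal{D}$: if $J(\hat z)$ were finite, iteration would give $S_N(\hat z) \le J(\hat z)$ for every $N$, while a coercivity bound (using the linear term $p \hat z_j$ for diverging spacings, the blow-up $v(r) \to \infty$ as $r \searrow r_{\rm hc}$ from Assumption~\ref{assu:hcvtoinfty}, and the quadratic estimate of Lemma~\ref{lem:coercive} in the elastic regime) forces $S_N(\hat z) \to +\infty$, a contradiction. Hence $J = C + \overline{\mathcal{E}}_\mathrm{surf}$ globally, and $\min J = 0$ together with the unique minimizer of $\mathcal{E}_\mathrm{surf}$ from Lemma~\ref{lem:Esurf_min} pins $C = -\min \mathcal{E}_\mathrm{surf}$, so $J = I$. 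The hardest step I anticipate is justifying $S_N(z) \to \mathcal{E}_\mathrm{surf}(z)$ on $\mathcal{D}$ when $m = \infty$: the original series is not absolutely convergent in its natural order (only the rearranged form in Lemma~\ref{lem:extension} is), so the truncation comparison has to lean carefully on the $\ell^2$-summability of strains and the decay rates in Assumption~\ref{assu:v}(ii), (iv). A related subtlety is the coercivity argument that excludes finite values of $J$ on non-$\ell^2$, non-hard-core sequences.
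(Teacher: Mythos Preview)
Your overall architecture matches the paper's: establish~\eqref{eq:bellmaennchen} on $\mathcal{D}_0$, extend to $\mathcal{D}$ by $\ell^2$-continuity, and for uniqueness iterate the relation, use truncations $z^{(N)}\in\mathcal{D}_0$ together with lower semi-continuity for one inequality, and the convergence of partial sums for the other. The main difference is that you have the difficulties located in the wrong place.

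\emph{What you flag as hardest is actually easy.} The convergence $S_N(z)\to\mathcal{E}_\mathrm{surf}(z)$ on $\mathcal{D}$ does not require any truncation comparison. Since $\mathcal{E}_\mathrm{surf}$ itself satisfies the iterated identity~\eqref{eq:bellmann-ersatz-induktion} (first on $\mathcal{D}_0$, then on $\mathcal{D}$ by continuity), one has $S_N(z)=\mathcal{E}_\mathrm{surf}(z)-\mathcal{E}_\mathrm{surf}((z_{N+1},z_{N+2},\ldots))$, and the tail term tends to $\mathcal{E}_\mathrm{surf}(a,a,\ldots)=0$ by $\ell^2$-continuity. This is how the paper (via Lemma~\ref{lem:sum-bound}) handles it in one line. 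Similarly, the fact that any $J$ with $J(\hat z)<\infty$ forces $\hat z\in\mathcal{D}$ is exactly Lemma~\ref{lem:sum-bound}, first part.

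\emph{What you dismiss in two lines is the actual crux.} Your sketch of lower semi-continuity of $I$ in the product topology (``combine Lemma~\ref{lem:coercive} with the finite truncations of Lemma~\ref{lem:extension}'') does not work as stated, because the coercivity of Lemma~\ref{lem:coercive} holds only on $\mathcal{D}\cap[z_{\min},z_{\max}+\eps]^\N$. If $z^{(n)}\to z$ pointwise with $I(z^{(n)})\leq\bar c$, there is nothing preventing infinitely many coordinates of each $z^{(n)}$ from lying outside $[z_{\min},z_{\max}+\eps]$, so neither coercivity nor convexity applies directly. The paper handles this via the \emph{truncated} coercivity estimate~\eqref{eq:truncoerc} (derived inside Lemma~\ref{lem:sum-bound}), which bounds $\sum_j\min((z_j-a)^2,\eps_0^2)$ and hence the number of bad indices uniformly; it then splits off a finite head, modifies the tail into $[z_{\min},z_{\max}+\eps]^\N$ by the shrink/delete procedure of Lemma~\ref{lem:Esurf_min} without increasing $\mathcal{E}_\mathrm{surf}$, and only then invokes weak lower semi-continuity from convexity and coercivity. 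This modification step is the genuine technical content of the existence proof, and your proposal does not account for it.
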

Note that, by induction, \eqref{eq:bellmaennchen} is equivalent to 
\be\label{eq:bellmann-ersatz-induktion} 
  I(z) 
  = \sum_{j=1}^k \big( h(z_j, z_{j+1}, \ldots ) - e_0 \big) + I( z_{k+1}, z_{k+2}, \ldots ) 
\ee 
for all $k \in \N$ and $z = (z_j)_{j\in\N} \in \R_+^{\N}$. (Observe that $h(z) > - \infty$ for all $z \in \R_+^{\N}$ by the decay assumption on $v$ and $r_{\rm hc} > 0$.) 

We begin with a technical auxiliary result. 
\begin{lemma}\label{lem:sum-bound}
  If $z_1, z_2, \ldots >0$ and $\bar{c} < \infty$ are such that
  $$
      \sup_{k \in \N} \sum_{i = 1}^k \big( h(z_{i}, \ldots, z_{m+i-1}) - e_0 \big) \le \bar{c},
  $$
then $z = (z_j)_{j\in\N} \in \mathcal{D}$. Moreover, any $z \in \mathcal{D}$ satisfies 
$$ 
   \lim_{k\to \infty} \sum_{j= 1}^k \big( h(z_{j}, \ldots, z_{j+m-1}) - e_0 \big) = \mathcal{E}_\mathrm{surf}(z). 
$$  
\end{lemma}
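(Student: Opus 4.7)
The plan is to handle the two claims separately.

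\textbf{Identification of the limit.} For $z \in \mathcal{D}$, I will replicate the algebraic manipulation from the proof of Lemma~\ref{lem:extension} on the finite partial sum $S_k := \sum_{j=1}^k \bigl( h(z_j, \ldots, z_{j+m-1}) - e_0 \bigr)$. Adding and subtracting $v'(\ell a) \sum_{i=j}^{j+\ell-1}(z_i - a)$ inside each term, swapping the order of summation, and using the equilibrium relation $p + \sum_\ell \ell v'(\ell a) = 0$, one arrives at
\bes
  S_k = - \sum_{j=1}^{\min(k, m-1)} \beta_j (z_j - a) + R_k + \sum_{j=1}^k \sum_{\ell=1}^m T_{j,\ell},
\ees
where $T_{j,\ell}$ are the second-order Taylor remainders of~\eqref{eq:extension} and $R_k = \sum_{j=k+1}^{k+m-1} c_j^{(k)} (z_j - a)$ is a boundary correction whose coefficients $c_j^{(k)}$ depend only on $\{v'(\ell a)\}$ and are uniformly bounded (this uses the summability $\sum_\ell \ell |v'(\ell a)| < \infty$ following from Assumption~\ref{assu:v}). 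Since $z \in \mathcal{D}$ forces $z_j - a \to 0$, a Cauchy--Schwarz estimate gives $R_k \to 0$; the partial double sum converges to its full counterpart by the absolute convergence established in Lemma~\ref{lem:extension}. Comparing with~\eqref{eq:extension} then yields $\lim_k S_k = \mathcal{E}_\mathrm{surf}(z)$.

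\textbf{Summability from bounded partial sums.} A bond-counting argument shows that $\sum_{j=1}^k h(z_j, \ldots, z_{j+m-1}) - \mathcal{E}_{k+1}(z_1, \ldots, z_k)$ collects exactly those bonds $v(z_i + \cdots + z_{i+\ell-1})$ of length $\ell \geq 2$ whose right endpoint lies past $k$; each such argument is $\geq 2 r_\mathrm{hc}$ where $v$ is bounded, and Assumption~\ref{assu:v}(ii) controls the long-range contributions via $\sum \ell^{-s}$, so the discrepancy is $O(1)$ uniformly in $k$. Combined with the asymptotic $E_{k+1} = k e_0 + O(1)$ from Theorems~\ref{thm:periodic} and~\ref{thm:surface}, this forces $\mathcal{E}_{k+1}(z_1, \ldots, z_k) - E_{k+1} \leq C_1$ for all $k$. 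Assuming the spacings all lie in $[z_\mathrm{min}, z_\mathrm{max} + \eps]$, a Taylor expansion of $\mathcal{E}_{k+1}$ around its minimizer $z^{(k+1)}$ combined with Lemma~\ref{lem:hessian} gives $\sum_{j \leq k}(z_j - z_j^{(k+1)})^2 \leq 2 C_1/\eta$; a parallel convexity comparison between $z^{(k+1)}$ and the constant configuration $(a,\ldots,a) \in [z_\mathrm{min}, z_\mathrm{max}]^k$ produces the uniform $k$-independent bound $\sum_j (z_j^{(k+1)} - a)^2 \leq C_2$, since $\mathcal{E}_{k+1}((a,\ldots,a)) - E_{k+1} \to -2 \min \mathcal{E}_\mathrm{surf} \geq 0$ by Theorem~\ref{thm:surface}(d). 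Combining these estimates yields $\sum_{j \leq k}(z_j - a)^2 \leq C$ for every $k$, hence $z \in \mathcal{D}$.

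\textbf{Main obstacle.} The chief difficulty is that Lemma~\ref{lem:hessian} provides convexity only on the box $[z_\mathrm{min}, z_\mathrm{max} + \eps]$, whereas a priori the $z_j$'s are only constrained to lie in $(r_\mathrm{hc}, \infty)$. To bypass this, I plan first to extract uniform pointwise bounds $c_- \leq z_k \leq c_+$ with $c_- > r_\mathrm{hc}$: the bond-counting identity gives a matching lower bound $S_k \geq -C$ via $\mathcal{E}_{k+1} \geq E_{k+1}$, so each increment $\phi_k = S_k - S_{k-1}$ is uniformly bounded; a direct estimate shows $\phi_k \geq p z_k + v(z_k) - C_0$, and the hard core $v(r) \to \infty$ as $r \searrow r_\mathrm{hc}$ together with the pressure term $p z_k$ then confine $z_k$ to a compact subset of $(r_\mathrm{hc}, \infty)$. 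I would then reduce to spacings in $[z_\mathrm{min}, z_\mathrm{max} + \eps]$ via the shrink and remove-and-reattach surgeries from the proof of Lemma~\ref{lem:est-on-min}, each of which lowers $\mathcal{E}_{k+1}$ by a strictly positive amount bounded below uniformly on $[c_-, z_\mathrm{min}) \cup (z_\mathrm{max} + \eps, c_+]$; hence the budget $C_1$ forces only finitely many such surgeries, and a careful accounting shows that the $\ell^2$-distance between the reduced and original configurations remains uniformly bounded, so the conclusion transfers back to $z$.
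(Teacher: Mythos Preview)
Your plan is sound and will work, but the paper's proof takes a somewhat different and more economical route on both halves.

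For the convergence of $S_k$ to $\mathcal{E}_\mathrm{surf}(z)$, the paper avoids redoing the algebra of Lemma~\ref{lem:extension}. It observes that the recursion $\mathcal{E}_\mathrm{surf}(z) = S_k + \mathcal{E}_\mathrm{surf}(z_{k+1}, z_{k+2}, \ldots)$, obvious on $\mathcal{D}_0$, extends to all of $\mathcal{D}$ by the $\ell^2$-continuity of Lemma~\ref{lem:Esurfcont}; since the shifted tail $(z_{k+1}, z_{k+2}, \ldots)$ converges to $(a,a,\ldots)$ in $\ell^2$, continuity gives $\mathcal{E}_\mathrm{surf}(z_{k+1},\ldots) \to 0$ and hence $S_k \to \mathcal{E}_\mathrm{surf}(z)$. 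Your direct computation via the representation~\eqref{eq:extension} reaches the same conclusion but requires tracking the boundary remainder $R_k$ by hand.

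For the summability, the paper replaces your preliminary pointwise confinement $c_- \leq z_k \leq c_+$ by a \emph{truncated} $\ell^2$-norm $\sum_j \min\bigl((z_j-a)^2, \eps_0^2\bigr)$. Shrinking a bond $z_j > z_{\max}$ down to $z_{\max}$ leaves this quantity unchanged (both values exceed $\eps_0$), and each particle-removal surgery increases it by at most $\eps_0^2$ while decreasing the energy by at least some $\delta > \eps_0^2$; so the energy budget bounds the number of removals and the truncated norm of the modified---hence also the original---configuration stays controlled. This device works uniformly for $p \geq 0$, whereas your upper bound $z_k \leq c_+$ extracted from $\phi_k \geq p z_k + v(z_k) - C_0$ genuinely needs $p > 0$. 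The paper also collapses your two-stage comparison (first $z$ to the minimizer $z^{(k+1)}$, then $z^{(k+1)}$ to the constant $(a,\ldots,a)$) into one step, by padding $(z_1,\ldots,z_k)$ with spacings $a$ on both sides and comparing the resulting $N$-particle configuration directly to the $N$-particle ground state, whose interior spacings tend to $a$ as $n_1,n_2 \to \infty$ by Theorem~\ref{thm:periodic}(b).
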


\begin{proof}
Let $\eps_0< \min(a-z_{\min}, z_{\max}- a)$. 
 The partial sum  $\sum_{j=1}^k h(z_j,\ldots,z_{j+m-1})$ is equal to the energy $\mathcal{E}_{k+1}(z_1,\ldots,z_{k})$ plus an interaction 
$$ \sum_{j = 1}^k \sum_{i = k+1}^{m+j-1} v( z_j + \ldots + z_i ), $$ 
(the inner sum being $0$ if $m+j-1<k+1$) which is bounded from below by 
\begin{align*}
  - \alpha_1 \sum_{j = 1}^k \sum_{i = k+1}^\infty \big( (i - j + 1) r_\mathrm{hc} \big)^{-s} 
  &\ge - C \sum_{j = 1}^k (k - j + 1)^{-s+1} \\ 
  &\ge - C \sum_{i = 1}^{\infty} i^{-s+1} 
   =: - C_1 > - \infty. 
\end{align*}
By adding $n_1$ and $n_2$ spacings $a$ to the left and right respectively, we may view $z$ as a block of spacings in an $N$-particle configuration where $N=n_1+n_2 + k +1$. Let $\hat z = (a,\ldots,a,z_1,\ldots,z_k,a,\ldots,a)$. The new configuration satisfies
\begin{align*}
  \mathcal{E}_N(\hat z)
  &\leq \mathcal{E}_{k+1}(z_1,\ldots,z_k) + 2 C_1
       + \mathcal{E}_{n_1+1}(a,\ldots,a) + \mathcal{E}_{n_2+1}(a,\ldots,a) \\ 
  &\leq C + Ne_0
\end{align*}
for some suitable constant $C$ that depends on $r_\mathrm{hc}$, $\bar c$ and $v$ only. Let $z^\ssup{N}$ be the $N$-particle ground state with spacings labelled by $j=-n_1+1,\ldots,k+n_2$ rather than $1,\ldots,N-1$. Since $\mathcal{E}_N(z^\ssup{N}) = E_N\geq N e_0$ by Lemma \ref{lem:subadditive} and $e_0 \le 0$, we get 
\bes
	 \mathcal{E}_N(\hat z) - \mathcal{E}_N(z^\ssup{N}) \leq C.
\ees
Suppose that all spacings $z_j$ are in $[z_{\min},z_{\max}]$. We use a Taylor approximation around the minimizer $z^\ssup{N}$, apply Lemma~\ref{lem:hessian} and Theorem~\ref{thm:periodic}, and obtain. 
\be \label{eq:cbound}
	C\geq \frac{\eta}{2} \sum_{j=1}^k (z_j- z_j^\ssup{N})^2 \to \frac{\eta}{2} \sum_{j=1}^k (z_j- a)^2\qquad (n_1,n_2\to \infty).
\ee
Letting $k\to \infty$ we obtain an upper bound for the $\ell^2$-norm of $(z_j-a)_{j\in \N}$. 
If there are $z_j$ with $z_j < z_{\min}$ or $z_j > z_{\max}$, we modify the configuration $z_1, \ldots, z_{k}$ without increasing its energy as in the proof of Lemma~\ref{lem:est-on-min} to obtain $z'_1, \ldots, z'_{k}$. When we shrink bonds $z_j >z_{\max}$ to $z'_j = z_{\max}$, leaving all other spacings unchanged, both $z'_j$ and $z_j$ are strictly larger than $\eps_0$ so 
the truncated $\ell^2$-norm $\sum_{j=1}^k \min \bigl( (z_j - a)^2, \eps_0^2)$ 
is unaffected. 

On the other hand suppose $z_i = \min (z_j) <z_{\min}$. Then we remove the particle $x_i$, reattach it a distance $z_{\max}$ to the left of the $k$-particle block. This effects
the change
\bes
	(z_{i-1}-a)^2 + (z_{i}-a)^2 \to  (z_{\max} - a)^2  + ((z_{i-1}+z_{i})- a)^2 
\ees
on the $\ell^2$-norm. Both $|z_{i}- a|$ and $|z_{\max}-a|$ are larger than $\eps_0$, moreover
\bes
 \min ( (z_{i-1}+z_{i}- a)^2,\eps_0^2 ) - \min ( (z_{i-1} - a)^2,\eps_0^2 ) \leq \eps_0^2.
\ees
So the truncated $\ell^2$-norm increases by at most $\eps_0^2$. Let $n$ be the number of times this step has to be performed. Iterating we arrive at a configuration $z''_1,\ldots,z''_k\in [z_{\min},z_{\max}]$ with 
\bes
	\sum_{j=1}^k \min (\eps_0^2,(z''_j-a)^2 ) \leq n\eps_0^2 + \sum_{j=1}^k \min ((z_j-a)^2,\eps_0^2)
\ees
and $\mathcal{E}_{k+1}(z'') \leq \mathcal{E}_{k+1}(z) - n \delta$
for some $\delta>0$, cf.\ \eqref{eq:dwb}. Making $\eps_0$ smaller if necessary we may assume $\eps_0^2<\delta$. We combine with Eq.~\eqref{eq:cbound} for $\hat z''$ and $C''= C-n \delta$ and obtain 
\bes
	\sum_{j=1}^k \min ((z_j-a)^2,\eps_0^2) \leq C- n \delta  +n\eps_0^2 \leq C.
\ees
We let $k\to \infty$ and find that the truncated $\ell^2$-norm of $(z_j)_{j\in \N}$ is finite. It follows in particular that there are only finitely many spacings $|z_j - a|\geq \eps_0$, and $(z_j-a)_{j\in \N}$ is square summable. This establishes the first assertion. 

In order to show the convergence of the partial sums to $\mathcal{E}_\mathrm{surf}$, first observe that $\mathcal{E}_\mathrm{surf}$ satisfies \eqref{eq:bellmann-ersatz-induktion} for $I = \mathcal{E}_\mathrm{surf}$. This is clear for $z \in \mathcal{D}_0$ and follows for general $z \in \mathcal{D}$ by continuity. If $z \in \mathcal{D}$, the sequence of shifts $((z_j)_{j \ge k})_{k \in \N}$ converges to $(\ldots, a, a, \ldots)$ strongly and thus 
\begin{align*}
  \sum_{j=1}^k \big( h(z_j, z_{j+1}, \ldots ) - e_0 \big) 
  &= \mathcal{E}_\mathrm{surf}(z) - \mathcal{E}_\mathrm{surf}( z_{k+1}, z_{k+2}, \ldots ) \\ 
  &\to \mathcal{E}_\mathrm{surf}(z) - \mathcal{E}_\mathrm{surf}(\ldots, a, a, \ldots) 
  = \mathcal{E}_\mathrm{surf}(z). 
\end{align*}
as $k \to \infty$. 
\end{proof}

\noindent We have actually proven the following: for sufficiently small $\eps_0>0$, suitable $c_1,c_2>0$, and all $(z_j)_{j\in \N} \in \R_+^\N$, 
\be \label{eq:truncoerc}
	\overline{\mathcal{E}}_\mathrm{surf}\bigl( (z_j)\bigr) \geq c_1 \sum_{j=1}^\infty \min ((z_j-a)^2,\eps_0^2) - c_2.
\ee

\begin{proof}[Proof of Proposition \ref{prop:bellman-ersatz}.]
Let $I= \overline{\mathcal{E}}_\mathrm{surf} - \min \mathcal{E}_\mathrm{surf}$. Observe that $I$ satisfies \eqref{eq:bellmaennchen}. This is clear for $z \in \mathcal{D}_0$ and for $z \notin \mathcal{D}$. For the remaining $z$ it follows from Lemma \ref{lem:Esurfcont}. We now show that $I$ is lower semi-continuous with respect to pointwise convergence. Without loss we suppose that $z^\ssup{n} \in \mathcal{D}$ converges to $z \in [r_{\rm hc}, \infty)^{\N}$ pointwise with $I(z^\ssup{n}) \le \bar{c} < \infty$ for some constant $\bar{c} > 0$. Passing to a subsequence (not relabelled) we may furthermore assume that $\liminf_{n \to \infty} I(z^\ssup{n}) = \lim_{n \to \infty} I(z^\ssup{n})$. Fix an $\eps > 0$ such that the estimate in Lemma \ref{lem:coercive} is satisfied. By \eqref{eq:truncoerc} 
\[ \max_{n \in \N} \# \{ j \mid z^\ssup{n}_j \notin [z_{\min}, z_{\max}+\eps] \} \le C \]
for some uniform constant $C > 0$ since $z_{\min} < a \le z_{\max}$. For given $N \in \N$ we denote by $j_n$ the first index $j \ge N$, if existent, with $z^\ssup{n}_{j} \notin [z_{\min}, z_{\max}+\eps]$. Passing to a further subsequence (not relabelled) and choosing $N$ sufficiently large we may achieve that either such indices do not exist or that $j_n \to \infty$ as $n \to \infty$. In both cases we get that $z_j \in [z_{\min}, z_{\max}+\eps]$ for $j \ge N$. In particular, $z_j > r_{\rm hc}$ for $j \ge N$. 

In the second case we define new configurations $\tilde{z}^\ssup{n}$ by applying the procedure detailed in the proof of Lemma \ref{lem:Esurf_min} to the tails $(z^\ssup{n}_j)_{j \ge N}$ shrinking the bonds $z^\ssup{n}_j > z_{\max}+\eps$, $j \ge N$, and deleting particles $x^\ssup{n}_{j+1}$ if $z^\ssup{n}_j < z_{\min}$, $j \ge N$, so that 
\[ {\mathcal{E}}_\mathrm{surf}((\tilde{z}^\ssup{n}_j)_{j \ge N}) 
   \le  {\mathcal{E}}_\mathrm{surf}((z^\ssup{n}_j)_{j \ge N}). \] 
In the first case we simply set $\tilde{z}^\ssup{n} = z^\ssup{n}$. Since $j_n \to \infty$ in the second case, we still have $\tilde{z}^\ssup{n} \to z$ pointwise. 

By \eqref{eq:bellmann-ersatz-induktion} with $k = N-1$ we have 
\[ I(z^\ssup{n}) 
  \ge \sum_{j=1}^{N-1} \big( h(z^\ssup{n}_j, z^\ssup{n}_{j+1}, \ldots ) - e_0 \big) + I( \tilde{z}^\ssup{n}_{N}, \tilde{z}^\ssup{n}_{N+1}, \ldots ). \]
From the decay properties of $v$ and $z^\ssup{n}_j \ge r_{\rm hc} > 0$ it is easy to see that, for any $j \in \N$, $h(z^\ssup{n}_j, z^\ssup{n}_{j+1}, \ldots)$ converges to $h(z_j, z_{j+1}, \ldots)$. Since $I(z^\ssup{n}) \le \bar{c}$ and $I \ge 0$, from Assumption \ref{assu:hcvtoinfty} we also get $z_j > r_{\rm hc}$ for $j = 1, \ldots, N-1$. So 
\[ \sum_{j=1}^{N-1} \big( h(z^\ssup{n}_j, z^\ssup{n}_{j+1}, \ldots ) - e_0 \big) 
   \to \sum_{j=1}^{N-1} \big( h(z_j, z_{j+1}, \ldots ) - e_0 \big).  
\] 
In particular, $I( (\tilde{z}^\ssup{n}_{j})_{j \ge N}) \le C$ and so Lemma \ref{lem:coercive} implies that $z \in \mathcal{D}$ and $\tilde{z}^\ssup{n} - z \rightharpoonup 0$ in $\ell^2$ by coercivity and hence that 
\[ \liminf_{n \to \infty} I( (\tilde{z}^\ssup{n}_{j})_{j \ge N}) 
   \ge  I( (z_{j})_{j \ge N}) \] 
by convexity. Summarizing we obtain 
\[ \liminf_{n \to \infty} I(z^\ssup{n}) 
  \ge \sum_{j=1}^{N-1} \big( h(z_j, z_{j+1}, \ldots ) - e_0 \big) + I( z_{N}, z_{N+1}, \ldots ) 
  = I(z). \]

Suppose, conversely, that a lower semi-continuous $I : \R_+^{\N} \to \R \cup \{+\infty\}$ satisfies \eqref{eq:bellmaennchen} with $\min I = 0$ and $I(z) = \infty$ if $z_j \le r_\mathrm{hc}$ for some $j$. We first note that, 
since $I \ge 0$, for any $z$ with $I(z) < \infty$ one has 
$$ \sup_{k \in \N} \sum_{j=1}^k \big( h(z_j, z_{j+1}, \ldots ) - e_0 \big) < \infty $$
by \eqref{eq:bellmann-ersatz-induktion} and so $z \in \mathcal{D}$ by Lemma \ref {lem:sum-bound}. It thus suffices to show that 
\be\label{eq:bellmann-ersatz-NTS}  
  I(z) = \mathcal{E}_\mathrm{surf}(z) + I(a, a, \ldots) 
\ee 
for all $z \in \mathcal{D}$. 

If $z \in \mathcal{D}$, then $\mathcal{E}_\mathrm{surf}(z)$ is indeed finite by Lemma~\ref{lem:extension}. We have $\lim_{k\to \infty} \sum_{j= 1}^k \big( h(z_{j}, \ldots, z_{j+m-1}) - e_0 \big) = \mathcal{E}_\mathrm{surf}(z)$ by Lemma \ref{lem:sum-bound}. Since the sequence of shifts $( (z_j)_{j \ge k} )_{k \in \N}$ converges to $(a, a, \ldots)$ pointwise as $k \to \infty$, taking the $\liminf$ in \eqref{eq:bellmann-ersatz-induktion} yields 
\bes 
  I(z) 
  = \lim_{k \to \infty} \sum_{j=1}^k \big( h(z_j, z_{j+1}, \ldots ) - e_0 \big) 
       + \liminf_{k \to \infty} I( z_{k+1}, z_{k+2}, \ldots ) 
   \ge \mathcal{E}_\mathrm{surf}(z) + I(a, a, \ldots). 
\ees
Note that, as $I \not\equiv \infty$, this inequality also shows that $I(a, a, \ldots) < \infty$. 

For the reverse inequality, by choosing $k$ large enough in \eqref{eq:bellmann-ersatz-induktion} we first see that \eqref{eq:bellmann-ersatz-NTS} holds true for all $z \in \mathcal{D}_0$. We denote by $z^{(N)}$ the truncation with $z^{(N)}_j = z_j$ for $j \le N$ and $z^{(N)}_j = a$ for $j \ge N+1$. Since $z^{(N)} \to z$ pointwise and $z^{(N)} - z \to 0$ in $\ell^2$ as $N \to \infty$, lower semi-continuity of $I$ and strong continuity of $\mathcal{E}_\mathrm{surf}$ (see Lemma \ref{lem:Esurfcont}) give 
$$ I(z) 
   \le \liminf_{N \to \infty} I(z^{(N)}) 
   = \liminf_{N \to \infty} \mathcal{E}_\mathrm{surf}(z^{(N)}) + I(a, a, \ldots) 
   = \mathcal{E}_\mathrm{surf}(z) + I(a, a, \ldots), $$
where we have used that $z^{(N)} \in \mathcal {D}_0$ for all $N$. 
\end{proof}

We now restrict to the case $m < \infty$. Let $d = m-1$.
By \eqref{eq:bellmann-ersatz-induktion} with $k = d$ we have
\begin{align}\label{eq:bellmann-ersatz-k-gleich-m}
\begin{split}
  \mathcal{E}_{\rm surf}((z_j)_{j \in \N})
  &= \sum_{j = 1}^d \big( h(z_j, \ldots, z_{j+d}) - e_0 \big) + \mathcal{E}_{\rm surf}(z_{d+1}, z_{d+2}, \ldots) \\ 
  &= \mathcal{E}_{d+1}(z_1, \ldots, z_d) - d e_0 + W(z_1, \ldots, z_d; z_{d+1}, \ldots, z_{2d}) \\ 
  &\ \ \    + \mathcal{E}_{\rm surf}((z_j)_{j \ge d+1}), 
\end{split}
\end{align}
for any $(z_j)_{j \in \N} \in \mathcal{D}$, where 
$$
	W(z_1, \ldots, z_{d}; z_{d+1}, \ldots, z_{2d}) 
   = \sum_{1 \le i \le d < j \le 2d \atop j - i \le d} v(z_i + \ldots + z_j).
$$   
Taking the infimum over $(z_j)_{j \in \N} \in \mathcal{D}_0$, with fixed $z_1, \ldots, z_d$ and setting
\begin{align*}
  u(x) 
  &= \inf \big\{ \mathcal{E}_{\rm surf}((z_j)_{j \in \N}) \mid (z_j)_{j \in \N} \in \mathcal{D}_0,~ (z_1, \ldots, z_d) = x \big\} \\ 
  &= \inf \big\{ \mathcal{E}_{\rm surf}((z_j)_{j \in \N}) \mid (z_j)_{j \in \N} \in \mathcal{D},~ (z_1, \ldots, z_d) = x \big\} 
\end{align*}
(recall Lemma \ref{lem:Esurfcont}) leads to
$$ u(x) = \inf_{y \in \R^d_+} \big( \mathcal{E}_{d+1}(x) + W(x; y) - d e_0 + u(y) \big). $$

In Chapter \ref{sec:gaussian} we will need the following estimate.

\begin{lemma} \label{lem:unibell}
Set $A_{\eps} = [z_{\min}, z_{\max} + \eps]^d$ and $B_{\eps} = \R_+^d \setminus A_{\eps}$. Then, for any $\eps > 0$ there exists a $\delta > 0$ such that
$$ \inf_{y \in B_{\eps}} \big( \mathcal{E}_{d+1}(x) + W(x; y) - d e_0 + u(y) \big) \ge u(x) + \delta $$
for all $x \in A_{\eps}$.
\end{lemma}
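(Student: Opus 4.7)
The proof I have in mind is by compactness and contradiction. Suppose the conclusion fails: there exist $\eps>0$ and sequences $x^{(n)}\in A_\eps$, $y^{(n)}\in B_\eps$ with $F(x^{(n)},y^{(n)})-u(x^{(n)})\to 0$, where $F(x,y):=\mathcal{E}_{d+1}(x)+W(x;y)-de_0+u(y)$. The first step is to extract a convergent subsequence. Iterating the Bellman relation~\eqref{eq:bellmann-ersatz-induktion} and using that each $h$ contains the pressure term $p z_1$ while the remaining pieces are bounded below by the decay of $v$, one obtains a coercivity bound of the form $u(y)\ge p(y_1+\cdots+y_d)-C$; so any unbounded component of $y^{(n)}$ would force $F-u\to\infty$. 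Likewise Assumption~\ref{assu:hcvtoinfty} yields $u(y)\to\infty$ as any $y_i\searrow r_\mathrm{hc}$. Hence $y^{(n)}$ stays in a compact subset of $(r_\mathrm{hc},\infty)^d$, and up to a subsequence $(x^{(n)},y^{(n)})\to(x^*,y^*)\in A_\eps\times\bar B_\eps$. Continuity of $u$ on $(r_\mathrm{hc},\infty)^d$ (inherited from Lemma~\ref{lem:Esurfcont} together with the coercivity in Lemma~\ref{lem:coercive}) and of $W,\mathcal{E}_{d+1}$ then gives $F(x^*,y^*)=u(x^*)$, so $y^*$ is a minimizer of $F(x^*,\cdot)$.

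The second step uses the interpretation~\eqref{eq:bellmann-ersatz-k-gleich-m} of $F(x,y)$ as the infimum of $\mathcal{E}_\mathrm{surf}(z)$ over $z\in\mathcal{D}$ subject to $z_j=x_j$ for $1\le j\le d$ and $z_{d+j}=y_j$ for $1\le j\le d$. The corresponding $z^*$ thus appears as a minimizer of the less constrained problem $\inf\{\mathcal{E}_\mathrm{surf}(z)\mid z\in\mathcal{D},\ z_j=x^*_j\text{ for }j\le d\}$, with $y^*_k=z^*_{d+k}$. Inspecting the proof of Lemma~\ref{lem:Esurf_min}, one observes that the shrinking and remove-reattach moves only modify spacings with index $\ge d+1$ and therefore respect the constraint; applied to $z^*$ they show $z^*_j\in[z_\mathrm{min},z_\mathrm{max}]$ for every $j>d$. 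In particular $y^*_k\le z_\mathrm{max}<z_\mathrm{max}+\eps$, so membership of $y^*$ in $\bar B_\eps$ is only possible if $y^*_k=z_\mathrm{min}$ for some $k$.

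The crucial third step rules out this remaining boundary case. Here I would combine the strict inequality $a>z_\mathrm{min}$ from Theorem~\ref{thm:periodic}(b) (available since $p>0$ and $m\ge 2$), the asymptotic $z^*_j\to a$ as $j\to\infty$, and the first-order condition $\partial_{z_{d+k}}\mathcal{E}_\mathrm{surf}(z^*)=0$ at the unconstrained, strictly-interior-of-feasibility value $z^*_{d+k}$. Together with the strictly positive, diagonally dominant Hessian on $[z_\mathrm{min},z_\mathrm{max}+\eps]^\N$ from Lemma~\ref{lem:hessian}, these conditions should propagate the strict bound $z^*_j>z_\mathrm{min}$ from the bulk backward to all free indices, yielding the contradiction with $y^*_k=z_\mathrm{min}$.

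The main obstacle I expect is precisely this propagation step. The remove-and-reattach move of Lemma~\ref{lem:est-on-min} delivers a \emph{strict} energy decrease only for $z_j$ strictly below $z_\mathrm{min}$, since Assumption~\ref{assu:v}(ii) is a strict inequality only on the open range $(r_\mathrm{hc},z_\mathrm{min})$; at the borderline $z^*_j=z_\mathrm{min}$ the improvement can degenerate to zero. Excluding this case therefore requires a more delicate perturbation argument exploiting the strict gap $a-z_\mathrm{min}>0$, the positive pressure, and the interior first-order conditions rather than the discrete particle-removal construction used earlier.
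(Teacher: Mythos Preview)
Your compactness/contradiction strategy is genuinely different from the paper's proof, and the obstacle you flag in your third step is real for your route but simply does not arise in the paper's argument.

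The paper proceeds directly and quantitatively: for an arbitrary $(z_j)\in\mathcal D_0$ with $(z_1,\ldots,z_d)=x\in A_\eps$ and $(z_{d+1},\ldots,z_{2d})=y\in B_\eps$, it exhibits a modification $(z'_j)$ with $(z'_1,\ldots,z'_d)=x$ and $\mathcal E_\mathrm{surf}((z'_j))\le \mathcal E_\mathrm{surf}((z_j))-\delta$ for an \emph{explicit} $\delta>0$ depending only on $\eps$ and $v$. The modification is precisely the shrink/merge move from Lemma~\ref{lem:Esurf_min}, but applied to the given configuration rather than to a limiting minimizer. Because $B_\eps$ is open, membership $y\in B_\eps$ means some $y_k>z_{\max}+\eps$ or $y_k<z_{\min}$ \emph{strictly}: in the first case shrinking to $z_{\max}$ gains at least $v(z_{\max}+\eps)-v(z_{\max})>0$; in the second, since $x\in A_\eps$ forces $z_1,\ldots,z_d\ge z_{\min}$, the overall minimum spacing lies at some index $\ge d+1$ and the merge move gains at least $2\alpha_1\sum_{n\ge m+1}(nz_{\min})^{-s}>0$. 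Taking the infimum over such $(z_j)$ and invoking~\eqref{eq:bellmann-ersatz-k-gleich-m} finishes the proof with $\delta$ the minimum of these two constants.

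Your route hits the boundary case $y^*_k=z_{\min}$ precisely because you pass to the limit first: the uniform $\delta$-improvement available on the \emph{open} set $B_\eps$ does not survive to $\bar B_\eps$. The ingredients you list for the propagation fix (first-order conditions at unconstrained indices, diagonal dominance of the Hessian, the strict gap $a>z_{\min}$) do not obviously assemble into a proof that the constrained minimizer cannot have $z^*_{d+k}=z_{\min}$; the stationarity condition $\partial_{d+k}\mathcal E_\mathrm{surf}(z^*)=0$ is perfectly compatible with $z^*_{d+k}$ sitting on that value, and nothing in Assumption~\ref{assu:v} singles out $z_{\min}$ as special for the gradient. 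So the gap you identify is genuine, and the cleanest resolution is to abandon compactness and argue directly as above.
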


\begin{proof}
Suppose $(z_j)_{j \in \N} \in \mathcal{D}_0$ is such that $(z_1, \ldots, z_d) \in A_{\eps}$, in particular, $z_j \ge z_{\min}$ for $j = 1, \ldots, d$. If $(z_{d+1}, z_{d+2}, \ldots) \notin [z_{\min}, z_{\max}+\eps]^{\N}$ we construct a new configuration $(z'_j)_{j \in \N} \in \mathcal{D}_0$ without changing the first $d$ spacings similarly as in the proofs of Lemma \ref{lem:est-on-min} and \ref{lem:Esurf_min}. 

If $z_i > z_{\max} + \eps$, we define $(z'_j)_{j \in \N}$ by setting $z'_j = z_j$ for $j \ne i$ and $z'_i = z_{\max}$. Then
\begin{align}\label{eq:u-too-long}
  \mathcal{E}_{\rm surf}((z'_j)_{j \in \N}) 
  \le \mathcal{E}_{\rm surf}((z_j)_{j \in \N}) + v(z_{\max}) - v(z_{\max} + \eps). 
\end{align}
Now assume $b = \min\{z_{d+1}, z_{d+2}, \ldots\} < z_{\min}$. We choose an $i \ge d+1$ with $z_i = b$ and define $(z'_j)_{j \in \N}$ by setting $z'_j = z_j$ for $j < i$, $z'_i = \min\{z_i+z_{i+1}, z_{\max}\}$ and $z'_j = z_{j+1}$ for $j > i$. As in Lemmas \ref{lem:est-on-min} and \ref{lem:Esurf_min} (in particular using that $e_0 \le 0$), we see that 
\begin{align}\label{eq:u-too-short}
\begin{split}
  \mathcal{E}_{\rm surf}((z'_j)_{j \in \N}) 
  &\le \mathcal{E}_{\rm surf}((z_j)_{j \in \N}) - \Big( v(b) + v(z_{\max}) - 2 \alpha_1 \sum_{n=2}^m (nb)^{-s} \Big) \\ 
  &\le \mathcal{E}_{\rm surf}((z_j)_{j \in \N}) - 2 \alpha_1 \sum_{n=m+1}^{\infty} (nz_{\min})^{-s}. 
\end{split}
\end{align}

The estimates \eqref{eq:u-too-long} and \eqref{eq:u-too-short} show that, for any $(z_j)_{j \in \N} \in \mathcal{D}_0$ with $(z_1, \ldots, z_d) \in A_{\eps}$ and $(z_{d+1}, \ldots, z_{2d}) \in B_{\eps}$ there is a $(z'_j)_{j \in \N} \in \mathcal{D}_0$ with $(z'_1, \ldots, z'_d) = (z_1, \ldots, z_d)$ such that 
\begin{align*}
  \mathcal{E}_{\rm surf}((z'_j)_{j \in \N}) 
  &\le \mathcal{E}_{\rm surf}((z_j)_{j \in \N}) - \delta, 
\end{align*}
where $\delta = \min \big\{ v(z_{\max} + \eps) - v(z_{\max}), \ 2 \alpha_1 \sum_{n=m+1}^{\infty} (nz_{\min})^{-s} \big\} > 0$. Using \eqref{eq:bellmann-ersatz-k-gleich-m} we arrive at 
\begin{align*}
  u(z_1, \ldots, z_d) + \delta 
  \le \mathcal{E}_{d+1}(z_1, \ldots, z_d) - d e_0 + W(z_1, \ldots, z_d; z_{d+1}, \ldots, z_{2d})  
  + \mathcal{E}_{\rm surf}((z_j)_{j \ge d+1}).   
\end{align*}
The claim now follows by taking the infimum over $(z_j)_{j \in \N}$ with fixed $(z_1, \ldots, z_d)$ conditioned on $(z_{d+1}, \ldots, z_{2d}) \in B_{\eps}$. 
\end{proof}

\noindent A simpler proof gives the following estimate that  will also be needed in Chapter \ref{sec:gaussian}. 
\begin{lemma}\label{lem:minEbulk-quantest}
For any $\eps > 0$ there exists a $\delta > 0$ such that $\mathcal{E}_\mathrm{bulk}(z) \ge \delta$ for all $z \in \mathcal{D}^+ \setminus [z_{\min}, z_{\max} + \eps]^{\Z}$.
\end{lemma}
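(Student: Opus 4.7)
The plan is to mirror the modification strategy used in Lemmas~\ref{lem:est-on-min}, \ref{lem:Esurf_min}, and \ref{lem:unibell}, transferring it from the semi-infinite to the bi-infinite setting. Given $z \in \mathcal{D}^+$ with some spacing $z_{i_0} \notin [z_{\min}, z_{\max}+\eps]$, the aim is to construct a modification $z' \in \mathcal{D}^+$ with $\mathcal{E}_\mathrm{bulk}(z) - \mathcal{E}_\mathrm{bulk}(z') \ge \delta$ for a uniform $\delta = \delta(\eps) > 0$. Since $\mathcal{E}_\mathrm{bulk}(z') \ge 0 = \min \mathcal{E}_\mathrm{bulk}$ by Proposition~\ref{prop:lim-bulk}(c), this at once gives $\mathcal{E}_\mathrm{bulk}(z) \ge \delta$. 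By the shift invariance of the representation \eqref{eq:extensionbulk}, I may assume $i_0 = 0$ and distinguish two cases.

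If $z_0 > z_{\max}+\eps$, I would set $z'_0 = z_{\max}$ and $z'_j = z_j$ otherwise; then $z' \in \mathcal{D}^+$ and $z$ and $z'$ differ in a single coordinate, so the difference $\mathcal{E}_\mathrm{bulk}(z') - \mathcal{E}_\mathrm{bulk}(z)$ reduces to a finite sum of affected $(j,k)$-terms. The $(j,k) = (0,1)$ contribution is $v(z_{\max}) - v(z_0) \le v(z_{\max}) - v(z_{\max}+\eps) < 0$, strictly by the uniqueness of $z_{\max}$ as the minimizer of $v$ (Assumption~\ref{assu:v}(i)). For $k \ge 2$ and $j \le 0 \le j+k-1$ the partial sums satisfy $S^z_{j,k} > S^{z'}_{j,k} > z_{\max}$ (here the hard core $r_{\rm hc} > 0$ from the subsection hypothesis ensures the strict lower bound), so by monotonicity of $v$ on $(z_{\max}, \infty)$ the corresponding differences are $\le 0$; likewise $p(z_{\max} - z_0) \le 0$. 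Summing yields $\mathcal{E}_\mathrm{bulk}(z) - \mathcal{E}_\mathrm{bulk}(z') \ge v(z_{\max}+\eps) - v(z_{\max}) =: \delta_1 > 0$.

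If $z_0 < z_{\min}$, I would apply the particle-removal construction of Lemmas~\ref{lem:Esurf_min} and \ref{lem:unibell}: remove the particle $x_1$ and relabel so that the bonds $z_0$ and $z_1$ merge into a single bond $\min(z_0+z_1, z_{\max})$, producing a bi-infinite $z' \in \mathcal{D}^+$ (square-summability is preserved as only a single bond is modified after re-indexing). Shift invariance of the absolutely convergent series \eqref{eq:extensionbulk} reduces $\mathcal{E}_\mathrm{bulk}(z) - \mathcal{E}_\mathrm{bulk}(z')$ to a finite rearrangement of local interactions near index $0$, and the identical computation carried out in the proof of Lemma~\ref{lem:unibell}, applied with $b = z_0$, yields
\[
\mathcal{E}_\mathrm{bulk}(z) - \mathcal{E}_\mathrm{bulk}(z') \ge v(z_0) + v(z_{\max}) - 2\alpha_1 \sum_{n=2}^m (n z_0)^{-s} \ge 2\alpha_1 \sum_{n=m+1}^\infty (n z_{\min})^{-s} =: \delta_2 > 0,
\]
where the second inequality combines Assumption~\ref{assu:v}(ii) with $z_0 < z_{\min}$, using crucially that $m < \infty$ in the present subsection to ensure $\delta_2 > 0$. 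Taking $\delta := \min(\delta_1, \delta_2) > 0$ closes the argument.

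The main obstacle is the bookkeeping in the second case: the re-indexing after removing a particle means that some pair interactions of index distance $m+1$ in $z$ re-appear as interactions of index distance $m$ in $z'$, and shrinking the merged bond to $z_{\max}$ further shifts particle positions on one side. However, only finitely many local interactions are affected, shift invariance of \eqref{eq:extensionbulk} on $\mathcal{D}^+$ legitimates the finite-difference calculation, and the algebraic cancellations that produce the estimate in Lemma~\ref{lem:unibell} go through verbatim.
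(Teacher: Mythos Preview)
Your approach is essentially the same as the paper's: modify a bad spacing (shrink if too long, remove a particle if too short) to drop the energy by a uniform amount $\delta>0$, then invoke $\mathcal{E}_\mathrm{bulk}\ge 0$. The paper streamlines matters by first reducing to $z\in\mathcal D^+_0$ via continuity of $\mathcal E_\mathrm{bulk}$ (Proposition~\ref{prop:lim-bulk}(a)), which makes the local difference computation entirely finite.

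One point to tighten: in the short-spacing case you take $b=z_0$ to be \emph{some} spacing below $z_{\min}$, but the estimate you invoke from Lemma~\ref{lem:est-on-min}/\eqref{eq:u-too-short},
\[
\mathcal E_\mathrm{bulk}(z)-\mathcal E_\mathrm{bulk}(z')\;\ge\; v(b)+v(z_{\max})-2\alpha_1\sum_{n=2}^{m}(nb)^{-s},
\]
relies on the interaction bound $|v(z_i+\cdots+z_{i+n-1})|\le \alpha_1 (nb)^{-s}$, which in turn needs every spacing to be at least $b$. If some other $z_j<z_0$ this fails, and Assumption~\ref{assu:v}(ii) cannot be applied. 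The fix is immediate: since $z\in\mathcal D^+$ implies $z_j\to a>z_{\min}$, only finitely many spacings lie below $z_{\min}$ and the infimum is attained; choose $i_0$ so that $z_{i_0}=\min_j z_j$ (this is exactly what the paper does, writing $b=\min\{z_j:j\in\Z\}$). With that adjustment your argument goes through verbatim.
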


\begin{proof}
By continuity we may assume that $z = (z_j)_{j \in \Z} \in \mathcal{D}^+_0 \setminus [z_{\min}, z_{\max} + \eps]^{\Z}$. If $z_i > z_{\max} + \eps$, we define $z' = (z'_j)_{j \in \Z}$ by setting $z'_j = z_j$ for $j \ne i$ and $z'_i = z_{\max}$. Then
\begin{align*}
  0 \le \mathcal{E}_{\rm bulk}(z') 
  \le \mathcal{E}_{\rm bulk}(z) + v(z_{\max}) - v(z_{\max} + \eps). 
\end{align*}
If $b = \min\{z_j : j \in \Z\} < z_{\min}$. We choose the smallest $i$ with $z_i = b$ and define $z = (z'_j)_{j \in \N}$ by setting $z'_j = z_j$ for $j < i$, $z'_i = \min\{z_i+z_{i+1}, z_{\max}\}$ and $z'_j = z_{j+1}$ for $j > i$. As in \eqref{eq:u-too-short} we get  
\begin{align*}
  0 \le \mathcal{E}_{\rm bulk}(z') 
  \le \mathcal{E}_{\rm bulk}(z) - 2 \alpha_1 \sum_{n=m+1}^{\infty} (nz_{\min})^{-s}. 
\end{align*}
This concludes the proof. 
\end{proof}


\section{Gibbs measures for the infinite and semi-infinite chains} \label{sec:gibbs} 

Here we prove the existence of $\nu_\beta$, $\mu_\beta$, $g(\beta)$, $g_\mathrm{surf}(\beta)$  and check that $\mu_\beta$ is shift-invariant and mixing, hence ergodic; the results and methods are fairly standard.
 In addition, we provide an a priori estimate on the decay of correlations with explicit analysis of the $\beta$-dependence (Theorem~\ref{thm:ergodicity}) which to the best of our knowledge is new. 
The results from this section need only very little on the pair potential: we only use that $v$ has a hard core and that $v(r)= O(1/r^{s})$, for large $r$, with $s>2$. The technical assumption of a hard core  frees us from superstability estimates~\cite{presutti-lebowitz76,ruelle76}. The decay of the potential ensures that the infinite volume Gibbs measure is unique, see e.g.~\cite[Chapter 8.3]{georgii-book} and~\cite{papangelou84a,papangelou84b,klein85}. 

We follow the classical treatment of one-dimensional systems with transfer operators. 
 For compactly supported pair potentials with a hard core (or, in our case, when $m$ is chosen finite), the transfer operators are integral operators in $L^2(\R_+^{m-1},\dd x)$~\cite[Chapter 5.6]{ruelle-book69}, see Section~\ref{sec:gaussian}. For long-range interactions, the transfer operator (also known as \emph{Ruelle operator} or \emph{Ruelle-Perron-Frobenius operator}) acts instead from the left on functions of infinitely many variables, and from the right on  measures~\cite{ruelle68,gallavotti-miraclesole70,ruelle-book78}. 
The formalism of transfer operators keeps being developed in the context of dynamical systems and ergodic theory~\cite{baladi-book,baladi00}.
 
 For the decay of correlations, we adapt~\cite{pollicott00} to the present context of continuous unbounded spins and carefully track the $\beta$-dependence in the bounds. 
 In Section~\ref{sec:ldp}, transfer operators will also help us investigate the large deviations behavior of the Gibbs measures; notably the eigenvalue equation from Lemma~\ref{lem:eigenmeas} translates into a fixed point equation for the rate function (see Lemma~\ref{lem:fpe}).

The results of this section hold for all $m\in \N\cup \{\infty\}$ and $\beta,p>0$; the additional condition $p<p^*$ is not needed. 

\subsection{Transfer operator} \label{sec:transferoperator-infinite}

For $j \in \Z$ and $z_j,z_{j+1},\ldots > 0$ we abbreviate 
$h_j = h(z_j,z_{j +1},\ldots)$, cf.\ \eqref{eq:h-def} and \eqref{eq:h-def-ext}. 
The transfer operator acts on functions as 
\bes
	\mathcal{L}_\beta f(z_1,z_2,\ldots) = \int_0^\infty \dd z_0\, \e^{-\beta h_0 } f(z_0,z_1,\ldots).
\ees
The dual action on measures is defined by $(\mathcal{L}_\beta^*\nu)(f)= \nu(\mathcal{L}_\beta f)$ and is given by
\bes 
  {\mathcal L}_\beta ^*  \nu (\dd z_1 \dd z_2...) = e^{-\beta h_1}\, \dd z_1  \nu (\dd z_2 \dd z_3 ...).
\ees

\begin{lemma} \label{lem:eigenmeas}
	There exist $\lambda_0(\beta)>0$ and a probability measure $\nu_\beta$ on $\R_+^\N$ such that 
	\bes 
		 \mathcal{L}_\beta^* \nu_\beta = \lambda_0(\beta) \nu_\beta.
	\ees	
	Moreover $\nu_\beta((r_\mathrm{hc},\infty)^\N) =1$ and the pair $(\nu_\beta,\lambda_0(\beta))$ is unique. 
\end{lemma}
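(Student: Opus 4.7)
The plan is to exhibit $(\lambda_0(\beta), \nu_\beta)$ as a Perron-Frobenius-type eigenpair of $\mathcal{L}_\beta^*$ via a Schauder-Tychonoff fixed point argument on a weakly compact convex set of probability measures on $\R_+^\N$, and then to deduce uniqueness from the fact that the eigenvalue equation identifies $\nu_\beta$ as a one-dimensional Gibbs measure for the absolutely summable interaction $\sum_j h_j$.

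For existence, I would work with the set $\mathcal{K}$ of probability measures $\nu$ on $\R_+^\N$ satisfying uniform exponential upper tails $\nu(\{z_j > r\}) \le C_0 e^{-c \beta p r}$ for fixed $C_0, c > 0$ and all $j,r$, together with a uniform lower-tail control near the hard core (supplied by the blow-up of $v$ at $r_{\rm hc}$, or by the decrease of $v$ on $(0,z_{\max})$ when $r_{\rm hc}=0$). Prokhorov's theorem applied coordinatewise shows that $\mathcal{K}$ is nonempty, convex, and compact in the weak topology induced by the product topology. Using the decay $v(r)=O(r^{-s})$ from Assumption~\ref{assu:v}(ii) together with the linear pressure term in $h$, one shows that $\lambda(\nu):= (\mathcal{L}_\beta^* \nu)(\R_+^\N) = \int\bigl(\int_0^\infty e^{-\beta h(z_0,z_1,\ldots)}\,\dd z_0\bigr)\,\dd \nu$ is bounded away from $0$ and $\infty$ uniformly on $\mathcal{K}$, so that the normalized map $T\nu := \mathcal{L}_\beta^* \nu/\lambda(\nu)$ is well-defined, weakly continuous, and preserves $\mathcal{K}$: the new first-coordinate marginal of $T\nu$ picks up its exponential tail from the pressure factor $e^{-\beta p z_1}$ in $h_1$, while the marginals in $z_j$, $j\ge 2$, simply inherit their tails from $\nu$. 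Schauder-Tychonoff then yields a fixed point $\nu_\beta$ with $\mathcal{L}_\beta^* \nu_\beta = \lambda_0(\beta)\nu_\beta$, where $\lambda_0(\beta):=\lambda(\nu_\beta)>0$. The support statement $\nu_\beta((r_{\rm hc},\infty)^\N)=1$ is then automatic because $e^{-\beta h_j}$ vanishes on $\{z_j \le r_{\rm hc}\}$.

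For uniqueness, iterating the eigenvalue equation gives, for every $n\ge 1$,
\[
\nu_\beta(\dd z_1 \cdots \dd z_n\, \dd z_{n+1}\cdots) = \lambda_0(\beta)^{-n}\, e^{-\beta\sum_{j=1}^n h_j}\,\dd z_1\cdots \dd z_n\, \nu_\beta(\dd z_{n+1}\cdots),
\]
which is exactly the DLR specification of a one-dimensional Gibbs measure for the summable long-range interaction $(h_j)$. Classical uniqueness results in this setting (see e.g.\ Ruelle 1968 or Georgii's book, Chapter 8) then force $\nu_\beta$ to be the unique probability-measure solution, and $\lambda_0(\beta)$ is pinned down by the normalization $\nu_\beta(\R_+^\N)=1$. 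The main obstacle is the case $m=\infty$, where $h$ depends on the entire infinite sequence and $\mathcal{L}_\beta$ is not a compact operator on any convenient Banach space, so Krein-Rutman is unavailable; one must instead exploit the soft compactness of the product topology together with the polynomial decay of $v$ to verify that the tail bounds defining $\mathcal{K}$ are propagated through $T$ with uniform constants, which is the quantitative heart of the argument.
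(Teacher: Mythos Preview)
Your existence argument differs from the paper's, and as written it has a real gap in the step where you claim $T$ preserves $\mathcal{K}$. Under $T\nu$ the coordinates shift: the new $(z_2,z_3,\ldots)$ is the old $(z_1,z_2,\ldots)$ \emph{reweighted} by the function $g(z_2,z_3,\ldots):=\int_0^\infty e^{-\beta h(z_1,z_2,\ldots)}\,\dd z_1$. You correctly observe that $g$ is bounded above and below, say $0<c_1\le g\le c_2$, but this only gives
\[
T\nu(\{z_j>r\}) \;\le\; \frac{c_2}{c_1}\,\nu(\{z_{j-1}>r\}) \;\le\; \frac{c_2}{c_1}\,C_0\,e^{-c\beta p r}\qquad(j\ge 2),
\]
so a single uniform constant $C_0$ does \emph{not} propagate. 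The fix is to allow $j$-dependent constants, e.g.\ $C_j=(c_2/c_1)^{j-1}C_1$ with $C_1$ set by the pressure bound on the fresh coordinate; this still yields coordinatewise tightness, hence a compact $\mathcal{K}$ in the product topology, and the Schauder--Tychonoff argument then goes through. But the sentence ``the marginals in $z_j$, $j\ge2$, simply inherit their tails from $\nu$'' is not correct as stated.

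The paper takes a different and shorter route that sidesteps this bookkeeping entirely. Rather than building an invariant compact set for $T$, the paper observes that $T$ maps Gibbs measures (in the DLR sense, for the specification determined by the $h_j$) to Gibbs measures: it checks the one-site DLR kernels $\gamma_k$ directly and shows $T\nu$ satisfies $\gamma_k$-invariance whenever $\nu$ does. Since the Gibbs measure is known to exist and be unique by \cite[Theorem~8.39]{georgii-book}, it must equal its own image under $T$, which gives existence of the eigenmeasure without any fixed-point theorem or tightness estimate. Your uniqueness argument (eigenmeasure $\Rightarrow$ Gibbs $\Rightarrow$ unique) is then exactly the converse direction of this same observation, and coincides with the paper's. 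So the paper uses Gibbs uniqueness to obtain \emph{both} existence and uniqueness in one stroke, whereas you use it only for uniqueness and pay for existence separately via Schauder--Tychonoff.
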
 

\noindent We will show in Proposition \ref{prop:thermolim} that $\nu_\beta$ is the measure satisfying \eqref{eq:nutherm}. The non-compactness of $(r_\mathrm{hc},\infty)^\N$ forms an obstacle to the application of a Schauder-Tychonoff fixed point theorem for the map $\nu\mapsto \mathcal{L}_\beta^* \nu /\nu(\mathcal{L}_\beta \mathbf{1})$, see e.g.~\cite[Proposition~2]{ruelle68}. It might be possible to remove the obstacle using tightness estimates, but we prefer to follow a different route and exploit the known uniqueness of infinite volume Gibbs measures~\cite[Chapter 8.3]{georgii-book} instead. 

\begin{proof}
Let $\nu$ be a probability measure on $\R_+^\N$, $\lambda:= \nu(\mathcal{L}_\beta \mathbf{1})$, and $\tilde\nu:= \frac{1}{\lambda}\mathcal{L}_\beta^* \nu$. 	We show that if $\nu$ is a Gibbs measure, then $\tilde \nu$ is a Gibbs measure as well.  Let us first introduce the kernels needed to formulate that $\nu$ is a Gibbs measure. By~\cite[Theorem 1.33]{georgii-book} it is enough to look at one-point kernels. 
	Pick $k\in \N$. For $z'_k>0$ and  $z= (z_j)_{j\in \N}\in \R_+^\N$, let 
	$$	H_k(z'_k\mid z) = p z'_k + \sum_{J\subset \N,\, J\ni k} v\Bigl( z'_k + \sum_{j\in J\setminus\{k\}} z_j\Bigr) $$
	where sum runs over discrete intervals $J=\{i,\ldots, \ell-1\}\subset \N$.
	Further define the kernel 
	$$ \gamma_k\bigl(z,A\bigr) = \frac{1}{N_k(z)} 
		\int_{0}^\infty \1_A\bigl( \ldots,z_{k-1},z'_k, z_{k+1},\ldots) \e^{- \beta H_k(z'_k \mid z)} \dd z'_k
	$$
	where $A\subset \R_+^\N$ and $N_k(z)  = \int_{0}^\infty  \e^{- \beta H_k(z'_k \mid z)} \dd z'_k$.
	The kernel acts on functions and measures in the usual way, in particular $(\gamma_k \1_A)(z) = \gamma_k(z,A)$. 
	Notice that $\gamma_k^2 f = \gamma_k f$ for all $f$. 
 Indeed $\gamma_k f$ yields a function where $z_k$-dependence has been integrated out, and integrating it against the probability measure  $\gamma_k(z,\cdot)$ does not change its value. Replacing $\N$ with $\N_0$, we define in a completely analogous fashion conditional energies $H_k^0$ and kernels $\gamma_k^0\bigl( (z_j)_{j\in \N_0}, B\bigr)$. 
	
	Suppose that $\nu$ is a Gibbs measure, i.e., $\nu \gamma_k = \nu$ for all $k\in \N$. Let $f:\R_+^{\N_0}\to \R_+$ be a measurable test function. Treat $\tilde \nu = \lambda^{-1} \mathcal{L}_\beta^*\nu$ as a measure on $\R_+^{\N_0}$.	We check that $\tilde \nu(\gamma_k^0 f) = \tilde \nu(f)$ for all $k\in \N_0$. For $k\in \N$, this property is inherited from the Gibbsianness of $\nu$: we have 
	$$
		\tilde \nu(f) = \frac{1}{\lambda}\int_0^\infty \nu\Bigl( f(z_0,\cdot)  \e^{-\beta h(z_0,\cdot)}\Bigr) \dd z_0 
			 = \frac{1}{\lambda}\int_0^\infty \nu \gamma_k \Bigl( f(z_0,\cdot)  \e^{-\beta h(z_0,\cdot)}\Bigr) \dd z_0.
	$$
	Set $\tilde f:= \gamma_k^0 f$. Note $\tilde f = (\gamma_k^0) \tilde f$.  Therefore 
	\begin{align*}
		\gamma_k \Bigl( f(z_0,\cdot)  \e^{-\beta h(z_0,\cdot)}\Bigr) (z) 
		& = 	( \gamma_k^0 f)(z_0,z)	\times (\gamma_k 	 \e^{-\beta h(z_0,\cdot)})(z)\\
		& = \gamma_k \Bigl( \tilde f(z_0,\cdot)  \e^{-\beta h(z_0,\cdot)}\Bigr) (z)
	\end{align*}
	hence $\tilde \nu(f) = \tilde \nu(\tilde f) = \tilde \nu(\gamma_k^0 f)$.
	For $k=0$, the required property follows from the definition of $\tilde \nu$. Notice $H_0^0 =h_0$ and 
		$$(\gamma_0^0 f)\bigl( (z_j)_{j\in \N_0}\bigr) = \frac{\int_0^\infty f(z'_0, z_1,z_2,\ldots) \e^{-\beta h(z'_0,z_1,\ldots)} \dd z'_0}{\int_0^\infty \e^{-\beta h(z'_0,z_1,\ldots)} \dd z'_0}.$$
	Let $\tilde f= \gamma_0^0 f$. Then 
	\begin{align*}
	   \tilde \nu(f) &=\frac{1}{\lambda} \nu \Bigl( \int_0^\infty f(z_0,\cdot)  \e^{-\beta h(z_0,\cdot)}\dd z_0 \Bigr)  
	    = \frac{1}{\lambda} \nu \Bigl( \int_0^\infty  \tilde f(z_0,\cdot)  \e^{-\beta h(z_0,\cdot)}\dd z_0 \Bigr)\\
	   & = \tilde \nu(\tilde f) = \tilde \nu(\gamma_0^0 f). 
	\end{align*}
	The previous identities hold for all non-negative test functions $f$, consequently $\tilde \nu \gamma_k^0 = \tilde \nu$ for all $k\in \N_0$ and $\tilde \nu$ is a Gibbs measure as well. 

	By~\cite[Theorem 8.39]{georgii-book}, the Gibbs measure $\nu$ exists and is unique.
 Treating $\nu$ and $\tilde \nu$ both as measures on $\R_+^\N$, we must therefore have $\nu = \tilde \nu$, i.e., the unique Gibbs measure is an eigenmeasure of $\mathcal{L}_\beta^*$ and in particular, there exists an eigenmeasure. 
	Conversely, let $\nu = \frac{1}{\lambda} \mathcal{L}_\beta^*\nu$ be an eigenmeasure. Arguments similar to the investigation of $\tilde \nu$ given above, based on the iterated fixed point equation $\nu = \frac{1}{\lambda^k} {\mathcal{L}_\beta^*}^k\nu$, show that $\nu \gamma_j = \nu$ for all $j=1,\ldots,k$ and all $k$, hence for all $j$. Every eigenmeasure is a Gibbs measure. Since the latter is unique, the eigenmeasure is unique as well. Finally, since $v(z_j) = \infty$ for $z_j\leq r_\mathrm{hc}$, the eigenmeasure $\nu = \frac{1}{\lambda^k}{\mathcal{L}_\beta^*}^k \nu$ must satisfy $\nu(\exists j\in \{1,\ldots,k\}:\, z_j\leq r_\mathrm{hc}) = 0$. This holds for all $k\in \N$, hence $\nu( (r_\mathrm{hc},\infty)^\N) =1$. 
\end{proof}	
 
Let $\nu_\beta^-$ be the probability measure on $\R_+^{\{\ldots, -1,0\}}$ obtained by flipping $\nu_\beta^+= \nu_\beta$, i.e., $\nu_\beta^-$ is the image of $\nu_\beta^+ =\nu_\beta$ under the map $(z_k)_{k\in \N} \mapsto (z_{1-\ell})_{\ell \leq 0}$. The measures $\nu_\beta^\pm$ represent equilibrium measures for the left and  right half-infinite chains. Let 
\bes 
  \mathcal{W}_0 = \mathcal{W}( \cdots z_{-1} z_0 \mid z_1 z_2 \ldots ) := \sum_{\heap{j \leq 0, k \geq 1}{|k-j|\leq m-1}} v(z_j+\cdots + z_k)
\ees
be the total interaction between left and right half-infinite chains,  cf.\ Proposition \ref{prop:lim-bulk}(d). We abbreviate the shifted versions as  $\mathcal{W}_\ell = \mathcal{W}(\cdots z_\ell \mid z_{\ell+1} \cdots)$.
Define $\varphi_\beta(z_1,z_2,\ldots)$ by 
\be \label{eq:phidef}
  \varphi_\beta(z_1,z_2,\ldots) = \frac{\nu_\beta^-(\exp( - \beta \mathcal{W}_0))}{\nu_\beta^- \otimes \nu_\beta^+( \exp( - \beta \mathcal{W}_0))}.
\ee
Thus $\varphi_\beta(z_1,z_2,\ldots)$ represents an averaged contribution to the Boltzmann weight from the left half-infinite chain. 

\begin{lemma} \label{lem:eigenphi}
	We have  $\mathcal{L}_\beta \varphi_\beta = \lambda_0(\beta) \varphi_\beta$ and $ \nu_\beta(\varphi_\beta) = 1$.
\end{lemma}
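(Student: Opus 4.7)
The plan is to introduce the unnormalized numerator
\[ \psi_\beta(z_1, z_2, \ldots) := \int\nu_\beta^-(\dd u)\,\e^{-\beta \mathcal{W}_0(u\,\mid\, (z_1, z_2, \ldots))}, \]
so that $\varphi_\beta = \psi_\beta/\nu_\beta^+(\psi_\beta)$. By Fubini the denominator equals $(\nu_\beta^-\otimes\nu_\beta^+)(\e^{-\beta\mathcal{W}_0})$, which is exactly the denominator in the definition of $\varphi_\beta$, so the normalization $\nu_\beta(\varphi_\beta)=1$ is automatic. The substance is to prove $\mathcal{L}_\beta\psi_\beta = \lambda_0(\beta)\psi_\beta$; dividing by the constant normalization then yields the stated eigenvalue equation for $\varphi_\beta$.

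The key is a combinatorial energy identity: for every $z_0 > 0$, left-half sequence $u = (\ldots, u_{-1}, u_0)$, and right-half sequence $(z_1, z_2, \ldots)$,
\[ h(z_0, z_1, z_2, \ldots) + \mathcal{W}_0\bigl(u\,\mid\, (z_0, z_1, z_2, \ldots)\bigr) = h(z_0, u_0, u_{-1}, \ldots) + \mathcal{W}_0\bigl((u, z_0)\,\mid\, (z_1, z_2, \ldots)\bigr), \]
where $(u, z_0)$ denotes the left-half $u$ extended by appending $z_0$ at the boundary. Both sides compute the excess energy, relative to the internal energies of the half-chains $u$ and $(z_1, z_2, \ldots)$, of placing the bridging spacing $z_0$ between them in a bi-infinite chain. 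The LHS treats $z_0$ as the first spacing of a right-extended configuration; the RHS treats $z_0$ as the last spacing of a left-extended configuration. I would verify the identity by a term-by-term partition of the contributing bonds: the self-bond $v(z_0)$, bonds from $z_0$ into $z_{\ge 1}$, bonds from $u$ into $z_0$, and bonds crossing $u$--$z_0$--$z_{\ge 1}$, with the pressure $pz_0$ appearing identically on both sides. For $m=\infty$, absolute convergence of the series under Assumption~\ref{assu:v}(ii) together with $r_\mathrm{hc}>0$ justifies the rearrangement.

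The second ingredient is the mirrored eigenvalue equation. Because $\nu_\beta^-$ is the pushforward of $\nu_\beta^+$ under the reflection $(z_k)_{k\ge 1}\mapsto(z_{1-\ell})_{\ell\le 0}$, and the pair-potential energy is reflection-symmetric, Lemma~\ref{lem:eigenmeas} transforms into
\[ \e^{-\beta h(z_0, u_0, u_{-1}, \ldots)}\,\dd z_0\,\nu_\beta^-(\dd u) = \lambda_0(\beta)\,\nu_\beta^-\bigl(\dd(u, z_0)\bigr). \]
Starting from $\mathcal{L}_\beta\psi_\beta(z_1, z_2, \ldots) = \int\dd z_0\,\e^{-\beta h_0}\psi_\beta(z_0, z_1, z_2, \ldots)$, I would substitute the definition of $\psi_\beta$, interchange the order of integration via Fubini, apply the energy identity to the combined exponent, invoke the mirrored eigenvalue equation to integrate $z_0$ against $\nu_\beta^-$, and recognize the result as $\lambda_0(\beta)$ times $\psi_\beta(z_1, z_2, \ldots)$ after absorbing $(u, z_0)$ into a single $\nu_\beta^-$-integration variable.

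The main obstacle is the careful bookkeeping needed to establish the combinatorial identity, especially for $m=\infty$, where all manipulations of the series defining $h$ and $\mathcal{W}_0$ must be justified by absolute convergence; the polynomial decay $v(r)=O(r^{-s})$ with $s>2$ together with the hard core $r_\mathrm{hc}>0$ provides this. The mirrored eigenvalue equation is intuitively a symmetry statement but still requires a brief explicit derivation via the pushforward identity, as the sequence spaces on the two sides differ by a shift of the index set and one must check that the $h$ appearing in the flipped relation is the same function applied to the flipped sequence.
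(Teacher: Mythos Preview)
Your proposal is correct and follows essentially the same approach as the paper: both proofs hinge on (i) the mirrored eigenvalue equation for $\nu_\beta^-$ obtained by reflecting Lemma~\ref{lem:eigenmeas}, and (ii) the combinatorial energy identity that swaps which half-chain the bridging spacing $z_0$ is attached to. The paper runs the computation in the reverse direction---starting from $\varphi_\beta$, expanding $\nu_\beta^-$ via its eigenvalue equation, applying the energy identity, and recognizing $\tfrac{1}{\lambda_0(\beta)}\mathcal{L}_\beta\varphi_\beta$---but the content is the same; your version is simply more explicit in isolating and naming the two ingredients.
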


\begin{proof}  
The normalization is obvious, for the eigenvalue equation let $c_\beta= \nu_\beta^- \otimes \nu_\beta^+( \exp( - \beta \mathcal{W}_0))$ and use the eigenvalue equation for $\nu_\beta^\pm$
\begin{align*} 
   &\varphi_\beta(z_1,z_2,\ldots) \\ 
			&\qquad = \frac{1}{c_\beta} \int \e^{- \beta\mathcal{W}(\cdots z_0 \mid z_1\cdots)} \dd \nu_\beta^-\bigl( (z_j)_{j\leq0} \bigr) \\
      &\qquad = \frac{1}{c_\beta \lambda_0(\beta)} \int \e^{- \beta\mathcal{W}(\cdots z_0 \mid z_1\cdots)} \e^{- \beta (p z_0 + v(z_{0})+ v(z_0+z_{-1}) + \cdots )}\dd z_0 \dd \nu_\beta^-\bigl( (z_j)_{j\leq -1} \bigr) \\
      &\qquad = \frac{1}{c_\beta \lambda_0(\beta)} \int \e^{- \beta\mathcal{W}(\cdots z_{-1}\mid z_0  z_1\cdots)} \e^{-\beta (pz_0 + v(z_0)+ v(z_0+z_1)+\cdots)} \dd z_0  \dd \nu_\beta^-\bigl( (z_j)_{j\leq -1} \bigr) \\
      &\qquad = \frac{1}{\lambda_0(\beta)} \int \e^{-\beta h_0} \varphi_\beta(z_0,z_1,\ldots) \dd z_0 \\
      &\qquad = \frac{1}{\lambda_0(\beta)}(\mathcal{L}_\beta \varphi_\beta)(z_1,z_2,\ldots).
\end{align*}
See also~\cite[Section 5.12]{ruelle-book78}.
\end{proof} 
\noindent 
Define the operator   
\bes 
	\mathcal{S}_\beta f:= \frac{1}{\lambda_0(\beta) \varphi_\beta} \mathcal{L}_\beta(\varphi_\beta f)
\ees
so that $\mathcal{S}_\beta\mathbf{1}= \mathbf{1}$ and $\mathcal{S}_\beta^*(\varphi_\beta \nu_\beta^+) = \varphi_\beta \nu_\beta^+$. Let $\mu_\beta$ be the probability measure on $\R_+^\Z$  given by 
\be \label{eq:mudef}
  \frac{\dd \mu_\beta}{\dd \nu_\beta^-\otimes \nu_\beta^+} = \frac{1}{c_\beta} \e^{-\beta \mathcal{W}_0}, \quad c_\beta=\nu_\beta^-\otimes\nu_\beta^+(\e^{- \beta \mathcal{W}_0}).
\ee 
We will show in Proposition \ref{prop:thermolim} that $\mu_\beta$ is the measure satisfying \eqref{eq:mutherm}. Notice that for every bounded measurable function $f$ that depends on right-chain variables $z_1,z_2,\ldots$ only,
\be\label{eq:numu}
  \mu_\beta(f) = \nu_\beta^+(f \varphi_\beta),\quad \nu_\beta^+(f) = \frac{\mu_\beta(\e^{\beta \mathcal{W}_0} f)}{\mu_\beta(\e^{\beta \mathcal{W}_0})}.
\ee
Let $\tau:\R_+^\Z\to \R_+^\Z$ be the shift $(\tau z)_j= z_{j+1}$.

\begin{lemma}\label{lem:mu-S-shift} \hfill
	\begin{itemize}
		\item [(a)] $\mu_\beta$ is shift-invariant.
		\item [(b)] For all $f,g:\R_+^\N\to \R_+$ and all $n\in \N$, we have $\mu_\beta(f (g\circ \tau^n)) = \mu_\beta((\mathcal{S}_\beta^n f)g)$. 
	\end{itemize}
\end{lemma}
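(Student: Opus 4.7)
\emph{Plan.} I will prove (b) by unfolding the iterated eigenvalue equation for $\nu_\beta$, and then obtain (a) by showing that $\tau_\ast \mu_\beta$ coincides with $\mu_\beta$ re-expressed via a splitting at position $-1$.

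For (b), the test function $f\cdot(g\circ\tau^n)$ depends only on the right-chain coordinates $z_1,z_2,\ldots$, so \eqref{eq:numu} gives $\mu_\beta(f\cdot(g\circ\tau^n)) = \nu_\beta(\varphi_\beta\, f\, (g\circ\tau^n))$. Iterating $\mathcal L_\beta^\ast \nu_\beta = \lambda_0(\beta)\nu_\beta$ yields the disintegration
\[
\dd\nu_\beta(z_1,z_2,\ldots)=\lambda_0(\beta)^{-n}\,\e^{-\beta(h_1+\cdots+h_n)}\,\dd z_1\cdots \dd z_n\,\dd\nu_\beta(z_{n+1},z_{n+2},\ldots).
\]
Performing the $\dd z_1\cdots \dd z_n$ integration with $g(z_{n+1},\ldots)$ pulled out, the inner integral is $\mathcal L_\beta^n(\varphi_\beta f)(z_{n+1},z_{n+2},\ldots)$ by definition of $\mathcal L_\beta^n$. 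Multiplying and dividing by $\varphi_\beta(z_{n+1},\ldots)$ identifies $\lambda_0(\beta)^{-n}\varphi_\beta^{-1}\mathcal L_\beta^n(\varphi_\beta f)=\mathcal S_\beta^n f$, so a second use of \eqref{eq:numu} delivers $\mu_\beta((\mathcal S_\beta^n f)g)$.

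For (a), let $\mu_\beta^{(\ell)}$ denote the measure defined like $\mu_\beta=\mu_\beta^{(0)}$ but with the splitting moved to position $\ell$: concretely,
\[
\dd\mu_\beta^{(\ell)} = (c_\beta^{(\ell)})^{-1}\,\e^{-\beta\mathcal W_\ell}\,\dd\nu_\beta^{-,\ell}\otimes \dd\nu_\beta^{+,\ell},
\]
where $\nu_\beta^{\pm,\ell}$ are the natural index-shifts of $\nu_\beta^\pm$. A direct change of variables in the pushforward gives $\tau_\ast\mu_\beta^{(0)}=\mu_\beta^{(-1)}$, so it suffices to show $\mu_\beta^{(-1)}=\mu_\beta^{(0)}$. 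Starting from the density of $\mu_\beta^{(0)}$, I will unfold the left factor by the flipped eigenvalue equation
\[
\dd\nu_\beta^-(z_0,z_{-1},\ldots)=\lambda_0^{-1}\,\e^{-\beta h(z_0,z_{-1},\ldots)}\,\dd z_0\,\dd\nu_\beta^{-,-1}(z_{-1},\ldots),
\]
and simultaneously fold $\dd z_0\,\dd\nu_\beta^+(z_1,z_2,\ldots)$ into $\lambda_0\,\e^{\beta h(z_0,z_1,z_2,\ldots)}\,\dd\nu_\beta^{+,-1}(z_0,z_1,\ldots)$ via the companion equation for $\nu_\beta^+$ extended to $\{0,1,2,\ldots\}$. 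The factors of $\lambda_0$ cancel, and the resulting measure is of the form $\mu_\beta^{(-1)}$ provided the bookkeeping identity
\[
\mathcal W_0 + h(z_0,z_{-1},z_{-2},\ldots)=\mathcal W_{-1}+h(z_0,z_1,z_2,\ldots)
\]
holds, which will also force $c_\beta^{(-1)}=c_\beta$ by normalization.

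I expect this combinatorial identity to be the only non-routine step. Writing $x_j=\sum_{i<j}z_i$ with $x_0:=0$ and expanding both sides, the common pressure terms $pz_0$ and the common nearest-neighbour term $v(x_1-x_0)$ in the two $h$'s cancel; the remaining $m-1$ terms of $h(z_0,z_1,z_2,\ldots)$ enumerate the interactions of $x_0$ with $x_2,\ldots,x_m$, while those of $h(z_0,z_{-1},\ldots)$ enumerate the interactions of $x_1$ with $x_{-1},\ldots,x_{1-m}$. These two collections coincide, respectively, with $\mathcal W_0\setminus\mathcal W_{-1}$ and $\mathcal W_{-1}\setminus\mathcal W_0$, so the identity reduces to the trivial set-theoretic fact that $\mathcal W_0\cup \mathcal W_{-1}$ can be decomposed in two ways. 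Once this accounting is done, both (a) and (b) are complete.
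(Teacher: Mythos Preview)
Your proof is correct. The paper omits the proof entirely, citing it as standard (Ruelle's book), and your argument---unfolding the iterated eigenvalue equation for $\nu_\beta$ in part (b), and for part (a) moving the splitting point via the bookkeeping identity $\mathcal W_0 + h(z_0,z_{-1},\ldots) = \mathcal W_{-1} + h(z_0,z_1,\ldots)$---is exactly the standard route.
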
 

The proof is standard~\cite{ruelle-book78} and therefore omitted. The lemma can be rephrased as follows: let $(Z_n)_{n\in \Z}$ be a stochastic process with law $\mu_\beta$, defined on some probability space $(\Omega,\mathcal{F},\P)$.  Then $(Z_n)_{n\in \Z}$ is stationary, and  
\bes
	\bigl( \mathcal{S}_\beta^n f\bigr)(Z_{n+1},Z_{n+2},\ldots) =  \mathbb{E}\Bigl[ f(Z_1,Z_2,\ldots)\,  \Big|\, Z_{n+1}, Z_{n+2},\ldots \Bigr]\quad \text{a.s.}
\ees
Our next task is to show that the  process is not only stationary but in fact ergodic and to estimate the decay of correlations. 

\subsection{Ergodicity} \label{sec:ergodicity}

Bounds on correlations are most conveniently expressed with the help of \emph{variations}, semi-norms that quantify how much a function depends on faraway variables. Notice that $\nu_{\beta}((r_{\rm hc}, \infty)^{\N}) = \mu_{\beta}((r_{\rm hc}, \infty)^{\Z}) = 1$. Let $f:\R_+^\N \to \R$ be a function and $n \in \mathbb N$.  The $n$th variation of $f$  on $(r_{\rm hc}, \infty)^{\N}$ is 
\bes
	 \var_n(f):= \sup\{|f(z) - f(z')|\, :\, z, z' \in (r_{\rm hc}, \infty)^{\N} \text{ such that }  z_1 = z'_1,\ldots,   z_n= z'_n\}. 
\ees
When $n=0$ the constraint on initial values is empty, $\var_0(f)$ is sometimes called the \emph{oscillation} of $f$~\cite[Eq. (8.2)]{georgii-book}. The oscillation vanishes if and only $f$ is constant.  Notice that $\var_k(h)$ decays algebraically: for $k\in \N$, as $v(r) = O(r^{-s})$,
\bes
	\var_k(h)  \leq 2 \sup_{z} \Bigl|\sum_{j=k+1}^\infty v(z_1+\cdots + z_j)\Bigr| = O\Bigl(\frac{1}{k^{s-1}}\Bigr). 
\ees
It follows that the variation is summable, $\sum_{k=1}^\infty \var_k(h) <\infty$. 
Set 
\bes
   C_ q:= \sum_{k=q+1}^\infty \var_k (h) = O\Bigl( \frac{1}{q^{s-2}}\Bigr).
\ees
Notice that for all $q\in \N_0$, $C_q$ is independent of $\beta$ and $p$. In fact the pressure only enters the oscillation $\var_0 (h)$. By a slight abuse of notation we identify a function $f:\R_+^\N\to \R$ with the function $f_1: \R_+^\Z\to \R_+$, $(z_{j})_{j\in \Z}\mapsto f( (z_j)_{j\in \N})$ and write $\mu_\beta(f)$ instead of $\mu_\beta(f_1)$. The results of this subsection hold for all $p>0$. 

\begin{theorem} \label{thm:ergodicity}
		Let $m\in\N \cup \{\infty\}$ and $p>0$. 
	The measure $\mu_\beta$ is mixing with respect to shifts, i.e., $\mu_\beta(f (g\circ \tau^n)) \to \mu_\beta(f) \mu_\beta(g)$ as $n\to \infty$, for all $f,g\in L^1(\R_+^\Z,\mu_\beta)$. Moreover for $\gamma(\beta) = \exp( - 3 \beta C_0)$ and all bounded $f,g:\R_+^\N \to \R$, $q,n\in \N$, $N\geq qn$, 
	\begin{multline*}
		\bigl|\mu_\beta\bigl( f (g\circ \tau^N)\bigr) - \mu_\beta(f) \mu_\beta(g)\bigr| 
			\leq  \Bigl( (1-\gamma(\beta))^q + \frac{1}{\gamma(\beta)}(\e^{3\beta C_n} - 1)\Bigr) ||g||_\infty ||f||_\infty \\
		+ \frac{1}{\gamma(\beta)} ||g||_\infty \var_n(f).
	\end{multline*}
\end{theorem}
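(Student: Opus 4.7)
By Lemma~\ref{lem:mu-S-shift}(b), $\mu_\beta(f(g\circ\tau^N))=\mu_\beta((\mathcal{S}_\beta^N f)g)$, and applying the same identity with $g\equiv 1$ (using $\mathcal{S}_\beta\mathbf 1=\mathbf 1$) yields the invariance $\mu_\beta(\mathcal{S}_\beta^N f)=\mu_\beta(f)$. Combined with the elementary estimate $\mu|\Psi-\mu(\Psi)|\le\var_0(\Psi)$ this gives
\bes
\bigl|\mu_\beta(f(g\circ\tau^N))-\mu_\beta(f)\mu_\beta(g)\bigr|
=\bigl|\mu_\beta\bigl((\mathcal{S}_\beta^N f-\mu_\beta(f))g\bigr)\bigr|
\le\|g\|_\infty\,\var_0(\mathcal{S}_\beta^N f).
\ees
The problem is thus reduced to an oscillation bound of the form $\var_0(\mathcal{S}_\beta^N f)\le 2(1-\gamma(\beta))^q\|f\|_\infty+\tfrac{2}{\gamma(\beta)}(e^{3\beta C_n}-1)\|f\|_\infty+\tfrac{1}{\gamma(\beta)}\var_n(f)$ for every $q,n\in\N$ with $qn\le N$.

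\textbf{Truncation and one-block Doeblin.} Fix $n$ and set $\tilde f(z_1,z_2,\ldots):=f(z_1,\ldots,z_n,a,a,\ldots)$, so that $\|f-\tilde f\|_\infty\le\var_n(f)$ and, as $\mathcal{S}_\beta$ is an $L^\infty$-contraction, $\var_0(\mathcal{S}_\beta^N f)\le\var_0(\mathcal{S}_\beta^N\tilde f)+2\var_n(f)$. Iterating the defining formula of $\mathcal{S}_\beta$, the telescoping of the $\varphi_\beta$-factors gives
\bes
(\mathcal{S}_\beta^n\tilde f)(z_1,z_2,\ldots)=\int\tilde f(z_{-n+1},\ldots,z_0)\,\rho(\mathbf z_-\mid\mathbf z_+)\,\dd\mathbf z_-,
\ees
with $\rho(\mathbf z_-\mid\mathbf z_+)=\lambda_0(\beta)^{-n}\prod_{j=-n+1}^0 e^{-\beta h_j}\cdot\varphi_\beta(\mathbf z_-,\mathbf z_+)/\varphi_\beta(\mathbf z_+)$, where $\mathbf z_-=(z_{-n+1},\ldots,z_0)$ and $\mathbf z_+=(z_1,z_2,\ldots)$. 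For fixed $\mathbf z_-$, perturbing $\mathbf z_+$ to a reference tail changes each $h_j$ by at most $\var_{-j+1}(h)$, and the product $\prod_j e^{-\beta h_j}$ by a factor in $[e^{-\beta C_0},e^{\beta C_0}]$ since $\sum_{k=1}^n\var_k(h)\le C_0$. A parallel estimate for $\log\varphi_\beta$, built from the representation~\eqref{eq:phidef} and the summable variation of $\mathcal W_0$ (which inherits the tail decay of $v$), bounds the ratio of $\varphi_\beta$-factors by $e^{\pm 2\beta C_0}$. Multiplying these contributions yields the Doeblin-type minorization $\rho(\mathbf z_-\mid\mathbf z_+)\ge\gamma(\beta)\rho_0(\mathbf z_-)$ with $\gamma(\beta)=e^{-3\beta C_0}$, uniform in $\mathbf z_+$, and hence the classical single-block contraction $\var_0(\mathcal S_\beta^n G)\le(1-\gamma(\beta))\var_0(G)$ for any $G$ depending only on the first $n$ coordinates.

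\textbf{Relocalization and iteration.} After one block, $\mathcal S_\beta^n\tilde f$ is no longer strictly local because of the long-range coupling carried by $h$. To enable iteration, replace $h$ by its truncation $\tilde h(z_1,\ldots,z_{n+1}):=pz_1+\sum_{k=1}^{\min(m,n)}v(z_1+\cdots+z_{k+1})$, which is local in $n+1$ arguments and satisfies $|h-\tilde h|\le C_n$. Substituting $\tilde h$ for $h$ in one application of $\mathcal{S}_\beta^n$ distorts the kernel multiplicatively by at most $e^{3\beta C_n}$, contributing an additive error $(e^{3\beta C_n}-1)\|G\|_\infty$ in the oscillation per block. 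Putting the two bounds together yields the recursion
\bes
\var_0\bigl(\mathcal{S}_\beta^{(k+1)n}\tilde f\bigr)\le(1-\gamma(\beta))\var_0\bigl(\mathcal{S}_\beta^{kn}\tilde f\bigr)+(e^{3\beta C_n}-1)\|\tilde f\|_\infty,
\ees
and iterating $q$ times (using $\var_0(\tilde f)\le 2\|f\|_\infty$ and summing the resulting geometric series) gives exactly the desired bound on $\var_0(\mathcal{S}_\beta^{qn}\tilde f)$, hence on $\var_0(\mathcal{S}_\beta^N f)$ via the truncation step. Mixing for bounded local $f,g$ follows by first letting $n\to\infty$ (so $\var_n(f),C_n\to 0$) and then $q\to\infty$, and the extension to all $f,g\in L^1(\mu_\beta)$ is a standard density argument.

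\textbf{Main obstacle.} The delicate point is the Doeblin step: one must carefully isolate three distinct sources of tail-dependence in $\rho(\mathbf z_-\mid\mathbf z_+)$—the product $\prod_j e^{-\beta h_j}$, the numerator ratio $\varphi_\beta(\mathbf z_-,\mathbf z_+)/\varphi_\beta(\mathbf z_-,\mathbf z_+^*)$, and the denominator ratio $\varphi_\beta(\mathbf z_+^*)/\varphi_\beta(\mathbf z_+)$—and check that each is controlled by $e^{\beta C_0}$, so that the three combine into the precise constant $\gamma(\beta)=e^{-3\beta C_0}$ in the final bound.
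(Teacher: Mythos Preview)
Your overall strategy—reduce to an oscillation bound for $\mathcal S_\beta^N f$, establish a one-block Doeblin contraction with constant $\gamma(\beta)=e^{-3\beta C_0}$, and iterate with a relocalization error per block—mirrors the paper's ``method of conditional expectations''. The reduction step and the Doeblin step are fine and correspond to the paper's Lemma~\ref{lem:contraction} (the paper phrases the contraction in $L^1(\varphi_\beta\nu_\beta)$ rather than in oscillation, but your $\var_0$-version is an admissible variant).

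The gap is in the relocalization step. Truncating $h$ to a finite-range $\tilde h$ does \emph{not} make $\mathcal S_\beta^n$ send local functions to local functions: the kernel also contains the ratio $\varphi_\beta(\mathbf z_-,\mathbf z_+)/\varphi_\beta(\mathbf z_+)$, and $\varphi_\beta$ is genuinely non-local regardless of what you do to $h$. Moreover your error estimate for the substitution is not right: replacing $h_j$ by $\tilde h_j$ for $j=1,\ldots,n$ costs $\sum_{j=1}^n|h_j-\tilde h_j|$, which is of order $n\,\var_n(h)$, not $C_n$, and the $\varphi_\beta$-part of the distortion is unaccounted for. So the recursion you write down is not justified by the mechanism you describe.

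The fix is to relocalize the \emph{function}, not the operator. After $k$ blocks, bound $\var_n(\mathcal S_\beta^{kn}\tilde f)$ directly via the smoothing estimate (Lemma~\ref{lem:smoothen} with $k'=n$): since $\var_{(k+1)n}(\tilde f)=0$, this gives $\var_n(\mathcal S_\beta^{kn}\tilde f)\le\|\tilde f\|_\infty(e^{3\beta C_n}-1)$, where the factor $3C_n$ arises from the three pieces $\sum_{j}\var_{2n-j}(h)\le C_n$, $\var_{2n}(\log\varphi_\beta)\le\beta C_{2n}$, and $\var_n(\log\varphi_\beta)\le\beta C_n$. Combined with the Doeblin contraction for the projection of $\mathcal S_\beta^{kn}\tilde f$ onto its first $n$ coordinates, this yields your recursion (with an innocuous extra factor $2$). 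The paper organizes the same idea slightly differently, working in $L^1$ with the conditional-expectation projector $\Pi_n$ and the telescoping identity $\mathcal S_\beta^{qn}-(\mathcal S_\beta^n\Pi_n)^q$ (Lemma~\ref{lem:truncerr}); this avoids the factors of $2$ and gives the constants exactly as stated.
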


We prove Theorem~\ref{thm:ergodicity} with 
Pollicott's \emph{method of conditional expectations}~\cite{pollicott00}. For alternative approaches, see~\cite{sarig02} and the references therein. The principal idea is the following: for $n\in \N$,  $f \in L^1(\R_+^\N,\varphi_\beta \nu_\beta)$ let $\Pi_n f$ be the projection 
\[ \bigl(\Pi_n f\bigr)(z_1,\ldots,z_n) 
   = \frac{\int_{\R_+^{\N}} \varphi_{\beta}(z_1, \ldots) f(z_1, \ldots) \e^{-\beta(h_1+\ldots+h_n)} \nu_{\beta}(\dd z_{n+1} \ldots)}{\int_{\R_+^{\N}} \varphi_{\beta}(z_1, \ldots) \e^{-\beta(h_1+\ldots+h_n)} \nu_{\beta}(\dd z_{n+1} \ldots)} \] 
onto the subspace of functions that depend on the first $n$ coordinates only, i.e., $\var_{n}(f) =0$. In terms of the stationary process $(Z_n)_{n\in \Z}$ with law $\mu_\beta$, 
\bes
	\bigl(\Pi_n f\bigr)(Z_1,\ldots,Z_n)  = \mathbb{E}\bigl[ f( (Z_j)_{j\geq 1})\, \big|\,  Z_1,\ldots,Z_n\bigr]\quad \text{a.s.}
\ees
Notice that 
\be \label{eq:pinvarn}
	||\Pi_n f- f||_1\leq ||\Pi_n f- f||_\infty \leq \var_n(f) 
\ee
where $||\cdot||_1$ is the  $L^1(\R_+^\N, \varphi_\beta \nu_\beta)$ norm. Let $q,n\in \N$. Then 
\bes \label{eq:smn}
	\mathcal{S}_\beta^{qn} = \Bigl(  \mathcal{S}_\beta^{qn} - (\mathcal{S}_\beta^n\Pi_n)^q\Bigr) + (\mathcal{S}_\beta^n\Pi_n)^q.
\ees
The difference enclosed in parentheses represents a truncation error; it is made small by choosing $n$ large. On the subspace of mean-zero functions, the truncated operator $\mathcal{S}^n\Pi_n$ satisfies a contraction property uniformly in $n$ (Lemma~\ref{lem:contraction}),  and  $(\mathcal{S}_\beta^n\Pi_n)^q$ goes to zero exponentially fast as $q\to \infty$. 

\begin{lemma} 
	  We have $\var_q(\log \varphi_\beta) \leq \beta C_q$ for all $q\in \N_0$ and $\beta,p>0$.
\end{lemma}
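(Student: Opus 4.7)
My plan is to exploit the explicit integral representation \eqref{eq:phidef} of $\varphi_\beta$. First I would observe that the denominator $c_\beta$ is a numerical constant, so up to an irrelevant additive constant,
\bes
\log\varphi_\beta(z) = \log\int \e^{-\beta\mathcal{W}_0(z^-,z)}\,\dd\nu_\beta^-(z^-) + \mathrm{const}.
\ees
Then the elementary Laplace sandwich
\bes
\bigl|\log\textstyle\int\e^{-\beta f}\,\dd\mu - \log\int\e^{-\beta g}\,\dd\mu\bigr| \leq \beta\,\|f-g\|_\infty
\ees
reduces the claim to the uniform pointwise bound
\bes
\sup_{z^-}\,\bigl|\mathcal{W}_0(z^-,z) - \mathcal{W}_0(z^-,z')\bigr| \leq C_q
\ees
whenever $z,z'\in(r_{\rm hc},\infty)^{\N}$ agree in the first $q$ coordinates.

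The core of the proof is to establish this pointwise bound by identifying the per-particle contributions to $\mathcal{W}_0$ with the single-site energies $h_j$. I would decompose
\bes
\mathcal{W}_0 = \sum_{j\leq 0} W_j, \qquad W_j := \sum_{k=1-j}^{m} v(z_j+z_{j+1}+\cdots+z_{j+k-1}),
\ees
so that $W_j$ collects exactly those cross interactions in which particle $j$ participates. Comparing with \eqref{eq:h-def}, the quantity $h_j - pz_j$ splits into purely left-left interactions (the terms with $k=1,\ldots,-j$) plus precisely $W_j$. Since the left-only part depends only on $z^-$, modifying right-chain variables changes $W_j$ in exactly the same way it changes $h_j$; in particular
\bes
W_j(z^-,z) - W_j(z^-,z') = h_j(z^-,z) - h_j(z^-,z').
\ees

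Finally I would bound each term by a variation of $h$. The $m$ arguments of $h_j$ are identified with $z_j, z_{j+1},\ldots,z_{j+m-1}$; those of index $\leq 0$ come from the common $z^-$ and those of index in $\{1,\ldots,q\}$ agree by hypothesis, so the first $q-j+1$ arguments of $h$ coincide in the two evaluations. This yields $|h_j(z^-,z)-h_j(z^-,z')|\leq\var_{q-j+1}(h)$, uniformly in $z^-$ (with the convention $\var_\ell(h)=0$ for $\ell\geq m$). Summing over $j\leq 0$ and substituting $\ell=q-j+1$ collapses the sum to
\bes
\sum_{j\leq 0}\var_{q-j+1}(h) = \sum_{\ell=q+1}^{\infty} \var_\ell(h) = C_q,
\ees
which combined with the sandwich estimate gives $\var_q(\log\varphi_\beta)\leq\beta C_q$. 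The only step demanding genuine care is the bookkeeping in the decomposition of $h_j$ into pure-left plus cross parts and the counting of how many $h$-arguments are fixed; I do not expect any serious analytic obstacle beyond this combinatorial accounting.
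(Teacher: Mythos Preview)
Your proposal is correct and follows essentially the same route as the paper: both arguments reduce to the pointwise bound $|\mathcal{W}_0(z^-,z)-\mathcal{W}_0(z^-,z')|\leq C_q$ by writing the cross-interaction as a sum over left sites $j\leq 0$, observing that the change equals $h_j(z^-,z)-h_j(z^-,z')$, and invoking $\var_{q-j+1}(h)$ term by term. The paper's write-up is more compressed (it jumps directly to $|\mathcal{W}_0(z)-\mathcal{W}_0(z')|=|\sum_{j\geq 0}(h_{-j}(z)-h_{-j}(z'))|\leq\sum_{j\geq 0}\var_{q+1+j}(h)=C_q$), but the content is identical. One small indexing slip: in your definition of $W_j$ the cross part should start at $k=2-j$ rather than $k=1-j$ (the term $k=1-j$ has top index $0$, still on the left), but since that extra term depends only on $z^-$ it cancels in the difference and the argument is unaffected.
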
 

\begin{proof}
	 Let $q\in \N_0$,  $(z_j)_{j\in \Z}, (z'_j)_{j\in \Z} \in (r_{\rm hc}, \infty)^{\Z}$ such that $z_j = z'_j$ for all $j \leq q$. Then 
      \bes 
	|\mathcal{W}_0(z) - \mathcal{W}_0(z')|  =| \sum_{j=0}^{\infty} \bigl(h_{-j}(z) - h_{-j}(z')\bigr)|  \leq \sum_{j=0}^\infty \var_{q+1+j}(h) = C_q 
      \ees
      and $\nu_{\beta}^-(\exp(- \beta \mathcal{W}_0)) \leq \exp(\beta C_q)  \nu_{\beta}^-(\exp(-\beta \mathcal{W}_0'))$. The claim then follows from the definition~\eqref{eq:phidef} of the invariant function.  
\end{proof} 

\begin{lemma} \label{lem:smoothen}
	Let $f:\R_+^\N\to \R$ be a bounded function. Then $n,k\in \mathbb N_0$, 
	\bes
		\var_{k} (\mathcal{S}_\beta^n f) \leq  \var_{n+k} (f) + ||f||_\infty (\e^{3 \beta C_k}-1).
	\ees
\end{lemma}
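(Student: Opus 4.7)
The plan is to represent $\mathcal{S}_\beta^n f$ as an averaging operator against a probability kernel, and then to compare these averages for two sequences agreeing in their first $k$ coordinates, splitting the resulting difference into (i) a contribution from the variation of $f$ itself and (ii) a contribution from the change of the averaging measure. Unfolding $\mathcal{S}_\beta = (\lambda_0(\beta)\varphi_\beta)^{-1}\mathcal{L}_\beta(\varphi_\beta\,\cdot\,)$ and iterating the eigenvalue identity $\mathcal{L}_\beta \varphi_\beta = \lambda_0(\beta)\varphi_\beta$ from Lemma~\ref{lem:eigenphi} exactly $n$ times, I would write
\begin{equation*}
	(\mathcal{S}_\beta^n f)(z) = \int_{\R_+^n} f(w_{1-n},\ldots,w_0,z_1,z_2,\ldots)\, p_z(\dd w),
\end{equation*}
where $p_z$ is a probability measure on $\R_+^n$ with density proportional to $\exp(-\beta S(w,z))\varphi_\beta(w,z)$, with $S(w,z) := \sum_{j=1-n}^0 h_j(w,z)$ and $(w,z)$ denoting the concatenation $(w_{1-n},\ldots,w_0,z_1,z_2,\ldots)$. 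The fact that $p_z$ is actually a probability measure—with normalising constant $\lambda_0(\beta)^n\varphi_\beta(z)$—is exactly the iterated eigenvalue identity.

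With this representation in hand, I would fix $z, z'\in(r_{\mathrm{hc}},\infty)^{\N}$ agreeing in positions $1,\ldots,k$ and decompose
\begin{equation*}
	(\mathcal{S}_\beta^n f)(z) - (\mathcal{S}_\beta^n f)(z') = \int \bigl[f(w,z) - f(w,z')\bigr]\, p_z(\dd w) + \int f(w,z')\, (p_z - p_{z'})(\dd w).
\end{equation*}
The concatenations $(w,z)$ and $(w,z')$ share their first $n+k$ entries, so the first integral is bounded by $\var_{n+k}(f)$, producing the leading term of the stated bound. The second integral is bounded by $\|f\|_\infty$ times the total variation distance between $p_z$ and $p_{z'}$, which I would estimate via a pointwise bound on the Radon-Nikodym derivative.

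The derivative $\dd p_z/\dd p_{z'}$ is a product of three ratios, each controlled by $\beta C_k$ in the logarithm. First, the normalising factor $\varphi_\beta(z')/\varphi_\beta(z)$ contributes at most $\beta C_k$ by the preceding lemma. Second, the factor $\varphi_\beta(w,z)/\varphi_\beta(w,z')$ contributes at most $\beta C_{n+k}\leq\beta C_k$ by the same lemma applied to the longer common prefix. Third, for the Boltzmann weight, I would use $|h_j(w,z)-h_j(w,z')|\leq \var_{k-j+1}(h)$ for $j=1-n,\ldots,0$, so that $|S(w,z)-S(w,z')|\leq\sum_{i=k+1}^{n+k}\var_i(h)\leq C_k$, contributing at most $\beta C_k$ to the log. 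Summing these gives $|\log(\dd p_z/\dd p_{z'})|\leq 3\beta C_k$ pointwise, and the elementary inequality $|e^x-1|\leq e^{|x|}-1$ yields
\begin{equation*}
	\Bigl|\int f(w,z')\,(p_z-p_{z'})(\dd w)\Bigr| \leq \|f\|_\infty\bigl(e^{3\beta C_k}-1\bigr),
\end{equation*}
completing the estimate. The main step requiring care is the bookkeeping of common prefixes when distributing the difference across the three ratios—in particular, realising that $C_{n+k}\leq C_k$ allows all three contributions to be handled uniformly; no substantial analytic obstacle arises, since the essential input, exponential control of $\log\varphi_\beta$, has already been supplied.
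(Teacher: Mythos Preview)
Your proof is correct and follows essentially the same route as the paper: writing $\mathcal{S}_\beta^n f$ as an average against a probability kernel $p_z(\dd w)\propto e^{-\beta\sum_j h_j}\varphi_\beta(w,z)\,\dd w$ (the paper packages this as a single weight $e^{-\beta g}$), splitting the difference into a $\var_{n+k}(f)$ term and a change-of-measure term, and bounding the latter via $\var_{n+k}(\log\varphi_\beta)+\var_k(\log\varphi_\beta)+\sum_{i=k+1}^{n+k}\var_i(h)\leq 3C_k$. The bookkeeping and the three contributions to the $3\beta C_k$ bound match the paper's estimate~\eqref{eq:vargbound} exactly.
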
 

\begin{proof} 
	Let 
$g =\sum_{j=1}^n h_j - \beta^{-1}\log [\lambda_0^n(\beta) \varphi_\beta] + \beta^{-1} \log \varphi_\beta\circ \tau^n$ on $(r_{\rm hc}, \infty)^{\N}$ and $g \equiv \infty$ on $\R_+^{\N} \setminus (r_{\rm hc}, \infty)^{\N}$ so that 
\bes 
    \mathcal{S}_\beta^n f (z_{n+1},z_{n+2},\ldots) = \int_{\R_+^n}  \e^{ -\beta g (z_1,z_2,\ldots)} f(z_1,z_2,\ldots)  \dd z_1\ldots \dd z_n. 
\ees	
Pick $z, z'  \in (r_{\rm hc}, \infty)^{\N}$ so that $z_j = z'_j$ for $j=1,\ldots,n+k$. Then 
\begin{align*}  
	\bigl| \e^{-\beta g(z)} f(z) -\e^{-\beta g(z')} f(z')  \bigr|
	& \leq \e^{-\beta g(z)} \bigl| f(z) - f(z')\bigr| +  \bigl| f(z') \bigr| \bigl| \e^{-\beta g(z)}- \e^{-\beta g(z')}\bigr|   \\
	& \leq \e^{-\beta g(z)}\Bigl( \var_{n+k}(f) + ||f||_\infty \bigl( \e^{\beta \var_{n+k}(g)} -1 \bigr) \Bigr). 
\end{align*} 
We integrate out $z_1,\ldots,z_n$, observe 
$\int \exp(-\beta g) \dd z_1\cdots \dd z_n = \mathcal{S}_\beta^n \mathbf{1} = \mathbf{1}$, and deduce 
\bes
	\var_{k}(\mathcal{S}^n f) \leq \var_{n+k}(f) +  ||f||_\infty \bigl(\e^{\beta \var_{n+k}(g)} -1 \bigr).
\ees
To conclude, we note 
\begin{align}
	\var_{k+n}(g)& \leq \sum_{j=0}^{n-1} \var_{n+k-j}(h) +\frac{1}{\beta} 
		\bigl( \var_{n+k}(\log \varphi)+ \var_{k}(\log \varphi) \bigr) \notag \\
		& \leq C_k + C_{n+k} + C_k\leq 3 C_k.\label{eq:vargbound}
\end{align}

\end{proof} 

\begin{lemma} \label{lem:contraction} 
	Let $f\in L^1(\R_+^\N,\varphi_\beta \nu_\beta)$ such that $\nu_\beta (f\varphi_\beta) =0$. Then for all $n\geq 1$ and $\gamma(\beta) = \exp( - 3 \beta C_0)$  
	\bes
		||\mathcal{S}_\beta^n \Pi_n f||_1 \leq (1- \gamma(\beta) \bigr) ||f||_1.  
	\ees
\end{lemma}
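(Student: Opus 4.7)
The plan is to exploit the integral-kernel representation of $\mathcal{S}_\beta^n$ derived in the proof of Lemma~\ref{lem:smoothen} together with a Doeblin-style minorization of this kernel, implementing the coupling at the level of the $L^1(\varphi_\beta\nu_\beta)$ norm. Writing $\mathcal{S}_\beta^n F(w) = \int_{\R_+^n} F(u)\, k_n(u;w)\, du$ with kernel $k_n(u;w) = \exp(-\beta g(u_1,\ldots,u_n,w_1,w_2,\ldots))$, the estimate $\var_n(g) \le 3C_0$ derived in the proof of Lemma~\ref{lem:smoothen} (cf.\ \eqref{eq:vargbound}) exponentiates to the pointwise comparison
\[
  k_n(u;w) \ge \gamma(\beta)\, k_n(u;w') \qquad \text{for every } u \in \R_+^n \text{ and every pair of tails } w, w' \in \R_+^\N.
\]

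The decisive step will then be to upgrade this pointwise comparison between two kernels to a minorization by the density of the invariant marginal. Because $k_n(u;w)\,du$ is, by construction of $\mathcal{S}_\beta^n$ as the $L^2(\varphi_\beta\nu_\beta)$-adjoint of the $n$-fold shift, the conditional law of the first $n$ spacings given the tail under $\varphi_\beta\nu_\beta$, shift-invariance of $\mu_\beta$ (Lemma~\ref{lem:mu-S-shift}) yields that the marginal density $\bar p(u)$ of $\varphi_\beta\nu_\beta$ on the first $n$ coordinates is exactly
\[
  \bar p(u) = \int k_n(u;w)\, \varphi_\beta(w)\, \nu_\beta(dw).
\]
Integrating the pointwise Doeblin bound above in $w'$ against $\varphi_\beta\nu_\beta$ then produces the strengthened minorization $k_n(u;w) \ge \gamma(\beta)\,\bar p(u)$, uniformly in $u$ and $w$.

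The hypothesis $\nu_\beta(\varphi_\beta f) = 0$ says exactly that $\Pi_n f$ has mean zero against $\bar p(u)\,du$, since $\Pi_n$ coincides with the conditional expectation under $\varphi_\beta\nu_\beta$ given the first $n$ coordinates. I would therefore subtract $\gamma(\beta)\bar p(u)$ for free inside the integral and rewrite
\[
  \mathcal{S}_\beta^n \Pi_n f(w)
  = \int_{\R_+^n} \Pi_n f(u)\,\bigl[k_n(u;w) - \gamma(\beta)\bar p(u)\bigr]\,du,
\]
in which the bracket is a non-negative weight. Taking absolute values, integrating against $\varphi_\beta(w)\nu_\beta(dw)$, and swapping by Fubini, the $w$-integral of the bracket collapses to $(1-\gamma(\beta))\bar p(u)$ by the very definition of $\bar p$; the argument is then concluded by the $L^1$-contractivity of the conditional expectation $\Pi_n$, which gives $\int |\Pi_n f|\,\bar p\,du \le \|f\|_1$.

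The step I expect to require most care is the upgraded minorization: the pointwise Doeblin bound by itself yields only an $L^\infty$-type contraction of the oscillation, whereas the statement demands a contraction in $L^1$. The bridge is the identity $\bar p(u) = \int k_n(u;w)\,\varphi_\beta(w)\nu_\beta(dw)$, encoding shift-stationarity of $\mu_\beta$, which rewrites the Doeblin minorization in terms of the invariant density and is exactly what allows the coupling to reproduce the $L^1$-norm of $\Pi_n f$ after Fubini; without this step one would obtain only a bound in terms of $\|f\|_\infty$ and the oscillation of $\Pi_n f$.
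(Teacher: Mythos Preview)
Your argument is correct and rests on the same ingredient as the paper's proof: the bound $\var_n(g)\le 3C_0$ from~\eqref{eq:vargbound} (applied with $k=0$), which you exponentiate to the pointwise Doeblin comparison $k_n(u;w)\ge\gamma(\beta)\,k_n(u;w')$. Your upgrade to $k_n(u;w)\ge\gamma(\beta)\,\bar p(u)$ via $\bar p(u)=\int k_n(u;w')\,\varphi_\beta(w')\,\nu_\beta(\dd w')$ is precisely the shift-invariance relation $\mu_\beta(\mathcal{S}_\beta^n F)=\mu_\beta(F)$ from Lemma~\ref{lem:mu-S-shift}, and the rest is a clean Fubini computation.

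The paper's proof packages the same minorization differently: rather than subtracting $\gamma(\beta)\bar p(u)$ from the kernel, it first shows $\inf\mathcal{S}_\beta^n f\ge\gamma(\beta)\,\mu_\beta(|f|)$ for nonnegative local $f$ (this is Ruelle's formulation, which they cite), and then for mean-zero $f$ splits $f=f_+-f_-$, uses $\mu_\beta(f_+)=\mu_\beta(f_-)$, and bounds $|\mathcal{S}_\beta^n f|\le\mathcal{S}_\beta^n|f|-\gamma(\beta)\,\mu_\beta(|f|)$ pointwise before integrating. The two presentations are standard equivalents of the Doeblin contraction argument; yours makes the role of the invariant marginal $\bar p$ explicit, while the paper's avoids writing down $\bar p$ at the cost of the $f_\pm$ bookkeeping.
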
 
 
\begin{proof} 
	We adapt \cite[Proposition 3]{ruelle68}. Consider first a non-negative function $f$ that depends on $z_1,\ldots,z_n$ only, i.e., $\var_n(f) =0$. 
	Let $k\geq 0$ $z,z'$ such that $z_j=z'_j$ for $j=1,\ldots,n$ and $g(z_1,z_2,\ldots)$ as in the proof of Lemma~\ref{lem:smoothen}. Then
\begin{align*}
	(\mathcal{S}_\beta^n f)(z_{n+1},z_{n+2},\ldots ) 
	& = \int \e^{-\beta g(z_1,\ldots)} f(z_1,\ldots,z_n) \dd z_1\cdots \dd z_n \\
	&\leq \e^{\beta \var_n(g)}\int \e^{-\beta g(z'_1,\ldots)} f(z'_1,\ldots,z'_n) \dd z'_1\cdots \dd z'_n \\
	& = \e^{\beta \var_{n} (g)} (\mathcal{S}_\beta^n f)(z'_{n+1},z'_{n+2},\ldots). 
\end{align*}
	By Inequality~\eqref{eq:vargbound} with $k = 0$  we have $\var_{n}(g) \leq 3 C_0$, uniformly in $n$. Thus $\mathcal{S}_\beta^n f(z) \leq \exp(- 3\beta C_0) (\mathcal{S}_\beta^n f)(z')$ for all $z,z' \in (r_{\rm hc}, \infty)^{\N}$. For non-negative $f$ with $f =\Pi_n f$ we have  by Lemma \ref{lem:mu-S-shift}
	\bes
	\inf \mathcal{S}_\beta^n f 
	\geq \gamma(\beta)  \sup \mathcal{S}_\beta^n f 
	 \geq \gamma(\beta) \mu_\beta(\mathcal{S}_\beta^n f)
	= \gamma(\beta) \mu_\beta(|f|).
	\ees
	Next let $f$ with $\var_n(f) =0$ and $\mu_\beta(f) =0$. Then $\mu_\beta(f_+)=\mu_\beta(f_-)$ and
	\begin{align*}
		|\mathcal{S}_\beta^n f| & \leq \bigl( \mathcal{S}_\beta^n f_+ - \gamma(\beta) \mu_\beta(f_+)\bigr) + \bigl( \mathcal{S}_\beta^n f_- - \gamma(\beta) \mu_\beta(f_-)\bigr) \\
			& = \mathcal{S}_\beta^n (f_++f_-) - \gamma(\beta)\mu_\beta(f_++ f_-) 
			= \mathcal{S}_\beta^n|f| - \gamma(\beta) \mu_\beta(|f|). 
	\end{align*}
	We integrate against $\mu_\beta$, use $\mu_\beta(\mathcal{S}_\beta^n|f|) = \mu_\beta(|f|) = ||f||_1$, and find $||\mathcal{S}_\beta^nf||_1 \leq (1-\gamma(\beta))||f||_1$. This holds for every local function $\var_n(f) =0$ with $\mu_\beta(f)=0$. For general $f$, we may apply the bound to $\Pi_n f$ and use $\mu_\beta(\Pi_n f) = \mu_\beta(f) =0$ and $\mu_\beta(|\Pi_n f|) \leq \mu_\beta(|f|)$, and we are done. 
\end{proof} 

\begin{lemma} \label{lem:truncerr}
	Let $f \in L^1(\R_+^\N,\varphi_\beta \nu_\beta)$ be a bounded map with $\nu_\beta(f\varphi_\beta) =0$. Then for all $q,n\in \N$,
	\bes
		 ||\mathcal{S}_\beta^{nq} f - (\mathcal{S}_\beta^n\Pi_n)^q f||_1 
		\leq \frac{1}{\gamma(\beta)} (\e^{3\beta C_n} - 1)||f||_\infty 
			+  \frac{1}{\gamma(\beta)} \var_n(f).
	\ees
\end{lemma}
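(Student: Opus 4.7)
The approach is to decompose $\mathcal{S}_\beta^{nq} - (\mathcal{S}_\beta^n \Pi_n)^q$ by a telescoping identity, and to bound each summand by combining the smoothing estimate of Lemma~\ref{lem:smoothen} with the $L^1$-contraction on mean-zero functions from Lemma~\ref{lem:contraction}. The starting point is the standard operator identity $B^q - A^q = \sum_{k=0}^{q-1} A^k(B-A)B^{q-1-k}$ applied to $A = \mathcal{S}_\beta^n \Pi_n$, $B = \mathcal{S}_\beta^n$, which yields
\[
  \mathcal{S}_\beta^{nq} - (\mathcal{S}_\beta^n \Pi_n)^q
  = \sum_{k=0}^{q-1} (\mathcal{S}_\beta^n \Pi_n)^k\, \mathcal{S}_\beta^n (I-\Pi_n)\, \mathcal{S}_\beta^{n(q-1-k)}.
\]

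Setting $g_k := \mathcal{S}_\beta^{n(q-1-k)} f$, so that $g_{q-1} = f$, the interpretation of $\Pi_n g_k$ as the conditional average of $g_k$ over the tail given the first $n$ coordinates immediately gives the pointwise bound $\|(I-\Pi_n) g_k\|_\infty \leq \var_n(g_k)$. Lemma~\ref{lem:smoothen} applied with the operator $\mathcal{S}_\beta^{n(q-1-k)}$ (its parameter $n$ is allowed to be $0$, covering the trivial case $k=q-1$) then furnishes the uniform-in-$k$ bound
\[
  \var_n(g_k) \leq \var_{n(q-k)}(f) + \|f\|_\infty\bigl(\e^{3\beta C_n}-1\bigr)
  \leq \var_n(f) + \|f\|_\infty\bigl(\e^{3\beta C_n}-1\bigr),
\]
where the second inequality uses that $\var_j(f)$ is non-increasing in~$j$.

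Next I would verify that $\mathcal{S}_\beta^n (I-\Pi_n)g_k$ has zero mean with respect to $\mu_\beta$. Lemma~\ref{lem:mu-S-shift}(b) with $g = \mathbf{1}$ shows $\mu_\beta(\mathcal{S}_\beta^n h) = \mu_\beta(h)$, while the fact that $\Pi_n$ is a conditional expectation under $\mu_\beta$ gives $\mu_\beta(\Pi_n h) = \mu_\beta(h)$; thus both operators leave $\mu_\beta$ invariant and the mean-zero property is preserved along the iteration. Because $\mathcal{S}_\beta^n$ is a Markov operator ($\mathcal{S}_\beta^n \mathbf{1} = \mathbf{1}$ and positivity-preserving), it is an $L^1(\mu_\beta)$-contraction, so
\[
  \|\mathcal{S}_\beta^n (I-\Pi_n)g_k\|_1 \leq \|(I-\Pi_n)g_k\|_1 \leq \|(I-\Pi_n)g_k\|_\infty \leq \var_n(g_k).
\]
Iterating Lemma~\ref{lem:contraction} on the invariant subspace of zero-mean functions then yields
\[
  \bigl\|(\mathcal{S}_\beta^n \Pi_n)^k\, \mathcal{S}_\beta^n (I-\Pi_n)g_k\bigr\|_1 \leq (1-\gamma(\beta))^k \var_n(g_k).
\]

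Summing the geometric series in $k$ and inserting the uniform estimate on $\var_n(g_k)$ produces
\[
  \bigl\|\mathcal{S}_\beta^{nq} f - (\mathcal{S}_\beta^n \Pi_n)^q f\bigr\|_1
  \leq \frac{1 - (1-\gamma(\beta))^q}{\gamma(\beta)} \Bigl[\var_n(f) + \bigl(\e^{3\beta C_n}-1\bigr) \|f\|_\infty\Bigr],
\]
which is bounded by the claimed quantity. The only step requiring genuine care is the telescoping bookkeeping combined with the verification that Lemma~\ref{lem:contraction} applies iteratively; everything else reduces to the already-established smoothing estimate and Markov contraction.
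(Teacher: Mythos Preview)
Your proposal is correct and follows essentially the same route as the paper: the same telescoping identity $B^q - A^q = \sum_{k=0}^{q-1} A^k(B-A)B^{q-1-k}$ with $A=\mathcal{S}_\beta^n\Pi_n$, $B=\mathcal{S}_\beta^n$, the same verification that each intermediate piece has $\mu_\beta$-mean zero so that Lemma~\ref{lem:contraction} applies iteratively, and the same use of Eq.~\eqref{eq:pinvarn} together with Lemma~\ref{lem:smoothen} to bound $\var_n(\mathcal{S}_\beta^{n(q-1-k)}f)$. The only cosmetic difference is that you uniformly bound $\var_{n(q-k)}(f)\le \var_n(f)$ before summing, whereas the paper carries $\var_{n(q-k)}(f)$ through the geometric sum and then bounds; both lead to the same final estimate.
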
 

\begin{proof} 
	A telescope summation, the triangle inequality, and Lemma~\ref{lem:contraction} yield
	\begin{align*}
		||\mathcal{S}_\beta^{nq} f - (\mathcal{S}_\beta^n\Pi_n)^q f||_1 
		& \leq \sum_{k=0}^{q-1} || (\mathcal{S}_\beta^n\Pi_n)^k \bigl(\mathcal{S}_\beta^n\Pi_n - \mathcal{S}_\beta^n\bigr) (\mathcal{S}_\beta^n)^{q-k-1} f||_1 \\	
		& \leq \sum_{k=0}^{q-1} (1-\gamma(\beta))^k 
			|| \bigl(\mathcal{S}_\beta^n\Pi_n - \mathcal{S}_\beta^n\bigr) (\mathcal{S}_\beta^n)^{q-k-1} f||_1 \\
		& \leq \sum_{k=0}^{q-1} (1-\gamma(\beta))^k 
					|| \bigl(\Pi_n - \mathrm{id}\bigr) (\mathcal{S}_\beta^n)^{q-k-1} f||_1,
	\end{align*}
where in the second step we use that $\nu_{\beta}((\mathcal{S}_\beta^n\Pi_n)^i \bigl(\mathcal{S}_\beta^n\Pi_n - \mathcal{S}_\beta^n\bigr) (\mathcal{S}_\beta^n)^{q-k-1} f \varphi_\beta) = \nu_{\beta}(f \varphi_\beta) = 0$ for $i = 1, \ldots, k$ by Lemma \ref{lem:mu-S-shift} and the third step follows from $|\mathcal{S}_\beta^n\bigl(\Pi_n - \mathrm{id}\bigr) (\mathcal{S}_\beta^n)^{q-k-1} f| \le \mathcal{S}_\beta^n|\bigl(\Pi_n - \mathrm{id}\bigr) (\mathcal{S}_\beta^n)^{q-k-1} f|$ and Lemma \ref{lem:mu-S-shift}.
By Eq.~\eqref{eq:pinvarn} and Lemma~\ref{lem:smoothen}, this can be further estimated as 
	\begin{align*}
	  & \sum_{k=0}^{q-1}(1-\gamma(\beta))^k \var_n( \mathcal{S}_\beta^{n(q-k-1)} f) \\
	   & \quad \leq \sum_{k=0}^{q-1}(1-\gamma(\beta))^k \Bigl( (\e^{3\beta C_n} - 1)||f||_\infty + \var_{n(q-k)}(f) \Bigr) \\
		& \quad \leq \frac{1}{\gamma(\beta)} (\e^{3\beta C_n} - 1)||f||_\infty 
			+  \frac{1}{\gamma(\beta)} \var_n(f). \qedhere
	\end{align*}
\end{proof} 

\begin{proof} [Proof of Theorem~\ref{thm:ergodicity}] 
	Let $f,g:\R_+^\N\to \R$ be bounded functions and $q,n\in \N$,  $N \ge qn$. Using Eq.~\eqref{eq:smn} and Lemmas~\ref{lem:contraction} and~\ref{lem:truncerr}, we get 
	\begin{align*}
	& \bigl| \mu_\beta\bigl( f (g\circ\tau^{N}) \bigr) -\mu_\beta(f)\mu_\beta(g)\bigr| 
	 = \bigl|\mu_\beta \bigl((\mathcal{S}_\beta^{N} f)g \bigr) -\mu_\beta(f) \mu_\beta(g)\bigr|  \\
	&\qquad  \leq \mu_\beta\bigl(  |g|  \bigl|\mathcal{S}_\beta^{N}(f-\mu_\beta( f) \mathbf{1} )\bigr|\bigr) 
	 \leq ||g||_\infty \, ||\mathcal{S}_\beta^{{N}} (f-\mu_\beta( f) \mathbf{1} )||_1 \\ 
	&\qquad\leq ||g||_\infty \, ||\mathcal{S}_\beta^{qn} (f-\mu_\beta( f) \mathbf{1} )||_1 \\
	&\qquad \leq \Bigl( (1-\gamma(\beta))^q + \frac{1}{\gamma(\beta)}(\e^{3\beta C_n} - 1)\Bigr) ||g||_\infty ||f-\mu_\beta( f)||_\infty + \frac{1}{\gamma(\beta)} ||g||_\infty \var_n(f) 
	\end{align*}
since $||\mathcal{S}_\beta||_1\leq 1$. The explicit estimate on the decay of correlations follows.  That $\mu_\beta$ is mixing then follows from standard approximation arguments. 
\end{proof} 
 
\begin{proof} [Proof of Theorem~\ref{thm:corrgen}]
	The estimate for infinite $m$ is an immediate consequence of Theorem~\ref{thm:corrgen}. For finite $m$ and $n=m-1$, the truncation error in Lemma~\ref{lem:truncerr}  for a function $f : \R_+^n \to \R$ actually vanishes since $\var_n(f) = 0$ and $C_n = 0$. The bound simplifies accordingly. 
\end{proof}

\subsection{Thermodynamic limit} 

\begin{prop} \label{prop:thermolim}
	Let $m\in\N \cup \{\infty\}$ and $p>0$. 
	\begin{enumerate} 
		\item[(a)] The Gibbs free energy and its surface correction defined by the limits~\eqref{eq:gdef} exist and are given by 
		$$ g(\beta) = - \frac{1}{\beta}\log \lambda_0(\beta),\quad  g_{\rm surf}(\beta)= - g(\beta)-\frac{ 1}{\beta}\log \mu_\beta(\e^{\beta \mathcal{W}_0}).$$
		\item [(b)] Eqs.~\eqref{eq:nutherm} and~\eqref{eq:mutherm} hold true.
	\end{enumerate} 
\end{prop}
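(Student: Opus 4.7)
The plan is to derive a single exact identity linking $Q_N(\beta)$ to the principal eigenpair of the transfer operator and then to extract both (a) and (b) from its asymptotic analysis, aided by Theorem~\ref{thm:ergodicity} and the uniqueness established in Lemma~\ref{lem:eigenmeas}.

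First I would write $\sum_{j=1}^{N-1} h(z_j,z_{j+1},\ldots) = \mathcal{E}_N(z_1,\ldots,z_{N-1}) + \mathcal{R}_N$, where
$\mathcal{R}_N = \sum_{j=1}^{N-1} \sum_{k=N-j+1}^{m} v(z_j+\cdots+z_{j+k-1})$
gathers the pair interactions from the first $N-1$ particles to particles at positions $\geq N+1$ of the natural right extension of the chain. Applying the dual eigenvalue equation $\mathcal{L}_\beta^{*\,N-1}\nu_\beta = \lambda_0(\beta)^{N-1}\nu_\beta$ to the constant function $\mathbf{1}$ (equivalently, using $\nu_\beta(\mathcal{L}_\beta^{N-1}\mathbf{1}) = \lambda_0(\beta)^{N-1}$) and Fubini yields the exact factorization
\[ \lambda_0(\beta)^{N-1} = Q_N(\beta)\,\Q_N^{\ssup{\beta}}(\Phi_N), \qquad \Phi_N(z_1,\ldots,z_{N-1}) := \int \nu_\beta(\dd z_N\,\dd z_{N+1}\cdots)\, e^{-\beta \mathcal{R}_N}. \]

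The heart of the argument is to show $\Q_N^{\ssup{\beta}}(\Phi_N)\to c_\beta := \nu_\beta^-\otimes\nu_\beta^+(e^{-\beta\mathcal{W}_0})$ as $N\to\infty$. I would read $\Q_N^{\ssup{\beta}}(\Phi_N)$ as an expectation of $e^{-\beta \mathcal{R}_N}$ under the product measure $\Q_N^{\ssup{\beta}}\otimes\nu_\beta$, and exploit two ingredients: (i) $\mathcal{R}_N$ is essentially localized near position $N$, since $v(r)=O(r^{-s})$ and the hard core make the contribution of pairs with indices far from $N$ uniformly summable; and (ii) the joint law of $(z_{N-K},\ldots,z_{N-1};z_N,z_{N+1},\ldots,z_{N+K})$ under $\Q_N^{\ssup{\beta}}\otimes\nu_\beta$ converges, as $N\to\infty$ with $K$ fixed, to the law of $(z_{-K},\ldots,z_0;z_1,\ldots,z_K)$ under $\nu_\beta^-\otimes\nu_\beta^+$. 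In that limit $\mathcal{R}_N$ matches $\mathcal{W}_0$, so $\E[e^{-\beta\mathcal{R}_N}]\to c_\beta$. Since $\mu_\beta(e^{\beta\mathcal{W}_0})=1/c_\beta$ by~\eqref{eq:mudef}, taking logarithms in the exact identity gives
\[ g(\beta) = -\tfrac{1}{\beta}\log\lambda_0(\beta), \qquad g_\mathrm{surf}(\beta) = -g(\beta) + \tfrac{1}{\beta}\log c_\beta = -g(\beta) - \tfrac{1}{\beta}\log\mu_\beta(e^{\beta\mathcal{W}_0}), \]
proving (a).

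For (b) the same identity, this time with a bounded continuous local test function $f(z_1,\ldots,z_k)$ inserted, gives $Q_N(\beta)\,\Q_N^{\ssup{\beta}}(f\Phi_N)=\lambda_0^{N-1}\nu_\beta(f)$, hence $\Q_N^{\ssup{\beta}}(f\Phi_N) = \nu_\beta(f)\,\Q_N^{\ssup{\beta}}(\Phi_N)$. Since $f$ is anchored at the left end and $\Phi_N$ is effectively anchored at the right end, decorrelation for $\Q_N^{\ssup{\beta}}$ (either transferred from the mixing bounds of Theorem~\ref{thm:ergodicity} for $\mu_\beta$, or extracted from uniqueness of Gibbs measures) gives $\Q_N^{\ssup{\beta}}(f)\to\nu_\beta(f)$, establishing~\eqref{eq:nutherm}. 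Equation~\eqref{eq:mutherm} follows by shifting the test function to a bulk position $i_N$ with $i_N$ and $N-i_N$ both tending to infinity: any weak subsequential limit of the resulting marginals is then a Gibbs measure on $\R_+^\Z$, and the uniqueness from Lemma~\ref{lem:eigenmeas} identifies the limit with the marginal of $\mu_\beta$.

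The main obstacle is the joint right-boundary convergence needed in Step~2 and the decorrelation in Step~4: both are uniform-in-$N$ statements about the \emph{finite}-volume measure $\Q_N^{\ssup{\beta}}$, whereas Theorem~\ref{thm:ergodicity} is formulated for $\mu_\beta$. This can be handled either by adapting the conditional-expectation argument of Section~\ref{sec:ergodicity} directly to $\Q_N^{\ssup{\beta}}$ (using the exponential tail bounds on single-spacing distributions to guarantee tightness) or by replacing quantitative mixing with a softer compactness-plus-uniqueness argument based on Lemma~\ref{lem:eigenmeas}; keeping track of the boundary contribution $\mathcal{R}_N$ in the non-local case $m=\infty$ is what makes the calculation genuinely one-dimensional rather than routine.
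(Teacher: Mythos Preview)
Your exact identity is the same as the paper's: writing $\sum_{j=1}^{n}h_j=\mathcal E_{n+1}+\mathcal W(z_1\cdots z_n\mid z_{n+1}\cdots)$ and using $\mathcal L_\beta^{*\,n}\nu_\beta=\lambda_0^n\nu_\beta$ gives $Q_{n+1}/\lambda_0^n=\nu_\beta\bigl(\e^{\beta\mathcal W(z_1\cdots z_n\mid z_{n+1}\cdots)}\bigr)$, which is precisely your $\lambda_0^{N-1}=Q_N\,\Q_N^{\ssup\beta}(\Phi_N)$ read from the other side. The difference is what happens next. You keep the right-hand side as a $\Q_N^{\ssup\beta}$-expectation and must then analyze the right-boundary behavior of the \emph{finite-volume} measure, which is essentially part of the conclusion you are trying to prove; you correctly flag this as the main obstacle and propose tightness plus uniqueness of the Gibbs measure as a workaround. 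That route is viable (uniqueness on $\R_+^\N$ and $\R_+^\Z$ is indeed available from \cite[Theorem 8.39]{georgii-book}, cited in the proof of Lemma~\ref{lem:eigenmeas}), but it is a genuine detour.

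The paper sidesteps the obstacle entirely by one move you are missing: it immediately transforms the $\nu_\beta$-expectation into a $\mu_\beta$-expectation via $\nu_\beta(F)=\mu_\beta(\e^{\beta\mathcal W_0}F)/\mu_\beta(\e^{\beta\mathcal W_0})$, obtaining
\[
\frac{Q_{n+1}}{\lambda_0^n}=\frac{\mu_\beta\bigl(\e^{\beta[\mathcal W_0+\mathcal W_n-\mathcal W_{0n}]}\bigr)}{\mu_\beta(\e^{\beta\mathcal W_0})},
\]
after noting $\mathcal W(z_1\cdots z_n\mid z_{n+1}\cdots)=\mathcal W_n-\mathcal W_{0n}$ with $\mathcal W_{0n}\to 0$ uniformly. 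Now the mixing of $\mu_\beta$ from Theorem~\ref{thm:ergodicity} applies \emph{directly} (no statement about $\Q_N^{\ssup\beta}$ is needed), giving $Q_{n+1}/\lambda_0^n\to\mu_\beta(\e^{\beta\mathcal W_0})$ and hence (a). For (b) the same trick with a test function inserted yields $\Q_{n+1}^{\ssup\beta}(f)$ as a ratio of $\mu_\beta$-expectations, and mixing again does the job. So your ``main obstacle'' is an artifact of the parsing; the key device is to push everything to the fixed infinite-volume measure $\mu_\beta$ where ergodicity is already in hand.
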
 

\begin{proof} 
	 We compute
	\be \label{eq:embedded}
    \begin{aligned}
		&\nu_\beta\bigl(\e^{\beta \mathcal{W}(z_1\cdots z_n\mid z_{n+1}\cdots)} \bigr)  = \bigl(\frac{1}{\lambda_0(\beta)^n} {\mathcal{L}_\beta^*}^n\nu_\beta\bigr) \bigl(\e^{\beta \mathcal{W}(z_1\cdots z_n\mid z_{n+1}\cdots) } \bigr)  \\
		& \qquad= \frac{1}{\lambda_0(\beta)^n} \int \e^{\beta \mathcal{W}(z_1\cdots z_n\mid z_{n+1}\cdots)} \e^{-\beta \sum_{j=1}^n h_j } \dd z_1\cdots \dd z_n \dd\nu_\beta(z_{n+1} z_{n+2}\ldots) \\
		& \qquad = 	\frac{1}{\lambda_0(\beta)^n}\int \e^{-\beta \mathcal{E}_{n+1}(z_1,\ldots,z_n)} \dd z_1\cdots \dd z_n \dd\nu_\beta(z_{n+1} z_{n+2}\ldots) \\
		& \qquad = \frac{1}{\lambda_0(\beta)^n} Q_{n+1}(\beta).
	\end{aligned}
	\ee
	Let $\mathcal{W}_{0n} = \sum_{j\leq 0}\sum_{k\geq n+1} v(z_j+\cdots+z_k)$. We note 
	\bes
		\mathcal{W}(z_1\cdots z_n\mid z_{n+1}\cdots) = \mathcal{W}_n- \mathcal{W}_{0n}.  
	\ees
	and with \eqref{eq:numu} deduce 
	\bes
  \frac{1}{\lambda_0(\beta)^n}\, Q_{n+1}(\beta) = \nu_\beta(\e^{\beta \mathcal{W}(z_1\cdots z_n\mid z_{n+1}\cdots)}) = \frac{\mu_\beta(\exp(\beta [\mathcal{W}_0+\mathcal{W}_n- \mathcal{W}_{0n}]) )}{\mu_\beta(\exp( \beta \mathcal{W}_0))}. 
\ees
Now $\mathcal{W}_{0n} = O(n^{-(s-2)})\to 0$ uniformly  on $(r_{\rm hc}, \infty)^\Z$. By Theorem~\ref{thm:ergodicity}, $\mu_\beta (\exp(\beta [\mathcal{W}_0+ \mathcal{W}_n])) = \mu_\beta (f (f\circ \tau^n)) \to \mu_\beta(f)^2$ where $f= \exp(\beta \mathcal{W}_0)$. Consequently as $n\to \infty$ 
	\bes
		\log Q_{n+1}(\beta) = (n+1) \log \lambda_0(\beta) - \log \lambda_0(\beta) +   \log \mu_\beta(\e^{\beta \mathcal{W}_0}) + o(1),
	\ees
	from which part (a) of the lemma follows. A computation analogous to Eq.~\eqref{eq:embedded} shows that for every local test function $f\in C_b(\R_+^k)$, 
	\bes
		\Q_{n+1}^\ssup{\beta}(f) = \frac{\mu_\beta( f \exp( \beta [\mathcal{W}_0+ \mathcal{W}_n - \mathcal{W}_{0n}] )}{\mu_\beta(\exp( \beta [\mathcal{W}_0+ \mathcal{W}_n - \mathcal{W}_{0n}] )}.
	\ees
	Part (b) of the lemma then follows from Theorem~\ref{thm:ergodicity}. 
\end{proof}

\section{Large deviations as $\beta \to \infty$} 

Here we analyze the behavior of the bulk and surface Gibbs measures $\mu_\beta$ and $\nu_\beta$ and of the energies $g(\beta)$ and $g_\mathrm{surf}(\beta)$. The large deviations result for the surface measure $\nu_\beta$ is a consequence of the eigenvalue equation from Lemma~\ref{lem:eigenmeas}, exponential tightness, and the uniqueness of the solution to the fixed point equation in Proposition~\ref{prop:bellman-ersatz}. Since the bulk measure is absolutely continuous with respect to the product measure of two independent half-infinite chains (Eq.~\eqref{eq:mudef} and Proposition~\ref{prop:thermolim}(b)), we may go from the surface to the bulk measure with the help of Varadhan's integral lemma~\cite[Chapter 4.3]{dembo-zeitouni}. The asymptotic behavior of $e_\mathrm{surf}(\beta)$ is based on the representation from Proposition~\ref{prop:thermolim}(a).  

\subsection{A tightness estimate} 

The following estimate will help us prove that the infinite-volume measure $\nu_\beta$ is exponentially tight (see the proof of Lemma~\ref{lem:subseqldp}) which enters the proof of Theorem~\ref{thm:ldp}.

\begin{lemma} \label{lem:ti}
	For all $\beta,p>0$, $N\in \N$, $k\in \{1,\ldots,N-1\}$, and $r \ge 0$, we have  
	$$ \Q_N^\ssup{\beta}( \{z\in \R_+^{N-1} \mid z_k \geq z_{\max} +  r\}) \leq \exp( - \beta p r ). $$ 
\end{lemma}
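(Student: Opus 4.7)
The plan is to exploit the pressure term $p \sum_{j=1}^{N-1} z_j$ in $\mathcal E_N$ by a translation argument: whenever $z_k \geq z_{\max} + r$, we shrink $z_k$ by $r$ (keeping all other spacings fixed) and show this decreases $\mathcal E_N$ by at least $pr$. A change of variables then immediately yields the bound.

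More precisely, fix $k$ and $r \geq 0$ and define, for any $z = (z_1,\ldots,z_{N-1}) \in \R_+^{N-1}$ with $z_k \geq z_{\max} + r$, the shifted configuration $z' = (z_1,\ldots,z_{k-1},z_k - r, z_{k+1},\ldots,z_{N-1})$. The pressure part of $\mathcal E_N$ decreases by exactly $pr$ under $z \mapsto z'$. For the interaction part, every pair interaction $v(z_i + \cdots + z_{j-1})$ that is affected by the shift has index set $\{i,\ldots,j-1\} \ni k$, so its argument is at least $z_k \geq z_{\max} + r$ before and at least $z_k - r \geq z_{\max}$ after. By Assumption~\ref{assu:v}(i), $v$ is non-positive and increasing on $[z_{\max},\infty)$, hence each such term can only decrease (or stay the same) under the shift. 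Consequently
\bes
	\mathcal E_N(z') \leq \mathcal E_N(z) - p r \qquad \text{whenever } z_k \geq z_{\max} + r.
\ees

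Writing $A_r = \{z \in \R_+^{N-1} \mid z_k \geq z_{\max} + r\}$ and changing variables $u_k = z_k - r$ (with $u_j = z_j$ for $j \neq k$), the map $z \mapsto u$ is a measure-preserving bijection from $A_r$ onto $A_0$, and the energy inequality above gives $\mathcal E_N(z) \geq \mathcal E_N(u) + pr$. Therefore
\bes
	\int_{A_r} \e^{-\beta \mathcal E_N(z)} \dd z \,\leq\, \e^{-\beta p r} \int_{A_0} \e^{-\beta \mathcal E_N(u)} \dd u \,\leq\, \e^{-\beta p r}\, Q_N(\beta),
\ees
and dividing by $Q_N(\beta)$ yields the claim.

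No serious obstacle is anticipated. The only point requiring minor care is verifying that every interaction term whose argument involves $z_k$ has that argument bounded below by $z_{\max}$ both before and after the shift, so that the monotonicity of $v$ on $[z_{\max},\infty)$ genuinely applies; this follows because the other summands are positive and $z_k - r \geq z_{\max}$ by hypothesis.
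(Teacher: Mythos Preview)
Your proof is correct and follows essentially the same approach as the paper: shrink $z_k$ by $r$, observe that the pressure term drops by exactly $pr$ while the interaction terms cannot increase (since every affected pair distance stays in $[z_{\max},\infty)$ where $v$ is monotone), and conclude by a change of variables. If anything, your write-up is slightly more careful than the paper's, which states the energy inequality $\mathcal{E}_N(z)-\mathcal{E}_N(z')\geq pr$ citing only the pressure contribution without spelling out the monotonicity argument for the interaction part.
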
 

\begin{proof} 
	Fix $k\in \N$ and $r \ge 0$.  For $z= (z_1,\ldots,z_{N-1})\in \R_+^{N-1}$ with $z_k \geq z_{\max}+r$ we define a new configuration $z'$ by setting $z'_k = z_k - r$ and leaving all other spacings unchanged. This decreases the Gibbs energy by an amount at least 
\bes
	\mathcal{E}_N(z) - \mathcal{E}_N(z') \geq p z'_k - p z_k = p r. 
\ees
A change of variables thus yields
\begin{align*}
	\Q_N^\ssup{\beta} ( \{z \mid z_k \geq z_{\max} + r \} ) 
	& = \frac{1}{Q_N(\beta)} \int_{\R_+^{N-1}} \e^{-\beta \mathcal{E}_N(z)} \mathbf{1}_{ [z_{\max}+r, \infty) } (z_k)  \dd z \\
	&  \leq \frac{1}{Q_N(\beta)} \int_{\R_+^{N-1}}\e^{-\beta p r}  \e^{-\beta \mathcal{E}_N(z')} \mathbf{1}_{[z_{\max}, \infty)} (z'_k)  \dd z' \\
			& \leq \e^{-\beta p r},
	\end{align*}
	and the proof of the lemma is easily concluded.
\end{proof}

\subsection{Gibbs free energy in the bulk} 

\begin{lemma}\label{lemma:lambda0-limit}
Let $\beta \to \infty$ at fixed $p$. Then 
  \bes
       g(\beta) = -\frac{1}{\beta}\log \lambda_0(\beta) =  e_0+ O(\beta^{-1} \log \beta).
  \ees
\end{lemma}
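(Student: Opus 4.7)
By Proposition~\ref{prop:thermolim}(a), $g(\beta) = -\beta^{-1}\log\lambda_0(\beta) = \lim_{N\to\infty}-(\beta N)^{-1}\log Q_N(\beta)$, while Theorem~\ref{thm:periodic}(c) gives $E_N/N \to e_0$. The plan is to establish matching Gaussian-type bounds on $Q_N(\beta)$ of the form
\begin{equation*}
\e^{-\beta E_N}\,(c/\sqrt{\beta})^{N-1} \leq Q_N(\beta) \leq \e^{-\beta E_N + \beta c_2}\,(C/\sqrt{\beta})^{N-1}
\end{equation*}
uniformly in $N$ for $\beta$ large, and then to pass to the limit $N\to\infty$. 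This yields the stronger asymptotic $g(\beta) = e_0 + \tfrac{\log\beta}{2\beta} + O(\beta^{-1})$, which implies the claim.

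The lower bound on $Q_N(\beta)$ is the easier direction and will follow from Laplace's method around the unique minimizer $z^\ssup{N}\in [z_{\min},z_{\max}]^{N-1}$ provided by Theorem~\ref{thm:periodic}(a). The upper Hessian bound in Lemma~\ref{lem:hessian} together with a second-order Taylor expansion at $z^\ssup{N}$ yields $\mathcal{E}_N(z) \leq E_N + \tfrac{C}{2}\|z-z^\ssup{N}\|^2$ on the box $\prod_j[z_j^\ssup{N}-\delta,z_j^\ssup{N}+\delta]$ for any $\delta\leq\eps$. Choosing $\delta = \beta^{-1/2}$ and carrying out the resulting product Gaussian integration over the box produces the required lower bound on $Q_N(\beta)$.

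For the upper bound on $Q_N(\beta)$ I will split $\R_+^{N-1} = A\cup A^c$ with $A = [z_{\min},z_{\max}+\eps]^{N-1}$. On $A$, strict convexity with uniform constant $\eta>0$ (the lower Hessian bound of Lemma~\ref{lem:hessian}) gives $\mathcal{E}_N(z) \geq E_N + \tfrac{\eta}{2}\|z-z^\ssup{N}\|^2$, hence $\int_A\e^{-\beta\mathcal{E}_N}\dd z \leq \e^{-\beta E_N}(2\pi/(\beta\eta))^{(N-1)/2}$. On $A^c$ some spacing $z_k$ lies outside $[z_{\min},z_{\max}+\eps]$; the bond-shrinking and particle-removal operations in the proof of Lemma~\ref{lem:est-on-min} produce a uniform-in-$N$ energy gap $\mathcal{E}_N(z) \geq E_N + \delta$ with $\delta=\delta(\eps)>0$. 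I then combine this gap with a union bound on $k$, the exponential tail estimate of Lemma~\ref{lem:ti} for large $z_k$, the bounded range $(r_{\mathrm{hc}},z_{\min})$ for small $z_k$, and the convexity-based Gaussian estimate for the remaining coordinates, to show that $\int_{A^c}\e^{-\beta\mathcal{E}_N}\dd z$ is dominated by $\int_A\e^{-\beta\mathcal{E}_N}\dd z$ for $\beta$ large.

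The main obstacle will be the $A^c$ estimate: absent global convexity, the defect-removal argument of Lemma~\ref{lem:est-on-min} yields only a uniform energy penalty per misplaced coordinate, and one has to integrate the remaining $N-2$ coordinates without losing an exponentially-in-$N$ factor. This will be handled by iterating the split according to the number of defective coordinates, since each additional defect contributes an extra factor $\e^{-\beta\delta}$ that dominates the geometric series, mimicking the truncated coercivity argument~\eqref{eq:truncoerc} for the surface functional but carried out with constants that do not degenerate as $N\to\infty$.
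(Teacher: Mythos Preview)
Your lower bound on $Q_N(\beta)$ is exactly the paper's argument: restrict to a box around the minimizer $z^{\ssup N}$, use the Hessian upper bound of Lemma~\ref{lem:hessian} for a quadratic upper bound on $\mathcal E_N$, and integrate. (The paper uses a fixed box width $\eps$ rather than $\beta^{-1/2}$, but this is cosmetic.)

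For the upper bound on $Q_N(\beta)$, however, the paper takes a much simpler route that sidesteps your $A^c$ difficulty entirely. The observation is that for \emph{every} $z\in\R_+^{N-1}$, shrinking each spacing $z_j>z_{\max}$ down to $z_{\max}$ can only decrease each pair interaction (the argument of each affected $v$-term remains $\geq z_{\max}$, where $v$ is nondecreasing), so with $z'_j=\min(z_j,z_{\max})$,
\[
\mathcal E_N(z)\ \geq\ \mathcal E_N(z')+p\sum_{j=1}^{N-1}\max(z_j-z_{\max},0)\ \geq\ E_N+p\sum_{j=1}^{N-1}\max(z_j-z_{\max},0).
\]
The second inequality is just $\mathcal E_N(z')\geq E_N$, which holds globally; no convexity and no case distinction on $A$ versus $A^c$ is needed. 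Integrating directly gives
\[
Q_N(\beta)\ \leq\ \e^{-\beta E_N}\Bigl(z_{\max}+\frac{1}{\beta p}\Bigr)^{N-1},
\]
hence $g(\beta)\geq e_0-\beta^{-1}\log\bigl(z_{\max}+(\beta p)^{-1}\bigr)=e_0+O(\beta^{-1})$, which together with the lower bound yields the lemma.

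Your route would, if completed, yield the sharper $g(\beta)=e_0+\tfrac{\log\beta}{2\beta}+O(\beta^{-1})$, but the $A^c$ estimate you sketch is genuinely delicate and not yet a proof. Two concrete issues: (i) invoking Lemma~\ref{lem:ti} is circular as stated, since that lemma bounds the \emph{normalized} measure $\Q_N^{\ssup\beta}$ and hence presupposes control of $Q_N(\beta)$; what you actually need is the energy inequality from its \emph{proof}, which is precisely the shrinking argument above. (ii) The particle-removal step of Lemma~\ref{lem:est-on-min} for small spacings changes the number of coordinates, so it does not directly give a pointwise lower bound on $\mathcal E_N(z)$ of the form $E_N+k\delta$ with $k$ the defect count; turning it into a bound on $\int_{A^c}\e^{-\beta\mathcal E_N}$ that beats $(C/\sqrt\beta)^{N-1}$ uniformly in $N$ requires a careful combinatorial argument balancing $\binom{N-1}{k}$ against $\e^{-k\beta\delta}$. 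This is doable, but far more work than the lemma requires.
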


\begin{proof}[Proof of Lemma~\ref{lemma:lambda0-limit}]
 The relation between $g(\beta)$ and $\lambda_0(\beta)$ has been proven in Proposition~\ref{prop:thermolim}. 
We proceed with an upper bound for $Q_N(\beta)$ and $\lambda_0(\beta)$. 
	For $z=(z_1,\ldots,z_{N-1})$, define $z'$ by $z'_j= \min (z_{\max},z_j)$.  Revisiting the proof of Lemma~\ref{lem:est-on-min}, we see that 
	\bes
		\mathcal{E}_N(z)\geq \mathcal{E}_N(z') + \sum_{j=1}^{N-1} \min( p(z_j - z_{\max}),0) \geq E_N +   \sum_{j=1}^{N-1} p \min((z_j - z_{\max}),0).
	\ees
	It follows that 
	\bes
		Q_N(\beta) \leq \e^{-\beta E_N} \prod_{j=1}^{N-1} \bigl(z_{\max} + \int_{z_{\max}}^\infty \e^{- \beta p (z_j - z_{\max}) }\dd z_j \bigr)
	\ees
	and 
	\bes
		\log \lambda_0(\beta) \leq  - \beta e_0 + \log \Bigl(z_{\max} + \frac{1}{\beta p} \Bigr),
	\ees
	whence $\beta^{-1}\log \lambda_0(\beta) \leq - e_0+ O(\beta^{-1})$. 
	For a lower bound, we let $\bar{z} \in [z_{\min}, z_{\max}]^{N-1}$ be the minimizer of $\mathcal{E}_N$ and choose $0 < \eps < a - z_{\min}$ so small that by Lemma \ref{lem:hessian} 
\[ \mathcal{E}_N(z) 
   \leq E_N + C \sum_{j=1}^{N-1}(z_j - \bar{z}_j)^2. \]
for every $z \in \times_{j=1}^{N-1} [\bar{z}_j - \eps, \bar{z}_j + \eps]$. We get 
\begin{align*}
  Q_N(\beta) 
  &\geq \e^{-\beta E_N} \prod_{j=1}^{N-1} \int_{\bar{z}_j-\eps}^{\bar{z}_j+\eps} \e^{- C \beta (z_j - \bar{z}_j)^2} \dd z_j \bigr) 
  = \e^{-\beta E_N} \Bigr( \int_{-\eps}^\eps \e^{-C \beta s^2} \dd s \Bigl)^{N-1}. 
\end{align*}
This yields 
\begin{align*}
  \log \lambda_0(\beta) 
  &\geq -\beta  e_0  + \log \Bigl(\int_{-\eps}^\eps \e^{- C \beta s^2}\dd s\Bigr) \\
  &= - \beta e_0 - \log \sqrt{ \frac{C \beta}{\pi}}+ \log \Bigl( 1- \sqrt{\frac{2}{\pi}} \int_{\eps\sqrt{2 C \beta}}^\infty \e^{-x^2 /2} \dd x\Bigr).
	\end{align*}
and $\beta^{-1}\log\lambda_0 (\beta) \geq - e_0 + O(\beta^{-1}\log\beta)$. 
\end{proof}

\subsection{Large deviations principles for $\nu_\beta$ and $\mu_\beta$} \label{sec:ldp}

Here we prove Theorem~\ref{thm:ldp}. 

\begin{lemma} \label{lem:subseqldp}
	Every sequence $\beta_j\to \infty$ has a subsequence along which $(\nu_{\beta_j})_{j\in \N}$ satisfies a large deviations principle with speed $\beta_j$ and some good rate function.
\end{lemma}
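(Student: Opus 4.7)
The plan is to reduce Lemma \ref{lem:subseqldp} to a standard compactness principle: once the family $(\nu_\beta)_{\beta>0}$ is shown to be exponentially tight on $\R_+^\N$ with the product topology, the existence of subsequences satisfying large deviations principles with good rate functions is automatic, by the Puhalskii / O'Brien--Vervaat compactness theorem (see e.g.\ \cite[Exercise 4.1.10]{dembo-zeitouni}). Since $\R_+^\N$ with the product topology is Polish and, by Tychonoff's theorem, any compact subset is contained in a product $\prod_{k\ge 1} [a_k,b_k]$ of compact intervals, it is enough to construct, for every $M > 0$, such a product $K_M$ with $\limsup_{\beta\to\infty} \beta^{-1}\log\nu_\beta(K_M^c) \le -M$. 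A union bound reduces this to uniform one-coordinate tail estimates.

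The upper tails are immediate from Lemma \ref{lem:ti}, which gives $\Q_N^\ssup{\beta}(z_k \ge z_{\max} + r) \le e^{-\beta p r}$ uniformly in $N$ and $1 \le k \le N-1$. Weak convergence of $\Q_N^\ssup{\beta}$ to $\nu_\beta$ (Proposition \ref{prop:thermolim}(b)), together with the portmanteau theorem applied to the open sets $\{z_k > z_{\max}+r-\delta\}$, transfers this into $\nu_\beta(z_k \ge z_{\max}+r) \le e^{-\beta p r}$. Setting $b_k^\ssup{M} := z_{\max} + p^{-1}(M + 2\log k)$, the union bound $\sum_k \nu_\beta(z_k > b_k^\ssup{M})$ is then controlled by a constant multiple of $e^{-\beta M}$.

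For the lower tails, the target is an analogous uniform bound $\nu_\beta(z_k < r_{\rm hc} + \eta) \le C\, e^{-\beta(V(\eta)-C_0)}$ with $V(\eta) := v(r_{\rm hc}+\eta) \to \infty$ as $\eta \searrow 0$ (Assumption \ref{assu:hcvtoinfty}). I would work at finite volume and, with $z_{\backslash k}$ fixed, compare the conditional density of $z_k$ (proportional to $\exp(-\beta g(z_k;z_{\backslash k}))$, where $g$ collects the linear term $pz_k$ and all interactions through site $k$) on $(r_{\rm hc}, r_{\rm hc}+\eta)$ to its integral over a reference interval such as $[z_{\max}-\delta, z_{\max}+\delta]$ on which $v$ is bounded. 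The crucial uniform-in-$z_{\backslash k}$ input is a Lipschitz estimate for $\sum_{(i,j)\neq(k,k+1)} v(d_{ij}(z_k))$: using $|v'(d)| = O(d^{-s-1})$ from Assumption \ref{assu:v}(iv) and the fact that at most $n$ pairs with $|i-j|=n$ pass through a given site, this sum is Lipschitz in $z_k$ with a constant depending only on $r_{\rm hc}$, $s$ and $\alpha_2$. The remaining contribution $v(z_k)$ is at least $V(\eta)$ on the small interval but bounded on the reference interval, and the stated bound follows. Passing to the thermodynamic limit as above and choosing $\eta_k^\ssup{M}$ so small that $V(\eta_k^\ssup{M}) - C_0 \ge kM$ makes the corresponding union bound summable.

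The hard part will be precisely this uniform-in-$z_{\backslash k}$ Lipschitz estimate near the hard core in the long-range regime $m = \infty$; for finite $m$ the analogue is elementary. With both tails in hand, $K_M := \prod_{k\ge 1} [r_{\rm hc}+\eta_k^\ssup{M}, b_k^\ssup{M}]$ is compact in the product topology and satisfies $\nu_\beta(K_M^c) \le C e^{-\beta M}$ for all sufficiently large $\beta$, proving exponential tightness and hence, by the cited compactness theorem, the claim.
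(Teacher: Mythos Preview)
Your overall strategy---exponential tightness plus the Dembo--Zeitouni compactness principle---is exactly the paper's approach, and your treatment of the upper tails via Lemma~\ref{lem:ti} matches the paper's. However, the entire lower-tail analysis you sketch is unnecessary: the paper simply takes $K_n = \prod_{j\ge 1}[0,\, z_{\max}+n+j]$, which is already compact in the product topology on $\R_+^\N$, and bounds $\nu_\beta(K_n^c)\le\sum_{k\ge 1}e^{-\beta p(k+n)}$ using only the upper-tail estimate. What you single out as ``the hard part''---the uniform-in-$z_{\backslash k}$ Lipschitz control near the hard core---can therefore be deleted entirely; no control of $\nu_\beta(z_k<r_{\rm hc}+\eta)$ is needed for exponential tightness. (Incidentally, your claim that the Lipschitz constant in that argument depends only on $r_{\rm hc},s,\alpha_2$ is not quite right, since Assumption~\ref{assu:v}(iv) gives only a one-sided bound on $v''$ and one still needs $\sup_{[2r_{\rm hc},\,2z_{\min}]}|v'|$, which depends on the specific $v$; but this is moot once the step is dropped.)
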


\begin{remark}
	If $p= p_\beta \to 0$, we lose exponential tightness and only know that every sequence $(\nu_{\beta_j})$ has a subsequence along which it satisfies a weak large deviations principle~\cite[Lemma 4.1.23]{dembo-zeitouni}, which means that the upper bound in~\eqref{eq:ldp} is required to hold for compact sets rather than closed sets.
\end{remark} 

\begin{proof}
	The lemma is a consequence of exponential tightness. 
		Let $n\in \N_0$. Define $K_n= \times_{j=1}^\infty [0,z_{\max}+n+j]$. $K_n$ is compact in the product topology. Passing to the limit $N\to \infty$ in Lemma~\ref{lem:ti}, we find 
\bes
	\nu_\beta( \{z\in \R_+^\N\mid z_k\geq z_{\max}+ r\} ) \leq \e^{-\beta p r} 
\ees
	for all $k\in \N$ and $r \ge 0$. Therefore 
	\begin{align*}
		\nu_\beta(K_n^\mathrm{c}) 
		& \leq \sum_{k=1}^\infty \nu_\beta( \{ z\in \R_+^\N\mid z_k >  z_{\max}+ k+ n \} ) \\
		& \leq \sum_{k=1}^\infty \e^{-\beta p(k+n)} 
		= \frac{\exp(-\beta p(n+1))}{1- \exp( -\beta p)}. 
	\end{align*}
	It follows that the family of measures $(\nu_\beta)_{\beta\geq 1}$ is exponentially tight, i.e.,  for every $M>0$, we can find a compact subset $K\subset \R_+^\N$ such that 
	$\limsup_{\beta\to\infty} \frac{1}{\beta}\log \nu_\beta( K^\mathrm{c}) \leq - M$.
	$\R_+^\N$ endowed with the product topology is separable and metrizable and therefore has a countable base. Lemma~4.1.23 in~\cite{dembo-zeitouni} applies and yields the claim. 
\end{proof}

\begin{lemma}  \label{lem:fpe}
	Suppose that Assumption \ref{assu:hcvtoinfty} holds true and assume that along some subsequence $(\beta_j)$ the measure $\nu_{\beta_j}$ satisfies a large deviations principle with good rate function $I(z_1,z_2,\ldots)$. Then $I$ satisfies 
	\begin{equation*}
		I(z_1,z_2,\ldots) =  \bigl(h(z_1,z_2,\ldots)- e_0\bigr) + I(z_2,z_3,\ldots).
	\end{equation*} 
	on $\R_+^\N$. In particular, $I((z_j)_{j\in \N}) = \infty$ if $z_j \leq r_\mathrm{hc}$ for some $j\in \N$. 
\end{lemma}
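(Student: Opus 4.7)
The plan is to pass to the limit $\beta\to\infty$ (along the subsequence of the hypothesis, which I suppress in notation) in the eigenvalue equation of Lemma~\ref{lem:eigenmeas},
\bes
  \lambda_0(\beta)\,\nu_\beta(A) = \int_0^\infty\!\int_{\R_+^\N} \mathbf{1}_A(z_1,z_2,\ldots)\, \e^{-\beta h(z_1,z_2,\ldots)}\, \dd z_1\, \dd \nu_\beta(z_2, z_3, \ldots),
\ees
evaluated at shrinking cylindrical neighborhoods of a point, combined with the LDP on both sides and the asymptotics $-\frac{1}{\beta}\log\lambda_0(\beta)\to e_0$ of Lemma~\ref{lemma:lambda0-limit}. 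The shape of this equation, with a Lebesgue $\dd z_1$-integration tacked onto a $\nu_\beta$-integral over the shifted process $(z_2,z_3,\ldots)=\tau(z_1,z_2,\ldots)$, is precisely what will produce the Bellman recursion $h(z)-e_0+I(\tau z)$ in the limit.

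First I would establish the identity at each $\bar z\in(r_{\rm hc},\infty)^\N$. Fix $n\in\N$ and $\delta>0$ small enough that $\bar z_j-\delta>r_{\rm hc}$ for $j\leq n$, and let $U=\{z:|z_j-\bar z_j|<\delta,\,j\leq n\}$, $\bar U$ its closure, and $V,\bar V$ the analogous open/closed cylinders around $\tau\bar z$ on coordinates $1,\ldots,n-1$. Bounding the integrand in the eigenvalue equation by its supremum and infimum over the cylinder yields the sandwich
\bes
  \frac{2\delta}{\lambda_0(\beta)}\,\e^{-\beta\sup_U h}\,\nu_\beta(V) \leq \nu_\beta(U) \leq \nu_\beta(\bar U) \leq \frac{2\delta}{\lambda_0(\beta)}\,\e^{-\beta\inf_{\bar U} h}\,\nu_\beta(\bar V).
\ees
Taking $\frac{1}{\beta}\log$ and using the LDP upper bound on $\bar U,\bar V$, the LDP lower bound on $U,V$, and Lemma~\ref{lemma:lambda0-limit} (so that $(\log 2\delta)/\beta\to0$ and $-(1/\beta)\log\lambda_0(\beta)\to e_0$), the ends of the sandwich produce
\bes
  \inf_{\bar U} I\leq \sup_U h - e_0 + \inf_V I, \qquad \inf_U I\geq \inf_{\bar U} h - e_0 + \inf_{\bar V} I.
\ees
Sending $\delta\to0$ and $n\to\infty$, the continuity of $h$ in the product topology on $(r_{\rm hc},\infty)^\N$ (following from smoothness of $v$ on $(r_{\rm hc},\infty)$ and $v(r)=O(r^{-s})$) drives $\sup_U h,\inf_{\bar U} h\to h(\bar z)$, while lower semi-continuity of $I$ combined with the elementary bound $\inf_U I\leq I(\bar z)$ forces $\inf_U I,\inf_{\bar U} I\to I(\bar z)$ and $\inf_V I,\inf_{\bar V} I\to I(\tau\bar z)$; the two inequalities collapse to $I(\bar z)=h(\bar z)-e_0+I(\tau\bar z)$.

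Next I would handle the hard-core region, where $I$ ought to be $+\infty$. If $\bar z_j<r_{\rm hc}$ for some $j$, a cylinder neighborhood of $\bar z$ disjoint from $(r_{\rm hc},\infty)^\N$ has $\nu_\beta$-measure zero by Lemma~\ref{lem:eigenmeas}, and the LDP lower bound forces $I(\bar z)=\infty$. The delicate case $\bar z_1=r_{\rm hc}$ uses the eigenvalue equation applied to $U=\{z:z_1\in(r_{\rm hc},r_{\rm hc}+\delta)\}$ together with the uniform lower bound $h(z_1,w)\geq p z_1+v(z_1)-C$, where $C$ controls the far contributions $\sum_{k\geq 2} v(z_1+w_1+\cdots+w_{k-1})\geq -C$ uniformly in $w$ via $v(r)\geq-\alpha_1 r^{-s}$. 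Assumption~\ref{assu:hcvtoinfty} makes $V(\delta):=\inf_{(r_{\rm hc},r_{\rm hc}+\delta)} v$ diverge as $\delta\to 0$, giving
\bes
  \nu_\beta(U) \leq \frac{\delta\,\e^{\beta C}}{\lambda_0(\beta)}\,\e^{-\beta V(\delta)},
\ees
and Lemma~\ref{lemma:lambda0-limit} together with the LDP upper bound then yield $\inf_{\bar U} I\geq V(\delta)-C-e_0\to\infty$, so $I(\bar z)=\infty$. The remaining case $\bar z_j=r_{\rm hc}$ with $j\geq 2$ and $\bar z_1>r_{\rm hc}$ follows by iterating the bulk identity of the previous step: $I(\bar z)=\sum_{k=0}^{j-2}(h(\tau^k\bar z)-e_0)+I(\tau^{j-1}\bar z)=\infty$. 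Since $v=+\infty$ on $(0,r_{\rm hc}]$ makes $h(\bar z)=+\infty$ whenever $\bar z_1\leq r_{\rm hc}$, the Bellman identity reads $\infty=\infty$ on the hard-core region and thus extends to all of $\R_+^\N$.

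The main obstacle I expect is the corner case $\bar z_1=r_{\rm hc}$, where Assumption~\ref{assu:hcvtoinfty} must be used critically to turn the Lebesgue $\dd z_1$-integration near the hard core into the factor $\e^{-\beta V(\delta)}$ with $V(\delta)\to\infty$; without the divergence of $v$ at the hard core one cannot rule out a finite value of $I(\bar z)$. A secondary technical point, present only for $m=\infty$, is to verify that the tail $\sum_{k\geq n}v(z_1+\cdots+z_k)$ vanishes uniformly over the cylinder $\bar U$ as $n\to\infty$, so that $\sup_{\bar U} h-\inf_{\bar U} h\to 0$ and the squeeze in the bulk step closes; this is what fixes the permissible order of the limits $\delta\to0$ and $n\to\infty$.
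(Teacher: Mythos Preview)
Your overall strategy is sound and takes a more elementary route than the paper. The paper first proves $I\equiv\infty$ outside $(r_{\rm hc},\infty)^\N$ via the $n$-fold eigenvalue equation, then applies Varadhan's lemma to the inner $\nu_\beta$-integral in order to set up a \emph{second} weak LDP with rate function $J=h-e_0+I\circ\tau$, and finally invokes uniqueness of the rate function on a Polish space to conclude $I=J$. Your cylinder sandwich bypasses both Varadhan and the uniqueness argument and yields the pointwise identity directly. One notational caveat: as written, $\sup_U h=\infty$ because the cylinder $U$ does not constrain the tail coordinates and $h$ has been extended to $+\infty$ off $(r_{\rm hc},\infty)^\N$; the sup and inf must be taken over $U\cap(r_{\rm hc},\infty)^\N$, which is all your integrals actually see since $\nu_\beta$ is supported there and the $\dd z_1$-interval lies in $(r_{\rm hc},\infty)$. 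With that reading your bulk step is correct.

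There is, however, a genuine gap in your last case. The iteration $I(\bar z)=\sum_{k=0}^{j-2}\bigl(h(\tau^k\bar z)-e_0\bigr)+I(\tau^{j-1}\bar z)$ applies the bulk identity at each $\tau^k\bar z$, $k=0,\ldots,j-2$. But every one of these points lies \emph{outside} $(r_{\rm hc},\infty)^\N$: its $(j-k)$-th coordinate is $\bar z_j=r_{\rm hc}$. Since you explicitly proved the bulk identity only on $(r_{\rm hc},\infty)^\N$, the iteration is unjustified. Two easy repairs are available. First, your sandwich in fact goes through at any $\bar z$ with $\bar z_1>r_{\rm hc}$, even if a later coordinate equals $r_{\rm hc}$: on $U\cap(r_{\rm hc},\infty)^\N$ the formula for $h$ is finite and continuous (all arguments satisfy $z_1+\cdots+z_k>r_{\rm hc}$), and the sup/inf converge to the formula value $\tilde h(\bar z)$; the iteration then reads $I(\bar z)=\sum_{k=0}^{j-2}(\tilde h(\tau^k\bar z)-e_0)+I(\tau^{j-1}\bar z)=\infty$ legitimately. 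Second, and this is what the paper does, you can handle $\bar z_n\le r_{\rm hc}$ in one shot via the $n$-fold eigenvalue equation: bounding $h_k\ge p z_k-C$ for $k<n$ and $h_n\ge p z_n+v(z_n)-C$ on the support of the integrand gives $\nu_\beta(\{z_n\le r_{\rm hc}+\delta\})\le\lambda_0(\beta)^{-n}(\beta p)^{-(n-1)}\e^{n\beta C}\delta\,\e^{-\beta V(\delta)}$, whence $\inf\{I(z):z_n\le r_{\rm hc}\}\ge\lim_{\delta\to 0}V(\delta)-nC+ne_0=\infty$.
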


\begin{proof} 
 Write $\beta$ instead of $\beta_j$. We will see that the fixed point equation for $I$ follows from the eigenvalue equation in Lemma~\ref{lem:eigenmeas} and the asymptotics of the principal eigenvalue provided in Lemma~\ref{lemma:lambda0-limit}. According to these, 
	\be\label{eq:iteratedeigen}
		\dd \nu_{\beta}  (z_1 z_2\ldots) = \e^{-\beta [h_1 + \ldots + h_n - ne_0 + o(1)]} \dd z_1 \ldots \\ z_n \dd \nu_{\beta}(z_{n+1}\ldots)
	\ee
	for any $n \in \N$ where the $o(1)$-term comes from $\log \lambda_0^{n}(\beta) = - \beta [ne_0+o(1)]$ and is independent of $(z_j)_{j\in \N}$. 

We first show that $I$ can only be finite on $(r_{\rm hc}, \infty)^\N$.  Fix $n \in \N$ and for $\eps > 0$ consider the open set $O_{\eps} = \{z \in \R^\N \mid 0 < z_n < r_{\rm hc}+\eps\}$. A repeated application of Lemma~\ref{lem:eigenmeas} and Lemma~\ref{lemma:lambda0-limit} give 
\begin{align*} 
  \nu_\beta(O_{\eps}) 
  = \int_{O_{\eps} \cap (r_{\rm hc}, \infty)^{\N}} \e^{-\beta [h_1 + \ldots + h_n - n e_0 + o(1)]} \dd z_1 \ldots \dd z_n \dd \nu_\beta(z_{n+1}\ldots).
\end{align*}
Let $-C$ be a lower bound for $-e_0 + v(z_{\max}) + \sum_{k=2}^\infty v(z_1 + \cdots + z_k )$ on $(r_\mathrm{hc},\infty)^\N$. Then 
\begin{align*}
  \nu_\beta(O_{\eps}) 
  &\le \int_{(r_{\rm hc}, \infty)^{n-1}} \e^{-\beta [p(z_1 + \ldots + z_{n-1}) - C(n-1) + o(1)]} \dd z_1 \ldots \dd z_{n-1} \\ 
  &\qquad \times \int_{(r_{\rm hc}, r_{\rm hc}+\eps)} \e^{-\beta [p z_n + v(z_n) - C]} \dd z_n 
\end{align*}
and 
\begin{align*}
  \log \nu_\beta(O_{\eps}) 
  &\le \beta (C + o(1)) (n-1) + \log \eps - \beta \inf_{s \in (r_\mathrm{hc},r_\mathrm{hc} + \eps ]} (p s + v(s)). 
\end{align*}
Hence 
\[ - \inf_{O_\eps} I 
   \leq C(n-1) - \inf_{s \in (r_\mathrm{hc},r_\mathrm{hc} + \eps ]} (p s + v(s)) =:- f(\eps) \]
It follows that 
\[ \inf\{ I(z) \mid z_n \le r_{\rm hc} \} 
   \geq \lim_{\eps \to 0} f(\eps) = \infty. \] 
Since $n$ was arbitrary we have shown that $I \equiv \infty$ on $\R_+^{\N} \setminus (r_{\rm hc}, \infty)^{\N}$. In particular, as $\nu_\beta$ satisfies a large deviations principle on $\R_+^\N$ with rate function $I$,  the same large deviations principle holds on $(r_\mathrm{hc},\infty)^\N$. 

We now establish another (weak) large deviations principle on $(r_\mathrm{hc},\infty)^\N$. Let $K\subset (r_ \mathrm{hc},\infty)^\N$ be a  (relatively) closed set and  $[\alpha,b]\subset (r_ \mathrm{hc},\infty)$ a compact interval. Then \eqref{eq:iteratedeigen} with $n = 1$ yields 
	\bes
		\nu_\beta([\alpha,b] \times K) = \int_\alpha^b \Bigl( \int_K \e^{-\beta [h(z_1,z_2,\ldots) - e_0+o(1)]} \dd \nu_\beta(z_2,z_3,\ldots) \Bigr)\dd z_1.  
	\ees
	Write $f_\beta(z_1;K)$ for the inner integral. As $h$ is bounded from below and for every fixed $z_1>r_\mathrm{hc}$,  $(z_2,z_3,\ldots) \mapsto h(z_1,z_2,\ldots)$ is continuous n $(r_{\rm hc}, \infty)^{\N}$ with respect to the product topology, we deduce from Varadhan's lemma \cite[Chapter 4.3]{dembo-zeitouni} that 
	\be \label{eq:fbc}
		\limsup_{\beta\to \infty} \frac{1}{\beta}\log f_\beta(z_1;K) \leq - \inf_{(z_j)_{j\geq 2}\in K} \bigl(h(z_1,z_2,\ldots) - e_0+ I(z_2,z_3,\ldots)\bigr).
	\ee
	for all $z_1\in [\alpha,b]$. Next we note that for all $(z_j)_{j\in \N}\in (r_\mathrm{hc},\infty)^\N$, $z'_1>r_\mathrm{hc}$, and suitable $C>0$, 
	\bes
		|h(z_1,z_2,\ldots) - h(z'_1,z_2,\ldots)| \leq |v(z_1) - v(z'_1)| + C|z_1- z'_1|.
	\ees
	For $z_1,z'_1$ bounded away from $r_{\mathrm{hc}}$ we may exploit that the derivative of $v$ is bounded and drop the first term, making $C$ larger if need be. Plugging these estimates into the definition of $f_\beta(z_1,K)$, we find that for some $C_\alpha>0$ and all $\beta>0$, 
	\bes 
		\Bigl|\frac{1}{\beta}\log f_\beta(z_1;K) -\frac{1}{\beta}\log f_\beta(z_1';K)\Bigr|\leq C_{ \alpha} |z_1-z'_1|\quad (z_1,z'_1>\alpha>r_\mathrm{hc}).  
	\ees
	It follows that the upper bound~\eqref{eq:fbc} is uniform on compact subsets of $(r_\mathrm{hc},\infty)$ and 
	\be\label{eq:ubwldp}
		\limsup_{\beta\to \infty}\frac{1}{\beta}\log \nu_\beta([ \alpha,b]\times K) \leq - \inf_{z\in [\alpha,b]\times K} \bigl( h(z_1,z_2,\ldots) -e_0 + I(z_2,z_3,\ldots)\bigr).
	\ee
	A similar argument shows that for all $b>\alpha>r_\mathrm{hc}$ and all (relatively) open subsets $O\subset(r_{\mathrm{hc}}, \infty)^{\N}$, 
	\be\label{eq:lbwldp}
		\liminf_{\beta\to \infty}\frac{1}{\beta}\log \nu_\beta((\alpha,b)\times O) \geq - \inf_{z\in (\alpha,b) \times O} \bigl( h(z_1,z_2,\ldots) -e_0 + I(z_2,z_3,\ldots)\bigr).
	\ee
	Taking monotone limits, the latter inequality is seen to extend to $\alpha=r_\mathrm{hc}$ and $b= \infty$. It follows that $(\nu_\beta)$, as a family of probability measures on $(r_\mathrm{hc},\infty)^\N$, satisfies a weak large deviations principle with rate function $J= h_1 - e_0+I(z_2,\ldots)$.  (It is indeed sufficient to consider product sets. This is easy to see for the lower bound: If $U \subset (r_\mathrm{hc},\infty)^\N$ is open, then for any $\eps > 0$ one finds $\bar{z}\in (\alpha,b) \times O \subset U$ with $h(\bar{z}_1,\bar{z}_2,\ldots) -e_0 + I(\bar{z}_2,\bar{z}_3,\ldots)
- \eps \le \inf_{z\in U} \bigl( h(z_1,z_2,\ldots) -e_0 + I(z_2,z_3,\ldots)\bigr)$, from which it follows that \eqref{eq:lbwldp} holds for $U$ instead of $(\alpha,b) \times O$. The upper bound for a general compact $V \subset (r_\mathrm{hc},\infty)^\N$ is obtained by covering, for given $\eps > 0$, $V \subset \bigcup_{i = 1}^{N_{\eps}} (\alpha_{x_i},b_{x_i}) \times B_{\delta(x_i)}(x_i)$, where for each $x \in V$, $b_x > \alpha_x > r_\mathrm{hc}$ and $\delta(x) > 0$ are chosen such that  
$h(x_1,x_2,\ldots) -e_0 + I(x_2,x_3,\ldots) - \eps \le \inf_{z\in (\alpha_{x},b_{x}) \times B_{\delta(x)}(x)} \bigl( h(z_1,z_2,\ldots) -e_0 + I(z_2,z_3,\ldots)\bigr)$. This is possible since $I$ is lower semicontinuous. With the help of \eqref{eq:ubwldp} we can now deduce that \eqref{eq:ubwldp} holds for $V$ instead of $[\alpha,b] \times K$.)

Since  $(r_\mathrm{hc},\infty)^\N$ is  a Polish space, the rate function in a weak large deviations principle is uniquely defined~\cite[Chapter 4.1]{dembo-zeitouni}, hence $J=I$  on $(r_\mathrm{hc},\infty)^\N$. To finish the proof it remains to observe that also $J=I$ on $\R_+^\N \setminus (r_\mathrm{hc},\infty)^\N$ because both $I$ and $h$ are equal to $\infty$ on that set.  
\end{proof}

\begin{proof}[Proof of Theorem~\ref{thm:ldp}] 
	 The large deviations principle for $\nu_\beta$  with good rate function $\overline{\mathcal{E}}_{\mathrm{surf}} - \min \mathcal{E}_\mathrm{surf}$ is an immediate consequence of Lemmas~\ref{lem:subseqldp} and~\ref{lem:fpe} and Proposition~\ref{prop:bellman-ersatz}. 
	As a consequence, $\nu^-_{\beta} \otimes \nu^+_{\beta}$ satisfies a deviations principle with good rate function $(z_j)_{j \in \Z} \mapsto \overline{\mathcal{E}}_\mathrm{surf}(z_1,z_2,\ldots) + \overline{\mathcal{E}}_\mathrm{surf}(z_0,z_{-1},\ldots)  - 2 \min \mathcal{E}_\mathrm{surf}$ on $\R_+^{\Z}$ and on $[r_{\rm hc}, \infty)^{\Z}$, The large deviations principle for $\mu_\beta$ thus follows from Eq.~\eqref{eq:mudef},  Lemmas~4.3.4 and~4.3.6 in~\cite{dembo-zeitouni}, $\min \overline{\mathcal{E}}_\mathrm{bulk} =0$ and 
	\bes
		\overline{\mathcal{E}}_\mathrm{bulk}(z_1,z_2,\ldots) = \overline{\mathcal{E}}_\mathrm{surf}(z_1,z_2,\ldots) + \overline{\mathcal{E}}_\mathrm{surf}(z_0,z_{-1},\ldots) + \mathcal{W}_0(\cdots z_0\mid z_1\cdots) 
	\ees
by Proposition \ref{prop:lim-bulk}, and the observation that $\mathcal{W}_0$ is continuous on $[r_{\rm hc}, \infty)^{\Z}$.
\end{proof} 

\subsection{Surface corrections to the Gibbs free energy} 

\begin{proof} [Proof of Theorem~\ref{thm:geld}] 
The statements about $g(\beta)$ have already been proven in Lemma~\ref{lemma:lambda0-limit}. For $g_\mathrm{surf}(\beta)$, we start from the formula in Proposition~\ref{prop:thermolim}(a), to which we  apply Lemma~\ref{lemma:lambda0-limit}, Theorem~\ref{thm:ldp} and Varadhan's lemma. This yields 
\bes 
	\lim_{\beta\to \infty} g_{\rm surf}(\beta) 
		 = - e_0 + \inf \bigl( \mathcal{E}_{\rm bulk} - \mathcal{W}_0\bigr). 
\ees
But now for $(z_j)$ with $\sum_{j\in \Z}(z_j-a)^2<\infty$ 
	\begin{align*}
		\mathcal{E}_{\rm bulk}- \mathcal{W}_0 
		& = \sum_{j\in \Z} \sum_{k=1}^{m} \bigl( v(z_j + \cdots + z_{j+k-1}) - v(ka) + \delta_{1k} p(z_j - a)\bigr)  \notag \\
		&\quad -  \sum_{j \le 0, \ell \ge 1 \atop |\ell - j| \le m-1} \bigl( v(z_j+\cdots + z_\ell) - v((\ell -j +1) a)\bigr) - \sum_{k=1}^{m} (k-1)\, v(ka) \notag \\
		& =  \mathcal{E}_{\rm surf}(z_1,z_2,\ldots) +  \mathcal{E}_{\rm surf}(z_0,z_{-1},\ldots)
+ e_\mathrm{clamp}+e_0
	\end{align*}
with $e_\mathrm{clamp} = - pa - \sum\limits_{k=1}^\infty k \,v(ka)$. So
	\bes
		\inf (\mathcal{E}_{\rm bulk}-\mathcal{W})-e_0 = 2 \inf \mathcal{E}_{\rm surf}+e_\mathrm{clamp} = e_{\rm surf}. \qedhere
	\ees
\end{proof}


\section{Gaussian approximation} \label{sec:gaussian}

Here we prove Theorems~\ref{thm:gaussian2} and~\ref{thm:gaussian3} on the Gaussian approximation to the bulk measure $\mu_\beta$ when $m$ is finite. We start from a standard idea, namely perturbation theory for transfer operators~\cite{helffer-book}, however we  need  to put some work into a good choice of transfer operator as the standard symmetrized choice~\eqref{eq:Tbeta} does not work well. This aspect is explained in more detail in Section~\ref{sec:choices}. Throughout this section $m$ satisfies $2\leq m <\infty$. Remember $d=m-1$. 

\subsection{Decomposition of the energy. Choice of transfer operator} \label{sec:choices}

For finite $m$, the treatment with transfer operators from Section~\ref{sec:transferoperator-infinite} can be considerably simplified: instead of an operator that acts on functions of infinitely many variables, the transfer operator becomes an integral operator in $L^2(\R^d)$ ($L^2$ space with respect to Lebesgue measure).  There are several possible choices, corresponding each to an additive decomposition of the energy. 
Let
$V(z_1,\ldots,z_d):= \mathcal E_m(z_1,\ldots,z_d)$ and 
$$
	W(z_1,\ldots,z_d;z_{d+1},\ldots, z_{2d}) =  \sum_{\substack{1\leq i \leq d<j \leq 2d\\ |i-j|\leq d}} v(z_i+\cdots + z_j).
$$
Let us block variables as $x_j = (z_{dj+1},\ldots, z_{dj + d})$. 
Then for $(z_j)_{j \in \Z} \in \mathcal{D}_0^+$ we have  
\be  \label{eq:bulk1}
	\mathcal{E}_\mathrm{bulk}((z_j)_{j \in \Z}) 
   = \sum_{j \in \Z} \big( V(x_j) +  W(x_j, x_{j+1})- d e_0 \big)
\ee
with only finitely many  non-zero summands. By Proposition~\ref{prop:lim-bulk} the sum extends to $\mathcal D^+$ by continuity. The transfer operator associated with the representation~\eqref{eq:bulk1} is the integral operator with kernel $\exp( - \beta[V(x) + W(x;y)])$; it is clearly related to the $d$-th power of the transfer operator $\mathcal L_\beta$ from Section~\ref{sec:transferoperator-infinite}. 
The analysis is simpler for a symmetrized operator with kernel 
\be \label{eq:Tbeta}
	T_\beta(x,y) = \1_{(r_\mathrm{hc},\infty)^d}(x) \exp\Bigl( - \beta\Bigl[\tfrac12 V(x) + W(x;y) + \tfrac12 V(y)\Bigr] \Bigr)\1_{(r_\mathrm{hc},\infty)^d}(y).
\ee
which has the advantage of being Hilbert-Schmidt: 
The pressure term present in $V(x)$ and $V(y)$ ensures that $T_\beta(x,y)$ decays exponentially fast when $|x|+|y|\to \infty$ so that $\int_{\R^{2d}} T_\beta(x,y)^2 \dd x \dd y <\infty$. The  transfer operator $T_\beta$ corresponds to a rewriting of ~\eqref{eq:bulk1}, 
$$
	\mathcal{E}_\mathrm{bulk}((z_j)_{j \in \Z}) 
   = \sum_{j \in \Z} \big( \tfrac12 V(x_j) +  W(x_j, x_{j+1}) +  \tfrac12 V(x_{j+1}) - d e_0 \big).
$$
For the analysis of the limit $\beta\to \infty$, we would like to have a transfer operator that concentrates in some sense around the optimal spacings so that we may approximate it with a Gaussian operator. When $m\geq 3$, unfortunately, the function $(x,y)\mapsto \tfrac12 V(x) + W(x;y) + \tfrac12 V(y)$ need not have its minimum at $(x,y) = (\vect a, \vect a)$, with $\vect a = (a,\ldots,a)\in \R^d$. Therefore we introduce yet another variant of the transfer operator: we look for a function $\widehat H(x,y)$ such that 
$$
	\mathcal{E}_\mathrm{bulk}((z_j)_{j \in \Z}) 
   = \sum_{j\in \Z} \widehat H(x_j, x_{j+1}) 
$$
and $\widehat H(x,y) \geq \widehat H(\vect a, \vect a) =0$, and work with the kernel 
$$
	K_\beta(x,y) := \1_{(r_\mathrm{hc},\infty)^{d}}(x) \exp\Bigl( - \beta\widehat H(x,y) \Bigr)  \1_{(r_\mathrm{hc},\infty)^{d}}(y). 
$$
By a slight abuse of notation we use the same letter for the integral operator 
$$
	(K_\beta f)(x) = \int_{\R^d} K_\beta(x,y) f(y) \dd y. 
$$
in $L^2(\R^d)$. 
The function $\widehat H$ is defined as follows. 
Set
\begin{align*}
	H(x,y)& : = \inf \left \{ \mathcal{E}_{\mathrm{bulk}}\bigl( (z_j)_{j\in \Z}\bigr)\mid (z_j)_{j\in \Z} \in (r_\mathrm {hc},\infty)^\Z:\,  (z_1,\ldots,z_{2d}) = (x,y)\right\}, \\
	  w(x) & : = \inf \left \{ \mathcal{E}_{\mathrm{bulk}}\bigl( (z_j)_{j\in \Z}\bigr)\mid  (z_j)_{j\in \Z} \in (r_\mathrm {hc},\infty)^\Z:\, (z_1,\ldots,z_{d}) = x\right\}.
\end{align*} 
and 
\bes
	\widehat H(x,y) := H(x,y) - \tfrac12 w(x) - \tfrac12 w(y). 
\ees
Remember 
\bes
	 u(x) = \inf\{ \mathcal E_{\mathrm{surf}}\bigl((z_j)_{j\in \N}\bigr)\mid  (z_j)_{j\in \Z} \in (r_\mathrm {hc},\infty)^\N:\, (z_1,\ldots,z_d) = x\}.
\ees
\begin{lemma} \label{lem:hhat}
	Assume $2\leq m<\infty$, $p\in [0,p^*)$, and $r_\mathrm{hc}>0$.   Then:
	\begin{enumerate} 
	\item[(a)] For all $x,y\in (r_\mathrm{hc},\infty)^d$, we have $\widehat H(x,y) \geq \widehat H(\vect a, \vect a) =0$.
	\item[(b)]  The function $g(x):= \frac12[u(x) - u(\sigma x)]$ is bounded, and we have 
	$$
		\widehat H(x,y) = - g(x) + \Bigl( \tfrac12 V(x) + W(x,y) + \tfrac12 V(y)- d e_0\Bigr) + g(y).
	$$
	\item [(c)] $\widehat H( x,  y) = \widehat H(\sigma y,\sigma x)$ for all $x,y\in (r_\mathrm{hc},\infty)^d$.
	\end{enumerate} 
\end{lemma}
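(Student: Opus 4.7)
The plan is to derive closed-form expressions for $H(x,y)$ and $w(x)$ in terms of the function $u$, then obtain (b) by direct substitution into the definition of $\widehat{H}$, deduce (a) from two Bellman-type inequalities, and obtain (c) from a global reflection of the chain. The workhorse is the block decomposition
\[ \mathcal{E}_\mathrm{surf}((z_j)_{j\in\N}) = \sum_{k \geq 0}\bigl[V(x_k) + W(x_k, x_{k+1}) - d e_0\bigr], \quad x_k := (z_{kd+1}, \ldots, z_{(k+1)d}), \]
obtained by iterating \eqref{eq:bellmann-ersatz-k-gleich-m}, together with its bi-infinite bulk analogue on $\mathcal{D}^+$ (consistent because $V(x) + W(x,y) + V(y) = \mathcal{E}_{2d+1}(x,y)$ accounts for every pair interaction of range $\leq m$). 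I will also use the elementary identities $V(\sigma z) = V(z)$ and $W(a,b) = W(\sigma b, \sigma a)$ (both immediate from reversal-invariance of pair interactions) and the Bellman equation $u(x) = V(x) - d e_0 + \inf_y[W(x,y) + u(y)]$ stated before Lemma~\ref{lem:unibell}.

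To derive the formulas for $H$ and $w$, I fix $x_0 = x$, $x_1 = y$ in the bulk decomposition and split off the central block from the left and right tails. The right-tail infimum is $u(y)$. For the left tail $\sum_{k \leq -1}[V(x_k) + W(x_k,x_{k+1}) - d e_0]$, I introduce $\tilde\xi_k := \sigma x_{-k}$ for $k \geq 1$ (and $\tilde\xi_0 := \sigma x$) and use the two symmetries above to convert the series into one of the shape of $\mathcal{E}_\mathrm{surf}(\sigma x, \tilde\xi_1, \tilde\xi_2, \ldots)$, except that $V(\tilde\xi_\ell)$ is paired with $W(\tilde\xi_{\ell-1}, \tilde\xi_\ell)$ instead of $W(\tilde\xi_\ell, \tilde\xi_{\ell+1})$. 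The term-by-term discrepancies $V(\tilde\xi_\ell) - V(\tilde\xi_{\ell-1})$ telescope to $V(\vect{a}) - V(\sigma x)$, using that any finite-energy configuration satisfies $\tilde\xi_N \to \vect{a}$ (analogous to Lemma~\ref{lem:sum-bound}). The left-tail infimum is therefore $u(\sigma x) + V(\vect{a}) - V(x)$, and combining with the central block and right tail gives
\[ H(x,y) = W(x,y) + u(\sigma x) + u(y) - d e_0 + V(\vect{a}), \qquad w(x) = u(x) + u(\sigma x) - V(x) + V(\vect{a}), \]
the second identity via $w(x) = \inf_y H(x,y)$ and the Bellman equation. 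Substituting these into $\widehat{H}(x,y) = H(x,y) - \tfrac12 w(x) - \tfrac12 w(y)$, the $V(\vect{a})$ terms cancel and the $u$-contributions regroup as $-g(x) + g(y)$ with $g(x) = \tfrac12[u(x) - u(\sigma x)]$, yielding (b). Boundedness of $g$ follows by subtracting the Bellman equations at $x$ and $\sigma x$: $|u(x) - u(\sigma x)| \leq 2 \sup|W|$, and the supremum is finite since $r_\mathrm{hc} > 0$ forces every argument of $v$ in $W$ to be at least $2 r_\mathrm{hc}$, on which $v$ is bounded by Assumption~\ref{assu:v}.

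For (a), I rewrite the identity of (b) as
\[ 2\widehat{H}(x,y) = \bigl[V(x) + W(x,y) + u(y) - d e_0 - u(x)\bigr] + \bigl[V(y) + W(x,y) + u(\sigma x) - d e_0 - u(\sigma y)\bigr]. \]
The first bracket is non-negative by the Bellman equation at $x$ tested against the choice $y$; the second is non-negative by the Bellman equation at $\sigma y$ tested against the choice $\sigma x$ (using $V(\sigma y) = V(y)$ and $W(\sigma y, \sigma x) = W(x,y)$). A short direct computation with the all-$a$ configuration gives $V(\vect{a}) + W(\vect{a},\vect{a}) = d e_0$, so (b) evaluated at $(\vect{a},\vect{a})$ yields $\widehat{H}(\vect{a},\vect{a}) = V(\vect{a}) + W(\vect{a},\vect{a}) - d e_0 = 0$. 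Part (c) is immediate: the involution $(z_j)_{j\in\Z} \mapsto (z_{2d+1-j})_{j\in\Z}$ preserves $\mathcal{E}_\mathrm{bulk}$ and maps the constraint $(z_1,\ldots,z_{2d}) = (x,y)$ to $(\sigma y, \sigma x)$, giving $H(x,y) = H(\sigma y, \sigma x)$; an analogous single-block reflection yields $w(x) = w(\sigma x)$, whence $\widehat{H}(x,y) = \widehat{H}(\sigma y, \sigma x)$.

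The principal obstacle is the reflection/telescoping step in the derivation of the left-tail infimum: the reflected sum carries a ``mismatched'' pairing between $V$- and $W$-contributions, and the additive constant $V(\vect{a})$ produced by the telescoping is precisely what is needed for the $V(\vect{a})$-cancellation in $\widehat{H}$ and, via $V(\vect{a}) + W(\vect{a},\vect{a}) = d e_0$, for $\widehat{H}(\vect{a},\vect{a}) = 0$.
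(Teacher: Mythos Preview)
Your proof is correct and follows essentially the same strategy as the paper: both derive closed-form expressions for $H$ and $w$ in terms of $u$, substitute into $\widehat H$, use the Bellman equation for non-negativity, and invoke reversal symmetry for (c). Two minor differences are worth noting. First, the paper proves (a) before (b) by the cleaner structural argument
\[
\widehat H(x,y)=\tfrac12\bigl[H(x,y)-w(x)\bigr]+\tfrac12\bigl[H(x,y)-w(y)\bigr]\ge 0,
\]
using only $w(x)=\inf_y H(x,y)$ and $w(y)=\inf_x H(x,y)$, so no explicit formula is needed; your route via (b) and the Bellman inequality is equivalent but slightly longer. Second, your telescoping computation actually produces the \emph{correct} additive constants: the paper states $H(x,y)=u(\sigma x)+W(x,y)+u(y)$ and $w(x)=u(\sigma x)+u(x)-V(x)+de_0$, but evaluating at $(\vect a,\vect a)$ (where $H=w=0$) shows these are off by $-W(\vect a,\vect a)=V(\vect a)-de_0$, exactly the constant your telescope picks up. The discrepancy cancels in $\widehat H$, so the paper's formula for $\widehat H$ is unaffected, and indeed the combination $H(x,y)-w(x)$ used later (Eq.~\eqref{eq:hwdiff}) is also correct under either convention.
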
 

\begin{proof}
	One easily checks 
	\bes 
		w(x) = \inf_{y \in (r_\mathrm{hc},\infty)^d} H(x,y),\quad w(y) = \inf_{x \in (r_\mathrm{hc},\infty)^d}  H(x,y)
	\ees
	which yields 
	\be \label{eq:positivehat}
		H(x,y)  -\tfrac12 w(x) - \tfrac12 w(y) = \tfrac12[ H(x,y) - w(x)] + \tfrac12 [H(x,y) - w(y)] \geq 0. 
	\ee
	For $x = y = \vect a$, we have $H(\vect a,\vect a)= w(\vect a)$ hence $\widehat H(\vect a,\vect a) =0$. This proves part (a) of the lemma. The symmetry in part (c) is immediate from the reversal symmetry of $\mathcal E_\mathrm{bulk}$.  
For (b), we note that  
$$
	H(x,y) = u(\sigma x) + W(x,y) + u(y),\quad w(x) = u(\sigma x)  + u(x) - V(x) + d e_0, 
$$	
the formula for $\widehat H$ follows. Because of 
$$
	u(x) = \inf_y \bigl( V(x) + W(x,y) - d e_0 + u(y)),
$$
and $V(\sigma x) = V(x)$, $C:= \sup_{(x,y)\in (r_\mathrm{hc},\infty)^{2d}} |W(x,y)-W(\sigma x, y)| <\infty$, we have 
$$
	u(x) \leq \inf_y \bigl( V(\sigma  x) + W(\sigma x, y) +C - d e_0 + u(y)\bigr) = u(\sigma x) + C.
$$
The roles of $x$ and $\sigma x$ can be exchanged, hence $u(x) - u(\sigma x)$ is bounded. 
\end{proof} 

\subsection{Some properties of the transfer operator} \label{sec:transferoperator-finite} 
 
\begin{lemma}\label{lem:kelementary}
	Assume $2\leq m<\infty$, $p\in (0,p^*)$, and and $r_\mathrm{hc}>0$.  Then:
	\begin{enumerate} 
	\item[(a)] 
	The kernels $K_\beta$ and $T_\beta$ are related as follows: 
	$$
		K_\beta(x,y) = \e^{\beta d e_0+ \tfrac12 \beta [u(x) - u(\sigma x) ]} T_\beta(x,y) \e^{ -\tfrac12  \beta[ u(y) - u(\sigma y)]}. 
	$$
	\item[(b)] The operator $K_\beta$ is a Hilbert-Schmidt operator in  $L^2(\R^d)$, and the kernel has the symmetry $K_\beta(x,y) = K_\beta(\sigma y, \sigma x)$. 
	\end{enumerate} 
\end{lemma}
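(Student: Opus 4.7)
\medskip
\noindent\textbf{Plan of proof.}

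For part (a) the plan is a direct computation. Starting from the definition $K_\beta(x,y) = \1_{(r_\mathrm{hc},\infty)^d}(x)\exp(-\beta\widehat H(x,y))\1_{(r_\mathrm{hc},\infty)^d}(y)$, I will simply substitute the formula for $\widehat H$ from Lemma~\ref{lem:hhat}(b), namely
\bes
   \widehat H(x,y) = -g(x) + \bigl(\tfrac12 V(x) + W(x,y) + \tfrac12 V(y) - d e_0\bigr) + g(y),
\ees
with $g(x) = \tfrac12[u(x)-u(\sigma x)]$, split the exponential into three factors, and recognize the middle factor as $T_\beta(x,y)$ (including both indicator functions). This yields the stated identity.

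For the symmetry in part (b) I will use Lemma~\ref{lem:hhat}(c), which gives $\widehat H(x,y)=\widehat H(\sigma y,\sigma x)$, together with the obvious fact that $\sigma$ preserves the set $(r_\mathrm{hc},\infty)^d$ (it is just a coordinate reversal). This immediately yields $K_\beta(x,y)=K_\beta(\sigma y,\sigma x)$.

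The Hilbert--Schmidt property is the only computational step. By Lemma~\ref{lem:hhat}(b) the function $g=\tfrac12(u-u\circ\sigma)$ is bounded, so part (a) gives $K_\beta(x,y)\le C_\beta\, T_\beta(x,y)$ for some constant $C_\beta<\infty$. It thus suffices to show $T_\beta\in L^2(\R^{2d})$. Here I will exploit the pressure term: on $(r_\mathrm{hc},\infty)^d$ one has $V(x)\ge -C_1 + p(x_1+\dots+x_d)$, since every $k$-th neighbour distance entering $V$ is at least $k\,r_\mathrm{hc}$ and $\sum_k \alpha_1(kr_\mathrm{hc})^{-s}<\infty$ by Assumption~\ref{assu:v}(ii); similarly $W(x,y)\ge -C_2$ for all $x,y\in (r_\mathrm{hc},\infty)^d$. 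Plugging these lower bounds into
\bes
  T_\beta(x,y)^2 = \1_{(r_\mathrm{hc},\infty)^{2d}}(x,y)\, \exp\bigl(-\beta[V(x)+2W(x,y)+V(y)]\bigr)
\ees
and integrating produces a product of one--dimensional integrals $\int_{r_\mathrm{hc}}^\infty \e^{-\beta p z}\dd z<\infty$, since $p>0$. Hence $T_\beta$ is Hilbert--Schmidt, and so is $K_\beta$.

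The only mildly delicate point is the lower bound on $V$ and $W$; the rest is bookkeeping. No single step looks like a real obstacle, since the useful facts (boundedness of $g$, symmetry of $\widehat H$, formula for $\widehat H$) have all been prepared by Lemma~\ref{lem:hhat}, and the pressure term $p>0$ is precisely what makes the Gaussian--like integrability go through.
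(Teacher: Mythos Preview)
Your proposal is correct and takes essentially the same approach as the paper, which omits the proof entirely and simply says it follows from Lemma~\ref{lem:hhat}. You have filled in precisely the details the paper skips: part (a) by substituting the formula from Lemma~\ref{lem:hhat}(b), the symmetry in (b) from Lemma~\ref{lem:hhat}(c), and the Hilbert--Schmidt bound from the boundedness of $g$ together with the exponential decay of $T_\beta$ induced by the pressure term (which the paper already notes just before introducing $T_\beta$).
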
 
\noindent The lemma follows from Lemma~\ref{lem:hhat}, the elementary proofs are omitted. 

By the Krein-Rutman theorem~\cite{krein-rutman48}, \cite[Chapter 6]{deimling-book}, the operator norm $||K_\beta||=:\Lambda_0(\beta)$ is a simple eigenvalue of $K_\beta$, the associated eigenfunction $\phi_\beta$ can be chosen strictly positive on $(r_\mathrm{hc},\infty)^d$, and the other eigenvalues of $K_\beta$ have absolute value strictly smaller than $\Lambda_0(\beta)$, i.e., 
$$
	\Lambda_1(\beta)  = \sup\{|\lambda|\, : \, \lambda\, \text{eigenvalue of } K_\beta,\, \lambda \neq \Lambda_0(\beta)\}<\Lambda_0(\beta).
$$
By Lemma~\ref{lem:kelementary}(b), the function $\phi_\beta \circ \sigma$ is a left eigenfunction of $K_\beta$:
$$
	\int_{\R^d} \phi_\beta(\sigma x) K_\beta(x,y) \dd x 
	 =	\Lambda_0(\beta) \phi_\beta(\sigma y).
$$
Let $\Pi_\beta$  be the rank-one projection in $L^2(\R^d)$ given by 
$$
	\Pi_\beta f :=\frac{ \la f, \phi_\beta \circ \sigma \ra}{\la \phi_\beta, \phi_\beta \circ \sigma \ra} \phi_\beta.
$$
Then $K_\beta\Pi_\beta = \Lambda_0(\beta) \Pi_\beta = \Pi_\beta K_\beta$ and an induction over $n\in \N$ shows
\be \label{eq:knp}
	\frac{1}{\Lambda_0(\beta)^n} K_\beta ^n - \Pi_\beta = \Bigl( \frac{1}{\Lambda_0(\beta)} K_\beta - \Pi_\beta \Bigr)^n.
\ee
Since $\Lambda_1(\beta)$ is nothing else but the spectral radius of $ K_\beta -\Lambda_0(\beta) \Pi_\beta$, it follows that 
\be \label{eq:knp2}
	\limsup_{n\to \infty} || \Lambda_0(\beta)^{-n} K_\beta^n - \Pi_\beta||^{1/n} =\frac{\Lambda_1(\beta)}{\Lambda_0(\beta)}<1.
\ee
The spectral properties of $K_\beta$ are related to the Gibbs free energy and the Gibbs measure as follows.

\begin{lemma} \label{lem:standardtransfer} 
	Assume $2 \leq m< \infty$, $p\in (0,p^*)$, and $r_\mathrm{hc}>0$. Then:
	\begin{enumerate}
		\item [(a)] 	The Gibbs free energy is given by $g(\beta)=e_0-\frac{1}{\beta d}\log \Lambda_0(\beta)$. 
		\item [(b)]  The $nd$-dimensional marginals of the bulk Gibbs measure $\mu_\beta$ have probability density function 
		$$
			\frac1c \phi_\beta(\sigma x_1) \Biggl(\prod_{i=1}^{n-1} \frac{1}{\Lambda_0(\beta)} K_\beta(x_i,x_{i+1})\Biggr) \phi_\beta(x_n)
		$$
		with $c=\la\phi_\beta, \phi_\beta \circ \sigma\ra$.
		\item [(c)] For all $\eps>0$ and all bounded $f,g:\R^d\to \R$, writing $f_0\bigl((z_j)_{j\in \Z}\bigr):=f(z_{0},\ldots, z_{d-1})$ and $g_n\bigl((z_j)_{j\in \Z}\bigr):= g(z_{nj},\ldots, z_{nj+d-1})$, we have
		$$
			\bigl|\mu_\beta(f_0g_n) - \mu_\beta(f_0)\mu_\beta(g_n)\bigr|
				\leq C_\eps(\beta) \Bigl(\frac{\Lambda_1(\beta)}{\Lambda_0(\beta)}\Bigr)^{(1-\eps)n} ||f||_\infty ||g||_\infty
		$$
		with some constant $C_\eps(\beta)$ that does not depend on $f$, $g$, or $n$. If $m=2$, we can pick $\eps = 0$ and $C_0=1$. 
	\end{enumerate}	
\end{lemma}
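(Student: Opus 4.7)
The plan is to reduce everything to spectral theory of the self-adjoint Hilbert–Schmidt operator $T_\beta$ from \eqref{eq:Tbeta}, and then pull the statements back to $K_\beta$ through the similarity relation of Lemma~\ref{lem:kelementary}(a). Using the energy decomposition of Section~\ref{sec:choices}, a direct computation gives $Q_{nd+1}(\beta) = \la \psi_\beta, T_\beta^{n-1}\psi_\beta\ra$ with $\psi_\beta(x) = \e^{-\beta V(x)/2}\1_{(r_{\mathrm{hc}},\infty)^d}(x)$, where the pressure term in $V$ forces $\psi_\beta \in L^2(\R^d)$ and $T_\beta$ to be Hilbert–Schmidt. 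By Krein–Rutman, $T_\beta$ has a simple positive principal eigenvalue $\mu_0(\beta)$ with positive normalized eigenvector $\phi_0$, and positivity gives $\la \phi_0, \psi_\beta\ra > 0$. A spectral decomposition then yields $Q_{nd+1}(\beta) = \mu_0(\beta)^{n-1}\la\phi_0, \psi_\beta\ra^2(1+o(1))$ as $n\to\infty$, so $g(\beta) = -(\beta d)^{-1}\log\mu_0(\beta)$. Combined with Lemma~\ref{lem:kelementary}(a), which exhibits $K_\beta = \e^{\beta d e_0} M_\Phi T_\beta M_\Phi^{-1}$ with multiplier $\Phi = \e^{\beta g}$ bounded above and below by Lemma~\ref{lem:hhat}(b), so that $\Lambda_0(\beta) = \e^{\beta d e_0}\mu_0(\beta)$, this proves (a).

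For (b), I would first derive the finite-volume block marginal density of $x_k, \ldots, x_{k+n-1}$ under $\Q_{Md+1}^\ssup{\beta}$ (with $M = k+n+k'$) in the form
\[
\frac{(T_\beta^k\psi_\beta)(x_k)\prod_{i=k}^{k+n-2}T_\beta(x_i,x_{i+1})(T_\beta^{k'}\psi_\beta)(x_{k+n-1})}{\la\psi_\beta, T_\beta^{M-1}\psi_\beta\ra}.
\]
Letting $k,k'\to\infty$, the $L^2$-convergence $T_\beta^k\psi_\beta/\mu_0^k \to \la\phi_0,\psi_\beta\ra\phi_0$ together with Proposition~\ref{prop:thermolim}(b) identifies the limit with the $n$-block marginal of $\mu_\beta$ and shows it equals $\phi_0(x_k)\prod_{i} (T_\beta(x_i,x_{i+1})/\mu_0)\phi_0(x_{k+n-1})$. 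Translating to $K_\beta$ via $T_\beta(x,y)/\mu_0 = \Lambda_0^{-1}\Phi(x)^{-1}K_\beta(x,y)\Phi(y)$ and telescoping the $\Phi$-factors in the product leaves boundary factors $\Phi(x_k)^{-1}$ and $\Phi(x_{k+n-1})$, which combine with $\phi_0 = \Phi^{-1}\phi_\beta$ to yield $\Phi(x_k)^{-2}\phi_\beta(x_k)$ and $\phi_\beta(x_{k+n-1})$. The crux is the identity $\Phi(x)^{-2}\phi_\beta(x) = \phi_\beta(\sigma x)$, which reduces to $\phi_0\circ\sigma = \phi_0$; this follows because the symmetries $T_\beta(x,y) = T_\beta(\sigma y, \sigma x)$ (Lemma~\ref{lem:kelementary}(b)) and $T_\beta(x,y) = T_\beta(y,x)$ together give $\sigma T_\beta \sigma = T_\beta$, so $\phi_0\circ\sigma$ is a positive principal eigenvector and must equal $\phi_0$ by Krein–Rutman uniqueness. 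Normalizing by $c = \la\phi_\beta\circ\sigma, \phi_\beta\ra$, which is indeed the integral of the unnormalized density since $\phi_\beta\circ\sigma$ is the left Perron eigenvector of $K_\beta$, completes (b).

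For (c), integrating out the intermediate blocks in the $(n+1)$-block marginal from (b) gives the two-block marginal of $(x_0, x_n)$ as $c^{-1}\phi_\beta(\sigma x_0) \Lambda_0^{-n}K_\beta^n(x_0, x_n) \phi_\beta(x_n)$, whence $\mu_\beta(f_0 g_n) = c^{-1}\la f(\phi_\beta\circ\sigma), \Lambda_0^{-n}K_\beta^n(g\phi_\beta)\ra$. Factorizing the one-block marginals and recognizing $\Pi_\beta = c^{-1}|\phi_\beta\ra\la\phi_\beta\circ\sigma|$ yields $\mu_\beta(f_0)\mu_\beta(g_n) = c^{-1}\la f(\phi_\beta\circ\sigma), \Pi_\beta(g\phi_\beta)\ra$. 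Subtracting and applying Cauchy–Schwarz bounds the covariance by $c^{-1}\|\phi_\beta\|_{L^2}^2\|f\|_\infty \|g\|_\infty \|\Lambda_0^{-n}K_\beta^n - \Pi_\beta\|$, and the operator-norm factor is controlled via Gelfand's spectral-radius formula using \eqref{eq:knp2}, giving $C_\eps(\beta)(\Lambda_1/\Lambda_0)^{(1-\eps)n}$ for any $\eps>0$. When $m=2$ one has $\sigma = \mathrm{id}$, so $K_\beta$ is self-adjoint by Lemma~\ref{lem:kelementary}(b), $\|\phi_\beta\|^2 = c$, and the spectral theorem replaces Gelfand's bound with the exact equality $\|\Lambda_0^{-n}K_\beta^n - \Pi_\beta\| = (\Lambda_1/\Lambda_0)^n$, allowing $\eps = 0$ and $C_0 = 1$. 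The main obstacle throughout is the careful bookkeeping of the similarity transform relating $T_\beta$ and $K_\beta$ and the verification that the Perron eigenvector $\phi_0$ of $T_\beta$ is $\sigma$-invariant; once these are in place, the transfer-matrix manipulations are routine.
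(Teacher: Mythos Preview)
Your proof is correct and follows essentially the same approach as the paper: both exploit the standard transfer-matrix formalism, Krein--Rutman for the principal eigenvalue, and the identity $\Lambda_0^{-n}K_\beta^n - \Pi_\beta = (\Lambda_0^{-1}K_\beta - \Pi_\beta)^n$ together with Gelfand's formula for part (c). The only difference is cosmetic: you route the argument through the symmetric operator $T_\beta$ and then translate via the similarity of Lemma~\ref{lem:kelementary}(a), whereas the paper passes to $K_\beta$ at the outset and argues directly there; your additional remark that $\phi_0\circ\sigma = \phi_0$ (from $\sigma T_\beta\sigma = T_\beta$) is a clean way to obtain the boundary factor $\phi_\beta(\sigma x_1)$ in (b), a step the paper simply labels ``standard'' and omits.
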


\begin{proof}[Proof of Lemma~\ref{lem:standardtransfer}]
	For $N= nd +1$, the partition function $Q_N(\beta)$ is given by 
	\begin{align*}
		Q_{nd+1}(\beta)&= \la \e^{-\beta V/2}, T_\beta^{n-1} \e^{-\beta V/2}\ra = \e^{-(n-1)\beta d e_0}\la \e^{- \beta V/2 -\beta[u- u\circ \sigma]/2 },K_\beta^{n-1} \e^{- \beta V/2 + \beta[u- u\circ \sigma] /2}\ra.
	\end{align*}
	For the second identity we have used Lemma~\ref{lem:kelementary}(a). The function $u-u\circ \sigma$ is bounded by Lemma~\ref{lem:hhat}(b) and $\exp(- \beta V)$ is integrable because $V(z_1,\ldots,z_d)=\mathcal E_m(z_1,\ldots,z_d)$ grows linearly when $|z_j|\to \infty$. Therefore $F_\beta:= \exp(- \beta V/2-  \beta[u- u\circ \sigma] /2)$ and $F_\beta \circ \sigma$ are in $L^2(\R^d)$, and as $n \to \infty$,
	$$
		\la F_\beta , K_\beta^{n-1} F_\beta \circ \sigma\ra = \Lambda_0(\beta)^{n-1}\la F_\beta, \phi_\beta \ra^2 + O(\Lambda_1(\beta)^{n-1}). 
	$$
	It follows that 
	$$
		g(\beta) = - \lim_{n\to\infty}\frac{1}{\beta (nd+1)}\log Q_{nd+1}(\beta) = e_0- \frac{1}{\beta d}\log	 \Lambda_0(\beta),
	$$
	which proves part (a) of the lemma. The standard proof of part (b) is omitted (compare~\cite[Chapter 4]{helffer-book}). For (c), we use the formula for the $(n+1)d$- dimensional marginal provided by (b). Let us choose multiplicative constants in such a way that $c=\la \phi_\beta,\phi_\beta \circ \sigma\ra=1$. Then
	\begin{align*}
		\mu_\beta(f_0g_n) - \mu_\beta(f_0)\mu_\beta(g_n) &= \la  f(\phi_\beta \circ \sigma),\frac{1}{\Lambda_0(\beta)^n}K_\beta ^n (g \phi_\beta)\ra - \la f ( \phi_\beta\circ \sigma), \phi_\beta \ra\la \phi_\beta \circ \sigma, g \phi_\beta\ra \\
		&=\la f(\phi_\beta \circ \sigma),\Bigl( \frac{1}{\Lambda_0(\beta)^n}K_\beta ^n- \Pi_\beta\Bigr) (g \phi_\beta)\ra.
	\end{align*}
	Eq.~\eqref{eq:knp} yields 
	$$
		\bigl| \mu_\beta(f_0g_n) - \mu_\beta(f_0)\mu_\beta(g_n)\bigr|
			\leq || \Bigl( \frac{1}{\Lambda_0(\beta)} K_\beta - \Pi_\beta\Bigr)^n ||\, ||f(\phi_\beta \circ \sigma)||\, ||g \phi_\beta||
	$$
	where $||\cdot||$ refers to the $L^2$-norm for functions and the operator norm for the operator. We further bound $|| g\phi_\beta||\leq ||g||_\infty ||\phi_\beta||$ and $||f (\phi_\beta \circ \sigma)||\leq ||f||_\infty ||\phi_\beta||$ and conclude with~\eqref{eq:knp2}. If $m=2$, the operators are symmetric, hence the operator norm is the same as the spectral radius and the estimates simplify accordingly. 
\end{proof}

\begin{remark}[Associated Markov chain] 
Define the kernel 
\be \label{eq:pkernel}
	P_\beta(x,\dd y):= \frac{1}{\Lambda_0(\beta) \phi_\beta(x)} K_\beta (x,y) \phi_\beta (y) \dd y
\ee
on $(r_{\mathrm{ hc}},\infty)^d$. Then $P_\beta$ is a Markov kernel with invariant measure $\rho_\beta(x)\dd x$ where 
$$
	\rho_\beta (x) =\frac1c \phi_\beta(\sigma x) \phi_\beta(x).
$$
If in the bulk Gibbs measure $\mu_\beta$ we group spacing in blocks as $x_n = (z_{dn},\ldots,z_{dn+d-1})$, we obtain a probability measure on $(r_\mathrm{hc},\infty)^d$. This measure is exactly the distribution of 
the two-sided stationary Markov chain $(X_j)_{j\in \Z}$ with state space $\R^d$, transition kernel $P_\beta$, and initial law $\mathcal L(X_0) = \rho_\beta(x)\dd x$.
\end{remark}

\subsection{Gaussian transfer operator}
Here we introduce the Gaussian counterpart to the transfer operator $K_\beta$ and study its spectral properties. We start from the quadratic approximation to the bulk energy $\mathcal E_\mathrm{bulk}$. The differentiability of $\mathcal E_\mathrm{bulk}$ in a neighborhood of the constant sequence $z_j\equiv a$ is checked in Lemma~\ref{lem:ebulk-C2} below, for the definition of the Gaussian transfer operator we only need the infinite matrix of partial derivatives at $(\ldots, a,a,\ldots)$. 

In the following we block variables as $x_j = (z_{dj},\ldots, z_{dj + d-1})$ for $z = (z_j)_{j \in \Z}$ and $\xi_j = (\zeta_{dj},\ldots, \zeta_{dj + d-1})$ for $\zeta = (\zeta_j)_{j \in \Z}$. Remember the decomposition~\eqref{eq:bulk1}. Set $\vect a=(a,\ldots,a)\in \R^d$ and define the $d\times d$ matrices 
\be \label{eq:abdef}
	A := W_{yy}(\vect a,\vect a) + V_{xx}(\vect a) + W_{xx}(\vect a,\vect a),\quad B:= - W_{xy}(\vect a,\vect a).
\ee
	We note the following relations:
	\be \label{eq:sygauss}
		W_{yy}(\vect a) =\sigma W_{xx}(\vect a)\sigma,\quad B^T= \sigma B \sigma, \quad \sigma A \sigma = A.
	\ee
The Hessian $\mathrm D^2 \mathcal E_{\mathrm{bulk}}$ at $(\ldots,a,a,\ldots)$ is a doubly infinite, band-diagonal matrix with block form 
\be \label{eq:bulkbanddiag} 
	\begin{pmatrix} 
			\ddots &\ddots &\ddots &&&  \\
			 & - B^T& A   & - B& &\\
			    &      &  - B^T & A & - B & \\
			   &&& \ddots &\ddots & \ddots 
	\end{pmatrix}. 
\ee
Note that Lemma \ref{lem:hessian} implies that $\mathrm D^2 \mathcal E_{\mathrm{bulk}}(\ldots,a,a,\ldots)$ is positive definite. We look for a quadratic form $\mathcal Q(x,y)$ on $\R^{2d}$ that is positive-definite and satisfies 
\bes
	\mathcal E_\mathrm{bulk}\bigl( (z_j)_{j\in \Z}\bigr) = \tfrac12 \sum_{j\in \Z} \mathcal Q(x_j-\vect a, x_{j+1}-\vect a) +  o\Bigl( \sum_{j \in \Z} |x_j-\vect a|^2\Bigr). 
\ees
One candidate choice could be 
\bes 
	\mathcal Q(x,y) := \tfrac12 \la x, Ax\ra - 2 \la x, By'\ra + \tfrac12 \la y, A y\ra \quad (x',y'\in \R^d),
\ees
but it is not easily related to $\widehat H(x,y)$. We make a different choice which mimicks the definition of $\widehat H(x,y)$ and show later that this amounts to picking the Hessian of $\widehat H(x,y)$ (see Lemma~\ref{lem:Hmin} below). 

We introduce the quadratic counterparts to the functions $H(x,y)$, $w(x)$, and $\widehat H(x,y)$ from Section~\ref{sec:transferoperator-finite}.  Remember the bulk Hessian from~\eqref{eq:bulkbanddiag}.  Since it is positive-definite, there exist uniquely defined positive-definite matrices
	$M\in \R^{2d\times 2d}$ and $N\in \R^{d\times d}$ such that 
	\begin{align}
			\la \begin{pmatrix} x\\ y\end{pmatrix}, M \begin{pmatrix} x\\ y\end{pmatrix} \ra & = \inf\{ \la z, \mathrm D^2\mathcal E_\mathrm{bulk}(a,a,\ldots) z\ra \mid z\in \ell^2(\Z),\, (z_1,\ldots, z_{2d}) =( x,y) \}  \label{eq:Mdef} \\
			\la x, N x\ra  &= \inf\{ \la z, 		\mathrm D^2\mathcal E_\mathrm{bulk}(a,a,\ldots) z\ra \mid z\in \ell^2(\Z),\, (z_1,\ldots, z_d) = x\} \label{eq:Ndef} 
	\end{align} 
	for all $x,y\in \R^d$.  The quadratic forms associated with $M$ and $N$ are the Gaussian counterparts to the functions $H(x,y)$ and $w(x)$, respectively. Finally set 
	\be \label{eq:hatMdef}
		\widehat M:= M- \begin{pmatrix} 
								\frac12 N& 0 \\ 0 & \frac12 N
							\end{pmatrix}.  
	\ee
	and 
	\bes 
		\widehat{ \mathcal Q}(x,y):=\bigl \la \begin{pmatrix} x\\y \end{pmatrix}, \widehat M  \begin{pmatrix} x\\y \end{pmatrix}\bigr \ra.  
	\ees
	We will see in the proof of Lemma~\ref{lem:Hmin} that $M$, $N$ and $\widehat{M}$ are the Hessians of $H$ at $(\vect a, \vect a)$, $w$ at $\vect a$ and $\widehat{H}$ at $(\vect a, \vect a)$, respectively. The relation between $\mathcal Q$ and $\widehat {\mathcal Q}(x,y)$ is clarified in Lemma~\ref{lem:qq} below. 	
	We are going to work with the kernel
\bes
	G_\beta(x,y) := \exp\Bigl( - \tfrac12 \beta \widehat{\mathcal Q}(x- \vect a,y- \vect a)\Bigr)\qquad (x,y\in \R^d)
\ees
and the associated integral operator $(G_\beta f)(x)=\int_{\R^d} G_\beta(x,y) f(y)\dd y$. In Section~\ref{sec:perturbation} we show that $G_\beta$ is a good approximation for $K_\beta$, here we study the operator $G_\beta$ on its own. Clearly it is enough to understand the integral operator $G$ with kernel 
$$
	G(x,y):= \exp(- \tfrac12 \widehat {\mathcal Q}(x,y)),
$$
since $G$ and $G_\beta$ are related by the change of variables $x\mapsto \sqrt \beta (x- \vect a)$, see Eq.~\eqref{eq:gutrafo} below. 

\begin{lemma}\label{lem:Q} 
	Assume $2 \leq m< \infty$, $p\in [0, p^*)$. 
	Then the quadratic form $\widehat{\mathcal Q}$ is positive-definite: $\widehat{\mathcal Q}(x,y)\geq \eps(|x|^2+ |y|^2)$ for some $\eps>0$ and all $(x,y)\in \R^{2d}$.
\end{lemma}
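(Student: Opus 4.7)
My plan is to exploit the variational definitions of $M$ and $N$ together with positive definiteness of the bulk Hessian, mimicking the elementary identity $\widehat H = \tfrac12(H-w(x))+\tfrac12(H-w(y))\geq 0$ that appeared in~\eqref{eq:positivehat} at the nonlinear level.

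First, from Lemma~\ref{lem:hessian} together with the block structure~\eqref{eq:bulkbanddiag}, $\mathrm D^2\mathcal E_{\mathrm{bulk}}(\ldots,a,a,\ldots)$ defines a bounded self-adjoint operator on $\ell^2(\Z)$ satisfying $\langle \zeta, \mathrm D^2\mathcal E_{\mathrm{bulk}}\,\zeta\rangle\geq\eta\|\zeta\|_{\ell^2}^2$ uniformly. Applied to admissible $\zeta$ in the constrained infima \eqref{eq:Mdef}--\eqref{eq:Ndef}, and observing that $\zeta$ with the prescribed finite-dimensional trace and all other coordinates zero is admissible, this yields $M\geq \eta I$ and $N\geq \eta I$ on $\R^{2d}$ and $\R^{d}$ respectively. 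In particular both $M$ and $N$ are strictly positive definite.

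Second, because the bulk Hessian is shift invariant, the constraint fixing the first $d$ coordinates of $\zeta$ is equivalent (after shifting) to the one fixing the coordinates indexed by $d+1,\ldots,2d$. Dropping half of the constraints in the $M$-problem thus gives the two relations
\begin{equation*}
\langle x,Nx\rangle=\min_{y'\in\R^d}\bigl\langle(x,y'),M(x,y')\bigr\rangle,
\qquad
\langle y,Ny\rangle=\min_{x'\in\R^d}\bigl\langle(x',y),M(x',y)\bigr\rangle,
\end{equation*}
which in particular imply
\begin{equation*}
A_1(x,y):=\bigl\langle(x,y),M(x,y)\bigr\rangle-\langle x,Nx\rangle\geq 0,\qquad
A_2(x,y):=\bigl\langle(x,y),M(x,y)\bigr\rangle-\langle y,Ny\rangle\geq 0.
\end{equation*}
Averaging, $\widehat{\mathcal Q}(x,y)=\tfrac12 A_1(x,y)+\tfrac12 A_2(x,y)\geq 0$, so $\widehat M\geq 0$.

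Third, I identify when equality can hold. Writing $M$ in block form with blocks $M_{11},M_{12},M_{22}$, the quadratic $y'\mapsto \langle(x,y'),M(x,y')\rangle$ is strictly convex (since $M_{22}>0$) and is minimized uniquely at $y'=-M_{22}^{-1}M_{12}^{\!\top}x$; similarly for the other minimization. Hence $A_1(x,y)=A_2(x,y)=0$ forces $y=-M_{22}^{-1}M_{12}^{\!\top}x$ and $x=-M_{11}^{-1}M_{12}y$, and substituting one into the other yields $(M_{11}-M_{12}M_{22}^{-1}M_{12}^{\!\top})x=0$, i.e.\ $Nx=0$. Since $N>0$, this gives $x=0$ and then $y=0$. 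Thus $\widehat{\mathcal Q}(x,y)>0$ whenever $(x,y)\neq 0$, and by continuity of $\widehat{\mathcal Q}$ on the unit sphere of $\R^{2d}$ there exists $\eps>0$ with $\widehat{\mathcal Q}(x,y)\geq \eps(|x|^2+|y|^2)$.

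The only step requiring a little care is the infinite-dimensional convexity $\mathrm D^2\mathcal E_{\mathrm{bulk}}\geq\eta I$ on $\ell^2(\Z)$. The diagonal dominance and Gershgorin argument used in Lemma~\ref{lem:hessian} apply uniformly to finite $N$, and passage to the limit $N\to\infty$ (with the constraint $\zeta$ finitely supported, then density) transfers the bound to $\ell^2(\Z)$; alternatively, the banded, diagonally dominant structure~\eqref{eq:bulkbanddiag} can be used directly. Once that uniform ellipticity is available, the rest of the argument is linear algebra on $\R^{2d}$.
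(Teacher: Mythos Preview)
Your proof is correct and follows essentially the same route as the paper: the decomposition $\widehat{\mathcal Q}=\tfrac12 A_1+\tfrac12 A_2$ with $A_1,A_2\geq 0$ is exactly the paper's identity~\eqref{eq:mhatdeco}, and the identification of the zero set is equivalent. The only cosmetic difference is in the last step: the paper observes that $A_1(x_0,y_0)=A_2(x_0,y_0)=0$ forces $\nabla F(x_0,y_0)=0$, hence $(x_0,y_0)$ is the unique critical point of the strictly convex $F$ and so vanishes; you instead compute the minimizers explicitly and substitute to obtain $Nx=0$ via the Schur complement, which is the same conclusion reached by a slightly more hands-on path.
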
 

\begin{proof}
	 First we show that $\widehat M$ is positive semi-definite, by an argument similar to Lemma~\ref{lem:kelementary}(a). Define 
	$$
		F(x,y):= \la \begin{pmatrix} x\\ y\end{pmatrix},  M \begin{pmatrix} x\\ y\end{pmatrix}\ra.  
	$$
	Clearly 
	$$
		\la x, N x\ra = \inf_{y\in \R^d} F(x,y) \quad \la y, N y\ra = \inf_{x\in \R^d} F(x,y), 
	$$ 
	hence 
	\be \label{eq:mhatdeco}
		\la \begin{pmatrix} x\\ y\end{pmatrix}, \widehat M \begin{pmatrix} x\\ y\end{pmatrix}\ra 
			= \frac12 \Bigl( F(x,y)  - \la x, N x\ra \Bigr) 
		+ \frac12 \Bigl( 	F(x,y)  - \la y, N y\ra \Bigr) \geq 0
	\ee
	for all $(x,y)\in \R^d\times \R^d$ and $\widehat M$ is positive semi-definite. Next let $(x_0,y_0)\in \R^d\times \R^d$ be a zero of the quadratic form associated with $\widehat M$. Then by~\eqref{eq:mhatdeco}, the function  $y\mapsto F(x_0,y)$ must be minimal at $y=y_0$, hence $\nabla_y F(x_0,y) =0$. Similarly, the function $y\mapsto F(x,y_0)$ must be minimal at $x=x_0$, hence $\nabla_x F(x_0,y_0) =0$. Thus $(x_0,y_0)$ is a critical point of $F$. But $F$ is strictly convex because $M$ is positive-definite, therefore the critical point $(x_0,y_0)$ is a global minimizer of $F$ which yields $(x_0,y_0)=0$. It follows that $\widehat M$ is positive-definite.
\end{proof} 

It follows from Lemma~\ref{lem:Q} that $\int_{\R^{2d}} G(x,y)^2 \dd x\dd y <\infty$, hence $G$ is Hilbert-Schmidt with strictly positive integral kernel and Krein-Rutman theorem is applicable. So we may ask for its principal eigenvalue and eigenvector and its spectral gap. It is natural to look for a Gaussian eigenfunction.

\begin{lemma} \label{lem:gaussianansatz}
	Let $F$ be a positive-definite, symmetric $d\times d$ matrix. Then the following two statements are equivalent: 
	\begin{enumerate}
		\item[(i)] $\phi(x) :=\exp(- \tfrac12 \la x, 
F x\ra)$ is an eigenfunction of $G$. 
		\item [(ii)] The function $x\mapsto \la x, F x\ra$ satisfies the quadratic Bellman equation
\be \label{eq:bellmanhat}
	\la x,F x\ra = \inf_{y\in\R^d}\bigl( \widehat{\mathcal Q}(x,y) + \la y , Fy\ra\bigr).
\ee
	\end{enumerate}
\end{lemma}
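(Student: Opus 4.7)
\textbf{Plan for the proof of Lemma~\ref{lem:gaussianansatz}.} The proof is a direct Gaussian computation: given a test function $\phi(x) = \exp(-\tfrac12\langle x, Fx\rangle)$, the function $G\phi$ is obtained by a Gaussian integral in $y$, and completing the square identifies the exponent with the infimum appearing in~\eqref{eq:bellmanhat}. Write $\widehat M$ in block form as
\[
  \widehat M = \begin{pmatrix} \widehat M_{xx} & \widehat M_{xy} \\ \widehat M_{xy}^T & \widehat M_{yy} \end{pmatrix},
\]
so that $\widehat{\mathcal Q}(x,y) + \langle y, Fy\rangle = \langle x,\widehat M_{xx}x\rangle + 2\langle x, \widehat M_{xy} y\rangle + \langle y, (\widehat M_{yy}+F) y\rangle$. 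Since $\widehat M$ is positive-definite by Lemma~\ref{lem:Q} and $F$ is positive-definite by assumption, $\widehat M_{yy}+F$ is invertible and positive-definite, so completing the square in $y$ around $y^*(x) = -(\widehat M_{yy}+F)^{-1}\widehat M_{xy}^T x$ gives
\[
  \widehat{\mathcal Q}(x,y) + \langle y, Fy\rangle = q_F(x) + \langle y-y^*(x), (\widehat M_{yy}+F)(y-y^*(x))\rangle,
\]
where $q_F(x) := \langle x, \bigl(\widehat M_{xx} - \widehat M_{xy}(\widehat M_{yy}+F)^{-1}\widehat M_{xy}^T\bigr) x\rangle$.

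The first key observation is that $q_F(x)$ is precisely the infimum on the right-hand side of~\eqref{eq:bellmanhat}: indeed the quadratic form above is minimized in $y$ at $y=y^*(x)$ and the minimum value equals $q_F(x)$. The second key observation is the Gaussian integral identity
\[
  (G\phi)(x) = \int_{\R^d} \e^{-\frac12[\widehat{\mathcal Q}(x,y)+\langle y, Fy\rangle]}\,\dd y
  \;=\; \sqrt{\frac{(2\pi)^d}{\det(\widehat M_{yy}+F)}}\,\e^{-\frac12 q_F(x)}.
\]
Combining these two observations, $(G\phi)(x) = \lambda\,\phi(x)$ holds for all $x\in\R^d$ if and only if $q_F(x) = \langle x, Fx\rangle$ for all $x$, in which case the eigenvalue is necessarily $\lambda = \sqrt{(2\pi)^d/\det(\widehat M_{yy}+F)}$.

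Since both $q_F$ and $\langle \cdot, F\cdot\rangle$ are homogeneous quadratic forms with no constant or linear term, the pointwise identity $q_F(x) = \langle x, Fx\rangle$ is equivalent to the Bellman equation~\eqref{eq:bellmanhat}. This establishes the equivalence of (i) and (ii). No serious obstacle is anticipated; the only delicate point is justifying the use of the block inverse, which follows from the positive-definiteness of $\widehat M_{yy}+F$ that in turn rests on Lemma~\ref{lem:Q}.
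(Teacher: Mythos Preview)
Your proof is correct and essentially identical to the paper's own argument: both write $\widehat M$ in block form, complete the square in $y$ using the positive-definiteness of $\widehat M_{yy}+F$ (which follows from Lemma~\ref{lem:Q}), identify the resulting Schur-complement quadratic form with the infimum in~\eqref{eq:bellmanhat}, and evaluate the Gaussian integral to conclude that both (i) and (ii) are equivalent to the same matrix equation $F = \widehat M_{xx} - \widehat M_{xy}(\widehat M_{yy}+F)^{-1}\widehat M_{xy}^T$.
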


\begin{proof}
	The proof is by a straightforward completion of squares: write 
	$$
		\widehat M= \begin{pmatrix} \widehat M_1 & \widehat M_2\\ \widehat M_2^ T & \widehat M_3\end{pmatrix} 
	$$
	with $d\times d$ -matrices $\widehat M_j$. The diagonal blocks $\widehat M_1$ and $\widehat M_3$ are positive-definite because $\widehat M$ is positive-definite, therefore $\widehat M_3+ F$ is positive-definite as well. Then 
	\begin{align*} 
	\widehat{\mathcal Q}(x,y) + \la y, Fy\ra 
	& =\la x, \widehat M_1 x\ra + 2 \la x, \widehat M_2 y \ra +\la y,( \widehat M_3+F) y\ra\\
	&= \la x, \widehat M_1  x\ra + \la y + (\widehat M_3+F)^{-1} \widehat M_2^T x,( \widehat M_3+F) (y + (\widehat M_3+F)^{-1} \widehat M_2^T x)\ra \\
			&\qquad \qquad - \la x, \widehat M_2( \widehat M_3+F)^{-1} \widehat M_2^T x\ra.
	\end{align*} 
	It follows that 
	$$
		\inf_{y\in \R^d} \bigl( 	\widehat{\mathcal Q}(x,y) + \la y, Fy\ra \bigr) = \la x, (\widehat M_1 - \widehat M_2 ( \widehat M_3+ F)^{-1} \widehat M_2^T) x\ra
	$$
	and 
	\be \label{eq:ga-eval}
		(G\phi)(x) = \sqrt{\frac{(2\pi)^d}{\det (\widehat M_3+F)}}\, \exp\Bigl( - \frac12 \la x, (\widehat M_1 - \widehat M_2 (\widehat M_3+F)^{-1} \widehat M_2^T) x\ra\Bigr).
	\ee
	Therefore (i) and (ii) hold true if and only if $F$ solves
	$$
		F = \widehat M_1 - \widehat M_2 (\widehat M_3+F)^{-1} \widehat M_2^T.
	$$
	In particular, (i) and (ii) are equivalent.
\end{proof} 

In Lemma~\ref{lem:qq} below we check that $M$ is of the form 
\be \label{eq:Mblocks}
	M = \begin{pmatrix} \sigma C\sigma & - B \\ - B^T & C \end{pmatrix} 
\ee
for some positive-definite $d\times d$ matrix $C$.

\begin{lemma} \label{lem:gauprincipal}
	The principal eigenvalue of $G$ is $\sqrt{(2\pi)^d /\det C}$ and the principal eigenfunction is $\exp( - \tfrac12 \la x, \tfrac12 N x\ra )$ (up to scalar multiples). 
\end{lemma}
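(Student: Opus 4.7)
The plan is to apply Lemma~\ref{lem:gaussianansatz} with the specific choice $F = \tfrac12 N$ and to verify that this choice solves the quadratic Bellman equation~\eqref{eq:bellmanhat}. Once this is done, the eigenvalue formula drops out of the explicit Gaussian computation carried out inside the proof of Lemma~\ref{lem:gaussianansatz}, and Krein--Rutman identifies this eigenpair as the principal one.

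First, I would use the block form of $M$ from Lemma~\ref{lem:qq}, namely $M = \bigl(\begin{smallmatrix} \sigma C \sigma & -B \\ -B^T & C\end{smallmatrix}\bigr)$, which by definition~\eqref{eq:hatMdef} gives the block decomposition
\[
	\widehat M_1 = \sigma C \sigma - \tfrac12 N, \qquad \widehat M_2 = -B, \qquad \widehat M_3 = C - \tfrac12 N.
\]
From the very definitions~\eqref{eq:Mdef} and~\eqref{eq:Ndef} of $M$ and $N$ one obtains, by first fixing $x$ and minimizing the quadratic form $\langle (x,y), M(x,y)\rangle$ over $y\in\R^d$ (using that $C$ is positive-definite, so that the minimizer is $y_\star = C^{-1} B^T x$), the identity
\[
	\langle x, Nx\rangle = \inf_{y \in \R^d}\bigl( \langle x, \sigma C\sigma x\rangle - 2 \langle x, By\rangle + \langle y, C y\rangle\bigr) = \langle x, (\sigma C \sigma - B C^{-1} B^T) x\rangle,
\]
so that $N = \sigma C \sigma - B C^{-1} B^T$. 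This is exactly the algebraic relation one needs.

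With $F = \tfrac12 N$ one computes $\widehat M_3 + F = C$ and
\[
	\widehat M_1 - \widehat M_2 (\widehat M_3 + F)^{-1} \widehat M_2^T = \sigma C \sigma - \tfrac12 N - B C^{-1} B^T = \tfrac12 N = F,
\]
so the fixed-point identity derived inside the proof of Lemma~\ref{lem:gaussianansatz} holds. Consequently $\phi(x):=\exp(-\tfrac12\langle x, \tfrac12 N x\rangle)$ is an eigenfunction of $G$, and the explicit formula~\eqref{eq:ga-eval} evaluates the eigenvalue at
\[
	\sqrt{\frac{(2\pi)^d}{\det(\widehat M_3 + F)}} = \sqrt{\frac{(2\pi)^d}{\det C}}.
\]

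Finally I would argue that this eigenpair is the principal one. The kernel $G(x,y)=\exp(-\tfrac12\widehat{\mathcal Q}(x,y))$ is continuous, strictly positive, and square-integrable by Lemma~\ref{lem:Q}, so $G$ is a compact operator on $L^2(\R^d)$ with strictly positive kernel. By the Krein--Rutman theorem, its operator norm is a simple eigenvalue, and the associated eigenfunction is, up to a scalar, the unique strictly positive eigenfunction in $L^2(\R^d)$. Since $\phi(x)=\exp(-\tfrac12\langle x, \tfrac12 N x\rangle)$ is strictly positive and belongs to $L^2(\R^d)$ (as $N$ is positive-definite), it must coincide, up to scalar multiples, with the Perron--Frobenius eigenfunction, and the associated eigenvalue $\sqrt{(2\pi)^d/\det C}$ is the principal eigenvalue. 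The only nontrivial step is the algebraic identity $N = \sigma C\sigma - BC^{-1}B^T$ linking the definitions of $M$ and $N$; everything else is a direct consequence of Lemma~\ref{lem:gaussianansatz} and Krein--Rutman.
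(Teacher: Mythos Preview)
Your proof is correct and follows essentially the same strategy as the paper: verify that $F=\tfrac12 N$ solves the Bellman equation from Lemma~\ref{lem:gaussianansatz}, read off the eigenvalue from the identity $\widehat M_3+F=C$, and appeal to Krein--Rutman for positivity. The only difference is cosmetic: the paper checks the Bellman equation directly from the definitions of $\widehat M$ and $N$ via
\[
\inf_{y}\bigl(\widehat{\mathcal Q}(x,y)+\langle y,\tfrac12 Ny\rangle\bigr)
= -\langle x,\tfrac12 Nx\rangle + \inf_{y}\langle (x,y),M(x,y)\rangle
= \langle x,\tfrac12 Nx\rangle,
\]
whereas you first extract the identity $N=\sigma C\sigma - BC^{-1}B^T$ from the block form~\eqref{eq:Mblocks} and then verify the Schur-complement fixed-point equation by hand; both routes are equivalent and rely on~\eqref{eq:Mblocks} (proved in Lemma~\ref{lem:qq}) for the final eigenvalue computation.
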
 

\begin{proof} 
 A close look at our definitions shows that $F:= \frac12 N$ solves~\eqref{eq:bellmanhat} (it is positive-definite because $N$ is). Indeed, by  the definition of $\widehat {\mathcal Q}$, $\widehat M$, we have 
\begin{align*}
	\inf_{y\in \R^d}\bigl(\widehat {\mathcal Q}(x,y) +\la y, \tfrac12N y\ra\bigr)
		& = - \la x, \tfrac12 N x\ra +	\inf_{y\in \R^d} \la \begin{pmatrix} x\\y \end{pmatrix}, M \begin{pmatrix} x\\y \end{pmatrix}\ra = \la x, \tfrac12 N x\ra.
\end{align*} 
Therefore, by Lemma~\ref{lem:gaussianansatz}, the function $\phi(x) =\exp( - \frac14 \la x, N x\ra)$ is an eigenfunction of $G$. 
 The matrix $\widehat M_3 +F$ in~\eqref{eq:ga-eval} is  equal to $(C- \tfrac12 N) + F=C$, and we find that the principal eigenvalue of $G$ is $\sqrt{(2\pi)^d /\det C}$.
\end{proof} 

In order to  identify the block $C$ in~\eqref{eq:Mblocks}, we  introduce the quadratic analogue to the function $u(x)$. 
	Let $A$ and $B$ be the $d\times d$ matrices from~\eqref{eq:abdef}
	and  $A_1:= V_{xx}(\vect a) + W_{xx}(\vect a, \vect a)$. 	
	The infinite matrix $(\partial_i \partial_j \mathcal E_\mathrm{surf}(a,a,\ldots))_{i,j\in \N}$ is  band-diagonal with block structure 
	$$
		\mathrm D^2\mathcal E_\mathrm{surf}(a,a,\ldots) 
			= \begin{pmatrix} 
					A_1 & - B & 0 & \cdots  && \\
					- B^T & A & - B & 0  & \cdots&   \\
					0 & - B^T & A & - B & 0 & \\
					\vdots & \ddots& \ddots & \ddots &\ddots & \ddots 
			\end{pmatrix}. 
	$$
	The matrix differs from the bulk Hessian~\eqref{eq:bulkbanddiag} by the upper left corner $A_1$: we have 
	\begin{equation} \label{eq:A1A}
		A= A_1 + W_{yy}(\vect a, \vect a). 
	\end{equation}
	By a  reasoning similar to Lemma~\ref{lem:hessian},  the Hessian of $\mathcal E_\mathrm{surf}$ is  positive-definite. Therefore there is a uniquely defined positive-definite $d\times d$-matrix $D$ such that 
	$$
		\la x, D x\ra  = \inf\{ \la z, 		\mathrm D^2\mathcal E_\mathrm{surf}(a,a,\ldots) z\ra \mid z\in \ell^2(\N),\, (z_1,\ldots, z_d) = x\}
	$$
	for all $x\in \R^d$. (Analogous arguments as in the proof of Lemma~\ref{lem:Hmin} show that $D$ is the Hessian of $u$ at $\vect a$.) Set
	\be \label{eq:Cdef}
		C:=D+ W_{yy}(\vect a, \vect a)
	\ee
	and 
\bes 
		J:=  D+ W_{yy}(\vect a, \vect a) - \sigma D \sigma- W_{xx}(\vect a, \vect a) = C - \sigma C \sigma 
	\ees
	(remember the symmetries~\eqref{eq:sygauss}).

\begin{lemma}\label{lem:qq}
	The matrix $C$ solves 
	\bes 
	C = A - B C^{-1} B^T
	\ees
	 and Eq.~\eqref{eq:Mblocks} holds true. Moreover 
	\begin{equation*}
		\widehat{\mathcal Q}(x,y) = 	- \la x, J x\ra + {\mathcal Q}(x,y) + \la y, J y\ra. 
	\end{equation*} 
\end{lemma}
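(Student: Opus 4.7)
The plan is to exploit the variational definitions of $M$, $N$ and $D$ together with the band structure of $\mathrm D^2 \mathcal E_\mathrm{bulk}(\vect a,\ldots)$ and the symmetry relations \eqref{eq:sygauss}. First, because the bulk Hessian couples only blocks $\xi_i,\xi_j$ with $|i-j|\leq 1$, fixing $\xi_1=x$ and $\xi_2=y$ splits the minimisation over $z\in\ell^2(\Z)$ into independent left- and right-tail problems together with a middle piece,
\[
\la (x,y), M(x,y)\ra = \la x, Ax\ra + \la y, Ay\ra - 2\la x, By\ra + T_L(x) + T_R(y),
\]
where $T_L(x)$ is the left-tail infimum over $(\xi_j)_{j\leq 0}$ (coupled to $x$ only through $-2\la \xi_0, Bx\ra$) and $T_R(y)$ the right-tail infimum over $(\xi_j)_{j\geq 3}$ (coupled to $y$ only through $-2\la y, B\xi_3\ra$). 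The reflection $(\xi_j)\mapsto(\sigma\xi_{3-j})$ preserves the bulk quadratic form by \eqref{eq:sygauss} and interchanges the two tail problems, yielding $T_L(x)=T_R(\sigma x)$.

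To identify $T_R$, I would compare it with the surface problem defining $D$. The surface Hessian is band-diagonal with the same off-diagonal block $-B$ and bulk diagonal block $A$, except at the boundary where the diagonal block is $A_1=A-W_{yy}(\vect a,\vect a)$ by \eqref{eq:A1A}. Constraining $(z_1,\ldots,z_d)=y$ and peeling off the $\xi_1$-only contribution $\la y, A_1 y\ra$, what remains is exactly $T_R(y)$ after relabelling, so
\[
T_R(y) = \la y, (D-A_1)y\ra = \la y, (C-A)y\ra.
\]
Substituting into the decomposition from the first step, together with $\sigma A\sigma = A$, gives the claimed block form of $M$: diagonal blocks $\sigma C\sigma$ and $C$, off-diagonal blocks $-B$ and $-B^T$. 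The Riccati identity $C=A-BC^{-1}B^T$ then drops out of the self-similarity of the tail infimum: performing one step of the minimisation yields the Bellman relation
\[
T_R(y) = \inf_{y'\in\R^d}\bigl[\la y', Ay'\ra - 2\la y, By'\ra + T_R(y')\bigr],
\]
and inserting the quadratic form $T_R(y)=\la y,(C-A)y\ra$, optimising over $y'$ (which gives $y'=C^{-1}B^T y$) and matching coefficients produces the equation for $C$.

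The same decomposition with only $\xi_1=x$ fixed gives $\la x,Nx\ra = \la x,Ax\ra + T_L(x)+T_R(x) = \la x,(\sigma C\sigma + C - A)x\ra$. Substituting this together with the block form of $M$ into $\widehat M = M - \tfrac12\,\mathrm{diag}(N,N)$ and expressing the result through $J = C-\sigma C\sigma$ produces the stated identity for $\widehat{\mathcal Q}$. The main obstacle is purely organisational: one has to track indices carefully through the half-integer reflection symmetry (so that the cross term $-2\la \xi_0,Bx\ra$ becomes $-2\la \sigma x, B\xi_3\ra$, which uses $B^T\sigma=\sigma B$), and then rewrite the surface infimum via $A = A_1 + W_{yy}$ so that the bulk diagonal block appears. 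Beyond these Schur-complement style manipulations, no further idea is needed.
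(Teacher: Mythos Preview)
Your proposal is correct and follows essentially the same route as the paper: both exploit the band-diagonal structure of the bulk Hessian to decouple the minimisation into independent left and right tail problems, identify the tail infimum with the surface quantity $D$ (the paper derives the Bellman relation $D=A_1-B(W_{yy}+D)^{-1}B^T$ directly, you package the same step as $T_R(y)=\la y,(C-A)y\ra$), and then obtain $N$ and $\widehat M$ by Schur-complement algebra together with the reflection symmetry \eqref{eq:sygauss}. The only organisational difference is that you carry the auxiliary tail functional $T_R$ through the argument whereas the paper substitutes $D$ and $C$ immediately; substantively the proofs are the same.
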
	
	
\begin{proof}
 Clearly
	$$
		\la x, D x\ra = \inf_{y\in \R^d} \bigl( \la x, A_1 x\ra - \la x, B y\ra - \la B^T x, y\ra + \la y, (W_{yy}(\vect a, \vect a) + D) y\ra \bigr)
	$$
	hence 
	\begin{equation} \label{eq:bequ}
		D = A_1 - B (W_{yy}(\vect a, \vect a)+ D)^{-1} B^T.
	\end{equation} 
	by a completion of squares similar to the proof of Lemma~\ref{lem:gaussianansatz}. We add $W_{yy}(\vect a, \vect a)$ to both sides, remember~\eqref{eq:A1A}, and obtain the equation for $C$. 	
	It is easy to see that 
	$$
		M = \begin{pmatrix} 
				\sigma D\sigma + W_{xx}(\vect a, \vect a) & -  B \\
				- B^T& W_{yy}(\vect a, \vect a) + D
			\end{pmatrix} = \begin{pmatrix} \sigma C\sigma  & - B \\ - B^T & C \end{pmatrix}
	$$
			which proves~\eqref{eq:Mblocks}. 
	Furthermore, 
	$$
		\la x, N x\ra = \inf_{y\in \R^d} \la \begin{pmatrix} x\\ y\end{pmatrix}, M \begin{pmatrix} x\\ y\end{pmatrix}\ra, \quad \la y, N y\ra = \inf_{x\in \R^d} \la \begin{pmatrix} x\\ y\end{pmatrix}, M \begin{pmatrix} x\\ y\end{pmatrix}\ra,
	$$
	hence, 
	$$
		N =\sigma C\sigma  - B C ^{-1} B^T, \quad N = C - B^T (\sigma C \sigma) ^{-1} B.
	$$
	Let us check that the two expressions for $N$ are indeed identical, and that $\sigma N\sigma = N$. 
	Combining with~\eqref{eq:A1A} and~\eqref{eq:bequ}, the two expressions for $N$ become 
	$$
		N= \sigma D\sigma + W_{xx}(\vect a, \vect a) - \bigl(A - W_{yy}(\vect a, \vect a) - D\bigr) = D + \sigma D \sigma + W_{xx}(\vect a, \vect a) +  W_{yy}(\vect a, \vect a)   - A 
	$$
	and 
	$$
		N = D + W_{yy}(\vect a, \vect a) - \sigma \bigl( A - W_{yy}(\vect a, \vect a) - D\bigr) \sigma = D + \sigma D \sigma + W_{xx}(\vect a, \vect a) + W_{yy}(\vect a, \vect a)   - A.
	$$
	The two expressions are indeed equal, and from the end formula and~\eqref{eq:sygauss} we read off that $\sigma N \sigma = N$. Actually 
	$$
		N = D + \sigma D \sigma - V_{xx}(\vect a),
	$$
	which is the analogue of $w(x) = u(x) + u(\sigma x) - V(x)$. 
	
		Now we compute $\widehat M$. The off-diagonal blocks of $\widehat M$ are the same as those of $M$. The upper left diagonal block is 
	\begin{align*}
		M_1 - \tfrac12 N & = \sigma D \sigma + W_{xx}(\vect a, \vect a) - \tfrac12 \bigl( D + \sigma D \sigma + W_{xx}(\vect a, \vect a) + W_{yy}(\vect a, \vect a)   - A\bigr) \\
		& = \tfrac12 A + \tfrac12 \bigl(\sigma D \sigma + W_{xx}(\vect a, \vect a)\bigr) - \tfrac12 \bigl( D + W_{yy}(\vect a, \vect a)\bigr). 
	\end{align*}
	A similar computation yields the lower right block.
	 Altogether we find 
	\begin{equation*} 
		\widehat M =
			\begin{pmatrix}
				\frac12 (A -J)  & - B \\ - B^T & \frac12 (A+J)
			\end{pmatrix}
	\end{equation*} 
	and the lemma follows. 
\end{proof}

\noindent	Finally we come back to the $\beta$-dependent operator $G_\beta$. 

\begin{prop}\label{prop:gbetagauss}	Assume $2\leq m <\infty$ and $p\in [0,p^*)$. 
		The principal eigenvalue of $G_\beta$ is 
	$$
		\Lambda_0^\mathrm{Gauss}(\beta) = \sqrt{\frac{(2\pi)^d}{\beta^d\, \det C}} 
	$$
	and the normalized, positive principal eigenfunction is 
	$$
		\phi_\beta^\mathrm{Gauss}(x) = \Bigl(\frac {\beta^d \det(\frac12 N)}{\pi^d}\Bigr)^{1/4} \exp\Bigl( - \tfrac12 \beta\la x - \vect a, \tfrac12 N\, (x-\vect a)\ra \Bigr). 
	$$
\end{prop}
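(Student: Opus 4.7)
The plan is to reduce the proposition to Lemma~\ref{lem:gauprincipal} via a scaling argument. The key observation is that the kernels of $G_\beta$ and $G$ are related by a simple dilation centred at $\vect a$:
\be \label{eq:gutrafo}
  G_\beta\bigl(\vect a + \tfrac{x}{\sqrt{\beta}},\, \vect a + \tfrac{y}{\sqrt{\beta}}\bigr)
  = \exp\bigl(-\tfrac12 \widehat{\mathcal Q}(x,y)\bigr) = G(x,y),
\ee
since $\widehat{\mathcal Q}$ is a quadratic form. I would first use this to pull back eigenfunctions of $G$ to eigenfunctions of $G_\beta$: if $\phi\in L^2(\R^d)$ satisfies $G\phi = \lambda \phi$ and one sets $\psi(x) := \phi\bigl(\sqrt{\beta}(x-\vect a)\bigr)$, then the change of variables $u=\sqrt{\beta}(y-\vect a)$ gives
\bes
  (G_\beta \psi)(x)
  = \int_{\R^d} G\bigl(\sqrt{\beta}(x-\vect a),\sqrt{\beta}(y-\vect a)\bigr)\, \phi\bigl(\sqrt{\beta}(y-\vect a)\bigr) \dd y
  = \beta^{-d/2} \lambda\, \psi(x).
\ees
Hence the spectra satisfy $\mathrm{spec}(G_\beta)\setminus\{0\} = \beta^{-d/2}\,\mathrm{spec}(G)\setminus\{0\}$, with eigenfunctions related by the above pull-back.

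By Lemma~\ref{lem:Q} the form $\widehat{\mathcal Q}$ is positive-definite, so $G$ is a positive integral operator with a strictly positive Hilbert--Schmidt kernel, and the Krein--Rutman theorem (as already invoked in Section~\ref{sec:transferoperator-finite}) applies to both $G$ and $G_\beta$: the principal eigenvalue is simple and the principal eigenfunction can be chosen strictly positive. The pull-back preserves strict positivity, so the pull-back of the principal eigenfunction of $G$ is the principal eigenfunction of $G_\beta$, and the corresponding eigenvalue $\beta^{-d/2}\lambda_0(G)$ is the principal eigenvalue of $G_\beta$.

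Combining with Lemma~\ref{lem:gauprincipal}, which identifies $\lambda_0(G)=\sqrt{(2\pi)^d/\det C}$ and $\phi(x)=\exp(-\tfrac12\la x,\tfrac12 N x\ra)$, gives
\bes
  \Lambda_0^{\mathrm{Gauss}}(\beta) = \beta^{-d/2}\sqrt{\tfrac{(2\pi)^d}{\det C}} = \sqrt{\tfrac{(2\pi)^d}{\beta^d\det C}},
\ees
and the (unnormalised) principal eigenfunction is $\exp\bigl(-\tfrac12\beta\la x-\vect a,\tfrac12 N(x-\vect a)\ra\bigr)$. The final step is to fix the normalisation in $L^2(\R^d)$: since $\tfrac12 N$ is positive-definite (being the Hessian-type block appearing in Lemma~\ref{lem:qq}), the standard Gaussian integral
\bes
  \int_{\R^d} \exp\bigl(-\beta\la x-\vect a,\tfrac12 N(x-\vect a)\ra\bigr) \dd x
  = \sqrt{\tfrac{\pi^d}{\beta^d\det(\tfrac12 N)}}
\ees
forces the multiplicative constant $(\beta^d\det(\tfrac12 N)/\pi^d)^{1/4}$ displayed in the statement. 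There is no real obstacle; the only thing to be careful about is confirming that the pull-back preserves the $L^2$ (Hilbert--Schmidt) framework so that Krein--Rutman transfers cleanly, which is immediate from~\eqref{eq:gutrafo} and the Jacobian factor $\beta^{-d/2}$.
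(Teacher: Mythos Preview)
Your proof is correct and follows essentially the same approach as the paper. The paper phrases the scaling relation as a unitary conjugation $G_\beta = \beta^{-d/2} U_\beta^* G U_\beta$ with $(U_\beta f)(x') = \beta^{-d/4} f(\vect a + \beta^{-1/2} x')$, whereas you write out the change of variables directly; the content is identical, and you additionally spell out the normalisation computation that the paper leaves implicit.
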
 

\begin{proof} 
	Let $U_\beta: L^2(\R^d)\to L^2(\R^d)$ be the unitary operator given by 
	\be \label{eq:ubeta}
		(U_\beta f)(x') = \beta^{-d/4} f(\vect a + \beta^{-1/2} x'). 
	\ee
	We have 
	\begin{align*}
	\bigl( U_\beta G_\beta f\bigr) (x') & = \beta^{-d/4} (G_\beta f)(\vect a+ \beta^{-1/2} \vect x')\\
		& = \beta^{-d/4} \int_{\R^d} G_\beta(\vect a+ \beta^{-1/2} x', \vect a+ \beta^{-1/2} y') f(\vect a+ \beta^{-1/2} y') \beta^{-d/2} \dd y' \\
	& = \beta^{- d/2} \int_{\R^d} G(x',y') (U_\beta f)(y') \dd y'
	\end{align*} 
	hence 
	\be \label{eq:gutrafo}
		G_\beta = \beta^{-d/2} U_\beta^*  G U_\beta
	\ee
	and the principal eigenvalue and eigenfunction of $G_\beta$ are obtained from those of $G$ in Lemma~\ref{lem:gauprincipal} by straightforward transformations. 
\end{proof} 

\begin{remark}
	When $m=2$, all eigenvalues and eigenfunctions of $G$ (hence $G_\beta$) can be computed explicitly, and the eigenfunctions are expressed with Hermite polynomials. See~\cite[Section 5.2]{helffer-book} on the harmonic Kac operator. 
\end{remark}

\subsection{Perturbation theory} \label{sec:perturbation}

 Remember the unitary operator $U_\beta$ from~\eqref{eq:ubeta} and the relation $G_\beta = \beta^{-d/2} U_\beta^* G U_\beta$. The main technical result of this section is the following.

\begin{prop} \label{prop:perturbation-operatornorms}
Assume $2\leq m <\infty$, $p\in (0,p^*)$, and $r_\mathrm{hc}>0$. 
We have 
	$||\beta^{d/2} (K_\beta - G_\beta)|| = ||G - \beta^{d/2} U_\beta K_\beta U_\beta ^* || \to 0$ as $\beta \to \infty$. 
\end{prop}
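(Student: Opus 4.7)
The plan is to exploit the unitary equivalence \eqref{eq:gutrafo}, which gives $\beta^{d/2} U_\beta G_\beta U_\beta^* = G$, and to reduce the claim to Hilbert--Schmidt convergence of integral kernels. Since $\|\cdot\|_{\mathrm{op}} \leq \|\cdot\|_{\mathrm{HS}}$, it is enough to show that
$$
k_\beta(x',y') := \mathbf{1}_{\{\vect a + \beta^{-1/2} x' > r_\mathrm{hc}\}}\, \mathbf{1}_{\{\vect a + \beta^{-1/2} y' > r_\mathrm{hc}\}}\, \exp\!\Bigl(-\beta\, \widehat H\bigl(\vect a + \beta^{-1/2}x',\, \vect a + \beta^{-1/2}y'\bigr)\Bigr),
$$
the integral kernel of $\beta^{d/2} U_\beta K_\beta U_\beta^*$, converges in $L^2(\R^{2d})$ to $G(x',y') = \exp\bigl(-\tfrac12 \widehat{\mathcal Q}(x',y')\bigr)$. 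The strategy is the classical Laplace-type one: pointwise convergence on compacts via Taylor expansion, combined with a $\beta$-independent $L^2$-majorant so that dominated convergence applies.

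For pointwise convergence I first establish that $\widehat H$ is $C^2$ in a neighborhood of $(\vect a, \vect a)$, with Hessian equal to $\widehat M$, so that
$$
\widehat H(\vect a + h, \vect a + k) = \tfrac12 \widehat{\mathcal Q}(h,k) + o(|h|^2 + |k|^2).
$$
Regularity comes from the implicit function theorem applied to the strictly convex $C^2$ functional $\mathcal E_\mathrm{bulk}$ (Proposition~\ref{prop:lim-bulk} together with the Lemma~\ref{lem:hessian} Hessian bound): near the minimizer $(\ldots, a, a, \ldots)$ the infima defining $H(x,y)$ and $w(x)$ are realized at smoothly varying interior minimizers, and their quadratic parts at $(\vect a, \vect a)$ and $\vect a$ equal $\tfrac12 \la \cdot, M \cdot \ra$ and $\tfrac12 \la \cdot, N \cdot \ra$ respectively, directly from the variational definitions \eqref{eq:Mdef}--\eqref{eq:Ndef}. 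The identity $\widehat M = M - \mathrm{diag}(N/2, N/2)$ then gives the Hessian of $\widehat H$. Combined with Lemma~\ref{lem:hhat}(a) (so that $\widehat H$ and its gradient vanish at $(\vect a, \vect a)$), the scaling $h = \beta^{-1/2}x'$, $k = \beta^{-1/2}y'$ yields
$$
\beta\, \widehat H\bigl(\vect a + \beta^{-1/2}x',\, \vect a + \beta^{-1/2}y'\bigr) \;\longrightarrow\; \tfrac12 \widehat{\mathcal Q}(x', y') \qquad (\beta \to \infty)
$$
for each fixed $(x',y') \in \R^{2d}$. Since $a > r_\mathrm{hc}$, the indicator equals $1$ eventually, giving pointwise convergence $k_\beta \to G$.

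The main obstacle is producing a $\beta$-independent $L^2$-majorant of $k_\beta$. I will establish a global lower bound
$$
\widehat H(x,y) \;\geq\; c_0 \min\!\bigl(|x - \vect a|^2 + |y - \vect a|^2,\ \delta_0\bigr) + c_1\, \bigl(|x|_1 + |y|_1 - M_0\bigr)_+
$$
on $(r_\mathrm{hc}, \infty)^{2d}$ for suitable $c_0, c_1, \delta_0, M_0 > 0$. The local quadratic piece follows from the positive-definiteness of $\widehat M$ (Lemma~\ref{lem:Q}) and from the continuity and strict positivity of $\widehat H$ away from $(\vect a, \vect a)$. The linear coercivity at infinity follows from the decomposition
$$
\widehat H(x,y) = -g(x) + \tfrac12 V(x) + W(x,y) + \tfrac12 V(y) - d e_0 + g(y)
$$
of Lemma~\ref{lem:hhat}(b): the pressure term $p\sum_j z_j$ in $V = \mathcal E_m$ grows linearly, $v(r) \geq -\alpha_1 r^{-s}$ makes $W$ and the interaction part of $V$ bounded below, and $g$ is bounded. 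Under the scaling, the quadratic contribution becomes $c_0 \min(|x'|^2 + |y'|^2,\ \delta_0 \beta)$ and the linear contribution is nontrivial only on the far regime $|x'| + |y'| \gtrsim \sqrt\beta$, where it grows at rate $\sqrt\beta\,(|x'|_1 + |y'|_1)$. Together they dominate $k_\beta$, uniformly in $\beta \geq 1$, by a fixed $L^2(\R^{2d})$ function $\Phi$ with Gaussian decay on bounded sets and exponential tails.

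Pointwise convergence on compacts together with the uniform $L^2$-majorant then give $\|k_\beta - G\|_{L^2(\R^{2d})} \to 0$ by dominated convergence, completing the proof. The hardest step is the global lower bound on $\widehat H$: the $C^2$-Taylor expansion controls only a fixed neighborhood of $(\vect a, \vect a)$, and one must glue this to the pressure-induced linear coercivity at infinity, ensuring that the boundary correction $g$ (which is only known to be bounded) does not spoil the comparison.
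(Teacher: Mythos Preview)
Your reduction to Hilbert--Schmidt convergence of the rescaled kernels is sound, and the pointwise convergence via the Taylor expansion of $\widehat H$ at $(\vect a,\vect a)$ is exactly Lemma~\ref{lem:Hmin}. The gap is precisely where you locate it: the global lower bound
\[
\widehat H(x,y) \;\geq\; c_0 \min\bigl(|x-\vect a|^2+|y-\vect a|^2,\ \delta_0\bigr) + c_1\bigl(|x|_1+|y|_1-M_0\bigr)_+.
\]
The quadratic piece near $(\vect a,\vect a)$ (Lemma~\ref{lem:Q}) and the linear coercivity at infinity (Lemma~\ref{lem:hhatgrowth}) are fine. What you have not established is the uniform strict positivity of $\widehat H$ on the intermediate bounded annulus, away from $(\vect a,\vect a)$ but with $|x|_1+|y|_1\leq M_0$. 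You invoke ``strict positivity of $\widehat H$ away from $(\vect a,\vect a)$'', but the paper explicitly leaves open, right after Lemma~\ref{lem:Hmin}, whether $(\vect a,\vect a)$ is the unique global zero of $\widehat H$. On $[z_{\min},z_{\max}+\eps]^{2d}$ one can argue via strict convexity of $\mathcal E_\mathrm{bulk}$ that $\widehat H(x,y)=0$ forces $(x,y)$ to be an unconstrained critical point, hence $(\vect a,\vect a)$; outside this box the convexity is lost and the argument breaks down. Moreover the annulus is not compact (it meets the open boundary $x_j\searrow r_\mathrm{hc}$, where under the stated hypotheses $v$ need not blow up), so continuity alone would not yield a uniform bound even if pointwise positivity were known.

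The paper avoids this issue by \emph{not} bounding the Hilbert--Schmidt norm globally. It splits $\R^d=\mathcal A\cup\mathcal B$ with $\mathcal A=[z_{\min},z_{\max}+\eps]^d$ and treats the four blocks separately. On $\mathcal A\times\mathcal A$ the Hilbert--Schmidt argument is essentially yours (Lemma~\ref{lem:perturbation1}). On the mixed blocks $\mathcal A\times\mathcal B$ and $\mathcal B\times\mathcal A$, the quantitative Bellman estimate of Lemma~\ref{lem:unibell} gives $\widehat H\geq\delta/2$ pointwise (Lemma~\ref{lem:perturbation3}). The crucial departure is on $\mathcal B\times\mathcal B$: there the paper bounds the \emph{operator} norm $\|\mathbf 1_{\mathcal B}K_\beta\mathbf 1_{\mathcal B}\|$ directly, via Krein--Rutman, by relating $\langle f,(K_\beta^{\mathcal B})^n g\rangle$ to $\exp(-\beta\,\mathcal E_\mathrm{bulk})$ on configurations with $n$ consecutive blocks in $\mathcal B$, which have energy $\gtrsim n\delta$ by the truncated coercivity~\eqref{eq:truncoerc} (Lemma~\ref{lem:perturbation2}). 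This spectral route sidesteps the pointwise question about $\widehat H$ on $\mathcal B^2$ entirely, and is the missing ingredient in your sketch.
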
 

\noindent Before we come to the proof of the proposition, we state a corollary on the principal eigenvalue and eigenfunction. Remember the quantities  $\Lambda_0(\beta)$, $\Lambda_1(\beta)$, $\phi_\beta$ defined before Lemma~\ref{lem:standardtransfer}.  We choose multiplicative constants so that $||\phi_\beta|| =1$. Let $\lambda_j^\mathrm{Gauss}$, $j\in \N_0$, be an enumeration of the eigenvalues of $G$ with $\lambda_0^\mathrm{Gauss} = ||G||$ and 
$$
	\gamma^\mathrm{Gauss} = \max_{j\neq 0}\frac{|\lambda_j^\mathrm{Gauss}|}{\lambda_0^{\mathrm{Gauss} }}. 
$$
\begin{cor} \label{cor:perturbation-spectral} 
	Under the assumptions of Proposition~\ref{prop:perturbation-operatornorms}: 
	Let $\Lambda_0^\mathrm{Gauss}(\beta)$ and $\phi^\mathrm{Gauss}_\beta(x)$ be as in Proposition~\ref{prop:gbetagauss}. Then as  $\beta\to \infty$, 
	$$
		\Lambda_0(\beta)  =  \bigl(1+ o(1)\bigr)\Lambda_0^\mathrm{Gauss}(\beta),\qquad \int_{\R^d} |\phi_\beta(x) - \phi^\mathrm{Gauss}_\beta(x)|^2\dd x\to 0, 
	$$ 
 and 
	$$
		\lim_{\beta \to \infty} \frac{\Lambda_1(\beta)}{\Lambda_0(\beta)}  = \gamma^\mathrm{Gauss} < 1. 
	$$
\end{cor}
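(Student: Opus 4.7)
The strategy is to deduce the spectral statements from the operator-norm convergence in Proposition~\ref{prop:perturbation-operatornorms} via classical perturbation theory for isolated eigenvalues of compact operators. Set $T_\beta := \beta^{d/2} U_\beta K_\beta U_\beta^*$. Since $U_\beta$ is unitary, $T_\beta$ and $\beta^{d/2}K_\beta$ have the same spectrum, and $\phi \mapsto U_\beta \phi$ sends eigenfunctions of $K_\beta$ to those of $T_\beta$. By~\eqref{eq:gutrafo} and Proposition~\ref{prop:gbetagauss}, the same conjugation maps $\phi_\beta^{\mathrm{Gauss}}$ to a $\beta$-independent positive $L^2$-normalised eigenfunction $\phi_0^{\mathrm{Gauss}}$ of $G$ at the principal eigenvalue $\lambda_0^{\mathrm{Gauss}} = \beta^{d/2}\Lambda_0^{\mathrm{Gauss}}(\beta)$. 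By Lemma~\ref{lem:Q}, $G$ is Hilbert--Schmidt with strictly positive kernel, so Krein--Rutman on the $G$ side gives that $\lambda_0^{\mathrm{Gauss}}$ is simple, isolated and strictly dominant, confirming in particular that $\gamma^{\mathrm{Gauss}}<1$. By Proposition~\ref{prop:perturbation-operatornorms}, $T_\beta\to G$ in operator norm, and it remains to track how the spectrum of $T_\beta$ approaches that of $G$.

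For the principal eigenpair, fix a small circle $\Gamma$ in $\C$ enclosing only $\lambda_0^{\mathrm{Gauss}}$ in $\mathrm{spec}(G)$. Standard contour-integral perturbation theory provides, for $\beta$ large, a rank-one Riesz projection
\bes
  P_\beta = \frac{1}{2\pi i}\oint_\Gamma (z-T_\beta)^{-1}\,\dd z
\ees
that converges in operator norm to the Riesz projection $P_0$ of $G$ at $\lambda_0^{\mathrm{Gauss}}$. The unique eigenvalue of $T_\beta$ inside $\Gamma$ has to coincide with the Krein--Rutman principal eigenvalue $\beta^{d/2}\Lambda_0(\beta)$ and converges to $\lambda_0^{\mathrm{Gauss}}$; dividing by $\beta^{d/2}\Lambda_0^{\mathrm{Gauss}}(\beta)=\lambda_0^{\mathrm{Gauss}}$ yields the first claim. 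Applying $P_\beta\to P_0$ to a fixed $\psi\in L^2(\R^d)$ with $P_0\psi\neq 0$ and renormalising, $U_\beta\phi_\beta$ converges to $\phi_0^{\mathrm{Gauss}}$ in $L^2(\R^d)$; positivity and unit normalisation of both eigenfunctions fix the otherwise arbitrary scalar, and unitarity of $U_\beta$ translates this into $\|\phi_\beta-\phi_\beta^{\mathrm{Gauss}}\|_{L^2}\to 0$.

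For the spectral gap, upper semicontinuity of the spectrum under operator-norm perturbations shows that, for every $\eps>0$, every eigenvalue of $T_\beta$ outside an $\eps$-neighbourhood of $\mathrm{spec}(G)$ vanishes for $\beta$ large; since the principal eigenvalue of $T_\beta$ is already pinned near $\lambda_0^{\mathrm{Gauss}}$, this yields $\limsup\beta^{d/2}\Lambda_1(\beta)\leq \gamma^{\mathrm{Gauss}}\lambda_0^{\mathrm{Gauss}}$. Conversely, since $\gamma^{\mathrm{Gauss}}\lambda_0^{\mathrm{Gauss}}$ is the modulus of an eigenvalue of $G$, a small-contour Dunford projection around that eigenvalue produces, for large $\beta$, at least one eigenvalue $\mu_\beta$ of $T_\beta$ with $|\mu_\beta|\to \gamma^{\mathrm{Gauss}}\lambda_0^{\mathrm{Gauss}}$; since this is strictly less than $\lambda_0^{\mathrm{Gauss}}$ and $\beta^{d/2}\Lambda_0(\beta)\to \lambda_0^{\mathrm{Gauss}}$, $\mu_\beta$ is not the Krein--Rutman principal eigenvalue of $T_\beta$ and thus $|\mu_\beta|\leq \beta^{d/2}\Lambda_1(\beta)$, giving $\liminf\beta^{d/2}\Lambda_1(\beta)\geq \gamma^{\mathrm{Gauss}}\lambda_0^{\mathrm{Gauss}}$. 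Dividing by $\beta^{d/2}\Lambda_0(\beta)\to\lambda_0^{\mathrm{Gauss}}$ gives the ratio statement.

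The main technical obstacle is that $G$ is not self-adjoint in general (the block matrix $\widehat{M}$ is not invariant under the $x\leftrightarrow y$ swap), so variational min--max characterisations of eigenvalues are unavailable and one cannot simply equate operator norms with spectral radii for the perturbed operators; the entire argument therefore relies on Dunford contour-integral perturbation theory, which is robust enough to cover this non-normal compact setting, combined crucially with Krein--Rutman to ensure simplicity, isolation and strict dominance of the principal eigenvalues on both sides.
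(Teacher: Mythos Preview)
Your proof is correct and is precisely the standard perturbation-theoretic argument the paper invokes; the paper's own proof is a single sentence citing Proposition~\ref{prop:perturbation-operatornorms} and ``standard perturbation theory for compact operators'' in Reed--Simon. The Riesz-projection argument for the principal eigenpair and the two-sided bound on $\Lambda_1(\beta)/\Lambda_0(\beta)$ via upper semicontinuity of the spectrum plus a second contour around the subleading eigenvalue are exactly what that citation unpacks to.
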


\noindent The corollary follows from Proposition~\ref{prop:perturbation-operatornorms} and standard perturbation theory for compact operators~\cite{reed-simon-vol4}. 
The proof of Proposition~\ref{prop:perturbation-operatornorms} builds on several lemmas. First we show that $\mathcal E_\mathrm{bulk}$ is  $C^2$ in a neighborhood of its global minimizer. 

\begin{lemma} \label{lem:ebulk-C2}
	The mapping $\mathcal E_\mathrm{bulk}$ is $C^2$ in some open  neighborhood in $\mathcal D^+$ of the constant sequence $(\ldots,a,a,\ldots)$. 
\end{lemma}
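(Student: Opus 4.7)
The idea is to push the manipulations of Lemma~\ref{lem:extension} one order further and identify explicit candidates for the Fréchet derivatives. Using the equilibrium identity $p + \sum_{k=1}^m k v'(ka) = 0$ exactly as in Lemma~\ref{lem:extension}, for $(z_j)_{j\in\Z} \in \mathcal{D}^+_0$ one rewrites
\[
  \mathcal{E}_\mathrm{bulk}\bigl((z_j)_{j\in\Z}\bigr)
  = \sum_{j\in\Z}\sum_{k=1}^m \bigl[v(S_{j,k}) - v(ka) - v'(ka)(\gamma_j+\cdots+\gamma_{j+k-1})\bigr],
\]
with $S_{j,k} = z_j+\cdots+z_{j+k-1}$ and $\gamma_i = z_i - a$; Proposition~\ref{prop:lim-bulk}(a) extends the identity to all of $\mathcal{D}^+$. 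I then introduce the candidate gradient and Hessian
\[
  L_\gamma(h) := \sum_{j,k}\bigl(v'(S_{j,k})-v'(ka)\bigr)(h_j+\cdots+h_{j+k-1}),
  \quad
  Q_\gamma(h,h) := \sum_{j,k} v''(S_{j,k})(h_j+\cdots+h_{j+k-1})^2,
\]
for $h\in\ell^2(\Z)$, and check that on a sufficiently small $\ell^2$-neighborhood of $0$ both sums converge absolutely and define, respectively, a bounded linear functional and a bounded symmetric bilinear form.

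The quantitative estimates mirror those of Lemma~\ref{lem:extension} and Lemma~\ref{lem:hessian}. Using $(h_j+\cdots+h_{j+k-1})^2 \le k(h_j^2+\cdots+h_{j+k-1}^2)$ and the bound $|v''(r)| \le |v''(kz_\mathrm{min})|$ for $r \ge kz_\mathrm{min}$ from Assumption~\ref{assu:v}(iii), one obtains the uniform estimate
\[
  |Q_\gamma(h,h)|
   \le \Bigl(|v''(z_\mathrm{min})| + \sum_{k=2}^{\infty} k^2 |v''(kz_\mathrm{min})|\Bigr)\|h\|^2_{\ell^2(\Z)}
\]
on the neighborhood where $z_j \ge z_\mathrm{min}$ for every $j$, with finite majorant because the decay $|v''(r)| \le \alpha_2 r^{-s-2}$ and $s>2$ from Assumption~\ref{assu:v}(iv) make $\sum_{k=2}^\infty k^2 |v''(kz_\mathrm{min})|$ convergent. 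A parallel Cauchy--Schwarz argument, using $|v'(S_{j,k})-v'(ka)| \le \sup|v''|\cdot|\gamma_j+\cdots+\gamma_{j+k-1}|$ and the same tail bounds on $v''$, shows that $L_\gamma$ is a bounded linear form whose norm tends to $0$ as $\gamma\to 0$, and that $\gamma\mapsto L_\gamma$, $\gamma\mapsto Q_\gamma$ are continuous in operator norm on the neighborhood by dominated convergence with the above majorant.

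To conclude $C^2$-regularity I carry out first- and second-order Taylor expansions of $v$ inside each summand with Lagrange-type remainders, sum the resulting pointwise estimates, and dominate them uniformly with the same summable majorant to pass to the limit, identifying $L_\gamma$ with $D\mathcal{E}_\mathrm{bulk}(\gamma)$ and $Q_\gamma$ with $D^2\mathcal{E}_\mathrm{bulk}(\gamma)$. The main obstacle is the infinite-range case $m=\infty$, where every summand depends on infinitely many coordinates of $\gamma$ and the interchange of derivatives with the double sum over $(j,k)$ must be justified---this is precisely where the decay hypotheses on $v''$ are indispensable. For finite $m$, which is the only case used in Section~\ref{sec:gaussian}, the Hessian is a band matrix of bandwidth $2(m-1)+1$ whose entries are smooth functions of finitely many $\gamma_i$'s each, so the continuity of $\gamma\mapsto Q_\gamma$ is essentially immediate and the argument reduces to a straightforward Taylor estimate.
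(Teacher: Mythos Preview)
Your argument is correct and yields the result, but it proceeds differently from the paper. The paper exploits that Section~\ref{sec:gaussian} only treats $2\le m<\infty$: it groups spacings into $d$-blocks $x_j=(z_{dj},\ldots,z_{dj+d-1})$, writes $\mathcal E_\mathrm{bulk}=\sum_j(V(x_j)+W(x_j,x_{j+1})-de_0)$, and reads off the candidate derivatives as block-tridiagonal operators whose entries are built from $V_x,W_x,W_y$ and $V_{xx},W_{xx},W_{yy},W_{xy}$. Continuity in operator norm then comes for free from two facts: these finitely many block functions are uniformly continuous on a compact neighborhood of $(\vect a,\vect a,\vect a)$, and $\ell^2(\Z)\hookrightarrow\ell^\infty(\Z)$, so $\|z-z'\|_{\ell^2}\to 0$ forces $\sup_j|x_j-x'_j|\to 0$. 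Your route instead stays with scalar indices and the Taylor-remainder representation of Lemma~\ref{lem:extension}, controls everything through the summable majorant $\sum_k k^2|v''(kz_{\min})|$, and thereby covers $m=\infty$ as well. The trade-off is that the paper's block argument is shorter for the case actually needed, whereas yours is more general.

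One point to tighten: invoking ``dominated convergence with the above majorant'' gives, for each fixed $h$, the convergence $Q_{\gamma'}(h,h)\to Q_\gamma(h,h)$, but not operator-norm continuity. What you actually need (and have all the ingredients for) is the $\eps/3$ splitting: choose $K$ so that the tail $\sum_{k>K}k^2|v''(kz_{\min})|<\eps$ handles $k>K$ uniformly, and for $k\le K$ use that $|S_{j,k}-S'_{j,k}|\le\sqrt K\,\|\gamma-\gamma'\|_{\ell^2}$ together with the uniform continuity of $v''$ on $[z_{\min},\infty)$. Also, for $k=1$ the bound $|v''(r)|\le|v''(z_{\min})|$ is not guaranteed by Assumption~\ref{assu:v}(iii); replace it by the constant $c_1=\sup_{r\ge z_{\min}}|v''(r)|$, exactly as in the proof of Lemma~\ref{lem:extension}.
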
 

\begin{proof} 
Note that 
$$ V(z_1, \ldots, z_d) +  W(z_1, \ldots, z_d, z_{d+1}, \ldots, z_{2d})- d e_0 
	= \sum_{i=1}^{d} h(z_{i}, \ldots, z_{d+i}) 
$$
defines a $C^2$ function in a neighborhood of $(a, \ldots, a) \in \R^{d} \times \R^d$ which vanishes for $(z_{1}, \ldots, z_{2d}) = (a, \ldots, a)$. Moreover, using that $(\ldots, a, a, \ldots)$ minimizes $\mathcal{E}_\mathrm{bulk}$ on $\mathcal{D}_0^+$ and so $\partial_{x_j} \mathcal{E}_\mathrm{bulk}(\ldots, a, a, \ldots) = 0$, we see that also 
$$ V_x(a, \ldots, a) + W_x(a, \ldots, a) + W_y(a, \ldots, a) = 0. $$ 
For all $z \in \mathcal{D}_0^+$ the derivative of $\mathcal{E}_\mathrm{bulk}$ at $z$ is given by 
\begin{align*}
  \mathrm D \mathcal{E}_\mathrm{bulk}(z) \zeta 
	= \sum_{j \in\Z} \big( V_x(x_j) + W_x(x_j, x_{j+1}) + W_y(x_{j-1}, x_j) \big) \xi_j,
\end{align*}
for all $\zeta \in \ell^2(\Z)$ with $\zeta_j = 0$ for all but finitely many $j$. So 
\begin{align}\label{eq:D-Ebulk}
  \mathrm D \mathcal{E}_\mathrm{bulk}(z) 
	= \big( V_x(x_j) + W_x(x_j, x_{j+1}) + W_y(x_{j-1}, x_j) \big)_{j \in\Z}.  
\end{align}
Since  
\begin{align*}
  &\sum_{j \in \Z} | V_x(x_j) + W_x(x_j, x_{j+1}) + W_y(x_{j-1}, x_j) 
   - V_x(x'_j) - W_x(x'_j, x'_{j+1}) - W_y(x'_{j-1}, x'_j) |^2 \\  
  &~~\le C \sum_{j \in  \Z} | (x_{j-1}, x_j, x_{j+1}) - (x'_{j-1}, x'_j, x'_{j+1}) |^2 
     \le C || z - z' ||_{\ell^2} 
\end{align*}
for $z, z' \in \mathcal{D}^+$ in a neighborhood of $(\ldots, a, a, \ldots)$ with a uniform constant $C$, the right hand side of \eqref{eq:D-Ebulk} extends to a uniformly continuous function there. Writing 
$$ \mathcal{E}_\mathrm{bulk}(z + \zeta) 
   = \mathcal{E}_\mathrm{bulk}(z) 
     + \int_0^1 \mathrm D \mathcal{E}_\mathrm{bulk} (z + t \zeta) \zeta \, \dd t $$ 
for $z, z' \in \mathcal{D}_0^+$, a standard approximation argument shows that indeed $\mathcal{E}_\mathrm{bulk}$ is $C^1$ in a neighborhood of $(\ldots, a, a, \ldots)$ also in $\mathcal{D}^+$ with $\mathrm D \mathcal{E}_\mathrm{bulk}$ given by \eqref{eq:D-Ebulk}. In fact, $\mathcal{E}_\mathrm{bulk}$ is even $C^2$ on a neighborhood of $(\ldots, a, a, \ldots)$ in $\mathcal{D}^+$ and
\begin{align}\label{eq:DD-Ebulk}
\begin{split}
  \mathrm D^2 \mathcal{E}_\mathrm{bulk}(z) \zeta 
  &= \big( ( V_{xx}(x_j) + W_{xx}(x_j, x_{j+1}) + W_{yy}(x_{j-1}, x_j) ) \xi_j \\ 
  &\quad\quad\quad + W_{xy}(x_j, x_{j+1}) \xi_{j+1} + W_{xy}(x_{j-1}, x_j) \xi_{j-1} \big)_{j \in\Z}.  
\end{split}
\end{align}
This follows similarly as above by extending the derivative of $\mathrm D \mathcal{E}_\mathrm{bulk}$, where we now use that the mappings $\R^d \times \R^d \times \R^d \to \R$, $(x,x',x'') \mapsto V_{xx}(x') + W_{xx}(x', x'') + W_{yy}(x, x')$ and $\R^d \times \R^d \to \R$, $(x,x') \mapsto W_{xy}(x, x')$ are uniformly continuous in a neighborhood of $x = x' = x'' = (a, \ldots, a)$ and so $\mathrm D^2 \mathcal{E}_\mathrm{bulk}$ extends to a continuous mapping from a neighborhood of $(\ldots, a, a, \ldots)$ to $L(\ell^2(\Z))$ (the space of bounded linear operators on $\ell^2(\Z)$) given by \eqref{eq:DD-Ebulk}. 
\end{proof} 

\noindent Next we show that $\widehat M$ is in fact the Hessian of $\widehat H$. 

\begin{lemma} \label{lem:Hmin}
	Assume $2\leq m <\infty$, $p\in [0,p^*)$, and $r_\mathrm{hc}>0$. 
	We have $\widehat H(x,y) \geq \widehat H(\vect a,\vect a) =0$ for all $x,y \in \R_+^d$, moreover as $x,y\to \vect a$, 
	$$
		\widehat H(x,y) = \tfrac12 \widehat{\mathcal Q}(x-\vect a, y-\vect a) + o(|x-\vect a|^2 + |y- \vect a|^2).
	$$
\end{lemma}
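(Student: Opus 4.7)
The positivity $\widehat H(x,y) \ge 0 = \widehat H(\vect a, \vect a)$ is already contained in Lemma~\ref{lem:hhat}(a) together with the decomposition~\eqref{eq:positivehat} (with the convention $\widehat H = +\infty$ whenever any component of $x$ or $y$ lies in $(0, r_\mathrm{hc}]$), so only the quadratic expansion remains. Writing $\xi := x - \vect a$, $\eta := y - \vect a$ and $\vect a^\infty := (\ldots, a, a, \ldots)$, the definitions of $\widehat H$ and $\widehat M$ reduce the claim to showing
\begin{equation*}
H(x,y) = \tfrac12 \bigl\langle (\xi, \eta), M(\xi, \eta) \bigr\rangle + o(|\xi|^2 + |\eta|^2), \qquad w(x) = \tfrac12 \langle \xi, N \xi \rangle + o(|\xi|^2),
\end{equation*}
since the result then follows by the subtraction $\widehat H = H - \tfrac12 w(\cdot) - \tfrac12 w(\cdot)$ and the identity $\widehat{\mathcal Q}(\xi,\eta) = \langle (\xi,\eta), \widehat M (\xi,\eta)\rangle$ from~\eqref{eq:hatMdef}. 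I will treat $H$ in detail; the argument for $w$ is identical with a single block constraint in place of two.

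The key input is Lemma~\ref{lem:ebulk-C2}, which makes $\mathcal E_\mathrm{bulk}$ a $C^2$ map on some $\ell^2(\Z)$-neighborhood of $\vect a^\infty$ in $\mathcal D^+$. Combined with $\mathcal E_\mathrm{bulk}(\vect a^\infty) = 0$ and $\mathrm D \mathcal E_\mathrm{bulk}(\vect a^\infty) = 0$ (Proposition~\ref{prop:lim-bulk}(c)), Taylor's theorem yields
\begin{equation*}
\mathcal E_\mathrm{bulk}(\vect a^\infty + \Delta) = \tfrac12 \bigl\langle \Delta, \mathrm D^2 \mathcal E_\mathrm{bulk}(\vect a^\infty) \Delta \bigr\rangle + R(\Delta), \qquad R(\Delta) = o\bigl(\|\Delta\|_{\ell^2}^2\bigr).
\end{equation*}
For the upper bound, let $\Delta^\star \in \ell^2(\Z)$ be the unique minimizer of the quadratic form subject to $(\Delta_1, \ldots, \Delta_{2d}) = (\xi, \eta)$; existence, uniqueness and the estimate $\|\Delta^\star\|_{\ell^2}^2 \le C(|\xi|^2 + |\eta|^2)$ come from uniform positive-definiteness of $\mathrm D^2 \mathcal E_\mathrm{bulk}(\vect a^\infty)$ on $\ell^2(\Z)$, which is the bulk analogue of the Gershgorin estimate in Lemma~\ref{lem:hessian}. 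The defining relation~\eqref{eq:Mdef} identifies the value of the minimized quadratic form with $\tfrac12 \langle (\xi,\eta), M(\xi,\eta)\rangle$, so using $\vect a^\infty + \Delta^\star$ as a test configuration in the infimum for $H$ gives
$H(x,y) \le \tfrac12 \langle (\xi,\eta), M(\xi,\eta)\rangle + o(|\xi|^2 + |\eta|^2)$.

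The main technical hurdle is the matching lower bound: the infimum over \emph{all} admissible $\Delta$ must be restricted to $\Delta$'s with $\|\Delta\|_{\ell^2}^2 \le C'(|\xi|^2+|\eta|^2)$, because only then does the pointwise remainder $R(\Delta) = o(\|\Delta\|^2)$ translate into a uniform $o(|\xi|^2+|\eta|^2)$. To achieve this confinement, I combine three earlier results: the upper bound just proved forces $H(x,y) = O(|\xi|^2+|\eta|^2) \to 0$, so for sufficiently small $(\xi,\eta)$ every $z = \vect a^\infty + \Delta$ satisfying the boundary constraint with $\mathcal E_\mathrm{bulk}(z) \le 2 H(x,y)$ lies below the threshold $\delta$ of Lemma~\ref{lem:minEbulk-quantest} and therefore belongs to $[z_\mathrm{min}, z_\mathrm{max}+\eps]^{\Z}$. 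On this convex set the bulk Hessian is uniformly positive-definite (Proposition~\ref{prop:lim-bulk}(b) and Lemma~\ref{lem:hessian}); integrating twice along the segment from $\vect a^\infty$ to $z$, which remains in the set, yields the coercivity estimate $\mathcal E_\mathrm{bulk}(z) \ge c\|\Delta\|_{\ell^2}^2$, and hence $\|\Delta\|_{\ell^2}^2 \le C'(|\xi|^2+|\eta|^2)$. On this $\ell^2$-ball the Taylor expansion with uniform remainder together with~\eqref{eq:Mdef} gives
\begin{equation*}
H(x,y) \ge \tfrac12 \bigl\langle (\xi,\eta), M(\xi,\eta)\bigr\rangle + o(|\xi|^2 + |\eta|^2).
\end{equation*}
Repeating the whole argument with only the constraint $(\Delta_1,\ldots,\Delta_d) = \xi$ and $N$ from~\eqref{eq:Ndef} in place of $M$ gives the corresponding expansion for $w$, completing the proof.
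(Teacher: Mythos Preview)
Your proof is correct and takes a genuinely different route from the paper's. The paper argues \emph{differentially}: it applies the implicit function theorem in $\ell^2(\Z)$ to show that the constrained minimizer $(z_-,z_+)$ of $\mathcal E_\mathrm{bulk}$ depends $C^1$ on $(x,y)$, then computes $\mathrm D^2 H(\vect a,\vect a)$ explicitly as a Schur complement of $\mathrm D^2\mathcal E_\mathrm{bulk}(\ldots,a,a,\ldots)$ and observes that the same Schur complement computation applied to the quadratic form gives $M$; likewise $\mathrm D^2 w(\vect a)=N$.

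You instead argue \emph{variationally}: upper bound by plugging in the quadratic minimizer $\Delta^\star$, lower bound by first confining near-minimizers to an $\ell^2$-ball of radius $O(|\xi|+|\eta|)$ via Lemma~\ref{lem:minEbulk-quantest} and coercivity, then invoking the uniform Taylor remainder. This is more elementary (no infinite-dimensional implicit function theorem) and delivers exactly what the lemma asks for, namely the second-order expansion. The paper's route additionally yields that $H$ is genuinely $C^2$ near $(\vect a,\vect a)$, which is not needed here but is conceptually cleaner.

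One small point to tighten: your phrase ``integrating twice along the segment'' presupposes that $\mathcal E_\mathrm{bulk}$ is $C^2$ on all of $[z_{\min},z_{\max}+\eps]^\Z\cap\mathcal D^+$, whereas Lemma~\ref{lem:ebulk-C2} is stated only on an $\ell^2$-neighborhood of $\vect a^\infty$. For finite $m$ the same proof goes through on the larger box (since $V,W$ have uniformly continuous second derivatives on $[z_{\min},z_{\max}+\eps]^d$), or alternatively you can get the coercivity $\mathcal E_\mathrm{bulk}(z)\ge c\|\Delta\|_{\ell^2}^2$ on that box by the finite-dimensional argument of Lemma~\ref{lem:coercive}: establish it first on $\mathcal D_0^+$ via Lemma~\ref{lem:hessian} applied to a long enough chain, then pass to $\mathcal D^+$ by continuity. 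Either way the gap is routine to fill.
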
 

\noindent The lemma leaves open whether $(\vect a,\vect a)$ is the unique global minimizer of $\widehat H$.

\begin{proof} 
The first part of the lemma has already been proven in Lemma~\ref{lem:kelementary}(a). With $M \in \R^{2d \times 2d}$, $N \in \R^{d \times d}$ as in \eqref{eq:Mdef} and \eqref{eq:Ndef} we let $\widehat{M}$ as in \eqref{eq:hatMdef}. It remains to show that $\mathrm D^2 \widehat H(\vect a, \vect a) = \widehat{M}$. 
Since, for a suitable $\eps > 0$, $\mathcal E_{\mathrm{bulk}}$ is convex on $\mathcal{D}^+ \cap [z_{\min}, z_{\max} + \eps]^{\Z}$, see (the proof of) Proposition~\ref{prop:lim-bulk}, Lemma~\ref{lem:minEbulk-quantest} shows that there is a unique function on a neighborhood of $(\vect a, \vect a)$ in $\R^d \times \R^d$ with values in $\R^{-\N} \times \R^{\N}$, $(x,y) \mapsto \tilde{z} = (z_-, z_+) = (z_-(x,y), z_+(x,y))$ such that 
$$ H(x,y) 
   = \mathcal E_{\mathrm{bulk}} (z_-(x,y), x, y, z_+(x,y)). $$ 
As $\mathrm D^2 \mathcal E_{\mathrm{bulk}} (\ldots,a,a,\ldots)$ is positive definite, the implicit function theorem shows that this mapping is $C^1$ and satisfies 
\begin{align*}
  \mathrm D_{\tilde{z}}\mathcal E_{\mathrm{bulk}} (z_-, \cdot, \cdot, z_+) = 0 
\end{align*}
as well as 
\begin{align*}
  \mathrm D_{(x,y)} \tilde{z}
	 = \big( \mathrm D_{\tilde{z}}^2 \mathcal E_{\mathrm{bulk}} (z_-, \cdot, \cdot, z_+) \big)^{-1} \mathrm D_{(x,y)} \mathrm D_{\tilde{z}}\mathcal E_{\mathrm{bulk}} (z_-, \cdot, \cdot, z_+). 
\end{align*}
The latter identity implies 
$$ \mathrm D_{(x,y)} H 
   = \mathrm D_{(x,y)} \mathcal E_{\mathrm{bulk}} (z_-, \cdot, \cdot, z_+), $$ 
so that $H$ is indeed $C^2$ near $(\ldots, a, a, \ldots)$ and 
$$ \mathrm D_{(x,y)}^2 H 
   = \big[ \mathrm D_{(x,y)}^2 \mathcal E_{\mathrm{bulk}} 
	   - \mathrm D_{(x,y)\tilde{z}} \mathcal E_{\mathrm{bulk}} \big( \mathrm D_{\tilde{z}}^2 \mathcal E_{\mathrm{bulk}} \big)^{-1} \mathrm D_{(x,y)\tilde{z}}\mathcal E_{\mathrm{bulk}} \big] (z_-, \cdot, \cdot, z_+). $$ 
In particular, since $\tilde{z}(\vect a, \vect a) = (\ldots, a, a, \ldots)$, 
$$ \mathrm D^2 H(\vect a, \vect a) 
   = \big[ \mathrm D_{(x,y)}^2 \mathcal E_{\mathrm{bulk}} 
	   - \mathrm D_{(x,y)\tilde{z}} \mathcal E_{\mathrm{bulk}} \big( \mathrm D_{\tilde{z}}^2 \mathcal E_{\mathrm{bulk}} \big)^{-1} \mathrm D_{(x,y)\tilde{z}}\mathcal E_{\mathrm{bulk}} \big] (\ldots, a, a, \ldots). $$ 
The same analysis applied to the quadratic approximation $\ell^2(\Z) \to \R$, $z \mapsto \frac{1}{2} \langle z, \mathrm D^2 \mathcal E_{\mathrm{bulk}}(\ldots,a,a,\ldots) z \rangle$ leads to 
$$ M
   = \big[ \mathrm D_{(x,y)}^2 \mathcal E_{\mathrm{bulk}} 
	   - \mathrm D_{(x,y)\tilde{z}} \mathcal E_{\mathrm{bulk}} \big( \mathrm D_{\tilde{z}}^2 \mathcal E_{\mathrm{bulk}} \big)^{-1} \mathrm D_{(x,y)\tilde{z}}\mathcal E_{\mathrm{bulk}} \big] (\ldots, a, a, \ldots), $$ 
too. So we have $\mathrm D^2 H(\vect a, \vect a) = M$. 
A completely analogous reasoning gives $\mathrm D^2 w(a, \ldots, a) = N$ and it follows that $\mathrm D^2 \widehat{H}(\vect a, \vect a) = \widehat{M}$. 
\end{proof} 	
	
\begin{lemma} \label{lem:hhatgrowth}
	Assume $2\leq m <\infty$. 
	For some $c_2>0$ and all $(z_1,\ldots,z_{2d})\in (r_{\mathrm{hc}},\infty)^{2d}$, 
	$$\widehat H\bigl((z_1,\ldots,z_d),(z_{d+1},\ldots,z_{2d})\bigr) \geq \tfrac12 p \sum_{i=1}^{2d} z_i- c_2.$$
\end{lemma}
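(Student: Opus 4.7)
The plan is to exploit the symmetric representation
$$\widehat{H}(x,y) \;=\; -g(x) + \bigl(\tfrac{1}{2}V(x) + W(x,y) + \tfrac{1}{2}V(y) - d e_0\bigr) + g(y)$$
from Lemma~\ref{lem:hhat}(b), and to read off the linear-in-pressure term $\tfrac{1}{2}p\sum z_i$ directly from the two $V$-contributions, showing that everything else is uniformly bounded from below.

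First I will recall that by definition $V(z_1,\ldots,z_d)=\mathcal{E}_m(z_1,\ldots,z_d)=\sum_{1\le i<j\le m} v(z_i+\cdots+z_{j-1}) + p\sum_{i=1}^d z_i$, and similarly that $V(z_{d+1},\ldots,z_{2d})$ has pressure contribution $p\sum_{i=d+1}^{2d} z_i$. Consequently
$$\tfrac{1}{2}V(x)+\tfrac{1}{2}V(y) \;=\; \tfrac{1}{2}p\sum_{i=1}^{2d}z_i \;+\; \tfrac{1}{2}\!\!\!\sum_{1\le i<j\le m}\!\!\!\bigl(v(z_i+\cdots+z_{j-1})+v(z_{d+i}+\cdots+z_{d+j-1})\bigr),$$
so the pressure term asserted in the statement is already visible, and it remains to bound the $v$-terms plus $W(x,y)$, $-de_0$, and $\pm g$ from below by a constant.

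For the $v$-terms, I will use that every argument appearing inside a $v(\cdot)$ in $V(x)$, $V(y)$, or $W(x,y)$ is a sum of spacings, each of which is strictly greater than $r_{\mathrm{hc}}$. By Assumption~\ref{assu:v}(i), $z_{\max}$ is the unique minimizer of $v$ on $(r_{\mathrm{hc}},\infty)$, so every such $v$-value is $\ge v(z_{\max})$, a finite constant (negative but finite). Since the number of such terms depends only on $d$, their total contribution is bounded from below by a finite constant. For the $\pm g$ terms, the boundedness of $g$ on $(r_{\mathrm{hc}},\infty)^d$ is precisely the content of Lemma~\ref{lem:hhat}(b) (which, under the standing assumption $r_{\mathrm{hc}}>0$ of this section, is valid because every $v$-argument appearing in the quantity $\sup|W(x,y)-W(\sigma x,y)|$ is a sum of at least two spacings, hence bounded away from $r_{\mathrm{hc}}$ by $2r_{\mathrm{hc}}>0$, where $v$ is finite and bounded). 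Finally $de_0$ is just a fixed number.

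Collecting these bounds yields $\widehat{H}(x,y)\ge \tfrac{1}{2}p\sum_{i=1}^{2d}z_i - c$ for some finite $c\in\R$; enlarging $c$ if necessary gives a positive $c_2>0$ as required. There is no real obstacle in this argument; the only point that deserves care is the verification that $g$ is bounded and that every $v$-argument stays $>r_{\mathrm{hc}}$, both of which have already been handled in Lemma~\ref{lem:hhat}.
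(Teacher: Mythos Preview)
Your proof is correct and follows essentially the same approach as the paper: use the representation $\widehat H(x,y) = -g(x) + \bigl(\tfrac12 V(x) + W(x,y) + \tfrac12 V(y) - d e_0\bigr) + g(y)$ from Lemma~\ref{lem:hhat}(b), extract the linear pressure term from the two $V$'s, and bound all remaining terms uniformly from below using that $v$ is bounded below and $g$ is bounded. The paper's version is more terse (it simply records $V(x)\ge p\sum z_i - c$ and $\inf W \ge -c$ and invokes Lemma~\ref{lem:hhat}), but the content is identical.
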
 

\begin{proof}
	Since the pair potential $v$ is bounded from below, we have for some constant $c>0$ 
	$$
		V(z_1,\ldots, z_d) = p \sum_{i=1}^d z_j  - c, \quad \inf_{\R^{2d}} W(x;y) \geq - c. 
	$$
	In combination with Lemma~\ref{lem:hhat} this yields the claim. 
\end{proof} 

\noindent In order to estimate $||K_\beta - G_\beta||$, we split the configuration space into a neighborhood $\mathcal A\supset B_\delta(\vect a)$ of $\vect a$ and its complement $\mathcal B= \R^d\setminus \mathcal A$ and treat blocks separately. For $U\subset \R^d$, we write $\mathbf1_U$ for the multiplication operator with the indicator function $\1_U$. 

\begin{lemma} \label{lem:perturbation1}
	Suppose that $\mathcal A\subset \R^d$ is compact, contains an open neighborhood of $\vect a$, and is such that $\widehat H(x,y)>0$ for all $(x,y) \in \mathcal A\times \mathcal A\setminus \{(\vect a,\vect a)\}$. Then 
	$$
	\lim_{\beta \to \infty} || \mathbf1_{\mathcal A}\,  \beta^{d/2} (K_\beta - G_\beta) \mathbf1_{\mathcal A}|| =0.
	$$
\end{lemma}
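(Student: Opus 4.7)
The plan is to pass to the rescaled coordinates dictated by $U_\beta$, so that the comparison between $K_\beta$ and its Gaussian analogue $G_\beta$ becomes a comparison of kernels on a single scale, and then to bound the resulting integral operator in Hilbert--Schmidt norm by splitting the domain of integration into a near region (where the Taylor expansion of $\widehat H$ at $(\vect a,\vect a)$ governs) and a far region (where $\widehat H$ is uniformly bounded away from zero on $\mathcal A^2$).

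First I would compute, by a change of variables analogous to the proof of Proposition~\ref{prop:gbetagauss}, that $U_\beta \beta^{d/2} K_\beta U_\beta^*$ is the integral operator with kernel
\[
\tilde K_\beta(x',y') := \1_{(r_\mathrm{hc},\infty)^d}(\vect a+\beta^{-1/2}x')\,\exp\!\bigl(-\beta\widehat H(\vect a+\beta^{-1/2}x',\vect a+\beta^{-1/2}y')\bigr)\,\1_{(r_\mathrm{hc},\infty)^d}(\vect a+\beta^{-1/2}y'),
\]
while $U_\beta \beta^{d/2} G_\beta U_\beta^* = G$ has kernel $\exp(-\tfrac12\widehat{\mathcal Q}(x',y'))$. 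Writing $\mathcal A_\beta := \sqrt\beta(\mathcal A - \vect a)$ and using $U_\beta^* \mathbf 1_{\mathcal A_\beta} = \mathbf 1_{\mathcal A} U_\beta^*$, unitary invariance reduces the claim to
\[
\int_{\mathcal A_\beta\times\mathcal A_\beta}\bigl|\tilde K_\beta(x',y')-G(x',y')\bigr|^2\,\dd x'\,\dd y'\ \longrightarrow\ 0,
\]
after bounding the operator norm by the Hilbert--Schmidt norm.

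I would then fix $\delta > 0$ small enough that $\overline{B_\delta(\vect a)} \subset \mathcal A$ and, by Lemma~\ref{lem:Hmin} combined with the ellipticity $\widehat{\mathcal Q}(u,v) \geq \eps(|u|^2 + |v|^2)$ from Lemma~\ref{lem:Q}, also
\[
\widehat H(x,y) \geq \tfrac14\,\widehat{\mathcal Q}(x - \vect a, y - \vect a) \quad \text{for all } (x,y) \in B_\delta(\vect a)^2.
\]
By compactness of $\mathcal A^2$ and the nondegeneracy hypothesis $\widehat H > 0$ on $\mathcal A^2 \setminus \{(\vect a,\vect a)\}$, there is then $c_\delta > 0$ with $\widehat H \geq c_\delta$ on $\mathcal A^2 \setminus B_\delta(\vect a)^2$. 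Splitting $\mathcal A_\beta^2 = \mathcal N_\beta \sqcup \mathcal F_\beta$ according as $(\vect a + \beta^{-1/2}x', \vect a + \beta^{-1/2}y')$ lies in $B_\delta(\vect a)^2$ or not: on $\mathcal F_\beta$ one has $\tilde K_\beta \leq \exp(-\beta c_\delta)$ and $|\mathcal F_\beta| = O(\beta^d)$, so $\int_{\mathcal F_\beta} |\tilde K_\beta|^2 = O(\beta^d \exp(-2 \beta c_\delta)) \to 0$, and $\int_{\mathcal F_\beta} G^2 \to 0$ by positive-definiteness of $\widehat{\mathcal Q}$. On $\mathcal N_\beta$ the quadratic lower bound yields $|\tilde K_\beta - G|^2 \leq 4 \exp(-\tfrac12 \widehat{\mathcal Q}(x',y'))$, an integrable dominant on $\R^{2d}$; and for each fixed $(x',y')$ Lemma~\ref{lem:Hmin} gives $\beta \widehat H(\vect a + \beta^{-1/2}x', \vect a + \beta^{-1/2}y') \to \tfrac12 \widehat{\mathcal Q}(x',y')$, so $\tilde K_\beta \to G$ pointwise and dominated convergence finishes the proof.

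The one delicate point will be the uniform quadratic lower bound on $\widehat H$ near $(\vect a, \vect a)$; everything else is a routine combination of rescaling, splitting, and dominated convergence. This delicate point is supplied by Lemma~\ref{lem:Hmin}, which identifies $\mathrm D^2 \widehat H(\vect a, \vect a)$ with $\widehat M$, combined with the positive-definiteness $\widehat{\mathcal Q} \geq \eps |\cdot|^2$ of Lemma~\ref{lem:Q}, which allows the Taylor remainder to be absorbed into $\tfrac14 \widehat{\mathcal Q}$.
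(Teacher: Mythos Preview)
Your proof is correct and follows essentially the same route as the paper: bound the operator norm by the Hilbert--Schmidt norm, split $\mathcal A^2$ into a $\delta$-neighborhood of $(\vect a,\vect a)$ and its complement, use compactness together with the hypothesis $\widehat H>0$ away from $(\vect a,\vect a)$ for the far region, and use the Taylor expansion from Lemma~\ref{lem:Hmin} for the near region. The only difference is in how the near region is handled: the paper uses a two-sided sandwich $\tfrac12(1-\eps)\widehat{\mathcal Q}\leq \widehat H\leq \tfrac12(1+\eps)\widehat{\mathcal Q}$, computes the resulting Gaussian integrals explicitly to get a bound of order $\eps$, and then lets $\eps\to 0$; you instead use only the lower bound $\widehat H\geq \tfrac14\widehat{\mathcal Q}$ to manufacture an integrable dominant and finish by dominated convergence. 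Your variant is slightly cleaner; the paper's explicit computation has the (here unused) advantage of being quantitative.
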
 

\begin{proof} 
By Lemma~\ref{lem:Hmin}, for every $\eps>0$, there is a $\delta>0$ such that for all $s,t\in \R^d$ with $|s|\leq \delta$ and $|t|\leq \delta$, we have 
\bes
	\tfrac12 (1-\eps) \widehat{\mathcal Q}(s,t)	\leq \widehat H(\vect a+ s, \vect a + t) \leq \tfrac12 (1+\eps) \widehat{\mathcal Q}(s,t)
\ees
Choosing $\delta>0$ small enough we may assume without loss of generality that $B_\delta(\vect a) \subset \mathcal A$. 
We estimate 
\begin{align*}
	\int_{B_\delta(\vect a)^2} \beta^d |K_\beta(x,y) - G_\beta(x,y)|^2 \dd x \dd y &\leq \int_{B_\delta(0)^2} \beta^d \bigl(\e^{\beta \eps \widehat{\mathcal Q}(s,t)} - 1\bigr)^2\e^{-\beta \widehat{\mathcal Q}(s,t)}  \dd s \dd t \\
	&\leq \int_{\R^d}\beta^d \bigl( \e^{- \beta(1-2\eps) \widehat{\mathcal Q}(s,t)} - 2 \e^{-\beta (1-\eps) \widehat{\mathcal Q}(s,t)} + \e^{-\beta \widehat{\mathcal Q}(s,t)} \bigr)\dd s\dd t\\
	& = \Bigl(\frac{1}{(1-2\eps)^d} - \frac{2}{(1-\eps)^d}+1\Bigr) \frac{(2\pi)^d}{\sqrt{\det \widehat M}} \leq k \eps
\end{align*}
for some $k>0$. On $\mathcal A^2\setminus B_\delta(\vect a)^2$, the function $\widehat H$ stays bounded away from $0$, therefore 
$$
	\int_{\mathcal A^2\setminus B_\delta(\vect a)^2} \beta^d |K_\beta(x,y)|^2 \dd x \dd y \leq \e^{- c_\eps \beta}.  
$$
A similar estimate clearly holds true for $G_\beta$ as well. 
Hence 
$$
	\limsup_{\beta\to \infty} \int_{\mathcal A^2} \beta^d |K_\beta(x,y) - G_\beta(x,y)|^2 \dd x \dd y \leq k \eps.
$$
This holds true for every $\eps>0$, so the left-hand side converges to zero. Since operator norms are bounded by Hilbert-Schmidt norms, the lemma follows. 
\end{proof} 

\begin{lemma} \label{lem:perturbation2}
	Assume that $\mathcal B\subset \R^d$ is such that $\mathrm{dist}(\vect a, \mathcal B)>0$ and $\mathcal B$ is invariant under reversals, $\sigma(\mathcal B)=\mathcal B$. Then 
	$||\mathbf1_{\mathcal B} K_\beta \mathbf1_{\mathcal B}|| = O(\e^{-\beta \delta})\to 0$.
\end{lemma}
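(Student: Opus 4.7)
Since $K_\beta$ is an integral operator with non-negative kernel $e^{-\beta\widehat H(x,y)}$ on $(r_\mathrm{hc},\infty)^{2d}$, its operator norm is bounded by its Hilbert--Schmidt norm and the proof reduces to controlling the integral $\int_{\mathcal B' \times \mathcal B'} e^{-2\beta\widehat H(x,y)} \dd x \dd y$, where $\mathcal B' := \mathcal B \cap (r_\mathrm{hc}, \infty)^d$.

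The heart of the argument is to produce $\delta_1 > 0$ with $\widehat H \geq \delta_1$ on $\mathcal B' \times \mathcal B'$. Lemma \ref{lem:Hmin} gives $\widehat H \geq 0$ with $\widehat H(\vect a, \vect a) = 0$; Lemma \ref{lem:hhatgrowth} forces $\widehat H(x,y) \to \infty$ as $\sum_i z_i \to \infty$, and Assumption \ref{assu:hcvtoinfty} forces $\widehat H \to \infty$ as any $z_i \searrow r_\mathrm{hc}$. Together these reduce the task to showing that $(\vect a, \vect a)$ is the \emph{only} zero of $\widehat H$ in $(r_\mathrm{hc},\infty)^{2d}$: the decomposition $2\widehat H = (H - w(x)) + (H - w(y))$ from the proof of Lemma \ref{lem:hhat} shows that any zero $(x_0, y_0)$ of $\widehat H$ produces a single configuration $z^* \in \mathcal D^+$ realizing all three infima $H(x_0, y_0), w(x_0), w(y_0)$ simultaneously, whence $z^*$ satisfies the Euler--Lagrange equations at every site. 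Adapting Lemma \ref{lem:est-on-min} to the bulk constrained problem places all free spacings of $z^*$ in $[z_\mathrm{min}, z_\mathrm{max}]$; the strict convexity of $\mathcal E_\mathrm{bulk}$ on $\mathcal D^+ \cap [z_\mathrm{min}, z_\mathrm{max}+\eps]^{\Z}$ from Proposition \ref{prop:lim-bulk}(b) then forces $z^* = (\ldots, a, a, \ldots)$ and hence $(x_0, y_0) = (\vect a, \vect a)$. Because $\mathrm{dist}(\vect a, \mathcal B) > 0$, the pointwise positivity together with the continuity and coercivity of $\widehat H$ yields $\inf_{\mathcal B' \times \mathcal B'} \widehat H = \delta_1 > 0$.

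With this in hand, I would combine the pointwise bound with the linear growth by a convex combination: for every $\alpha \in (0,1)$,
\[ 2\widehat H(x,y) \geq 2\alpha\delta_1 + (1-\alpha)\bigl(p\textstyle\sum_i z_i - 2c_2\bigr) \quad \text{on } \mathcal B' \times \mathcal B'. \]
Inserting this into the Hilbert--Schmidt integral and integrating each of the $2d$ coordinates separately over $(r_\mathrm{hc},\infty)$ gives
\[ \|\mathbf{1}_\mathcal B\, K_\beta\, \mathbf{1}_\mathcal B\|_\mathrm{HS}^2
   \leq \bigl(\beta(1-\alpha)p\bigr)^{-2d}
        \exp\Bigl(2\beta\bigl[(1-\alpha)(c_2 - p r_\mathrm{hc} d) - \alpha\delta_1\bigr]\Bigr). \]
Choosing $\alpha$ close enough to $1$ so that $\delta := \alpha\delta_1 - (1-\alpha)(c_2 - p r_\mathrm{hc} d)_+ > 0$ yields $\|\mathbf{1}_\mathcal B K_\beta \mathbf{1}_\mathcal B\| \leq \|\mathbf{1}_\mathcal B K_\beta \mathbf{1}_\mathcal B\|_\mathrm{HS} = O(e^{-\beta\delta})$.

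The main technical obstacle is the uniqueness of the zero of $\widehat H$, because the strict convexity of $\mathcal E_\mathrm{bulk}$ is only available on the restricted domain $[z_\mathrm{min}, z_\mathrm{max}+\eps]^{\Z}$; one must carry out the ``remove/shrink'' modifications of Lemma \ref{lem:est-on-min}--type carefully within the constrained framework in order to guarantee that the common minimizer $z^*$ indeed lands inside that convex domain, where uniqueness can be invoked. Once this geometric input is secured, the coercivity estimate is routine.
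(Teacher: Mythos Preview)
Your approach is genuinely different from the paper's, and it has a real gap.

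The paper does \emph{not} bound the Hilbert--Schmidt norm. Instead it uses the Krein--Rutman structure of $K_\beta^{\mathcal B}:=\mathbf 1_{\mathcal B}K_\beta\mathbf 1_{\mathcal B}$ together with the spectral-radius formula $\|K_\beta^{\mathcal B}\|=\lim_n\langle f,(K_\beta^{\mathcal B})^n g\rangle^{1/n}$ for suitable positive $f,g$. Choosing $f(y)=\e^{-\beta\widehat H(\vect a,y)}$, $g(x)=\e^{-\beta\widehat H(x,\vect a)}$, the exponent $\sum_i\widehat H(x_i,x_{i+1})$ telescopes via Lemma~\ref{lem:hhat}(b) into $\mathcal E_{\mathrm{bulk}}$ of a single configuration $(z_j)$ obtained by inserting the blocks $x_1,\ldots,x_{n+1}\in\mathcal B$ into the ground state. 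Since every block is at distance $\geq\operatorname{dist}(\vect a,\mathcal B)>0$ from $\vect a$, the truncated $\ell^2$-coercivity of $\mathcal E_{\mathrm{bulk}}$ forces a lower bound linear in $n$, hence $\|K_\beta^{\mathcal B}\|\leq\e^{-\beta\delta}$. Crucially, this argument never needs $\widehat H$ to be strictly positive at a \emph{single} off-minimum pair $(x,y)$.

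Your route, by contrast, hinges on the claim that $(\vect a,\vect a)$ is the \emph{unique} zero of $\widehat H$ on $(r_{\mathrm{hc}},\infty)^{2d}$. The paper explicitly flags this as open (remark after Lemma~\ref{lem:Hmin}). Your reduction to ``$z^*$ satisfies the Euler--Lagrange equations at every site'' is correct, but the conclusion $z^*=(\ldots,a,a,\ldots)$ does not follow from the tools at hand: strict convexity of $\mathcal E_{\mathrm{bulk}}$ is only available on $[z_{\min},z_{\max}+\eps]^{\Z}$, and the Lemma~\ref{lem:est-on-min}-type modifications you invoke are energy-decreasing moves for \emph{minimizers}, not devices that pin a critical point into the convex region. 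In particular the ``remove'' operation shifts all subsequent indices and thereby destroys the very constraint structure you used to deduce criticality, so it cannot be applied to the constrained coordinates $(x_0,y_0)$ when these happen to lie outside $[z_{\min},z_{\max}+\eps]^d$. Without an independent argument excluding non-trivial critical points of $\mathcal E_{\mathrm{bulk}}$ in $\mathcal D^+$, the pointwise lower bound $\inf_{\mathcal B'\times\mathcal B'}\widehat H>0$ is unproven.

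There is also a second, smaller issue: you invoke Assumption~\ref{assu:hcvtoinfty} to push $\widehat H\to\infty$ at the hard-core boundary, but Lemma~\ref{lem:perturbation2} (and the theorems it feeds into) only assume $r_{\mathrm{hc}}>0$, not $v(r)\to\infty$ as $r\searrow r_{\mathrm{hc}}$. The paper's telescoping argument is insensitive to the boundary behaviour of $v$ because the linear growth comes from the \emph{number} of defective blocks, not from the value of $\widehat H$ at any single pair.
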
 

\begin{proof}
	We may view $K_\beta^\mathcal B = \mathbf 1_{\mathcal B} K_\beta \mathbf 1_{\mathcal B}$ as an operator in $L^2(\mathcal B,\dd x)$. The Krein-Rutman theorem is applicable and shows that $\lambda = ||K_\beta^\mathcal B||$ is a simple eigenvalue and there exists an eigenfunction $\psi$ that is strictly positive on $\mathcal B\cap (r_{\mathrm{hc}},\infty)^d$.  Because of the symmetry $\widehat H(\sigma y,\sigma x) = \widehat H(x,y)$, the function $\psi\circ \sigma$ is a left eigenfunction. 	
	Moreover for all $f,g\in L^2(\mathcal B,\dd x)$, we have 
	$$
		\lim_{n\to \infty} \frac{1}{\lambda^n} \la f, (K_\beta^\mathcal B)^n g\ra = \la f, \psi\ra \la \psi\circ \sigma, g\ra
	$$
	so for all strictly positive functions $f,g\in L^2(\mathcal B,\dd x)$, 
	$$
		\lambda = \lim_{n\to \infty}\Bigl( \la f, (K_\beta^{\mathcal B})^n g\ra\Bigr)^{1/n}.
	$$
	We choose $f(y)= \exp(- \beta \widehat H(\vect a, y))$ and $g(x) =\exp(- \beta  \widehat H(x,\vect a))$. 
	The scalar product becomes 
	$$
		\la f, (K_\beta^\mathcal B)^n g\ra = \int_{{\mathcal B}	^n}  \e^{ - \beta \sum_{i=0}^{n+1} \widehat H(x_i,x_{i+1}) } \dd x_1\cdots \dd x_{n+1}
	$$
	with $x_0 =x_{n+2} = \vect a$. 
	By Lemma~\ref{lem:hhat}(b) , remembering $u(\vect a) =0$, we have 
	$$
		 \sum_{i=0}^{n+1} \widehat H(x_{i-1},x_i) 
		= - (n+2) d e_0 - V(\vect a) + \sum_{i=0}^{n+1} V(x_i) + \sum_{i=1}^n W(x_i,x_{i+1}). 
	$$
	Define $(z_1,\ldots,z_{(n+1)d} ) = (x_1,\ldots,x_{n+1})$ and for $j\in \Z\setminus \{1,\ldots, (n+1)d\}$, $z_j = a$. Then we recognize 
	$$
		\sum_{i=0}^{n+1} \widehat H(x_{i-1},x_i) = \mathcal E_{\mathrm{bulk}}\bigl( (z_j)_{j\in \Z}\bigr) + \mathrm{const}
	$$
	where the constant depends on $e_0$, $d$, and $V(a)$ alone. As $z_1,\ldots, z_{(n+1)d}$ stay bounded away from $a$, we obtain 
	$$
		\sum_{i=0}^{n+1} \widehat H(x_{i-1},x_i) \geq \delta(n+1) d- c
	$$
	for some $\delta,c>0$ and all $n\in \N$ and $x_1,\ldots,x_{n+1}\in \mathcal B$. 
	It follows that $	||K_\beta^{\mathcal B}|| = \lambda \leq \e^{- \beta \delta}$. 
\end{proof} 

\begin{lemma} \label{lem:perturbation3}
	Suppose that  $\mathcal A\subset \R^d$ and $\mathcal B = \R^d\setminus \mathcal A$ are such that 
	\be \label{eq:quantbell}
		V(x) + W(x,y) - d e_0 + u(y)  \geq u(x)+ \delta
	\ee
	for some $\delta>0$ and all $x\in \mathcal A$, $y\in \mathcal B$. Assume also that $\mathcal A$ is invariant under reversals, $\sigma(\mathcal A) = \mathcal A$. 
	Then 
	$$\lim_{\beta \to \infty} \beta^{d/2} \bigl( ||\mathbf 1_{\mathcal A}  K_\beta \mathbf 1_{\mathcal B}|| + ||\mathbf 1_{\mathcal B} K_\beta \mathbf 1_{\mathcal A}||\bigr) =0. $$
\end{lemma}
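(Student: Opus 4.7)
The plan is to reduce to a Hilbert--Schmidt estimate on $\mathbf 1_{\mathcal A}K_\beta \mathbf 1_{\mathcal B}$ and then transfer the resulting bound to $\mathbf 1_{\mathcal B}K_\beta \mathbf 1_{\mathcal A}$ via the reversal symmetry. The first step is to upgrade the hypothesis into a uniform pointwise gap for $\widehat H$ on $\mathcal A\times \mathcal B$. Using $H(x,y) = u(\sigma x) + W(x,y) + u(y)$ and $w(x) = u(\sigma x) + u(x) - V(x) + de_0$, the assumption rewrites as $H(x,y) - w(x) \ge \delta$ on $\mathcal A\times \mathcal B$. Combining this with the always-true inequality $H(x,y) \ge w(y)$ (a consequence of the definition of $w$ and shift-invariance of $\mathcal E_\mathrm{bulk}$) and averaging yields
\[
  \widehat H(x,y) = H(x,y) - \tfrac12 w(x) - \tfrac12 w(y) \;\ge\; \tfrac{\delta}{2}, \qquad (x,y)\in \mathcal A\times \mathcal B.
\]

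Next I would combine this gap with the linear-growth lower bound $\widehat H(x,y) \ge \tfrac12 p\sum_{i=1}^{2d} z_i - c_2$ from Lemma~\ref{lem:hhatgrowth} by splitting, for small $\eps>0$,
\[
  2\widehat H(x,y) = (2-\eps)\widehat H(x,y) + \eps\,\widehat H(x,y)
  \ge (2-\eps)\tfrac{\delta}{2} + \eps\bigl(\tfrac{p}{2}\textstyle\sum_{i=1}^{2d} z_i - c_2\bigr)
\]
on $\mathcal A\times \mathcal B \cap (r_\mathrm{hc},\infty)^{2d}$. Integrating the resulting bound on $K_\beta(x,y)^2 = \exp(-2\beta \widehat H(x,y))$ against Lebesgue measure and using the product of $2d$ one-dimensional exponential integrals, which contribute $(2/(\beta \eps p))^{2d}$, one obtains
\[
  \beta^d\,\|\mathbf 1_{\mathcal A}K_\beta \mathbf 1_{\mathcal B}\|^2
  \;\le\; \beta^d\!\!\int_{\mathcal A\times \mathcal B}\!\! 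K_\beta(x,y)^2\,\dd x\,\dd y
  \;\le\; C_\eps\,\beta^{-d}\,\e^{-\beta\,\kappa_\eps},
\]
with $\kappa_\eps := (2-\eps)\delta/2 - \eps c_2$. Choosing $\eps \in (0, \delta/(c_2+\delta/2))$ makes $\kappa_\eps>0$, so the operator norm bound (dominated by the Hilbert--Schmidt norm) gives $\beta^{d/2}\|\mathbf 1_{\mathcal A}K_\beta \mathbf 1_{\mathcal B}\| \to 0$.

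For the second norm, I would use the $\sigma$-invariance of $\mathcal A$ (and hence of $\mathcal B$) together with the kernel symmetry $K_\beta(x,y) = K_\beta(\sigma y, \sigma x)$ from Lemma~\ref{lem:kelementary}(b). If $M\colon L^2(\R^d)\to L^2(\R^d)$ is the unitary $(Mf)(x) = f(\sigma x)$, a direct kernel computation gives $M^*(\mathbf 1_{\mathcal A}K_\beta \mathbf 1_{\mathcal B}) M = (\mathbf 1_{\mathcal B}K_\beta \mathbf 1_{\mathcal A})^*$, so $\|\mathbf 1_{\mathcal B}K_\beta \mathbf 1_{\mathcal A}\| = \|\mathbf 1_{\mathcal A}K_\beta \mathbf 1_{\mathcal B}\|$, and the first estimate yields the claim.

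The main subtlety lies in the balance in the second step: one wants the uniform gap $\delta/2$ on $\mathcal A\times \mathcal B$ to produce exponential decay in $\beta$, but $\exp(-\beta\delta)$ alone is not integrable on $\R^{2d}$, so one must reserve a small fraction $\eps$ of $\widehat H$ to absorb the linear growth of the pressure term from Lemma~\ref{lem:hhatgrowth}. Provided $\eps$ is taken small enough that $\eps c_2$ does not erode the gap, the remaining $(2-\eps)\delta/2 - \eps c_2$ comfortably dominates the $\beta^d$ prefactor, and the rest is routine.
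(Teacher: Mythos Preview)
Your proof is correct and follows essentially the same route as the paper: rewrite the hypothesis as $H(x,y)-w(x)\ge\delta$ via the identities for $H$ and $w$, combine with $H(x,y)\ge w(y)$ to get $\widehat H\ge\delta/2$ on $\mathcal A\times\mathcal B$, use the linear growth from Lemma~\ref{lem:hhatgrowth} to make $\e^{-2\beta\widehat H}$ integrable, and finish with the reversal symmetry $K_\beta(x,y)=K_\beta(\sigma y,\sigma x)$. Your explicit $\eps$-splitting and unitary-conjugation argument just spell out what the paper compresses into one sentence each.
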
 

\begin{proof}
		Revisiting the proof of Lemma~\ref{lem:hhat}, we see that 
\be \label{eq:hwdiff}
	H(x,y) - w(x) = V(x) + W(x,y)  - d e_0 + u(y) - u(x).
\ee
	Eqs.~\eqref{eq:hwdiff},~\eqref{eq:positivehat} and~\eqref{eq:quantbell} show that $\widehat H(x,y) \geq \delta/2$ for all $x\in \mathcal A$ and $y\in \mathcal B$. This estimate together with the growth estimate from Lemma~\ref{lem:hhatgrowth} shows 
	\bes
		\limsup_{\beta\to \infty}\frac{1}{\beta}\log \Bigl( \int_{\mathcal A\times \mathcal B} |K_\beta(x,y)|^2\dd x\dd y\Bigr) \leq - \tfrac12 \delta < 0 
	\ees
	hence $||\mathbf 1_\mathcal A K_\beta \mathbf1_\mathcal B||\to 0$. 
	The estimate on $||\mathbf 1_\mathcal BK_\beta \mathbf1_\mathcal A||$ follows from the symmetry $K_\beta(\sigma y,\sigma x) = K_\beta(x,y)$.  
\end{proof}

\begin{proof} [Proof of Proposition~\ref{prop:perturbation-operatornorms}]
	Let $\eps >0$, $\mathcal A_\eps:= [z_{\min}, z_{\max}+\eps]^d$, and 
	$\mathcal B= \R^d\setminus \mathcal A$. The sets $\mathcal A$ and $\mathcal B$ are clearly invariant under reversals, moreover $z_{\min}< a \leq z_{\max}$ by Theorem~\ref{thm:periodic}(b), so $a$ is in the interior of $\mathcal A$ and bounded away from $\mathcal B$. Thus $\mathcal A$ and $\mathcal B$ satisfy the assumptions of  Lemmas~\ref{lem:perturbation1} and~\ref{lem:perturbation2}. By Lemma~\ref{lem:unibell}, they also satisfy the condition~\eqref{eq:quantbell} from Lemma~\ref{lem:perturbation3}. 
	By the triangle inequality, 
	$$
		||K_\beta - G_\beta||\leq ||\mathbf1_\mathcal A (K_\beta - G_\beta)\mathbf1_\mathcal A|| + ||K_\beta - 1_\mathcal A K_\beta \mathbf1_\mathcal A|| + ||G_\beta - \mathbf 1_\mathcal A G_\beta\mathbf1_\mathcal A||. 
	$$
	The first term on the right-hand side, multiplied by $\beta^{d/2}$, goes to zero by Lemma~\ref{lem:perturbation1}. For the second term, we estimate
	$$
		||K_\beta - \mathbf 1_\mathcal A K_\beta \mathbf1_\mathcal A|| \leq || \mathbf 1_\mathcal B K_\beta \mathbf 1_\mathcal B|| + \bigl(|| \mathbf 1_\mathcal A K_\beta \mathbf 1_\mathcal B||+  || \mathbf 1_\mathcal B K_\beta \mathbf 1_\mathcal A||\bigr)
	$$
	and conclude from Lemmas~\ref{lem:perturbation2} and~\ref{lem:perturbation3} that d $\beta^{d/2}||K_\beta - 1_\mathcal A K_\beta \mathbf1_\mathcal A||\to 0$. Bounding Hilbert-Schmidt norms, it is straightforward to check that $||\beta ^{d/2} (G_\beta - \mathbf1_\mathcal AG_\beta \mathbf 1_\mathcal A)||\to 0$ as well, and the proof is complete. 
\end{proof} 

\subsection{Proof of Theorems~\ref{thm:gaussian2}, \ref{thm:gaussian3} and ~\ref{thm:corr-finitem}} 

\begin{proof} [Proof of Theorem \ref{thm:gaussian3}]
Combining Lemma~\ref{lem:standardtransfer}(a) and Corollary~\ref{cor:perturbation-spectral}, we obtain  
\begin{equation*} 
	g(\beta,p) = e_0 - \frac{1}{\beta }\log \sqrt{\frac{2\pi}{\beta (\det C)^{1/d}}} + o(\beta^{-1}). \qedhere
\end{equation*}
\end{proof}

\begin{proof}[Proof of Theorem~\ref{thm:corr-finitem}]
	The  theorem is an immediate consequence of Lemma~\ref{lem:standardtransfer}(c) and Corollary~\ref{cor:perturbation-spectral}. 
\end{proof} 

For the proof of Theorem~\ref{thm:gaussian2}, we first express the marginals of $\mu^\mathrm{Gauss}$ in terms of the matrices $A$ and $B$ from Eq.~\eqref{eq:abdef} and the matrix $C$ from~\eqref{eq:Cdef}.  We group variables in blocks $x_j \in \R^d$ as usual and view $\mu^\mathrm{Gauss}$ as a measure on $(\R^d)^\Z$. 

\begin{prop} \label{prop:gaussmarginals}
	Under the assumptions of Theorem~\ref{thm:gaussian2}, the distributions of $x_0 = (z_0,\ldots, z_{d-1})$, $(x_0,x_1)$, and $(x_0,\ldots, x_n)$ ($n\geq 2$) under $\mu^\mathrm{Gauss}$ have probability density functions proportional to 
	\begin{enumerate} 
		\item [(a)] $\exp( - \frac12 \beta\la x_0, (\sigma C \sigma - BC^{-1} B^T) x_0\ra )$, 
		\item [(b)] $\exp( - \tfrac12 \beta  [\la \sigma x_0, C \sigma x_0\ra - 2 \la x_0, B x_1\ra + \la x_1, C x_1\ra])$, 
		\item [(c)] $\exp(- \tfrac12 ( \la \sigma x_0, (C - \tfrac12 A) \sigma x_0\ra + \sum_{i=0}^{n-1} \mathcal Q(x_i,x_{i+1}) + \la x_{n-1}, (C - \tfrac12 A) x_{n-1}\ra ))$
	\end{enumerate} 
	respectively. 
\end{prop}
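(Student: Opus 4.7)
The plan is to identify, for each finite block index set $S$, the marginal of the Gaussian $\mu^{\mathrm{Gauss}}$ on $x_S$ as a Gaussian whose precision matrix is given by the Schur-complement variational formula
\[
\la x_S,\mathcal H_S x_S\ra = \inf\{\la z,\mathcal H z\ra:\ z\in\ell^2(\Z),\ z_S = x_S\},
\]
with $\mathcal H$ the block-tridiagonal bulk Hessian from \eqref{eq:bulkbanddiag}. This is the standard characterization of marginals of a centered Gaussian with precision $\mathcal H$; in our setting it can be established either by general Gaussian theory on $\ell^2$ or as a weak limit of finite-volume marginals (whose precisions are the restrictions of $\mathcal H$ to blocks $-N,\ldots,N$), using the uniform positive-definiteness from Lemma~\ref{lem:hessian}.

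Granted this, parts (a) and (b) are essentially restatements of Lemma~\ref{lem:qq}: by \eqref{eq:Ndef} and \eqref{eq:Mdef} the marginal precisions on $\{x_0\}$ and $\{x_0,x_1\}$ are the matrices $N$ and $M$ of that lemma, identified there as $N = \sigma C\sigma - BC^{-1}B^T$ and $M = \begin{pmatrix}\sigma C\sigma & -B\\ -B^T & C\end{pmatrix}$. Expanding $\la(x_0,x_1),M(x_0,x_1)\ra$ yields the quadratic form in~(b).

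Part (c) with $S=\{0,\ldots,n\}$ requires one extra computation: because $\mathcal H$ is block-tridiagonal, the complement $S^{\mathrm{c}}$ splits into two non-interacting half-chains and the Schur complement decomposes additively. The bulk contribution on $S$ is $\tfrac12\sum_{j=0}^n\la x_j,Ax_j\ra - \sum_{j=0}^{n-1}\la x_j,Bx_{j+1}\ra$. For the right tail, iterating the one-block minimization sends a boundary quadratic form with precision $F$ on the leading block into $-B(A+F)^{-1}B^T$ on the next. Passing to the limit—either by checking directly that the geometric sequence $x_{n+k}=(C^{-1}B^T)^k x_n$ realizes the infimum, or by monotone convergence of iterated Schur complements—one obtains the right boundary contribution $\tfrac12\la x_n,(C-A)x_n\ra$, with the fixed-point identity $C=A-BC^{-1}B^T$ from Lemma~\ref{lem:qq} identifying $C-A$ as the limit. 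The reversal symmetries $\sigma A\sigma=A$ and $\sigma B\sigma=B^T$ give the mirror left contribution $\tfrac12\la x_0,(\sigma C\sigma-A)x_0\ra$. Adding both boundary terms to the bulk contribution and regrouping (using $\sigma A\sigma = A$) produces exactly the quadratic form displayed in~(c).

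The main obstacle is the convergence of the iterative Schur complement on each half-chain, equivalently the passage from finite- to infinite-volume marginalization on the tails. This rests on the positivity and diagonal dominance of $\mathcal H$ from Lemma~\ref{lem:hessian} together with the uniqueness of the positive-definite solution of $F = A - BF^{-1}B^T$ established in Lemma~\ref{lem:qq}; beyond this point the argument is routine quadratic algebra.
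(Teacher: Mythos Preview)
Your proposal is correct and follows essentially the same approach as the paper: both arguments rest on the Schur-complement/variational characterization \eqref{eq:dvar} of Gaussian marginal precisions, identify (a) and (b) directly via the definitions \eqref{eq:Mdef}--\eqref{eq:Ndef} and the block form \eqref{eq:Mblocks} from Lemma~\ref{lem:qq}, and obtain (a) from (b) by a further Schur step. For (c) the paper simply says ``similar'' whereas you spell out the tridiagonal splitting and the half-chain Schur iteration converging to the fixed point $C=A-BC^{-1}B^T$; this is exactly the computation the paper leaves implicit. One small caveat: Lemma~\ref{lem:qq} asserts that $C$ \emph{solves} the fixed-point equation but does not state uniqueness, so your appeal to ``uniqueness established in Lemma~\ref{lem:qq}'' is a slight overreach---however your alternative justification via the variational formula (the infimum over finite tails decreases to the infimum over the infinite tail, which equals $\langle x_n,(C-A)x_n\rangle$ by the very definition of $M$ and $C$) is the cleaner route and suffices.
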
 

\begin{proof} 
	We recall a standard fact on marginals of multivariate Gaussians and Schur complements. Suppose we are given a positive-definite $(n+k)\times (n+k)$-matrix in block form 
	$$ \mathcal H = \begin{pmatrix}
			\mathcal H_1  & \mathcal H_2 \\ \mathcal H_2^T &  \mathcal H_3 
		\end{pmatrix}
	$$
	where $\mathcal H_1,\mathcal H_2,\mathcal H_3$ are $n\times n$, $n\times k$ and $k\times k$ matrices, respectively.  Think of $\mathcal H$ as the Hessian of the energy. Consider the Gaussian measure on $\R^{n+k}$ with covariance matrix $\mathcal H^{-1}$ and probability density function
	$$
		\rho (x,y) = \sqrt{ \frac{\det \mathcal H} {(2\pi)^{(n+k) } }}\, \exp\Bigl( - \frac12 \la \begin{pmatrix} x\\ y\end{pmatrix} , \mathcal H \begin{pmatrix} x \\y \end{pmatrix}\ra\Bigr) \qquad (x\in \R^{n}, y\in \R^k). 
	$$
	Then for all $x\in \R^n$, 
	\be \label{eq:gaussmarginal}
		\int_{\R^{k}} \rho(x, y) \dd y = \sqrt{ \frac{\det \mathcal M} {(2\pi)^{n } }}\, \exp\Bigl( - \frac12 \la x, \mathcal M x\ra\Bigr) 
	\ee
	with $\mathcal M = \mathcal H_1 - \mathcal H_2 \mathcal H_3^{-1} \mathcal H_2^T$ the Schur complement of $\mathcal H_3$ in $\mathcal H$. The inverse $\mathcal M^{-1}$ is equal to the upper left block of $\mathcal H^{-1}$. Another characterization is provided by a completion of squares, similar to the proof of Lemma~\ref{lem:gaussianansatz}: we have 
	$$  \la x, \mathcal M x\ra = \inf_{y\in \R^k} \la \begin{pmatrix} x \\ y\end{pmatrix} , \mathcal H \begin{pmatrix} x \\ y\end{pmatrix}\ra.
	$$ 
	Now let $\mathcal H = (\mathcal H_{ij})_{i,j\in \Z}$ be the Hessian of $\mathcal E_\mathrm{bulk}$ at $(\ldots, a,a,\ldots)$. By definition of $\mu^\mathrm{Gauss}$, the distribution of $(z_1,\ldots,z_n)$ is Gaussian with mean zero and covariance matrix $(\mathcal H^{-1})_{i,j=1,\ldots, n}$. Let $\mathcal M= (\mathcal M_{ij})_{0\leq i,j\leq n-1}$ be the $n\times n$-matrix defined by $\mathcal M ^{-1}= (\mathcal H^{-1})_{0\leq i,j\leq n-1}$. 	It is not difficult to check that the considerations above generalize to the infinite matrices at hand, hence for all $z_0,\ldots, z_{n-1} \in \R$, 
	\be \label{eq:dvar}
		\sum_{i,j=0}^{n-1} \mathcal M_{ij} z_i z_j = \inf \Bigl \{ \sum_{i,j\in \Z} \mathcal H_{ij} z'_i z'_j \, \Big|\, (z'_j)_{j\in \Z}\in \ell^2(\Z):\ z'_0 = z_0,\ldots, z'_{n-1} = z_{n-1} \Bigr \}. 
	\ee
	Eq.~\eqref{eq:dvar} provides a variational description of the covariance matrix $\mathcal M^{-1}$ of the $n$-dimensional marginal of $\mu^\mathrm{Gauss}$. 
	For $n= 2d = 2(m-1)$, with $x_0=(z_0,\ldots,z_{d-1})$ and $x_1 = (z_d,\ldots, z_{2d-1})$,  Eq.~\eqref{eq:dvar} shows $\mathcal M = M$, by the definition~\eqref{eq:Mdef} of $M$. Combining with~\eqref{eq:Mblocks} we get 
	$$
		\mathcal M = \begin{pmatrix} \sigma C \sigma & - B \\ - B^T & C \end{pmatrix} = M.
	$$
	This proves part (b) of the lemma. The proof of (c)  is similar. Part (a) follows from (b) and a relation similar to~\eqref{eq:gaussmarginal}. 	
\end{proof}

\begin{proof}[Proof of Theorem~\ref{thm:gaussian2}]
	It is enough to treat the $nd$-dimensional marginals with $n\geq 2$. 	
	Let $\phi_\beta$ be the principal eigenfunction of $K_\beta$, with multiplicative constant chosen so that $\la \phi_\beta \circ \sigma, \phi_\beta \ra=1$. Set  $\tilde \phi_\beta (x):= U_\beta \phi_\beta(x) =\beta^{-1/4} \phi_\beta(\vect a+\beta^{-1/2}x)$ and
	$$
		\tilde K_\beta(x,y):= \frac{1}{\Lambda_0(\beta)} \bigl(U_\beta K_\beta U_\beta^*\bigr)(x,y) = \frac{1}{\Lambda_0(\beta)} K_\beta(\vect a +\beta^{-1/2}x,\vect a +\beta^{-1/2}y)
	$$
	By Lemma~\ref{lem:standardtransfer}, the probability density $\rho_{nd}^\ssup{\beta}$ for $(x_1,\ldots,x_n)\in \R^{nd}$ satisfies 
	$$
		\tilde \rho_{nd}^{(\beta)} (x_1,\ldots,x_n)= \beta^{-nd/2}	\rho_{nd}^\ssup{\beta} (\vect a +\beta^{-1/2} x_1,\ldots, \vect a +\beta^{-1/2} x_n) \\
		= \tilde \phi_\beta(\sigma x_1) \Biggl( \prod_{i=1}^{n-1} \tilde K_\beta (x_i,x_{i+1}) \Biggr)\tilde \phi_\beta(x_n). 
	$$
	By Proposition~\ref{prop:gaussmarginals}, the analogous representation for the Gaussian density $\rho_{nd}^\mathrm{Gauss}$ is 
	$$
		\rho_{nd}^\mathrm{Gauss}(x_1,\ldots,x_n) = \phi^\mathrm{Gauss}(\sigma x_1) \Biggl(\prod_{i=1}^{n-1} \tilde G (x_i,x_{i+1})  \Biggr)\phi^\mathrm{Gauss}(x_n)
	$$
	with $\tilde G(x,y) = (\lambda_0^\mathrm{Gauss}) G(x,y)$ and 	
	$\phi^\mathrm{Gauss}(x) \propto \exp(-\frac12 \la x, \frac12 N x \ra)$ the principal eigenfunction of $G$, normalized so that $\la \phi^\mathrm{Gauss}\circ \sigma, \phi^\mathrm{Gauss}\ra =1$. It follows that 
	\begin{align*}
			&\int_{\R^{nd}}\bigl|\tilde \rho_{nd}^{(\beta)} (x_1,\ldots,x_n) -\rho_{nd}^\mathrm{Gauss}(x_1,\ldots,x_n)	\bigr|\dd x_1\ldots \dd x_n  \\
			&\quad \leq \bigl| \la \tilde \phi_\beta \circ \sigma- \phi^\mathrm{Gauss}\circ \sigma, \tilde K_\beta^{n-1} \tilde \phi_\beta \ra \bigr|
			 + \sum_{i=1}^{n-1} \bigl| \la \phi^\mathrm{Gauss}\circ \sigma, \tilde  G^i (\tilde K_\beta - \tilde G)	{\tilde K_\beta}^{n-i-2} \tilde \phi_\beta \ra\bigl| \\
			 &\qquad \qquad + \bigl|\la \phi^\mathrm{Gauss}\circ \sigma,  \tilde G^{n-1} (\tilde \phi_\beta - \phi^\mathrm{Gauss}\ra\bigr|.
	\end{align*}
	Using $\tilde K_\beta \tilde \phi_\beta = \tilde \phi_\beta$ and $\tilde G^* (\phi^\mathrm{Gauss}\circ \sigma) = \phi^\mathrm{Gauss}\circ \sigma$, we get 
	$$
		||\rho_{(n+1)d}^{(\beta)}- \rho_{(n+1)d}^\mathrm{Gauss}||_{L^1} \leq \bigl( ||\tilde \phi_\beta||_{L^2} + ||\phi^\mathrm{Gauss}||_{L^2}\bigr) ||\tilde \phi_\beta - \phi^\mathrm{Gauss}||_{L^2}  + ||\tilde K_\beta - \tilde G||
	$$
	which goes to zero by Proposition~\ref{prop:perturbation-operatornorms} (see also Corollary~\ref{cor:perturbation-spectral}).
\end{proof}


\section{A Brascamp-Lieb type covariance estimate for $m=\infty$}
\label{sec:brascamp}

Here we prove Proposition~\ref{prop:restrict-alg}.  Key to the proof is a matrix lower bound $A$ for the Hessian of $\mathcal{E}_N$. For Gaussian measures with probability density proportional to $\exp(- \frac{\beta}{2} \la z, A z\ra)$ and test functions $f_i = z_i$, $g_j = z_j$, we end up estimating the covariance $C_{ij} = ([\beta A]^{-1})_{ij}$. We follow~\cite{menz14}, see also~\cite{otto-reznikoff07}.

\begin{proof} [Proof of Proposition~\ref{prop:restrict-alg}]
Revisiting the proof of Lemma~\ref{lem:hessian}, we obtain bounds on matrix elements of the Hessian. Let $N\in \N$, $z\in [z_{\mathrm{min}}, z_\mathrm{max}]^{N-1}$. For $1 \leq i< j \leq N-1$ we have 
\begin{align*}
	 0 \geq \partial_i \partial_j \mathcal{E}_N(z)   & =   \sum_{L\supset \{i,j\}} v''(\sum_{k\in L} z_k) 
	 	  \geq \sum_{n=j-i+1}^{N-1} v''(n z_\mathrm{min}) \#\{L \mid \#L =n,\, L \supset \{i,j\}\} \\
	 	 & \geq \sum_{n=j-i+1}^\infty (n-j+i) v''(nz_\mathrm{min}) =: - \kappa_{j-i}
\end{align*}
with 
\be\label{eq:kappaj}
	0  \leq \kappa_{j-i} \leq \sum_{n=j-i+1}^\infty \frac{\alpha_2 n}{ (nz_\mathrm{min})^{s+2}} 
	     \leq 
	      \frac{\alpha_2}{s z_\mathrm{min}^{s+2} (j-i)^{s}}
\ee
For $1\leq i \leq N-1$ we also have 
\bes
	\partial_i^2 \mathcal{E}_N(z)  = \sum_{L \ni i} v''(\sum_{k\in L} z_k) \geq v''(z_\mathrm{max}) - \sum_{n=2}^{\infty} n \bigl| v''(n z_\mathrm{min}) \bigr| =:\rho >0
\ees
by Assumption~\ref{assu:v}(iv). Moreover 
\bes
	\eta := \rho - 2 \sum_{\ell=1}^\infty \kappa_\ell = v''(z_{\max}) - \sum_{n=2}^\infty n^2 |v''(n z_{\min})|>0
\ees
again by Assumption~\ref{assu:v}(iv). Let $A_N$ be the $(N-1)\times (N-1)$-matrix with diagonal $\rho$ and off-diagonal entries $-\kappa_{|j-i|}$; notice that $\eta,\kappa_{j-i}, \rho$ do not depend on $N$. $A_N$ is symmetric and positive-definite. 
  
The previous estimates together with \cite[Remark 2.6]{menz14} show that the energy $\mathcal{E}_N$ satisfies the assumptions of~\cite[Theorem 2.3 and Proposition 3.5]{menz14}.   
It follows that for all smooth $f,g:\R_+ \to \R$, 
\bes
	\Bigl|\tilde \mu_\beta^\ssup{N} (f_i g_j ) 
 - \tilde \mu_\beta^\ssup{N} (f_i) 	\tilde \mu_\beta^\ssup{N} (g_j) \Bigr|
  	\leq  \frac{1}{\beta} (A_N^{-1})_{ij} \Bigl( \tilde \mu_\beta^\ssup{N} \bigl({f'_i}^2 \bigr) \tilde \mu_\beta^\ssup{N} \bigl({g'_j}^2 \bigr) \Bigr)^{1/2}.
\ees
Let $X_1,X_2,\ldots$ be i.i.d. random variables with law 
\bes
	\P(X_i  = \ell) = \frac{\kappa_{|\ell|}}{\rho - \eta},\quad \ell \in \Z\setminus \{0\}, \quad \P(X_i  = \ell) =0
\ees
and $S_n = X_1 + \cdots + X_n$. 
We may decompose $A_N$ as $\rho \mathrm{Id}$ plus an off-diagonal matrix, write a Neumann series for the inverse, and find that for $i<j$
\be \label{eq:aninv}
	(A_N^{-1})_{ij} \leq \frac{1}{\rho} \sum_{k=1}^\infty \bigl( 1- \frac{\eta}{\rho}\bigr)^{k} \P(S_k = j-i).
\ee
Clearly 
\be \label{eq:psk}
	\P(S_k = j-i) \leq \sum_{r=1}^k \P(X_r \geq (j-i)/k,\, S_k = j-i ).
\ee 
By~\eqref{eq:kappaj}, we have $\P(X_r = \ell) \leq C/|\ell|^s$ for some constant $C>0$. Following~\cite[Proposition 3.5]{menz14} we may estimate, for each $m\in \N$, 
\begin{align*}
	\P(X_2 \geq m,\, S_k = j-i )  & \leq \sum_{\ell =m}^\infty \P(X_2 = \ell) \P(X_1 + X_3+\cdots + X_n = j-i-\ell) \\ 
	& \leq \sup_{\ell \geq m} \P(X_2 = \ell) \leq \frac{C}{m^s}. 
\end{align*}
Similar estimates apply to other $r$. Combining with~\eqref{eq:psk} we find 
\bes 
	\P(S_k = j-i) \leq \frac{C\, k^{s+1}}{|j-i|^{s}}.
\ees
It follows that 
\begin{align*}
	(A_N^{-1})_{ij} & \leq \frac{C}{\rho |i-j|^s} \sum_{k=1}^{\infty} k^{s+1} \bigl(1-\frac{\eta}{\rho}\bigr)^{k}
\end{align*}
Notice that the series is convergent. The bound is plugged into the estimate~\eqref{eq:aninv} and the proposition follows by passing to the limit $N\to \infty$.
\end{proof}

\subsubsection*{Acknowledgments} 

We thank Nils Berglund, Andrew Duncan,  Andr{\'e} Schlichting,  and Martin Slowik for helpful discussions.

\bibliographystyle{amsalpha}
\bibliography{lenjon}

\end{document}